%%%%%%%%%%%%%%%%%%%% author.tex %%%%%%%%%%%%%%%%%%%%%%%%%%%%%%%%%%%
%
% sample root file for your "contribution" to a contributed volume
%
% Use this file as a template for your own input.
%
%%%%%%%%%%%%%%%% Springer %%%%%%%%%%%%%%%%%%%%%%%%%%%%%%%%%%

%%%%%%%%%%%%%%%%%%%%%%%%%%%%%%%%%%%%%%%%%%%%%%%%%%%%
%                                                  %
%   This is the revised version of the survey:     %
%                                                  %
%         D.Repovs and P.V.Semenov                 %
%                                                  %
% "Continuous selections of multivalued mappings"  %
%                                                  %
%           Date: July 17, 2012                    %
%                                                  %
%           Last changes by DR                     %
%                                                  %
%%%%%%%%%%%%%%%%%%%%%%%%%%%%%%%%%%%%%%%%%%%%%%%%%%%%

% RECOMMENDED %%%%%%%%%%%%%%%%%%%%%%%%%%%%%%%%%%%%%%%%%%%%%%%%%%%
\makeindex
\documentclass[graybox]{svmult}

\usepackage{makeidx}         % allows index generation
\usepackage{graphicx}        % standard LaTeX graphics tool
                             % when including figure files
\usepackage{multicol}        % used for the two-column index
\usepackage[bottom]{footmisc}% places footnotes at page bottom
\usepackage{ucs}
\usepackage[cp1250]{inputenc}
\usepackage{color}
\usepackage{dsfont}
\usepackage{amsfonts}
\usepackage{amssymb}
\usepackage{amsmath}
\usepackage{textcomp}
\usepackage{verbatim}
\usepackage{color}

% see the list of further useful packages
% in the Reference Guide

\makeindex             % used for the subject index
                       % please use the style svind.ist with
                       % your makeindex program

%%%%%%%%%%%%%%%%%%%%%%%%%%%%%%%%%%%%%%%%%%%%%%%%%%%%%%%%%%%%%%%%%%%%%%%%%%%%%%%%%%%%%%%%%

\begin{document}

\def\N{{\mathbb N}}
\def\R{{\mathbb R}}

\title*{Continuous selections of multivalued mappings}
% Use \titlerunning{Short Title} for an abbreviated version of
% your contribution title if the original one is too long
\author{Du\v san Repov\v s and Pavel V. Semenov}
% Use \authorrunning{Short Title} for an abbreviated version of
% your contribution title if the original one is too long
\institute{Du\v san Repov\v s
\at
Faculty of Education and
Faculty of Mathematics and Physics,
University of Ljubljana,
P. O. Box 2964,
Ljubljana, Slovenia 1001\\
\email{dusan.repovs@guest.arnes.si}
\and
Pavel V. Semenov
\at
Department of Mathematics,
Moscow City Pedagogical University,
2-nd Sel'sko\-khozyast\-vennyi pr.\,4,
Moscow,\,Russia 129226\\
\email{pavels@orc.ru}}
%
% Use the package "url.sty" to avoid
% problems with special characters
% used in your e-mail or web address
%

\maketitle

\abstract{
This survey covers
in our opinion
the
most important results in the
 theory of continuous
selections of multivalued mappings
(approximately)
from
2002 through 2012.
It extends and
continues our previous 
such
survey which appeared in 
{\it Recent Progress in General Topology, II} which
was published in 2002.
In comparison, our present survey
considers more restricted
and specific areas of mathematics. We remark
that we do not consider 
the theory of selectors
(i.e. continuous choices of elements from subsets of  topological spaces)
since this topics is covered by another survey in this volume.
}

\section{Preliminaries}
\label{sec:1}

A 
{\it selection}\index{selection}
of a given 
multivalued mapping\index{multivalued mapping} 
$F:X \to Y$
with nonempty values 
$F(x) \neq \emptyset,$
for every
$x \in X,$
is a mapping $\Phi:X \to Y$ 
(in general, also multivalued) 
which for every $x \in X,$ selects a nonempty subset $\Phi(x) \subset F(x)$.
When all $\Phi(x)$ are singletons,
a selection is called 
{\it singlevalued} %\index{singlevalued selection} 
and is identified with the usual singlevalued mapping
$f:X \to Y, \,\,\{f(x)\}=\Phi(x).$ 
As a rule, we shall use small letters $f, g, h, \phi, \psi,...$ 
for singlevalued mappings and 
capital letters $F, G, H, \Phi, \Psi,...$ for multivalued mappings.

There exist a great number of theorems on existence of selections in the category of topological spaces and their continuous (in various senses) mappings.
However, the citation index of one
of them is by an order of magnitude
higher than for any other one: this is the Michael selection theorem for 
convexvalued mappings\index{convexvalued mapping}
\cite[Theorem 3.2'',\, (a)$\Rightarrow$(b)]
{Sel-1}:

\begin{theorem} \label{ConvSel}
A multivalued mapping $F:X \to B$
admits  a continuous single\-valued selection, provided
that the following conditions are satisfied:
\begin{description}
\item{(1)} $X$ is a paracompact space;
\item{(2)} $B$ is a Banach space;
\item{(3)} $F$ is a lower semicontinuous (LSC) mapping;
\item{(4)} For every $x \in X$,
$F(x)$ is a nonempty convex subset of $B$; and
\item{(5)} For every $x \in X$,
$F(x)$ is a closed subset of $B$.
\end{description}
\end{theorem}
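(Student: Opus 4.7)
The plan is to build the selection $f$ as a uniform limit of continuous approximate selections $f_n \colon X \to B$. The first and crucial step is an \emph{approximate selection lemma}: for every $\varepsilon > 0$ there exists a continuous $f_\varepsilon \colon X \to B$ with $d(f_\varepsilon(x), F(x)) < \varepsilon$ for every $x \in X$. To produce $f_\varepsilon$, I would observe that by the lower semicontinuity (3) each set $U_y = \{ x \in X : F(x) \cap B(y,\varepsilon) \neq \emptyset \}$ is open in $X$, and by the nonemptiness in (4) the family $\{U_y\}_{y \in B}$ covers $X$. Using paracompactness (1), subordinate a locally finite partition of unity $\{\phi_\alpha\}$ with chosen centers $y_\alpha$ to this cover, and define $f_\varepsilon(x) = \sum_\alpha \phi_\alpha(x)\, y_\alpha$. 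If $x$ lies in the support of $\phi_\alpha$, pick $z_\alpha(x) \in F(x) \cap B(y_\alpha,\varepsilon)$; convexity of $F(x)$ in (4) then gives $\sum_\alpha \phi_\alpha(x) z_\alpha(x) \in F(x)$, and comparing with $f_\varepsilon(x)$ in norm yields $d(f_\varepsilon(x), F(x)) < \varepsilon$.

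The second step is an inductive refinement. Starting from some $f_1$ with $d(f_1(x), F(x)) < 2^{-1}$, I would assume $f_n$ has been constructed with $d(f_n(x), F(x)) < 2^{-n}$ and consider the ``narrowed'' mapping
\[
F_n(x) = F(x) \cap \bigl\{ y \in B : \|y - f_n(x)\| < 2^{-n} \bigr\}.
\]
By the choice of $f_n$ the values $F_n(x)$ are nonempty, and they are convex as intersections of two convex sets. A short lemma, which I would verify separately, shows that the intersection of an LSC convex-valued map with the preimage of an open convex set under a continuous map remains LSC; applied here it gives that $F_n$ is LSC. The approximate selection lemma applied to $F_n$ with tolerance $2^{-n-1}$ then produces $f_{n+1}$ with $d(f_{n+1}(x), F_n(x)) < 2^{-n-1}$, which yields both $d(f_{n+1}(x), F(x)) < 2^{-n-1}$ and $\|f_{n+1}(x) - f_n(x)\| < 2^{-n} + 2^{-n-1}$.

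The third step is the passage to the limit. The uniform bound $\|f_{n+1} - f_n\|_\infty \le 3 \cdot 2^{-n-1}$ makes $(f_n)$ uniformly Cauchy in $C(X,B)$; by completeness of the Banach space $B$ in (2) the limit $f = \lim f_n$ is continuous. Since $d(f_n(x), F(x)) \to 0$ and $F(x)$ is closed in $B$ by (5), the limit satisfies $f(x) \in F(x)$ for every $x \in X$, so $f$ is the desired singlevalued continuous selection.

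The main obstacle is the preservation of lower semicontinuity when one intersects $F$ with the open balls around the moving continuous centers $f_n$; without this small technical lemma, one cannot iterate the approximate selection procedure. The other conceptually load-bearing ingredients are already visible in the approximate step: paracompactness is what allows a partition-of-unity construction, convexity is exactly what makes the convex combination $\sum \phi_\alpha(x) z_\alpha(x)$ land back inside $F(x)$, and completeness together with closedness are only used at the very end to turn a uniform Cauchy sequence of approximations into an honest selection.
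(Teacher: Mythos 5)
Your proposal is correct and is exactly the classical proof of Michael's convex-valued selection theorem by the ``method of outside approximations'' that the paper itself describes (continuous $\varepsilon_n$-selections refined via the nonempty convex LSC mappings $F_n(x)=F(x)\cap B(f_n(x),2^{-n})$, then a uniform limit using completeness and closedness); the survey does not reprove the theorem but cites Michael's original argument, which proceeds just as you outline. The one lemma you flag --- that intersecting an LSC mapping with an open-graph mapping such as $x\mapsto B(f_n(x),2^{-n})$ preserves lower semicontinuity when the intersections are nonempty --- is indeed the load-bearing technical step, and it holds by the standard neighborhood argument.
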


Moreover, the reverse implication $(b)\Rightarrow(a)$ in \cite[Theorem 3.2'']{Sel-1} states that a
$T_1$-space $X$ is paracompact whenever each multivalued mapping $F:X \to B$ with properties $(2)-(5)$ above, admits  a continuous singlevalued selection.

If one identifies a multivalued mapping $F: X \to Y$ with its graph $\Gamma_F \subset X \times Y$ then the {\it lower semicontinuity}\index{lower semicontinuous mapping} 
(LSC) of $F$ means exactly the openess of the restriction $\pi_1 | _{\Gamma_F}: \Gamma_F \to X $ of the projection $\pi_1: X \times Y \to X $ onto the first factor.
In more direct terms, lower semicontinuity of a multivalued
mapping $F: X \to Y$ between  topological spaces $X$ and $Y$ means
that the (large) preimage
$
F^{-1}(U) = \{x \in X\,\,: F(x) \cap U \ne \emptyset\}
$
of any open set $U \subset Y$ is an open subset of the domain $X$.
Applying the Axiom of Choice,  we obtain:

\begin{lemma} \label{LSC}
The following statements are equivalent:
\begin{description}
\item{(1)} $F: X \to Y$ is a lower semicontinuous mapping; 
\item{(2)} For each $(x;y) \in\Gamma_F$ and
each open
neighborhood $U(y)$ there exists
a local singlevalued (not necessarily continuous) selection of $F$, say
$s:x' \mapsto s(x') \in F(x') \cap U(y),$ defined on some open
neighborhood $V(x)$.
\end{description}
\end{lemma}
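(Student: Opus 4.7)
The plan is to prove the two implications separately, with the Axiom of Choice entering only in the forward direction.

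For $(1)\Rightarrow(2)$, I would start with a point $(x,y)\in\Gamma_F$ and an open neighborhood $U=U(y)$ of $y$. Since $y\in F(x)\cap U$, the point $x$ lies in the large preimage $F^{-1}(U)$, which by lower semicontinuity is an open subset of $X$. I would then set $V(x):=F^{-1}(U)$. By the very definition of $F^{-1}(U)$, the set $F(x')\cap U$ is nonempty for every $x'\in V(x)$, so the Axiom of Choice produces a function $s:V(x)\to Y$ with $s(x')\in F(x')\cap U$, which is the required local (not necessarily continuous) selection through $(x,y)$.

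For $(2)\Rightarrow(1)$, I would verify LSC directly from the definition: given an open $U\subset Y$, I want $F^{-1}(U)$ to be open. Pick any $x\in F^{-1}(U)$; then there exists $y\in F(x)\cap U$, so $(x,y)\in\Gamma_F$ and $U$ is an open neighborhood of $y$. The assumed local selection yields an open neighborhood $V(x)$ of $x$ and a map $s:V(x)\to Y$ with $s(x')\in F(x')\cap U$ for all $x'\in V(x)$. In particular $F(x')\cap U\neq\emptyset$ for every $x'\in V(x)$, so $V(x)\subset F^{-1}(U)$, showing that $x$ is an interior point of $F^{-1}(U)$. Since $x$ was arbitrary, $F^{-1}(U)$ is open.

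Neither direction is really difficult; the main conceptual point, and the only reason the Axiom of Choice is mentioned, is the simultaneous selection of values $s(x')\in F(x')\cap U$ over the whole neighborhood $V(x)$ in the first implication. Continuity of $s$ is not asserted and not needed, which is what makes the lemma essentially a tautological reformulation of lower semicontinuity in terms of pointwise selections through graph points.
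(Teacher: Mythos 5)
Your proof is correct and follows exactly the route the paper intends: the paper gives no detailed argument, merely remarking that the lemma follows from the definition of lower semicontinuity by ``applying the Axiom of Choice,'' which is precisely your forward direction (take $V(x)=F^{-1}(U)$ and choose $s(x')\in F(x')\cap U$), while your converse is the standard verification that $F^{-1}(U)$ is open. Nothing is missing.
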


Therefore, the notion of lower semicontinuity is by definition related to the
notion of selection. Symmetrically, if the (large) preimage
$
F^{-1}(A) = \{x \in X\,\,: F(x) \cap A \ne \emptyset\}
$
of any {\it closed} set $A \subset Y$ is a closed subset of the domain $X$, then the mapping
$F:X \to Y$ is said to be 
{\it upper semicontinuous}\index{upper semicontinuous mapping}
(USC). Note that
a
more useful definition of upper semicontinuity of $F$ is that the
(small) preimage
$
F_{-1}(U) = \{x \in X\,\,: F(x) \subset U \}
$
of any open $U \subset Y$ is an open subset of the domain $X$.

Let us now reformulate the other three principal Michael's theorems on selections.

\begin{theorem} [\cite{Sel-2}]
\label{ZeroDim}
A multivalued mapping $F:X \to Y$
admits  a continuous singlevalued selection, provided
that the following conditions are satisfied:
\begin{description}
\item{(1)} $X$ is a zero-dimensional (in {\rm dim-}sense) paracompact space;
\item{(2)} $Y$ is a completely metrizable space;
\item{(3)} $F$ is a LSC mapping; and
\item{(4)} For every $x \in X$,
$F(x)$ is a closed subset of $Y$.
\end{description}
\end{theorem}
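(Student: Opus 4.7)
The plan is to construct the selection as the uniform limit of a Cauchy sequence of continuous ``approximate selections'' $f_n : X \to Y$, exploiting completeness of $Y$ for convergence, closedness of the values $F(x)$ to land in $F$, and zero-dimensionality of $X$ to avoid the need for any partition-of-unity / convexity argument. Fix a complete metric $d$ on $Y$ and build $f_n$ inductively so that
\begin{itemize}
\item[(a)] $d(f_n(x), F(x)) < 2^{-n}$ for every $x \in X$, and
\item[(b)] $d(f_n(x), f_{n-1}(x)) < 2^{-n+2}$ for every $x \in X$ (for $n \ge 1$).
\end{itemize}
Each $f_n$ will be locally constant on a clopen partition of $X$, hence continuous automatically.

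For the inductive step, given $f_{n-1}$ satisfying (a) at level $n-1$, for every $x \in X$ choose $z_x \in F(x)$ with $d(z_x, f_{n-1}(x)) < 2^{-n+1}$. By continuity of $f_{n-1}$ pick an open neighborhood $V_x^{(1)}$ of $x$ on which $d(f_{n-1}(\cdot), f_{n-1}(x)) < 2^{-n}$, and by Lemma~\ref{LSC} applied to $(x, z_x) \in \Gamma_F$ with $U = B(z_x, 2^{-n})$ pick an open neighborhood $V_x^{(2)}$ on which $F(\cdot) \cap B(z_x, 2^{-n}) \ne \emptyset$. Set $V_x = V_x^{(1)} \cap V_x^{(2)}$. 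For any $x' \in V_x$ we then have $d(z_x, F(x')) < 2^{-n}$ and $d(z_x, f_{n-1}(x')) < 2^{-n+1} + 2^{-n} < 2^{-n+2}$, which are exactly the bounds required of $f_n$ at points of $V_x$ if we set $f_n \equiv z_x$ there. The base case $n = 0$ is handled identically, choosing any $z_x \in F(x)$ and using $U = B(z_x, 1)$.

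To produce a single continuous $f_n$ out of these local choices, invoke zero-dim paracompactness of $X$: the open cover $\{V_x\}_{x \in X}$ admits a refinement by a disjoint clopen cover $\{W_\alpha\}_{\alpha \in A}$; for each $\alpha$ pick $x_\alpha$ with $W_\alpha \subset V_{x_\alpha}$ and set $f_n(x') := z_{x_\alpha}$ for $x' \in W_\alpha$. This is well-defined (disjointness) and continuous (locally constant on a clopen cover), and inherits (a) and (b) from the corresponding bounds on each $V_{x_\alpha}$.

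Finally, (b) forces $\{f_n(x)\}$ to be Cauchy uniformly in $x$ (with $d(f_n, f_m) < 2^{-m+3}$ for $n > m$), so by completeness of $Y$ the sequence converges uniformly to a continuous $f : X \to Y$. For each $x$, (a) gives points of $F(x)$ within $2^{-n}$ of $f_n(x)$, which therefore converge to $f(x)$; since $F(x)$ is closed, $f(x) \in F(x)$, so $f$ is the desired continuous singlevalued selection. The main obstacle, and the point where zero-dimensionality enters essentially, is the patching step: without disjoint clopen refinements one cannot assemble the locally constant approximations into a continuous global map, and the proof would have to detour through either convexity (as in Theorem~\ref{ConvSel}) or a more elaborate filtration by neighborhoods; zero-dim paracompactness removes this obstruction cleanly.
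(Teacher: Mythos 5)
Your proof is correct and is the standard argument for Michael's zero-dimensional selection theorem: the paper itself states Theorem~\ref{ZeroDim} without proof (citing \cite{Sel-2}), and your construction --- inductively built locally constant $\varepsilon$-approximate selections glued along a disjoint clopen refinement, with the Cauchy estimate and closedness of values finishing the job --- is exactly the classical one, with the disjoint open refinement correctly playing the role that partitions of unity and convexity play in Theorem~\ref{ConvSel}. All the quantitative bounds check out (in fact your Cauchy bound $2^{-m+3}$ is a safe overestimate of the sharper $2^{-m+2}$), so there is nothing to repair.
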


\begin{theorem} [\cite{MultSel}] \label{CompSel}
A multivalued mapping $F:X \to Y$
admits a compactvalued USC selection $H:X \to Y$, which in turn,
admits a compactvalued LSC selection $G:X \to Y$
(i.e. $G(x) \subset H(x) \subset F(x),\,\,x \in X$), provided
that the following conditions are satisfied:
\begin{description}
\item{(1)} $X$ is a paracompact space;
\item{(2)} $Y$ is a completely metrizable space;
\item{(3)} $F$ is a LSC mapping; and
\item{(4)} For every $x \in X$, $F(x)$ is a closed subset of $Y$.
\end{description}
\end{theorem}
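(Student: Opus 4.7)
The plan is to fix a complete metric $d$ on $Y$, bounded by $1$, and to construct both $H$ and $G$ in parallel as limits of increasingly fine combinatorial approximations to $F$, lifting the pattern of Michael's zero-dimensional argument from a disjoint partition to a nested sequence of locally finite open covers of $X$.

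Explicitly, I would inductively build, for each $n \geq 1$, a locally finite open cover $\{U^n_\alpha\}_{\alpha \in A_n}$ of $X$, a refinement map $\pi_n \colon A_{n+1} \to A_n$ satisfying $\overline{U^{n+1}_\beta} \subset U^n_{\pi_n(\beta)}$, and witness points $y^n_\alpha \in Y$ obeying $F(x) \cap B(y^n_\alpha, 2^{-n}) \neq \emptyset$ for every $x \in \overline{U^n_\alpha}$, together with the Cauchy estimate $d(y^{n+1}_\beta, y^n_{\pi_n(\beta)}) < 2^{-n}$. The base step uses LSC of $F$ to pull back the cover of $Y$ by balls of radius $2^{-1}$ and then applies a paracompactness shrinking to achieve local finiteness with the required closure containments; the inductive step repeats the procedure inside each $U^n_\alpha$ with new centers within $2^{-n}$ of the old $y^n_\alpha$. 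With this data I would set $H(x) = \{\lim_n y^n_{\alpha_n} : (\alpha_n) \text{ is a thread with } x \in \overline{U^n_{\alpha_n}} \text{ and } \pi_n(\alpha_{n+1}) = \alpha_n\}$, and define $G(x)$ by the same formula but with $x \in U^n_{\alpha_n}$ in place of the closure condition. The Cauchy estimate forces each thread to converge, and completeness of $d$ together with closedness of $F(x)$ puts the limit in $F(x)$, so $G(x) \subset H(x) \subset F(x)$.

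Compactness of $H(x)$ and $G(x)$ follows by identifying each as a continuous image of the inverse limit of the finite nonempty index sets at level $n$, itself compact by K\"onig's lemma. Upper semicontinuity of $H$ is a shrinking argument: given open $V \supset H(x)$, fix $n$ so large that the $2^{1-n}$-neighborhood of the finitely many centers indexed by $\{\alpha \in A_n : x \in \overline{U^n_\alpha}\}$ lies in $V$, then use the open neighborhood of $x$ obtained by deleting the finitely many closures $\overline{U^n_\beta}$ that miss $x$. Lower semicontinuity of $G$ is dual, exploiting the openness of the cells $U^n_\alpha$ themselves. The principal obstacle, and the only delicate step, is carrying out the inductive construction so that all four requirements --- the strict refinement $\overline{U^{n+1}_\beta} \subset U^n_{\pi_n(\beta)}$, the Cauchy estimate on centers, the sharpening of the covering radius from $2^{-n}$ to $2^{-(n+1)}$, and local finiteness --- are maintained simultaneously; this is the standard but technical paracompactness/LSC juggling that underlies all of Michael's selection theorems.
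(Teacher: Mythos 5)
A preliminary remark on the comparison: the survey does not actually prove Theorem~3 --- it is quoted from Michael's 1959 Duke paper, and Section~6 merely lists the known proof strategies (Michael's tree of $2^{-n}$-singlevalued selections with pointwise closures of limit-point sets, Choban's method of coverings, sieves, reduction to the zero-dimensional theorem). Your plan is a covering-tree argument in the spirit of the first two, and its skeleton is sound: nonemptiness of the thread space via the inverse limit of finite nonempty sets, compactness of the value as a continuous image of that profinite space, and membership of the limits in $F(x)$ via completeness plus closedness are all fine.

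There is, however, a genuine and structural gap. Because you impose the strict refinement $\overline{U^{n+1}_\beta} \subset U^n_{\pi_n(\beta)}$, any thread $(\alpha_n)$ with $x \in \overline{U^n_{\alpha_n}}$ for all $n$ automatically satisfies $x \in U^n_{\alpha_n}$ for all $n$ (apply the containment one level up), so your $H$ and $G$ are \emph{the same mapping}; you are thus implicitly claiming that a single compactvalued selection is simultaneously USC and LSC, which is stronger than the theorem and is not delivered by your arguments. Both semicontinuity proofs founder on ``dead branches''. For USC: if $x \in \overline{U^n_\alpha} \setminus U^n_\alpha$, then no child $\beta$ of $\alpha$ can have $x \in \overline{U^{n+1}_\beta}$, so $\alpha$ never continues to a thread through $x$; its center $y^n_\alpha$ is only guaranteed to be $2^{-n}$-close to $F(x)$, not to $H(x)$, and hence the step ``fix $n$ so large that the $2^{1-n}$-neighborhoods of the centers indexed by $\{\alpha : x \in \overline{U^n_\alpha}\}$ lie in $V$'' need not hold when $F(x)$ is much larger than $H(x)$. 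For LSC: given $y \in G(x)$ arising from a thread through $\alpha_n$ and a nearby $x' \in U^n_{\alpha_n}$, you need a thread through $x'$ extending $\alpha_n$, i.e.\ the children of $\alpha_n$ must cover $U^n_{\alpha_n}$; but a locally finite family of open sets whose closures lie in $U^n_{\alpha_n}$ and whose union is $U^n_{\alpha_n}$ forces $U^n_{\alpha_n}$ to be clopen, so this is unattainable already for $X=[0,1]$. The classical proofs evade both problems precisely by \emph{not} defining the two selections by the same formula: the inner LSC selection is built from the open cells with a Cauchy estimate imposed on all overlapping pairs of consecutive-level cells (not only parent--child pairs), and the outer USC selection is obtained by a pointwise closure, roughly $H(x)=\bigcap_n \mathrm{Clos}\bigl(\bigcup\{D(y^n_\alpha,2^{1-n}) : x \in \overline{U^n_\alpha}\}\bigr)$, whose upper semicontinuity comes from local finiteness of the family of closures. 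The bookkeeping you defer as ``standard juggling'' is exactly where the theorem lives.
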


\begin{theorem} [\cite{Sel-2}] \label{FinDim}
Let $n \in \N$. A multivalued mapping $F:X \to Y$
admits a continuous singlevalued selection provided
that the following conditions are satisfied:
\begin{description}
\item{(1)} $X$ is a paracompact space with $dim X \leq n+1$;
\item{(2)} $Y$ is a completely metrizable space;
\item{(3)} $F$ is a LSC mapping;
\item{(4)} For every $x \in X$, $F(x)$ is an $n$-connected subset of $Y$;
and
\item{(5)} The family of values $\{F(x)\}_{x \in X}$ is equi-locally $n$-connected.
\end{description}
\end{theorem}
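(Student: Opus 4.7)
The plan is to follow the classical Michael strategy by first producing, for every $\epsilon>0$, a continuous $\epsilon$-approximate selection $f_\epsilon:X\to Y$ with $d(f_\epsilon(x),F(x))<\epsilon$ for all $x$, and then organizing a sequence $(f_{\epsilon_k})$ with $\epsilon_k\to 0$ into a Cauchy sequence in the uniform metric, whose limit, by the completeness of $Y$ and the closedness of the values $F(x)$, is a genuine continuous singlevalued selection.

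For a fixed $\epsilon>0$ I would first use the equi-local $n$-connectedness to choose a decreasing sequence of gauges $\epsilon=\delta_0>\delta_1>\cdots>\delta_{n+1}>0$ so that, uniformly in $x\in X$ and $y\in Y$, every continuous $\phi:S^k\to F(x)\cap B(y,\delta_{k+1})$ extends over $D^{k+1}$ into $F(x)\cap B(y,\delta_k)$, for $k=0,1,\ldots,n$. Invoking LSC of $F$ via Lemma~\ref{LSC}, I cover $X$ by open sets $V_\alpha$ each equipped with a base-point $y_\alpha\in Y$ and a (not necessarily continuous) local selection $V_\alpha\to F(\cdot)\cap B(y_\alpha,\delta_{n+1})$. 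Paracompactness together with $\dim X\leq n+1$ now supplies a locally finite open refinement $\mathcal{U}$ whose nerve $N(\mathcal{U})$ has dimension at most $n+1$, and I fix the canonical barycentric map $\kappa:X\to|N(\mathcal{U})|$ of a subordinated partition of unity.

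The heart of the argument is then the inductive construction of a continuous $g:|N(\mathcal{U})|\to Y$ along the skeleta. Set $g(\alpha)=y_\alpha$ on vertices; having defined $g$ on the $k$-skeleton so that the image of each closed $k$-simplex $\sigma$ lies inside some $F(x_\sigma)\cap B(y_{\alpha_\sigma},\delta_k)$ (the witness $x_\sigma$ existing in the nonempty intersection of the cover members indexing $\sigma$), extend across each $(k+1)$-simplex by applying the equi-local connectedness extension property at the scale $\delta_{k+1}\to\delta_k$. For $k=n$ the top extension uses the $n$-connectedness of the individual value $F(x_\sigma)$ to close up the $(n+1)$-cells. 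The composition $f_\epsilon:=g\circ\kappa$ then lies within $\epsilon$ of $F$ at every $x$.

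To upgrade the $\epsilon$-approximations into a true selection, one reapplies the construction to the LSC shrinkings $F_k(x):=F(x)\cap\overline{B(f_{\epsilon_k}(x),2\epsilon_k)}$ with $\epsilon_{k+1}\ll\epsilon_k$, producing a Cauchy sequence $(f_{\epsilon_k})$. The main expected obstacle is the delicate bookkeeping that coordinates three nested hierarchies---the gauges $\delta_k$, the mesh of $\mathcal{U}$, and the rate $\epsilon_k\to 0$---so as to guarantee at each stage that $F_k$ is LSC, has nonempty closed values, and still carries a usable equi-local $n$-connectedness structure inherited from $F$; verifying this inheritance after each shrinking step is the technical heart of the proof.
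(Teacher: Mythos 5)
The survey states this theorem without proof (it is quoted from \cite{Sel-2}), so your proposal must be judged against the classical argument. Your overall two-stage shape --- nerve-based skeletal construction of continuous $\varepsilon$-approximate selections, then a uniformly Cauchy sequence --- is the right one, and the first stage is essentially the standard construction. Still, two points are genuine gaps rather than bookkeeping. The smaller one: equi-local $n$-connectedness provides, for each point $y\in Y$ and each neighborhood $U$ of $y$, a smaller neighborhood $V$ (depending on $y$) in which spheres bound inside $F(x)\cap U$ uniformly in $x$; it does not provide gauges $\delta_{k+1}<\delta_k$ valid uniformly in $y\in Y$, which is what your opening sentence assumes. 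The classical proof therefore works with open coverings of $Y$ and their star-refinements instead of global radii, and for the same reason it cannot keep the image of each simplex inside a single value $F(x_\sigma)$: the faces of a $(k{+}1)$-simplex carry different witnesses, so only an approximate extension property, relative to a covering, is actually applied.

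The serious gap is the convergence step. The shrunken mappings $F_k(x)=F(x)\cap\overline{B(f_{\varepsilon_k}(x),2\varepsilon_k)}$ in general have values that are \emph{not} $n$-connected and form a family that is \emph{not} equi-locally $n$-connected: already a tame arc in the plane (an absolute retract, hence $C^n$ and $ELC^n$ for all $n$) can meet a closed ball in two components. So ``verifying the inheritance'' is not a technical chore to be completed --- it is false in general, and this is precisely why the finite-dimensional theorem is harder than the convex-valued Theorem 1, where intersecting with balls preserves convexity and your scheme does work. The classical proof avoids shrinking $F$ altogether: it establishes a relative approximation lemma stating that a $\mathcal{V}$-approximate selection can be replaced by a $\mathcal{W}$-approximate selection ($\mathcal{W}$ a suitable star-refinement of $\mathcal{V}$) lying uniformly close to the given one, the displacement being controlled by the $ELC^n$ data; iterating this lemma yields the Cauchy sequence directly, without ever requiring the intermediate multivalued mappings to satisfy the hypotheses of the theorem. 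You would need to replace your last paragraph by such a lemma (and its proof) for the argument to close; this is exactly the ``more careful construction of the uniform Cauchy sequence'' that the survey alludes to immediately after stating the theorem.
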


A resulting selection of a given multivalued mapping $F$ is
practically always constructed as a uniform limit of some sequence of approximate selections.
A typical difficult situation arises with the limit point (or the limit subset). 
Such a limit point (or a subset) can easily end up in the boundary of the set $F(x)$, rather than
in the set
$F(x)$,\,\,if one does not pay attention to a more careful
construction of the uniform Cauchy sequence of approximate selections.

In general, for an arbitrary Banach space $B,$ there exists a LSC mapping $F:[0;1] \rightarrow B$ with convex (nonclosed) values and without any continuous singlevalued selections (cf. \cite[Example 6.2 ]{Sel-1} or \cite[Theorem 6.1]{RS}). On the other hand, every
convexvalued LSC mapping of a metrizable domain into a separable
Banach space admits a singlevalued selection,
provided that all values are
finite-dimensional
\cite[Theorem 3.1''']{Sel-1}.
Another kind of omission of closedness was suggested in \cite{ChobGen,
CountSel}. It turns out that such omission can be made over a
$\sigma$-discrete subset of the domain.

An alternative to pointwise omission of closedness is to consider
some uniform versions of such omission. Namely, one can consider
closedness in a fixed subset $Y \subset B$ instead of closedness in
the entire Banach space $B$. Due to a deep result of
van Mill, Pelant and Pol \cite{MPP}, existence of selections under such
assumption implies that $Y$ must be completely metrizable, or in
other words, a $G_{\delta}$-subset of $B$.

Due to the Aleksandrov Theorem, each of Theorems 2-4 remains valid
under a replacement of the entire completely metrizable range space by any of its $G_{\delta}$-subsets. However,  what happens with Theorem 1 under such a substitution? What can one
(informally)
say concerning the links between the metric structure and the convex structure induced on a $G_{\delta}$-subset from the entire Banach space?

Thus, during the last two decades one of the most intriguing questions in the selection theory was the following problem:

\begin{problem} [\cite{MOpPr}]\label{G-dProb} Let $Y$ be a convex $G_{\delta}$-subset of a Banach space $B$.
Does then every LSC mapping $F : X \to Y$ of a paracompact
space $X$ with nonempty convex closed values  into $Y$
have a continuous singlevalued selection?
\end{problem}

In the next section we shall present some (partial) affirmative answers, as well as the counterexample
of Filippov \cite{Filli,Fil}.

\section{Solution of the $G_{\delta}$-problem}
\label{sec:2}

Summarizing the results below, the answer to the 
$G_{\delta}$-problem\index{$G_{\delta}$-problem}
is affirmative for domains which are "almost" \, finite-dimensional,
whereas the answer is negative for domains which are essentially infinite-dimensional, for example, for domains which contain a copy of the Hilbert cube.

For finite-dimensional domains $X$, the
$G_{\delta}$-problem has an
affirmative solution simply because the family of convex closed
subsets of a Banach space is 
$ELC^{n}$\index{$ELC^{n}$-family} 
and every convex set is $C^{n}$ for every
$n \in \mathbb{N}$. Hence Theorem 4 can be applied. For a finite-dimensional range $B$ and moreover, for
all finite-dimensional values closed in $Y \subset B$ , the problem is
also trivial, because one can use the compactvalued selection
Theorem 3 and the fact that the closed convex hull of a
finite-dimensional compact space coincides with its convex hull.

As for ways of uniform omission of closedness in the range space let us first
consider the simplest case when $Y=G$ is a unique open subset of a Banach space $B$.
Separately we extract the following well-known folklore result
(it probably first appeared in an implicit form in Corson and Lindenstrauss \cite{CorLi}).

\begin{lemma} \label{LocPrinc}
{\bf (Localization Principle)}
Suppose that a convexvalued mapping $F: X
\rightarrow Y$ of a paracompact domain $X$ into a topological
vector space $Y$ admits a singlevalued continuous selection over
each member of some open covering $\omega$ of the
domain. Then $F$
admits a global singlevalued continuous selection.
\end{lemma}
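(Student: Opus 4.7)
The plan is to assemble a global selection from the given local ones by a partition-of-unity averaging argument. Write $\omega = \{U_\alpha\}_{\alpha \in A}$. Since $X$ is paracompact, there exists a locally finite continuous partition of unity $\{\lambda_\alpha\}_{\alpha \in A}$ on $X$ with $\mathrm{supp}(\lambda_\alpha) \subset U_\alpha$ for every $\alpha \in A$. By hypothesis, for each $\alpha$ we are given a continuous singlevalued selection $f_\alpha : U_\alpha \to Y$ of the restriction $F|_{U_\alpha}$.

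Next, I would define the candidate global selection by
\[
f(x) \;=\; \sum_{\alpha \in A} \lambda_\alpha(x)\, f_\alpha(x),\qquad x \in X,
\]
where a term is simply omitted whenever $x \notin U_\alpha$; this is harmless because such $x$ must lie outside $\mathrm{supp}(\lambda_\alpha)$, so $\lambda_\alpha(x)=0$. By local finiteness of $\{\mathrm{supp}(\lambda_\alpha)\}_{\alpha\in A}$, on a suitable open neighborhood of any given point only finitely many $\lambda_\alpha$ are nonzero, and there the displayed sum reduces to a finite sum of continuous $Y$-valued maps. Joint continuity of addition and scalar multiplication in the topological vector space $Y$ then yields continuity of $f$ on $X$.

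It remains to verify $f(x) \in F(x)$ for every $x \in X$. By construction the coefficients $\lambda_\alpha(x)$ are nonnegative and sum to $1$, and every $f_\alpha(x)$ that contributes to the sum lies in $F(x)$. Since $F(x)$ is convex, the finite convex combination $f(x)$ again belongs to $F(x)$, so $f$ is a global continuous singlevalued selection of $F$.

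The only real subtlety is the bookkeeping in choosing the partition of unity: one needs $\mathrm{supp}(\lambda_\alpha)\subset U_\alpha$ indexed by the \emph{original} cover $\omega$, rather than by some refinement over whose members local selections might not be available. This is exactly what paracompactness of $X$ supplies. Beyond this, the argument uses nothing more than finite convex combinations and joint continuity of the vector space operations, so it works verbatim in an arbitrary topological vector space $Y$, with no need for local convexity, metrizability, or completeness.
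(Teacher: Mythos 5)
Your proof is correct and is precisely the standard partition-of-unity argument that the paper leaves implicit by citing the lemma as folklore: take a locally finite partition of unity $\{\lambda_\alpha\}$ subordinated to $\omega$ and form the convex combination $f(x)=\sum_\alpha \lambda_\alpha(x)f_\alpha(x)$, which lands in $F(x)$ by convexity of the values and is continuous by local finiteness together with joint continuity of the vector operations. You also correctly flag the one genuine subtlety, namely that the partition of unity must be indexed by the original cover so that each nonzero term has an available local selection.
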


Taking for any $x\in X$ and $y \in F(x) \subset G$,
an arbitrary open ball $D$, centered at $y$, such that the closure
$Clos(D)$ is a subset of $G$, and invoking the 
Localization Principle\index{localization principle}
we obtain:

\begin{lemma} \label{SelOpenSet}
Given any paracompact space $X$ and any
open subset $G$ of a Banach space $B$, every LSC mapping $F: X
\rightarrow G$ with nonempty convex values admits a singlevalued
continuous selection, whenever all values $F(x)$ are closed in $G$.
\end{lemma}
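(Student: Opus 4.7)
The plan is to reduce the assertion to the classical convex-valued Michael theorem (Theorem \ref{ConvSel}) by passing to a locally ``shrunken'' multivalued mapping whose values are closed in the entire Banach space $B$, and then to glue the resulting local selections via the Localization Principle (Lemma \ref{LocPrinc}).

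I would proceed as follows. Fix an arbitrary $x_{0} \in X$ and pick any $y_{0} \in F(x_{0}) \subset G$. Since $G$ is open in $B$, choose an open ball $D$ centered at $y_{0}$ with $\mathrm{Clos}(D) \subset G$. By lower semicontinuity, the preimage $U = F^{-1}(D) = \{x \in X : F(x) \cap D \neq \emptyset\}$ is an open neighborhood of $x_{0}$. On $U$, define
$$
\widetilde{F}(x) = \mathrm{Clos}\bigl(F(x) \cap D\bigr),
$$
with closure taken in $B$. I would then verify the hypotheses of Theorem \ref{ConvSel} for $\widetilde{F} : U \to B$: the values are nonempty (by definition of $U$), convex (closure of convex is convex), closed in $B$, and the mapping is LSC, since intersection of an LSC mapping with a fixed open set is LSC on its domain of nonemptiness, and taking pointwise closure preserves lower semicontinuity.

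The key observation that makes the reduction work is that any selection of $\widetilde{F}$ is automatically a selection of $F$. Indeed, $\widetilde{F}(x) \subset \mathrm{Clos}(D) \subset G$, and $\mathrm{Clos}(F(x) \cap D) \subset \mathrm{Clos}_{B}(F(x))$; intersecting with $G$ and using the hypothesis that $F(x)$ is closed in $G$ gives
$$
\widetilde{F}(x) = \widetilde{F}(x) \cap G \subset \mathrm{Clos}_{B}(F(x)) \cap G = F(x).
$$
Applying Theorem \ref{ConvSel} to $\widetilde{F}$ on the paracompact open subset $U$ therefore yields a continuous singlevalued selection of $F$ over $U$.

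Running this construction at every point of $X$ produces an open cover $\omega$ of $X$ together with a continuous singlevalued selection of $F$ over each element of $\omega$. Since $F$ is convexvalued and $X$ is paracompact, the Localization Principle (Lemma \ref{LocPrinc}) immediately furnishes a global continuous selection, completing the proof. The only real subtlety is the verification that $\widetilde{F}(x) \subset F(x)$, which is precisely where the assumption ``$F(x)$ closed in $G$'' (rather than in $B$) is consumed; the rest is routine bookkeeping around LSC and paracompactness.
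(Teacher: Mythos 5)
Your argument is correct and is essentially the proof the paper itself sketches immediately before the statement of the lemma: choose a ball $D$ with $\mathrm{Clos}(D)\subset G$ around a point of $F(x_0)$, pass to $\mathrm{Clos}(F(\cdot)\cap D)$ on $F^{-1}(D)$, apply Theorem~\ref{ConvSel}, and glue via the Localization Principle (Lemma~\ref{LocPrinc}); your verification that closedness of $F(x)$ in $G$ forces $\mathrm{Clos}_{B}(F(x)\cap D)\subset F(x)$ is precisely the point the paper leaves implicit. One small imprecision: an open subspace $U=F^{-1}(D)$ of a paracompact space need not itself be paracompact, so before invoking Theorem~\ref{ConvSel} you should shrink to an open $V\ni x_0$ with $\mathrm{Clos}(V)\subset U$ and select over the closed (hence paracompact) subspace $\mathrm{Clos}(V)$, after which the Localization Principle applies to the cover by the sets $V$. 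This is the same level of detail the paper elides, and it does not affect the substance of your proof.
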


Somewhat different approach can be obtained using the following:

\begin{lemma} \label{SelOpenSet_1}
For any compact subset $K$ of a convex closed (in $G$)
subset $C$ of an open subset $G$ of a Banach space $B$, the closed (in $B$)
convex hull $Clos(conv(K))$ also
lies in $C$.
\end{lemma}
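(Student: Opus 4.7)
The plan is to combine Mazur's theorem with the defining property of ``closed in $G$'' and thereby reduce the problem to an inclusion in $G$.

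Set $L := \mathrm{Clos}(\mathrm{conv}(K))$. By Mazur's theorem (in a Banach space, the closed convex hull of a compact set is compact), $L$ is norm-compact in $B$. Since $K \subset C$ and $C$ is convex, $\mathrm{conv}(K) \subset C$, and taking norm-closure yields $L \subset \overline{C}$, where $\overline{C}$ denotes the norm-closure of $C$ in $B$. The hypothesis that $C$ is closed in $G$ reads precisely as $C = \overline{C} \cap G$; combined with $L \subset \overline{C}$, this gives $L \cap G \subset C$, so it suffices to prove that $L \subset G$.

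To establish $L \subset G$, observe that $K$ is compact inside the open set $G$, so $d(K, B\setminus G) > 0$. Using the convexity of $C$ together with $\mathrm{conv}(K) \subset C \subset G$, I would upgrade this bound to a positive lower bound on $d(\mathrm{conv}(K), B \setminus G)$; such a bound persists under norm-closure, giving $L \subset G$ and hence $L \subset \overline{C} \cap G = C$.

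The main obstacle is precisely this last propagation step. When $G$ is itself convex, concavity of the function $d(\cdot, B \setminus G)$ on $G$ makes the propagation immediate. For a general open $G$ the argument is more delicate: one exploits that $C$, being closed in an open subset of $B$, is a $G_\delta$-subset of $B$, hence completely metrizable by Aleksandrov's theorem, and combines this completeness with the norm-total boundedness of $\mathrm{conv}(K)$ to rule out any escape of limits of convex combinations into $\overline{C} \setminus C \subset B \setminus G$.
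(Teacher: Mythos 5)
Your reduction is sound: $L:=\mathrm{Clos}(\mathrm{conv}(K))$ is compact by Mazur's theorem, $L\subset \mathrm{Clos}(C)$ because $C$ is convex, and since closedness of $C$ in $G$ means $C=\mathrm{Clos}(C)\cap G$, everything indeed comes down to proving $L\subset G$. But that inclusion is exactly where the content of the lemma lies, and your proposal does not establish it. Positivity of $d(\mathrm{conv}(K),B\setminus G)$ is \emph{equivalent} to the assertion you are reducing to (if $L\subset G$ it follows from compactness of $L$, and conversely), so ``upgrading'' $d(K,B\setminus G)>0$ to $d(\mathrm{conv}(K),B\setminus G)>0$ is not a step but the whole problem; concavity of $d(\cdot,B\setminus G)$ is available only for convex $G$, which is the easy case already handled by Lemma 5. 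The appeal to Aleksandrov's theorem also cannot close the gap: the complete metric $\rho$ witnessing complete metrizability of the $G_\delta$-set $C$ is not the norm metric, a norm-Cauchy sequence in $\mathrm{conv}(K)$ need not be $\rho$-Cauchy, and norm-total-boundedness of $\mathrm{conv}(K)$ does not transfer to $\rho$-total-boundedness --- in fact $\rho$-total-boundedness of $\mathrm{conv}(K)$ is again equivalent to what is being proved, so this route is circular. This is precisely the gap in which Filippov's counterexample to the general $G_\delta$-problem lives, so it cannot be waved away by completeness alone.

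The missing idea is to use compactness of $K$ to split convex combinations into finitely many \emph{local} pieces and then invoke the convexity of $C$ a second time. Choose $\varepsilon>0$ with $\{x\in B: d(x,K)<\varepsilon\}\subset G$ and a finite $\varepsilon/3$-net $k_1,\dots,k_m\in K$. Any $p\in L$ is a limit of $p_n\in\mathrm{conv}(K)$; grouping the atoms of $p_n$ according to the balls $D(k_i,\varepsilon/3)$, write $p_n=\sum_{i}\mu_i^n c_i^n$ with $c_i^n\in\mathrm{conv}\bigl(K\cap D(k_i,\varepsilon/3)\bigr)$ (empty groups receiving weight zero). Each $c_i^n$ lies in the compact set $\mathrm{Clos}\bigl(\mathrm{conv}(K\cap \mathrm{Clos}(D(k_i,\varepsilon/3)))\bigr)$ by Mazur again, so after passing to a subsequence $\mu_i^n\to\mu_i$ and $c_i^n\to c_i$ with $\|c_i-k_i\|\le\varepsilon/3<\varepsilon$; hence $c_i\in \mathrm{Clos}(C)\cap G=C$, and $p=\sum_i\mu_i c_i\in C$ by convexity of $C$. (Equivalently: every $p\in L$ is the barycenter of a probability measure on $K$; partition that measure into finitely many pieces with supports of diameter less than $\varepsilon$ and take the convex combination of their barycenters.) This is the classical Corson--Lindenstrauss argument that the survey alludes to; note that the convexity of $C$ itself, not merely the inclusion $\mathrm{conv}(K)\subset G$, is what is used in the final step.
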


Thus, as it was pointed out in \cite{MOpPr, RS-2},
one can affirmatively resolve the $G_{\delta}$-problem for
an arbitrary
intersection
of countably many open 
{\it convex }
subsets of $B$.

\begin{lemma} \label{SelOpenSet_2}
Let $\{G_{n}\}, n \in \mathbb{N}$,
be a sequence of
open convex subsets of a Banach space and 
$F: X \rightarrow Y
= \bigcap _{n}G_{n}$ 
a LSC mapping of a paracompact space $X$
with nonempty convex values. Then $F$ admits a singlevalued
continuous selection,
whenever all values $F(x)$ are closed in $Y$.
\end{lemma}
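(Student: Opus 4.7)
The plan is to build a uniformly Cauchy sequence of continuous approximate selections $f_n : X \to B$ whose limit $f$ lies in $Y$ and in each value $F(x)$. The engine is Lemma \ref{SelOpenSet} applied at each stage to an appropriately chosen LSC convex-valued mapping into the open set $G_n$. Two observations drive the argument. First, for each $n$ the mapping $x \mapsto \overline{F(x)}^{G_n}$ (closure taken inside the open set $G_n$) is convex, closed in $G_n$, and remains LSC, since closure preserves lower semicontinuity; hence Lemma \ref{SelOpenSet} applies. Second, because $\overline{F(x)}^{G_n} = \overline{F(x)}^{B} \cap G_n$, any such selection automatically lies in $\overline{F(x)}^{B}$, that is, at $B$-distance zero from $F(x)$.

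I would construct inductively continuous mappings $f_n : X \to B$ and continuous positive functions $\rho_n : X \to (0,\infty)$ such that (i) $f_n(x) \in \overline{F(x)}^{G_n}$; (ii) $\overline{B}(f_n(x), \rho_n(x)) \subset G_1 \cap \cdots \cap G_n$; (iii) $\rho_{n+1}(x) \leq \rho_n(x)/4$; and (iv) $\|f_{n+1}(x) - f_n(x)\| < 2^{-n-1}\rho_n(x)$. For the base case, Lemma \ref{SelOpenSet} applied to $x \mapsto \overline{F(x)}^{G_1}$ produces $f_1$, and a standard paracompactness argument (for instance $\rho_1(x) = \mathrm{dist}(f_1(x), B \setminus G_1)/2$, with an obvious fix when $G_1 = B$) yields $\rho_1$. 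For the inductive step, set $F_{n+1}(x) = F(x) \cap B(f_n(x), 2^{-n-1}\rho_n(x))$: this is nonempty (since $f_n(x) \in \overline{F(x)}^B$), convex, LSC (by the standard intersection-of-LSC-with-continuously-moving-open-ball argument), and contained in $Y \subset G_{n+1}$. Its closure in $G_{n+1}$ then satisfies the hypotheses of Lemma \ref{SelOpenSet}, producing $f_{n+1}$ with $f_{n+1}(x) \in \overline{F(x)}^{G_{n+1}}$ and $\|f_{n+1}(x) - f_n(x)\| \leq 2^{-n-1}\rho_n(x) < \rho_n(x)$; property (ii) for $n$ then forces $f_{n+1}(x) \in G_1 \cap \cdots \cap G_n$, and a suitable $\rho_{n+1}$ is selected as before.

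Condition (iv) gives $\|f(x) - f_n(x)\| \leq \rho_n(x)$ for every $n$, where $f = \lim_n f_n$ is continuous by uniform convergence. Therefore $f(x) \in \overline{B}(f_n(x), \rho_n(x)) \subset G_n$ for every $n$, so $f(x) \in Y$. Since each $f_n(x)$ lies in $\overline{F(x)}^B$, so does the limit; hence $f(x) \in \overline{F(x)}^B \cap Y = \overline{F(x)}^Y = F(x)$, where the last equality uses closedness of $F(x)$ in $Y$.

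The main obstacle is the inductive step: one must simultaneously ensure $f_{n+1}$ is close enough to $f_n$ to force eventual convergence inside $Y$, while remaining a selection of a mapping whose values are honestly closed in $G_{n+1}$ so that Lemma \ref{SelOpenSet} is applicable. The device of passing to the $G_{n+1}$-closure of $F_{n+1}$, combined with the fact that such closures still lie inside $\overline{F(x)}^B$, is precisely what reconciles these two competing requirements without losing proximity to $F$.
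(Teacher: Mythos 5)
Your proof is correct, but it follows a genuinely different route from the paper's. The paper disposes of the lemma in two lines: it invokes the compactvalued selection Theorem~3 (legitimate since $Y$, being a $G_{\delta}$ in a Banach space, is completely metrizable) to get a compactvalued LSC selection $H$ of $F$, observes via the principle of Lemma~4 that $Clos(conv(H(x)))$ is a compact convex set contained in every open \emph{convex} $G_n$, hence in $Y$, hence in $F(x)$, and then applies Theorem~1 to $Clos(conv(H))$. You instead iterate Lemma~3 directly to manufacture a Cauchy sequence of exact selections of successively shrunken mappings; this avoids Theorem~3 and the convex-hull-of-compacta argument entirely, at the cost of a longer induction. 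One point you should make explicit, because it is the load-bearing step: your argument uses the convexity of the sets $G_n$ exactly once, namely in the unjustified assertion that $\overline{F(x)}^{G_n}=\overline{F(x)}^{B}\cap G_n$ (and likewise $\overline{F_{n+1}(x)}^{G_{n+1}}$) is convex --- this is the intersection of the convex set $\overline{F(x)}^{B}$ with $G_n$, and it is convex only because $G_n$ is. Every other step of your proof goes through for arbitrary open $G_n$, so as written the argument appears to resolve the full $G_{\delta}$-problem affirmatively, which Filippov's counterexample forbids; flagging where convexity of the $G_n$ enters removes that false impression. Two cosmetic remarks: condition (iv) should read $\|f_{n+1}(x)-f_n(x)\|\leq 2^{-n-1}\rho_n(x)$ (the selection lands in a closure, so only the non-strict inequality is available, as your later computation in fact acknowledges), and since $\rho_1$ need not be bounded you get only locally uniform convergence, which of course still yields continuity of the limit.
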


In fact, it suffices to pick a compactvalued LSC selection $H : X \to Y$ of the
mapping $F$ (cf.  Theorem 3).
Then the multivalued mapping $ Clos(conv(H)) : x \mapsto Clos(conv(H(x)))$ is a selection
of the given mapping $F$ and it remains to apply Theorem 1 to the LSC
mapping $Clos(conv(H))$.

Michael and Namioka \cite{MiNam} characterized those convex $G_{\delta}$-subsets $Y \subset B$ which are stable with respect to taking closed convex hulls of compact subsets. Note that they  essentially used the construction of Filippov's counterexample \cite{Filli,Fil}.

\begin{theorem} [\cite{MiNam}]\label{Mich+Nam}
Let $Y \subset B$ be a convex $G_{\delta}$-subset of a Banach space $B$. Then the following statements  are equivalent:
\begin{description}
\item{(1)} If $K \subset Y$ is compact then so is the closed (in $Y$) convex hull of  $K$;
\item{(2)} For any paracompact space $X,$
each LSC mapping $F:X \to Y$ with convex closed (in $Y$) values admits a continuous singlevalued selection; 
\item{(3)} Same as (2) but with $X$ assumed to be  compact
and
metrizable.
\end{description}
\end{theorem}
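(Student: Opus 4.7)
The proof splits naturally into three implications: $(2)\Rightarrow(3)$ is trivial since every compact metrizable space is paracompact, so the real content is in $(1)\Rightarrow(2)$ and $(3)\Rightarrow(1)$. I would prove (1) and (2) are equivalent to (3) by going around the cycle $(1)\Rightarrow(2)\Rightarrow(3)\Rightarrow(1)$.

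For $(1)\Rightarrow(2)$, let $F:X\to Y$ be LSC, with nonempty convex values that are closed in $Y$. By Theorem \ref{CompSel}, $F$ admits a compactvalued LSC selection $H:X\to Y$, so in particular $H(x)\subset F(x)$ with $H(x)$ compact. Define $G(x)=\overline{\mathrm{conv}}_B(H(x))$, the closed (in $B$) convex hull. By Mazur's theorem, $G(x)$ is compact in $B$; by hypothesis (1) applied to the compact set $H(x)\subset Y$, this closed convex hull actually lies in $Y$, and then, because $F(x)$ is convex and closed in $Y$ and contains $H(x)$, we obtain $G(x)\subset F(x)$. The mapping $G$ is LSC because taking convex hulls and closures both preserve lower semicontinuity, and its values are nonempty, convex, and closed in $B$. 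Theorem \ref{ConvSel} now yields a continuous singlevalued selection of $G$, which is automatically a selection of $F$.

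The main obstacle is $(3)\Rightarrow(1)$, because we must manufacture a counterexample to the selection property from a single failure of compactness of a closed convex hull. Assume (1) fails: there exists a compact $K\subset Y$ whose closed (in $Y$) convex hull is not compact. By Mazur's theorem, $C:=\overline{\mathrm{conv}}_B(K)$ is compact in $B$, and the closed (in $Y$) convex hull of $K$ coincides with $C\cap Y$. Non-compactness of $C\cap Y$ therefore forces the existence of a point $y^{*}\in C\setminus Y$; writing $Y=\bigcap_n G_n$ with $G_n\subset B$ open, we may fix $n_0$ with $y^{*}\notin G_{n_0}$. The goal is to build a compact metrizable $X$ and an LSC map $F:X\to Y$ with nonempty convex, $Y$-closed values that has no continuous selection, by using $y^{*}$ as an obstruction: any continuous selection would, along a carefully designed convergent net in $X$, be forced to converge to $y^{*}$, contradicting the requirement that its values stay inside $Y$ (and in fact inside $G_{n_0}$).

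The construction here is exactly the Filippov-type construction referenced in the statement: one chooses a convergent sequence $x_m\to x_\infty$ in $X$ (say $X=\{1/m\}\cup\{0\}$, or a more elaborate compact metric space), expresses $y^{*}$ as a limit of finite convex combinations $\sum_i \lambda_i^{(m)} k_i^{(m)}$ of points of $K$, and defines $F(x_m)$ to be a convex closed-in-$Y$ set which \emph{necessitates} that any continuous selection $f$ satisfy $f(x_m)$ approximately equal to such a combination, while $F(x_\infty)$ is chosen disjoint from every neighborhood of $y^{*}$ that lies in $Y$. Paracompactness of $X$ is trivial (it is compact metric), and lower semicontinuity is built into the construction; the closedness of each $F(x)$ in $Y$ (but not in $B$) is what permits $F$ to exist while still escaping towards $y^{*}\notin Y$. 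This contradicts (3) and completes the cycle. The delicate combinatorics of matching the coefficients $\lambda_i^{(m)}$ with the topology on $X$ so that $F$ is LSC and admits no continuous selection is the genuinely hard part of the theorem.
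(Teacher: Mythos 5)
Your decomposition $(1)\Rightarrow(2)\Rightarrow(3)\Rightarrow(1)$ is the right one, and your argument for $(1)\Rightarrow(2)$ is correct and is exactly the device the survey itself records (in the paragraph following Lemma~\ref{SelOpenSet_2}): extract a compactvalued LSC selection $H$ of $F$ via Theorem~\ref{CompSel}, pass to $x \mapsto Clos(conv(H(x)))$, use hypothesis $(1)$ together with Mazur's theorem to see that these compact convex sets lie in $Y$ and hence in $F(x)$, and finish with Theorem~\ref{ConvSel}. The implication $(2)\Rightarrow(3)$ is indeed trivial.

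The gap is in $(3)\Rightarrow(1)$, which you rightly flag as the hard direction but do not actually prove, and the specific construction you gesture at would fail. You propose a convergent sequence $x_m \to x_\infty$ in which $F(x_m)$ forces a selection $f$ to satisfy $f(x_m)\approx c_m \to y^{*}\notin Y$ while $F(x_\infty)$ is kept away from $y^{*}$. But lower semicontinuity at $x_\infty$ demands that for every $y\in F(x_\infty)$ and every neighborhood $U$ of $y$, the sets $F(x_m)$ eventually meet $U$; so if the $F(x_m)$ are small sets clustering only at $y^{*}$, then $F$ is not LSC at $x_\infty$, and if you enlarge the $F(x_m)$ to restore lower semicontinuity you destroy the forcing $f(x_m)\to y^{*}$. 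No single convergent sequence can produce the obstruction. The actual mechanism --- which the survey attributes to Filippov's construction and isolates in Definition~\ref{WFil} and Lemma~\ref{CountEx} --- takes the \emph{domain} to be a compact convex set (e.g.\ the space of probability measures on the compact set $K$, mapped onto $C=Clos(conv(K))$ by the barycenter map), defines $F$ via supports so that every value meets the set of extreme points, and derives the contradiction not from one sequence but from a density argument: a continuous selection must fix each extreme point, hence map each finite-dimensional face $conv\{x_1,\dots,x_n\}$ onto itself, hence have dense image in $C$; compactness of the domain then forces $f(X)=C\ni y^{*}$, contradicting $f(X)\subset Y$. That exploitation of the convex structure of the domain is the genuinely missing idea, and without it your sketch does not yield an LSC, convexvalued, $Y$-closed-valued mapping on a compact metrizable space with no continuous selection.
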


Moreover, they observed that Theorem 5 remains valid for nonconvex $Y$, provided that $(1)$ is modified by also requiring that $K \subset C$ for some  closed (in $Y$) convex subset $C \subset Y$. Hence,
the equivalence $(2) \Leftrightarrow (3)$ of Theorem 5 holds for any $G_{\delta}$-subset $Y$ of a Banach space.

Returning to the restrictions for domains, recall that
Gutev  \cite{Gu-1} affirmatively resolved the $G_{\delta}$-problem for domains $X$ which are either a countably dimensional metric space or a strongly countably dimensional paracompact space.
In fact, he proved that in both cases under the hypotheses of the problem,
the existence of a singlevalued continuous selection is equivalent
to the existence of a compactvalued USC selection. The latter statement is true, because each domain of such type can be represented as the image of some zero-dimensional
paracompact space under some closed surjection with all  preimages
of points being finite.

In 2002, Gutev and Valov \cite{GV} obtained a
positive answer
for domains with the so-called 
$C$-property\index{$C$-property}.
They introduced a
certain enlargement of the original mapping $F$. Roughly speaking,
they defined $W_{n}(x)$ as the set of all $y \in Y = \bigcap_{n
\in N} G_{n}$ which are closer to $F(x)$ than to $B \setminus G_{n}$. It
turns out that each of the mappings $W_{n}$ has an open graph and
all of its values are contractible.

Applying the selection theorem
of Uspenskii \cite{U} for $C$-domains,
one can first find selections for
each $W_{n}$ and then for the pointwise intersections $\bigcap
W_{n}(x)$ (for details cf.  \cite{GV}). Their technique properly works even for
arbitrary (nonconvex) $G_{\delta}$-subsets $Y \subset B$.
Unfortunately, such a method does not work
outside the class of $C$-domains, because Uspenskii's theorem
gives a
{\it characterization} of the $C$-property.

This was the reason why we stated the following problem in our previous survey:

\begin{problem}[\cite{RS-2}] \label{CharC-dom}
Are the following statements
equivalent:
\begin{description}
\item{(1)} $X$ is a $C$-space.
\item{(2)} Each LSC mapping $F:X \to Y$ to a $G_{\delta}$-subset of $Y$ of a Banach space $B$
with convex closed (in $Y$) values admits a continuous singlevalued selection.
\end{description}
\end{problem}

Karassev has resolved this problem for weakly infinite-dimensional compact domains.

\begin{theorem}[\cite{Kar}] \label{Karas}
Let $X$ be a compact Hausdorff space and suppose that property $(2)$ above holds. Then $X$ is weakly infinite-dimensional.
\end{theorem}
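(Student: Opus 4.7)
The plan is to prove the contrapositive: assuming that $X$ is a compact Hausdorff space which is not weakly infinite-dimensional, I will construct an LSC mapping $F: X \to Y$ into a convex $G_{\delta}$-subset $Y$ of some Banach space $B$ with nonempty convex values closed in $Y$, which admits no continuous singlevalued selection, thereby violating hypothesis $(2)$.

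First, I would invoke the standard partition characterization of strong infinite-dimensionality: since $X$ is compact Hausdorff and not weakly infinite-dimensional, there exists an essential sequence $\{(A_n,B_n)\}_{n\in\N}$ of pairs of disjoint closed subsets of $X$, i.e. a sequence such that for every choice of partitions $C_n \subset X$ between $A_n$ and $B_n$ one has $\bigcap_n C_n \neq \emptyset$. By Urysohn's lemma I would then pick continuous functions $\phi_n : X \to [0,1]$ with $\phi_n|_{A_n} \equiv 0$ and $\phi_n|_{B_n} \equiv 1$, and assemble them into a map $\phi : X \to Q = [0,1]^{\N}$; essentiality of the family translates into essentiality of $\phi$ in the sense that the preimages $\phi^{-1}(\{t_n=1/2\})$ cannot be simultaneously deformed off a single point.

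Next, I would import Filippov's counterexample \cite{Filli,Fil}, which provides an explicit convex $G_{\delta}$-subset $Y\subset B$ of a Banach space and an LSC mapping $F_0 : Q \to Y$ with convex values closed in $Y$ but with no continuous singlevalued selection, the obstruction being read off from the coordinate partition structure of $Q$. I would then define $F := F_0 \circ \phi : X \to Y$. Continuity of $\phi$ immediately makes $F$ LSC, and its values coincide setwise with values of $F_0$, so they are convex and closed in $Y$.

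Finally, and this is where the main obstacle lies, I must show that a putative continuous selection $s : X \to Y$ of $F$ would force the essential family $\{(A_n,B_n)\}$ to be inessential, yielding the desired contradiction. The idea is that Filippov's construction on $Q$ is engineered so that any continuous selection must, in a controlled neighborhood of each coordinate hyperplane $\{t_n = 1/2\}$, make a definite choice between the two sides; transferring this through $\phi$, the preimages of these choices under $s$ would produce partitions $C_n$ between $A_n$ and $B_n$ whose common intersection is empty, contradicting the essentiality. The delicate step is a careful inspection of Filippov's example to verify that the obstruction is indeed local to each coordinate and survives the pullback by an arbitrary continuous $\phi$; an equivalent route, perhaps cleaner, is to forego $\phi$ and mimic Filippov's construction directly on $X$, replacing the coordinates of $Q$ by the characteristic partition data of the pairs $(A_n,B_n)$, and then run the non-selection argument within $X$ itself.
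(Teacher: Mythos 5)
Your overall strategy (contrapositive: strongly infinite-dimensional $\Rightarrow$ there is a non-selectable LSC map) is legitimate in principle, but the step you yourself flag as ``the main obstacle'' is a genuine gap, and it rests on a mischaracterization of Filippov's example. The non-existence of a selection for Filippov's map $F_0:K \to Y$ (see the proof in Section 2, with $K=P[0,1]$) is not a coordinate-by-coordinate partition obstruction ``local to each hyperplane $\{t_n=1/2\}$''; it is a global argument: any selection must fix every extreme point of $K$, hence map each simplex $conv\{\delta_{x_1},\dots,\delta_{x_n}\}$ onto itself, hence have dense image, hence by compactness be surjective onto $K$, contradicting $f(K)\subset Y \not= K$. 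None of this survives precomposition with an arbitrary continuous $\phi:X\to K$. A selection $s$ of $F_0\circ\phi$ satisfies $s(x)\in F_0(\phi(x))$ but lives on $X$, not on $K$; already the surjectivity-onto-simplices step fails, because $\phi^{-1}([\delta_t,\delta_{t'}])$ need not be connected, so $s$ restricted to it need not cover the segment even though it meets both endpoint fibers. Note also that $q\in F_0(q)$ whenever $q\in Y$, so $F_0$ carries no pointwise avoidance that could be transferred through $\phi$. Your fallback (``mimic Filippov's construction directly on $X$ using the partition data'') is not yet a construction: Filippov's example depends on the affine structure of $P[0,1]$ (extreme points, supports of measures), which has no analogue on an abstract strongly infinite-dimensional compactum.

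For comparison, Karassev's argument --- the one the survey alludes to immediately after the theorem --- runs in the direct direction and uses the criterion that a compact Hausdorff space is weakly infinite-dimensional if and only if every map $f:X\to Q$ can be avoided by some $g:X\to Q$ with $g(x)\neq f(x)$ for all $x$. Given $f$, one constructs an LSC mapping $F$ into a fixed convex $G_\delta$-subset $Y$ whose convex closed values are engineered to \emph{miss} $f(x)$ (in the spirit of the convex complements appearing in the Weizs\"acker--Filippov construction), so that hypothesis $(2)$ hands you a selection $g$ which automatically avoids $f$; the work is in keeping the target $Y$ fixed and verifying lower semicontinuity, not in transferring a non-selection obstruction. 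If you wish to keep your contrapositive framing, you must still supply the missing link between essentiality of $\phi$ and non-selectability of $F_0\circ\phi$, and with Filippov's $F_0$ as it stands I do not see how to do that.
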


Observe that the (non)coincidence of the classes of $C$-spaces and weakly infinite-dimensional spaces is one of the oldest and still unsolved problems in dimension theory. The advantage of compact spaces is that 
in this case
there is a set of various criteria for weak infinite-dimensionality.
In particular, Karassev \cite{Kar} used the fact that a compact space $X$ is weakly infinite-dimensional if and only if for any mapping $f:X \to Q$ to the Hilbert cube there exists a mapping $g: X \to Q$ such that $f(x) \not=g(x),$ for all $x \in X$.

Ending with the affirmative answers, let us recall that in our previous survey we
directly suggested (cf.  p. 427 in \cite{RS-2}) the area for finding a counterexample. In fact, having Lemma 5, one needs to find a convex $G_{\delta}$-subset $Y$ of a Banach space $B$, such that $Y$ is not an intersection of countably many open convex sets. Such a situation in fact,
appeared in measure theory: for example in the compactum $P[0,1]$ of probability measures on the segment
$[0,1]$ such is the convex complement of any absolutely continuous measure. To extract the main idea of  Filippov's construction we introduce a temporary notion.

\begin{definition} \label{WFil}
A convex compact subset $K$ of a Fr\'{e}chet space $B$ has the 
{\it Weizs{\"a}cker-Filippov property}\index{Weizs{\"a}cker-Filippov property}
(WF-property) if there exists:
\begin{description}
\item{(1)} A proper convex $G_{\delta}$-subset $Y \subset K$ which contains
the set $extr(K)$ of all extreme points of $K$; and
\item{(2)} A LSC convexvalued mapping $R:K \to K$ such that $R(x) \cap extr(K) \not= \emptyset,\,\,x \in K$ and $R(conv A)=conv A,$ for any finite subset $A \subset extr(K)$.
\end{description}
\end{definition}

\begin{lemma} \label{CountEx}
If $K \subset B$ has the WF-property then the mapping $F: K \to Y$ defined by
$F(x) = Clos_{Y}(R(x) \cap Y), \quad x \in K$,
is a counterexample to the $G_{\delta}$-problem.
\end{lemma}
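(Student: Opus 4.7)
The plan is to verify that $F$ satisfies the hypotheses of Problem \ref{G-dProb} while showing, by a Brouwer-degree argument, that any continuous singlevalued selection of $F$ is forced to be surjective onto $K$ --- contradicting $f(K) \subset Y \subsetneq K$.

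First I would carry out the formal verification. $K$ is compact, hence paracompact. $Y$ is a convex $G_\delta$ in $B$: since $K$ is closed (hence $G_\delta$) in $B$ and $Y$ is $G_\delta$ in $K$, $Y$ is $G_\delta$ in $B$. The values $F(x)$ are convex, closed in $Y$ by construction, and nonempty because $R(x) \cap extr(K) \neq \emptyset$ and $extr(K) \subset Y$. For LSC of $F$, the key observation is that $R(x) \cap Y$ should be dense in $R(x)$ for each $x$; granting this, if $U = V \cap Y$ is open in $Y$ (with $V$ open in $B$) then the condition $F(x) \cap U \neq \emptyset$ is equivalent to $R(x) \cap V \neq \emptyset$, an open condition by LSC of $R$.

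Next, suppose for contradiction that $f : K \to Y$ is a continuous singlevalued selection. Applying the invariance $R(\text{conv}(A)) = \text{conv}(A)$ to $A = \{v\}$ yields $R(v) = \{v\}$ for every $v \in extr(K)$, so $F(v) = \{v\}$ and $f(v) = v$. For a general finite $A \subset extr(K)$ and $x \in \text{conv}(A)$, the same invariance gives $R(x) \subset \text{conv}(A)$, so $F(x) \subset \text{conv}(A) \cap Y = \text{conv}(A)$ (because $\text{conv}(A) \subset \text{conv}(extr(K)) \subset Y$). Thus $f|_\sigma$ is a continuous self-map of each simplex $\sigma = \text{conv}(A)$ fixing the vertices, and the same reasoning applied to subsets $A' \subset A$ shows that $f$ sends each face of $\sigma$ into itself.

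The contradiction is a Brouwer-degree argument. The straight-line homotopy $H_t(x) = (1-t) f(x) + t x$ stays inside $\sigma$ and each of its faces (by convexity and face-invariance), and fixes every vertex throughout; restricting to $\partial \sigma$ gives a homotopy through self-maps, so $f|_{\partial \sigma} \simeq \text{id}_{\partial \sigma}$ and $\deg(f|_{\partial \sigma}) = 1$. If some $p \in \text{int}(\sigma)$ were missed by $f|_\sigma$, composing with the radial retraction $r : \sigma \setminus \{p\} \to \partial \sigma$ would produce $r \circ f : \sigma \to \partial \sigma$ extending $f|_{\partial \sigma}$; but a degree-one self-map of $\partial \sigma \cong S^{n-1}$ cannot extend over the contractible $\sigma \cong D^n$. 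Hence $f|_\sigma$ is surjective. Taking the union over all finite $A$, $f(K) \supset \text{conv}(extr(K))$; $f(K)$ is compact and hence closed in $K$, and $\overline{\text{conv}(extr(K))} = K$ by Krein-Milman, so $f(K) = K$. But $f(K) \subset Y \subsetneq K$, contradiction.

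The main obstacle I anticipate is the LSC verification, specifically the density claim that $R(x) \cap Y$ is dense in $R(x)$. The axioms of the WF-property only give $R(x) \cap extr(K) \neq \emptyset$, which is a priori not enough: the compact convex set $R(x)$ may have extreme points outside $extr(K)$, so Krein-Milman applied to $R(x)$ does not directly yield density of $R(x) \cap \text{conv}(extr(K))$ in $R(x)$. Closing this gap likely requires Choquet-theoretic input, or an additional property of $R$ specific to the Filippov construction.
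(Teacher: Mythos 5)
Your proposal follows the same route as the paper's own proof: check the hypotheses of the $G_{\delta}$-problem, show that a selection $f$ fixes every extreme point and maps each polytope $conv(A)$, $A\subset extr(K)$ finite, onto itself, deduce that $f(K)\supset conv(extr(K))$ is dense in $K$, and contradict $f(K)\subset Y\subsetneq K$ using compactness of $f(K)$. Where the paper writes ``$f([x,y])\supset[x,y]$ \dots similarly \dots and so on'', your Brouwer-degree argument (face-preserving straight-line homotopy to the identity on $\partial\sigma$, hence degree one, hence no interior point can be missed) is exactly the rigorous version of that step, so nothing is different in substance.

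Concerning the obstacle you flag: it is real, but it is the paper's gap as much as yours. The paper's entire justification of lower semicontinuity is the sentence ``the mapping $x\mapsto R(x)\cap Y$ is LSC because $R$ is LSC and $Y$ is dense in $K$'', which, unpacked, is precisely your claim that $R(x)\cap Y$ is dense in $R(x)$; mere density of $Y$ in $K$ does not imply this for the stated axioms of the WF-property, so the lemma as axiomatized is incomplete on this point. The density is instead secured in the concrete realization (Theorem 10): there $R(m)=\{m'\in K:\, supp(m')\subset supp(m)\}$, and $R(m)\cap conv(extr(K))$ consists of the finitely supported probability measures carried by $supp(m)$, which are weak-$*$ dense in $R(m)$ by Krein--Milman, and lie in $Y$ since $Y$ is convex and contains all Dirac measures. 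So your concern should be resolved not by Choquet theory in the abstract setting but by either adding ``$R(x)\cap Y$ is dense in $R(x)$'' to the definition of the WF-property or verifying it, as above, in the application.
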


\begin{proof} All values $F(x)$ are nonempty because $R(x) \cap extr(K) \not= \emptyset$ and $extr(K) \subset Y$. Clearly, $F(x)$ are convex closed (in $Y$) sets. The mapping $x \mapsto R(x) \cap Y $ is LSC because $R$ is LSC and $Y$ is dense in $K$. Hence $F: K \to Y$ is LSC because pointwise-closure operator preserves lower semicontinuity.

Suppose to the contrary, that $f:K \to Y$ is a singlevalued continuous selection of $F$. Then $f(x)=x,$ provided that $x$ is an extreme point. Moreover, if $x,y \in extr(K)$ then  $f([x,y]) \subset F([x;y]) = [x,y]$ and $f([x,y]) \supset [x,y]$,  
since
$f(x)=x$, $f(y)=y,$ and because of
the continuity of $f$.
Similarly, $f(conv\{x,y,z\}) = conv\{x,y,z\}$ for each extreme points $x,y,z,$ and so on. Hence,
$f(conv(extr(K))) = conv(extr(K)) \subset K$ is a dense subset of $K$ and $f(K)$ is also dense in $K$. However, $f(K)$ is compact since it is the image of a
compact set $K$ under the continuous mapping $f$.
Therefore $f(K)=K$ which contradicts with $f(K) \subset Y$ and $Y \not= K$. \qed \end{proof}

\begin{theorem} \label{ProbMeas}
\begin{description}
\item{(1)} The space $P[0;1]$ of all probability measures on $[0;1]$ has the WF-property.
\item{(2)} In any Banach space there exists a convex compact subset $K$ with the
WF-property.
\end{description}
\end{theorem}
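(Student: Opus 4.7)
The plan is to first construct an explicit WF-structure on $P[0;1]$, and then, for (2), transport it into an arbitrary infinite-dimensional Banach space by an affine embedding.

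For (1), let $\lambda$ denote Lebesgue measure on $[0;1]$ and set $Y = \{\mu \in P[0;1] : \mu \perp \lambda\}$ and $R(\mu) = \{\nu \in P[0;1] : \mathrm{supp}(\nu) \subseteq \mathrm{supp}(\mu)\}$. The set $Y$ is convex (the union of two $\lambda$-null carriers is still $\lambda$-null), proper ($\lambda \notin Y$), and contains $\mathrm{extr}(P[0;1]) = \{\delta_t : t \in [0;1]\}$. The $G_{\delta}$-property comes from the characterization that $\mu \perp \lambda$ if and only if for every $n$ there is an open $U \subseteq [0;1]$ with $\lambda(U) < 2^{-n}$ and $\mu(U) > 1 - 2^{-n}$ (the hard direction is Borel--Cantelli on the summable weights $2^{-n}$); each $n$-th condition defines an open set in $P[0;1]$ by lower semicontinuity of $\mu \mapsto \mu(U)$ for open $U$. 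The map $R$ is convex-valued, $\delta_t \in R(\mu) \cap \mathrm{extr}(P[0;1])$ for every $t \in \mathrm{supp}(\mu)$, and for a finite $A = \{\delta_{t_1},\dots,\delta_{t_k}\}$ the inclusion $\mathrm{supp}(\mu) \subseteq \{t_1,\dots,t_k\}$ for $\mu \in \mathrm{conv}(A)$ yields $R(\mu) \subseteq \mathrm{conv}(A)$, while $\mu \in R(\mu)$ gives the reverse inclusion at the level of images, hence $R(\mathrm{conv}(A)) = \mathrm{conv}(A)$.

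The delicate step is lower semicontinuity of $R$. Given $\mu$, $\nu \in R(\mu)$, and a basic $w^{*}$-neighborhood $V$ of $\nu$ determined by finitely many $g_i \in C[0;1]$, I would first approximate $\nu$ inside $V$ by a finitely supported $\nu_* = \sum_j \alpha_j \delta_{s_j}$ with every $s_j \in \mathrm{supp}(\nu) \subseteq \mathrm{supp}(\mu)$. Choose disjoint open balls $W_j \ni s_j$ small enough that every $\sum_j \alpha_j \delta_{s_j'}$ with $s_j' \in W_j$ still lies in $V$. Since $s_j \in \mathrm{supp}(\mu)$ forces $\mu(W_j) > 0$, the Portmanteau inequality $\liminf \mu'(W_j) \geq \mu(W_j)$ yields a $w^{*}$-neighborhood $O$ of $\mu$ on which $\mu'(W_j) > 0$ for every $j$; picking any $s_j' \in W_j \cap \mathrm{supp}(\mu')$ for $\mu' \in O$ produces $\sum_j \alpha_j \delta_{s_j'} \in R(\mu') \cap V$, which is exactly the LSC condition.

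For (2) (the finite-dimensional case being vacuous, since Krein--Milman would force $Y = K$), fix a dense family $\{f_n\} \subset C[0;1]$ with $\|f_n\|_\infty \leq 1$ and linearly independent vectors $e_n \in B$ with $\|e_n\| \leq 2^{-n}$, and define $T : P[0;1] \to B$ by $T\mu = \sum_n \left(\int f_n\, d\mu\right) e_n$. Absolute summability makes $T$ well defined and $w^{*}$-to-norm continuous; affinity is built in; density of $\{f_n\}$ together with the Riesz representation theorem gives injectivity. By $w^{*}$-compactness of $P[0;1]$, $T$ is an affine homeomorphism onto a convex compactum $K = T(P[0;1]) \subset B$, and taking $\widetilde Y = T(Y)$, $\widetilde R = T \circ R \circ T^{-1}$ transfers the entire WF-structure: an affine homeomorphism preserves convexity, the $G_{\delta}$-property, extreme points, and lower semicontinuity. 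The principal obstacle throughout is the LSC verification of $R$ in the second paragraph; once that Portmanteau-based argument is in place, the rest of the theorem reduces to bookkeeping.
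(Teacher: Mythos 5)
Your argument is correct in outline but follows a genuinely different route from the paper's in both parts. For (1), the paper does \emph{not} take $Y$ to be the singular measures: it fixes an absolutely continuous $\mu$ (Lebesgue measure) and takes $Y$ to be its \emph{convex complement} $\{m \in K\setminus\{\mu\} : l_{m,\mu}\cap K = [m;\mu]\}$, i.e.\ the measures from which the ray through $\mu$ cannot be prolonged beyond $\mu$ inside $K$; the $G_{\delta}$-property is then immediate, since for each fixed $t>1$ the condition $(1-t)m+t\mu\notin K$ defines an open set. Your $Y=\{\mu : \mu\perp\lambda\}$ is a smaller but equally legitimate choice: it is convex, proper, contains all Dirac measures, and your Borel--Cantelli characterization of singularity via open sets $U$ with $\lambda(U)<2^{-n}$, $\mu(U)>1-2^{-n}$ does exhibit it as a relative $G_\delta$ (using lower semicontinuity of $\mu\mapsto\mu(U)$ for open $U$). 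Since Definition~8 imposes no interaction between $Y$ and $R$ beyond $extr(K)\subset Y$, either choice works. Your mapping $R$ is exactly the paper's, and your Portmanteau argument for its lower semicontinuity is sound and is a genuine addition: the paper dismisses this as ``a straightforward verification.'' The verification of $R(conv A)=conv A$ is also correct. For (2), the paper simply cites Keller's theorem to embed $P[0;1]$ affinely into Hilbert space and hence into any (infinite-dimensional) Banach space, whereas you build the embedding by hand; both implicitly exclude finite-dimensional $B$, where, as you note, no convex compactum can have the WF-property.

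The one genuine gap is in your injectivity claim for $T\mu=\sum_n\bigl(\int f_n\,d\mu\bigr)e_n$. Density of $\{f_n\}$ plus Riesz representation gives only that $\mu\neq\nu$ forces \emph{some} coefficient $c_n=\int f_n\,d(\mu-\nu)$ to be nonzero; it does not follow that $\sum_n c_n e_n\neq 0$, because linear independence of the $e_n$ rules out only \emph{finite} nontrivial relations. There exist linearly independent sequences with $\|e_n\|\le 2^{-n}$ admitting a nontrivial convergent relation $\sum_n c_n e_n=0$ with $|c_n|\le 2$ (take $u_n$ orthonormal, $e_1=2^{-1}\sum_k 2^{-k}u_k$ and $e_n=2^{-n}u_{n-1}$ for $n\ge 2$; then $2e_1-\sum_{n\ge2}2e_n=0$). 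The fix is standard: take $(e_n)$ to be $2^{-n}$ times a normalized basic sequence (which exists in every infinite-dimensional Banach space by Mazur's theorem), so that the coordinate functionals kill any convergent relation; or simply invoke Keller's theorem as the paper does. With that repair the transport of the WF-structure along the affine homeomorphism $T$ is routine, as you say.
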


\begin{proof}
By the Keller theorem, the convex compact space $P[0;1]$ can be affinely 
embedded into the Hilbert space, hence into every
Banach space with a
Schauder basis (hence into every Banach space). Hence
$(1)$ implies $(2)$.

In order to check $(1),$
pick an arbitrary absolutely continuous measure $\mu \in K=P[0;1]$, for example the
Lebesgue measure. For every $m \in K\setminus \{\mu\},$ denote by $l_{m,\mu}$ the infinite ray from the point $m$ through the point $\mu$. Define the {\it convex complement}
of $\mu$ by setting
$$
Y =\{m \in K\setminus \{\mu\}\,\,: \quad l_{m,\mu} \cap K = [m;\mu] \}.
$$
Clearly $Y$ is convex. For every point $x \in [0;1],$ the Dirac measure\index{Dirac measure} $\delta_x$ belongs to $Y$ because  $(1-t)\delta_x  + t\mu,  t>1$ is not a probability measure\index{probability measure}. 
Hence $extr(K) \subset Y$.
Next,
$$
Y = \bigcap_{n=1}^{\infty} \{m \in K\,\,: (1-n^{-1}) \cdot \delta_x  + n^{-1} \cdot \mu \notin K \}
$$
and this is why $Y$ is a proper convex $G_{\delta}$-subset of $K$.

Finally, define $R:K \to K$ by setting $R(m) = \{m'\in K: supp(m') \subset supp(m)\},$ 
where $supp$ denotes the {\it support} of the
probability measure, i.e. the set of all points $x \in [0;1]$ with 
the property that the value of the
measure is positive over each neighborhood of the point. It is a straightforward verification that $R:K \to K$ is a LSC convexvalued mapping and
that the equality
$$
R\left( \sum \lambda_i \cdot \delta_{x_i} \right)  = conv\{\delta_{x_1},...,\delta_{x_n}\},\quad x_i \in [0;1], \quad \lambda_i \geq 0, \sum \lambda_i =1,
$$
is evidently  true.
\qed
\end{proof}

A version of the construction was proposed in \cite{RSClos} which (formally) avoids any probability measures and works directly in the Hilbert cube $Q=[0;1]^{\mathds{N}}$. Here is a sketch:

\begin{itemize}
\item{} $X = \{x \in Q : x_{1}=1,\,
x_{n}=x_{2n}+x_{2n+1}, n \in \mathds{N}\}$;
\item{} $Y = \{ x \in
X : \sup\{ x_{n}z_{n}^{-1}: n \in \mathds{N}\} = \infty \}$, where $z
\in X$ is arbitrarily chosen so that $\lim_{n \to \infty} z(n) =
0$ and $z(n)> 0$ for all $n$; and
\item{} $F(x)=\Phi(x)\cap
Y$, where $\Phi: X \rightarrow X$ is defined by
$$
\Phi(x) = \{y \in X : y_{n}=0 \ \  \hbox{whenever} \ \  x_{n} = 0 \}.
$$
\end{itemize}

Let us temporarily say
that natural numbers $2n$ and $2n+1$ are {\it sons} of the number $n$,
which in turn, we shall call the {\it father} of such {\it twins}. Thus each
natural number has exactly 2 sons, 4 grandsons, etc. and the
natural partial order, say $\prec$,
immediately arises on the set $\mathds{N}$. 
With respect to $\prec$, the set $\mathds{N}$ can be
represented as a
binary tree $T$ and every $x \in X$ is a mapping $x : T \rightarrow [0;1]$ with
$x_{1}=1,\, x_{n}=x_{2n}+x_{2n+1}, n \in \mathds{N}$. In other words, each $x \in X$
defines some probability distribution\index{probability distribution} 
on each $n$-th level of the binary tree $T$.

Hence even though
all proofs in this construction can be performed directly in the Hilbert cube, the set $X$ is in fact,
a "visualization" of the set of all probability measures of the Cantor set and  details of the proof look similar to those above.

In conclusion, we mention the paper \cite{FilGe} which demonstrated the essentiality of the $G_{\delta}$-assumption for $Y$ in Theorem 5 of Michael and Namioka. Briefly, it was proved that for every countable $A \subset [0;1],$ the set $Y=P_A = \{\mu \in P[0;1] :  supp(\mu) \subset A\}$ has the property $(1)$ from Theorem 5.
Then by using sets of probability measures with various countable supports,
the
authors
constructed a convex subset $Y \subset \R^{2} \times l_2$ with property $(1)$ and without property $(2)$ from Theorem 5.
Note that, as it was proved by V.
Kadets, the property $(1)$ from Theorem 5 is equivalent to the
closedness of
$Y \subset B$ being not only a convex set,  but also
a {\it linear subspace} (cf.  \cite[Proposition 5.1]{M10}).

\section{Selections and extensions}
\label{sec:3}

There are intimate relations between selections and 
extensions\index{extension}
and typically they appear together:  if $A \subset X$ and $f:A \to Y$ then $\widehat{f}: X \to Y$ is an extension of $f$ if and only if $\widehat{f}$ is a selection of multivalued mapping $F_A: X \to Y$ defined by setting $F_A(x)=\{f(x)\}, x \in A,$ and $F_A(x)=Y$ otherwise. 
Thus
as a rule,
each fact concerning existence of singlevalued selections implies some result on
extensions. In the other direction,
many basic theorems (or some of their special cases) about
extensions are special cases of some appropriate
selection theorems.

However, extension theory is certainly not simply a subtheory of selection theory: specific questions and problems need specific ideas and methods. For example, selection Theorem 1 implies that every continuous map $f:A \to Y$ from a closed subset $A$ of a paracompact domain $X$ into a Banach space $B$ has a continuous extension $\widehat{f}: X \to B$ with $f(X) \subset Clos(conv(f(A)))$. However, the Dugundji extension theorem states somewhat differently: every continuous map $f:A \to Y$ from a closed subset $A$ of a metric (or stratifiable) domain $X$ into a locally convex topological vector space $B$ has a continuous extension $\widehat{f}: X \to B$ with $f(X) \subset conv(f(A))$. Besides the differences in assumptions and conclusions,
these two ``similar'' theorems are proved by almost disjoint techniques: a sequential procedure of some approximations in the Michael selection theorem and a straightforward answer by a
formula in Dugundji extension theorem. It seems that the only common point are continuous partitions of unity\index{partition of unity}.

To emphasize the difference on a more nontrivial level, let us recall that for a wide class of nonlocally convex, completely metrizable, topological vector spaces it was proved in
\cite{D} that all such spaces are 
{\it absolute retracts}\index{absolute retract}
(with respect to all metrizable spaces), abbreviated as $AR's$. At the same time, at present
there is no known
example of
a
nonlocally convex, completely metrizable, topological vector space $E$ which can be successfully substituted instead of Banach (or Fr\'{e}chet) spaces $B$ into the assumption of the Michael selection Theorem 1. In particular, Dobrowolski stated (private communication) the following:

\begin{problem} \label{TDobr}
Is the space $l_p,   0<p<1$, of all $p$-summable sequences of reals an absolute selector, i.e. is it true that for every paracompact space (metric space, compact space) and
every LSC mapping $F:X \to l_p$ with convex closed values,
there exists a continuous singlevalued selection of $F$?
\end{problem}

During the last decade one of the most interesting facts concerning
relations between selections and extensions was obtained by Dobrowolski and van Mill \cite{DM}.
To explain their main results recall that $g:X \to Y$ is said to be an 
{\it $\varepsilon$-}selection\index{$\varepsilon$-selection} 
of a multivalued mapping $F:X \to Y$ into a metric space $(Y;d)$ if $dist(g(x), F(x))< \varepsilon$. Dobrowolski and van Mill used the term 
{\it $\epsilon$-near selection}\index{$\epsilon$-near selection}
for the case when the
strong inequality $dist(g(x), F(x))< \varepsilon$ is replaced
by $dist(g(x), F(x)) \leq \varepsilon$. Clearly, for closedvalued mappings $0$-near selections are exactly selections.

\begin{definition} \label{FDimSelPr}
A convex subset $Y$ of a vector metric space $(E;d)$ has the 
{\it finite-dimensional selection property}\index{finite-dimensional selection property}
(resp. 
{\it finite-dimensional near selection property}\index{finite-dimensional near selection property})
if for every metrizable domain $X$ and 
every LSC mapping $F:X \to Y$ with all compact convex  and finite-dimensional values $F(x) \subset Y,\,\,x \in X$, there exists a continuous singlevalued selection of $F$ (resp.,
for every $\varepsilon > 0$ there exists a continuous singlevalued $\varepsilon$-near selection of $F$).
\end{definition}

Combining 3.3, 4.1, 5.4 and 6.1 from \cite{DM} we formulate the following:

\begin{theorem} [\cite{DM}] \label{ARSel}
For any convex subset $Y$ of a vector metric (not necessarily, locally convex) space $(E;d)$ the following statements  are equivalent:
\begin{description}
\item{(1)} $Y$ is an $AR$; 
\item{(2)} $Y$ has the finite-dimensional near selection property.
\end{description}
\end{theorem}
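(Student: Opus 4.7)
The theorem asserts the equivalence of ``$Y$ is an absolute retract'' and ``$Y$ admits continuous $\varepsilon$-near selections for finite-dimensional compact-valued LSC mappings from metrizable domains.'' The proof naturally splits into the two implications, and uses rather different machinery in each direction. I would prove each separately.

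For $(1) \Rightarrow (2)$: fix an LSC mapping $F:X\to Y$ with compact convex finite-dimensional values, $X$ metrizable, and fix $\varepsilon>0$. Using lower semicontinuity together with the compactness of the values $F(x)$, I would construct a locally finite open cover $\{U_\alpha\}$ of $X$ and chosen points $y_\alpha \in Y$ with the following properties: (a) $d(y_\alpha,F(x))<\varepsilon/3$ whenever $x\in U_\alpha$, and (b) whenever the $U_{\alpha_i}$ meet at a point $x$, the associated $y_{\alpha_i}$ all lie within $\varepsilon/3$ of a single compact finite-dimensional convex subset (a value of $F$ near $x$). Take a partition of unity $\{\phi_\alpha\}$ subordinate to $\{U_\alpha\}$ and form the canonical nerve map $\kappa:X\to|N|$, $\kappa(x)=\sum\phi_\alpha(x)v_\alpha$. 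Define $\psi$ on the discrete $0$-skeleton $|N|^{(0)}$ by $\psi(v_\alpha)=y_\alpha$, and use the $AR$-property of $Y$ to extend $\psi$ to $|N|$ simplex by simplex. Because each closed simplex is a finite-dimensional compactum and the vertex values over an incident simplex cluster within $\varepsilon/3$ of a single finite-dimensional compact convex subset of $Y$ (which is itself an $AR$), the extension over a simplex can be kept within $\varepsilon/3$ of the vertex values. The composition $\psi\circ\kappa:X\to Y$ is then the desired $\varepsilon$-near selection of $F$.

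For $(2) \Rightarrow (1)$: to verify that $Y$ is $AR$, I would consider a closed subset $A$ of a metrizable space $X$ and a continuous map $f:A\to Y$, and construct a continuous extension. First, build a Dugundji-type \emph{multivalued} extension $F:X\to Y$ of $f$: set $F(x)=\{f(x)\}$ for $x\in A$, and for $x\in X\setminus A$ cover $X\setminus A$ by a canonical locally finite open family $\{V_\beta\}$ with mesh shrinking near $A$, choose $a_\beta\in A$ Dugundji-close to $V_\beta$, and set $F(x)=\operatorname{conv}\{f(a_\beta):x\in V_\beta\}$. Then $F$ is LSC, takes compact convex finite-dimensional values, and agrees with $\{f(x)\}$ on $A$. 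Applying the finite-dimensional near selection property with a sequence $\varepsilon_n\to 0$ produces approximate extensions $g_n:X\to Y$; an inductive refinement (replacing $F$ at stage $n$ by the multivalued mapping $x\mapsto F(x)\cap \overline{B(g_n(x),\varepsilon_n)}$ or by a suitably shrunken version) combined with a Cauchy-type argument yields a continuous map $\widehat{f}:X\to Y$ extending $f$, proving $Y$ is $AR$.

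The main obstacle is the \emph{quantitative} control in $(1)\Rightarrow(2)$: the $AR$-property alone yields extensions but furnishes no a priori bound on how far the extended map wanders, whereas the conclusion demands an $\varepsilon$-near selection. The cure is twofold: first, the refinement of the cover $\{U_\alpha\}$ must exploit compactness of the values of $F$ to keep vertex values uniformly close on overlapping simplices; second, the extension from the $0$-skeleton across each simplex of $|N|$ must be forced into a small ball using the $AR$ character of the finite-dimensional compact convex subsets of $Y$ that arise as values. Managing this without local convexity of $E$, which would otherwise let one average via the partition of unity as in the Michael selection theorem, is precisely where the $AR$ hypothesis on $Y$ substitutes for the missing convex structure of $E$.
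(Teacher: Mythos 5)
First, a remark on the comparison itself: the survey does not prove this theorem --- it is quoted verbatim from Dobrowolski and van Mill (``Combining 3.3, 4.1, 5.4 and 6.1 from \cite{DM} we formulate the following''), so there is no in-paper argument to measure yours against. What the surrounding text does reveal about the source's method matters, though: the survey stresses that the selection results of \cite{DM} are obtained by the \emph{method of inside approximations} ($\delta$-continuous maps with values inside $F(x)$), precisely because in a nonlocally convex $E$ ``the intersections of convex subsets with balls are in general, nonconvex'' and compactness is not preserved by closed convex hulls. Both directions of your proposal are outside-approximation schemes, and both run into exactly this obstruction.

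In $(1)\Rightarrow(2)$ the gap is the passage from ``each vertex value $y_{\alpha_i}$ lies within $\varepsilon/3$ of $F(x)$'' to ``the extension over the simplex lies within $\varepsilon$ of $F(x)$.'' You control the extension relative to a single compact convex set $F(x_\sigma)$ chosen for the simplex, but an $\varepsilon$-near selection must be close to $F(x)$ for \emph{every} $x$ carried into that simplex, and since $F$ is only LSC the set $F(x_\sigma)$ says nothing about those other values. In a Banach space this is rescued by the convexity of the $\varepsilon$-neighborhood of a convex set, so that any convex combination of the $y_{\alpha_i}$ stays $\varepsilon$-close to $F(x)$; without local convexity that neighborhood need not be convex, and neither affine interpolation inside $F(x_\sigma)$ nor an AR-extension controlled by $F(x_\sigma)$ yields the required estimate. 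Moreover, the bare AR property supplies no modulus of extension that is uniform over the family $\{F(x)\}$ and over the dimension of the simplices of an infinite-dimensional nerve, which your skeleton-by-skeleton induction needs in order to close; producing such uniform control out of AR-ness is essentially the hard content of the theorem, not an ingredient one may assume. In $(2)\Rightarrow(1)$ the Dugundji-type multivalued extension is fine (it is LSC with polytope values), but the step from the approximate extensions $g_n$ to an exact one is where the theorem lives: your proposed refinement $x\mapsto F(x)\cap\overline{B(g_n(x),\varepsilon_n)}$ has values which are intersections of convex sets with balls, hence in general nonconvex in a nonlocally convex $E$, so the near-selection hypothesis cannot be reapplied to it; and without some such refinement the $g_n$ have no reason to form a Cauchy sequence. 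A correct proof must replace this Michael-style successive-approximation loop by a different mechanism (in \cite{DM} this is the inside-approximation machinery and the intrinsic characterizations of AR-ness it feeds into).
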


As for a specific selection theorem we cite:

\begin{theorem} [\cite{DM}] \label{5.4}
Let  $Y$ be a convex subset of a vector metric (not necessarily locally convex) space $(E;d)$. Then for every metrizable domain $X$ and
every compactvalued and convexvalued LSC mapping $F:X \to Y$ with $\max \{dim F(x):\,\,x \in X\} < \infty$, there exists a continuous singlevalued selection $f$ of $F$.
\end{theorem}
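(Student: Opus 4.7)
The plan is to emulate the iterative proof of Michael's convex-valued Theorem~\ref{ConvSel}, using the uniform dimension bound $n:=\sup_{x\in X}\dim F(x)<\infty$ to compensate for the absence of local convexity in $(E,d)$. The key observation is that every compact convex subset of $E$ of dimension at most $n$ lies in a finite-dimensional affine subvariety on which the topology inherited from $E$ is Euclidean; in particular each $F(x)$ is an AR and convex-combination operations within $F(x)$ behave as in Euclidean space.

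First, by the Localization Principle (Lemma~\ref{LocPrinc}) it suffices to build a continuous selection of $F$ on an open neighborhood of each $x_0\in X$. Fix such $x_0$ and set $K_0:=F(x_0)$. Choose a finite $\varepsilon$-net $y_1,\ldots,y_m\in K_0$. Lemma~\ref{LSC} supplies a common neighborhood $V$ of $x_0$ together with (a priori discontinuous) local selections $s_i:V\to E$ satisfying $s_i(x)\in F(x)$ and $d(s_i(x),y_i)<\varepsilon$. Combining these via a continuous partition of unity $\{\phi_i\}$ subordinate to a refinement of $V$, one aims to produce a continuous $\varepsilon'$-approximate selection $g_\varepsilon:V\to E$, with $\varepsilon'\to 0$ as $\varepsilon\to 0$.

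Iterating such approximations with a geometrically decreasing sequence of error scales then yields a uniformly Cauchy sequence of continuous approximate selections $\{f_k\}$ defined on a neighborhood of $x_0$. The uniform limit $f=\lim f_k$ is continuous, and the compactness and closedness of each value of $F$ force $f(x)\in F(x)$. Gluing these local selections via Lemma~\ref{LocPrinc} produces a global continuous singlevalued selection of $F$.

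The main obstacle is the $\varepsilon$-approximation step. In a locally convex $E$ the classical convex-combination estimate $d\bigl(\sum_i\phi_i(x)y_i,\,F(x)\bigr)\le\max_i d\bigl(y_i,F(x)\bigr)$ is immediate; in the non-locally-convex setting it can fail badly. The uniform dimension cap rescues the argument because the relevant convex combinations can be arranged to take place inside a compact convex set of dimension at most $n$, on which Euclidean convex-combination estimates apply. The technical delicacy is to make these finite-dimensional \emph{moving windows} depend continuously on $x$, so that the Michael-type iteration converges uniformly on a neighborhood of $x_0$; this is precisely where the finite-dimensionality of the values $F(x)$ is indispensable.
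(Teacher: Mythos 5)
Your scheme is the classical \emph{outside} approximation: continuous maps $f_k$ with $f_k(x)$ merely near $F(x)$, converging uniformly. The survey explicitly points out that Theorem~\ref{5.4} is proved in \cite{DM} by the opposite device, the \emph{method of inside approximations}: one builds $\delta_k$-continuous but genuinely \emph{discontinuous} maps $f_k$ with $f_k(x)\in F(x)$ for every $x$, and shows that their uniform limit is continuous. This is not a stylistic difference. The reason the outside method is abandoned for nonlocally convex $E$ is exactly the point you flag as ``the main obstacle'' and then do not resolve: in such a space a convex combination of points each $\varepsilon$-close to $F(x)$ need not be close to $F(x)$, because the convex hull of a small ball can be metrically large.

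The rescue you propose --- that the combinations ``can be arranged to take place inside a compact convex set of dimension at most $n$, on which Euclidean convex-combination estimates apply'' --- does not close the gap. It is true that each individual value $F(x)$ spans a finite-dimensional affine subspace on which the induced topology is Euclidean, but the constants comparing $d$ with a Euclidean metric on an $n$-dimensional subspace depend on the subspace, and the affine hulls of the $F(x)$ move with $x$ and need not lie in any common finite-dimensional subspace, so there is no uniform comparison available on a neighborhood of $x_0$. Worse, the points entering your partition-of-unity combination ($y_i\in F(x_0)$, or the values $s_i(x)$ perturbed to make them continuous) do not all lie in the single set $F(x)$, so the combination is not confined to any one finite-dimensional ``window''; once it leaves, you are back to estimating distances in the nonlocally convex ambient metric, where the needed inequality $d\bigl(\sum_i\phi_i(x)y_i,\,F(x)\bigr)\le C\max_i d\bigl(y_i,F(x)\bigr)$ simply fails. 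The survey also records the two structural facts that doom this route: intersections of convex sets with balls are in general nonconvex, and the closed convex hull of a compact set need not be compact. The inside method avoids all of this by never leaving $F(x)$ --- convex combinations of points of the convex set $F(x)$ stay in $F(x)$, and the Euclidean structure of that single compact $n$-dimensional value can be exploited --- at the price of having to prove that a sequence of discontinuous selections with shrinking oscillation converges to a continuous one. That argument, which is where the hypothesis $\max_x\dim F(x)<\infty$ (rather than mere pointwise finite-dimensionality, cf.\ Theorem~\ref{ExDobMi}) is actually used, is absent from your proposal.
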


Continuous singlevalued selections $f$ of a given multivalued mapping $F$ are usually constructed as uniform limits of  sequences of
certain approximations $\{ f_{\varepsilon} \}, \varepsilon \to 0,$ of $F$. Practically all known
selection results are obtained by using one of the following two approaches
for a construction of $\{ f_n = f_{\varepsilon_n} \} , \,\, \varepsilon_{n} \to 0$. In the first (and the most popular) one,
the method of outside approximations\index{method of outside approximations}, 
mappings $f_n$ are
continuous $\varepsilon_{n}$-selections of $F$, i.e. $f_{n}(x)$ all lie {\it near}
the set $F(x)$ and all mappings $f_n$ are {\it continuous}. 
In the second one,
the method of inside approximations\index{method of inside approximations},
$f_n$ are $\delta_n$-continuous selections of $F$, i.e. $f_n(x)$  all lie {\it in} the set $F(x)$, however $f_n$ are {\it discontinuous}.

We emphasize that Theorem 9 is proved by using the method of inside approximation.
This
is a rare situation: all previously known to us examples are \cite{BP, MConv, vMB,  RS}. 
However,
for nonlocally convex range spaces it is an adequate approach since
for such  spaces $Y$ the intersections of  convex subsets with balls are in general, nonconvex. Also, compactness is not preserved under the convex closed hull operation.

It is very natural to try to substitute $dim F(x) < \infty,\,\,x \in X,$ in Theorem 9
instead of $\max \{dim F(x):\,\,x \in X\} < \infty$. 
It turns out that this is a futile attempt. Namely, 5.6 in \cite{DM} implies that Theorem 9  becomes false with such a change of the
dimensional restriction.

\begin{theorem} [\cite{DM}]\label{ExDobMi}
There exist a linear metric vector space $E$ and a LSC mapping $F: Q \to E$ from the Hilbert cube $Q,$ such that $E$ contains the tower $\{E_n\}$ of closed subsets with the following properties:
\begin{description}
\item{(1)} $Q= \cup Q_n,\, where \,\,Q_n = F^{-1}(E_n)$;
\item{(2)} The restrictions $F|_{Q_n}$ satisfy all assumptions of Theorem 9; and
\item{(3)} For arbitrary choices of continuous selections $f_n:Q_n \to E$ of $F|_{Q_n}$ their pointwise limit $f=\lim_{n}f_n$ is not a continuous mapping, whenever such a pointwise limit exists.
\end{description}
\end{theorem}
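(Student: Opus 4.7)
The plan is to produce an explicit non-locally-convex linear metric space $E$ that contains a filtration by finite-dimensional closed subspaces, and to build $F$ so that each restriction $F|_{Q_n}$ lives in one such level and satisfies the hypotheses of Theorem 9, while the levels fail to piece together globally. The overall shape of the argument mirrors the classical construction of a convex set that is an $AR$ at every finite-dimensional scale but degenerates at infinity because convex combinations do not survive in the metric $d$.

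First I would fix the target space. Let $\{e_i\}_{i\in\mathbb{N}}$ be a Hamel-type sequence and take $E_n = \mathrm{span}\{e_1,\dots,e_n\}$; choose an $F$-norm $\|\cdot\|$ on $E = \bigcup_n E_n$ (or a completion thereof) which is translation-invariant and metrizes a vector topology, but which is scaled differently at each level so that the coefficients $\lambda_i$ in a convex combination $\sum \lambda_i e_i$ are not controlled uniformly by the distance from the origin. The essential design feature is that, as $n\to\infty$, convex combinations in $E_n$ can be made to escape every $d$-bounded subset of $E$ even when the combined points remain uniformly bounded. Each $E_n$ is closed in $E$ and $E_n \subset E_{n+1}$ is the required tower.

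Next I would carve up $Q$ by a strictly increasing closed filtration $Q_1 \subset Q_2 \subset \cdots$ with $Q = \bigcup_n Q_n$ but $Q \neq Q_n$ for any $n$. A convenient choice is to let $Q_n$ be the subcube of $Q$ where the first $n$ coordinates may vary freely and the tail is zero, or some analogous shrinking by coordinates. The mapping $F$ is then defined piecewise so that $F(x)$ is a compact convex finite-dimensional subset of $E_n$ whenever $x \in Q_n$, with $\dim F(x)$ growing with the first index $n$ for which $x\in Q_n$. I would arrange that $F|_{Q_n}$ is LSC, compactvalued, convexvalued, and of bounded dimension (namely $\leq n$), so that Theorem 9 applies on each $Q_n$ and yields continuous selections $f_n$. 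Concretely, for $x$ in the "annulus" $Q_n\setminus Q_{n-1}$, one lets $F(x)$ contain a simplex spanned by the basis vectors $e_1,\dots,e_n$ scaled by factors that blow up in the $d$-metric as $n\to\infty$, while still being contained in a compact set for fixed $n$; the LSC behaviour across boundaries is obtained by letting these simplices degenerate into the previous level's values as $x$ approaches $Q_{n-1}$.

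Finally I would prove the noncontinuity clause (3). Fix continuous selections $f_n : Q_n \to E$ with $f_n(x)\in F(x)\subset E_n$, and assume the pointwise limit $f(x) = \lim_n f_n(x)$ exists on $Q$. Pick a sequence $x_k \in Q_{n_k}\setminus Q_{n_k -1}$ with $n_k\to\infty$ and $x_k \to x_\infty \in Q$. The selection $f_{n_k}(x_k)$ is a convex combination in $E_{n_k}$ whose defining simplex has been designed to force $\|f_{n_k}(x_k)\|$ away from $\|f(x_\infty)\|$, regardless of the choice of $f_{n_k}$; this uses precisely the failure of local convexity of $d$ to rule out any continuous choice that would otherwise dampen the defect. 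Hence $f_{n_k}(x_k)\not\to f(x_\infty)$, so $f$ is discontinuous at $x_\infty$.

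The main obstacle, as usual for constructions of this flavour, is Step~3, the rigidity clause: one must engineer $F$ so that every continuous selection on $Q_n$ is compelled on the annulus $Q_n\setminus Q_{n-1}$ to take values far from the values it could take on $Q_{n-1}$, yet $F$ itself must remain LSC with compact convex values across the boundary. The balance is delicate because LSC forces a certain compatibility of $F(x)$ with $F(x')$ for nearby $x,x'$, and this compatibility has to be strong enough for Theorem 9 to apply locally but weak enough, relative to the non-locally-convex metric of $E$, to block any global continuous limit. The device of scaling the simplices by coefficients that are unbounded in $d$ but bounded in each $E_n$ is what resolves this tension; a detailed verification is carried out in \cite[\S 5.6]{DM}.
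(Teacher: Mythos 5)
The survey you are working from does not actually prove this theorem; it only states it and attributes it to item 5.6 of \cite{DM}, so there is no in-paper argument to match your sketch against --- only the requirements any proof must meet. Measured against those, your proposal has a genuine gap at precisely the point you yourself flag as ``the main obstacle'': you never supply a mechanism that forces \emph{every} continuous selection of $F|_{Q_n}$ to misbehave. The design you describe --- simplices in $E_n$ spanned by $e_1,\dots,e_n$, scaled so as to ``blow up'' in $d$, degenerating into the previous level's values as $x$ approaches $Q_{n-1}$ --- rules nothing out on its own: lower semicontinuity across the boundary is typically arranged by a join-type interpolation, and such interpolations preserve common points of the values, so a selection can simply track one fixed point of $F(x)$ through the transition and remain continuous (this is exactly how the naive needle-point version of the construction fails). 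What is actually needed is a topological obstruction --- in \cite{DM} it comes from combining a Sperner/no-retraction argument in barycentric coordinates with the fact that, in a non-locally convex space, the $\delta$-core of a simplex whose vertices have small $F$-norm can be uniformly far from $0$ --- guaranteeing that any continuous selection over a prescribed cell of $Q_n$ must somewhere attain a value of norm bounded below by a constant independent of $n$, while those cells accumulate at a point $x_\infty$ with $F(x_\infty)$ a singleton. Asserting that ``the failure of local convexity rules out any continuous choice that would otherwise dampen the defect'' and then deferring the verification to \cite[\S 5.6]{DM} leaves the entire content of the theorem unproved.

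Two smaller but real defects. First, your proposed filtration of $Q$ by the subcubes on which the tail coordinates vanish does not satisfy $Q=\bigcup_n Q_n$: that union is only the dense set of finitely supported points. One needs closed sets such as $Q_n=\bigl([1/n,1]\times Q'\bigr)\cup\bigl(\{0\}\times Q'\bigr)$ in $Q=[0,1]\times Q'$, whose union is all of $Q$ while the levels of points converging to the slice $\{0\}\times Q'$ tend to infinity. Second, in your argument for clause (3) you estimate $f_{n_k}(x_k)$, but discontinuity of the limit $f$ at $x_\infty$ requires controlling $f(x_k)=\lim_m f_m(x_k)$, i.e.\ the values at $x_k$ of \emph{all} the selections $f_m$ with $m\ge n_k$, and comparing them with $f(x_\infty)$, which itself depends on the choices; the forcing lemma must therefore be uniform in $m$ and must pin down a definite separation in the metric $d$ rather than a discrepancy of norms --- and this is again exactly the part that is missing.
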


Note that due to the Localization principle (Lemma 1) the assumption $\max \{dim F(x):\,\,x \in X\} < \infty $ in Theorem 9 can be replaced by its local version $\max \{dim F(x):\,\,x \in U(x) \} < \infty $ for some neighborhood $U(x)$ of $x$. Two slight generalizations of Theorem 9 were presented in \cite{RSClos}: in the first one $Y$ was replaced by a $G_{\delta}$-subset and in the second the closedness restriction for values $F(x)$ was omitted.

\begin{theorem}[\cite{RSClos}] \label{GenSelDobMi}
\begin{description}
\item{(1)}
Let $F: X \rightarrow Y$ be
a LSC convexvalued mapping of a paracompact domain $X$ into a
$G_{\delta}$-subset $Y$ of a completely metrizable linear space $E$.
Then $F$ admits a singlevalued continuous selection provided that
the values $F(x)$ are closed in $Y$ and that for every $x \in X$
there exists a neighborhood $U(x)$ such that $max\{dim F(x'): x'
\in U(x)\}< \infty$.
\item{(2)} Let $F: X \rightarrow E$ be a
LSC convexvalued mapping of a metrizable domain $X$ into a
completely metrizable linear space $E$. Then $F$ admits a
singlevalued continuous selection provided that for every $x \in
X$ there exists a neighborhood $U(x)$ such that $max\{dim F(x'):
x' \in U(x)\}< \infty$.
\end{description}
\end{theorem}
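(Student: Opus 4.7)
My plan is to derive both statements from Theorem 9 via the Localization Principle, using the compactvalued selection Theorem 3 to reduce to the compact finite-dimensional setting that Theorem 9 addresses.

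For part (1), first note that since $Y$ is a $G_{\delta}$ in the completely metrizable $E$, the Aleksandrov theorem makes $Y$ itself completely metrizable, so Theorem 3 applies with range $Y$. I would invoke the Localization Principle (Lemma 1) and work on a neighborhood $U=U(x_0)$ on which $\max\{\dim F(x'): x'\in U\}\le n$ for some $n$. On $U$, Theorem 3 produces a compactvalued LSC selection $G:U\to Y$ of $F$, with each $G(x)\subset F(x)$ compact and of dimension $\le n$. Set $H(x)=\mathrm{Clos}_Y(\mathrm{conv}(G(x)))$. Because $G(x)$ sits in the finite-dimensional affine span of $F(x)$, the convex hull is already compact, so $H(x)$ is a compact convex subset of $F(x)$ of dimension $\le n$ (using that $F(x)$ is convex and closed in $Y$). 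Lower semicontinuity of $H$ is inherited from $G$ since the convex-hull operator preserves LSC on convexvalued mappings. Then Theorem 9 provides a singlevalued continuous selection of $H\subseteq F$ on $U$, and the Localization Principle assembles these into a global selection.

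For part (2), I would again start with the Localization Principle to reduce to an open $U=U(x_0)\subset X$ with $\dim F(x')\le n$ on $U$; note $U$ is metrizable since $X$ is. Pass to $\overline F(x)=\mathrm{Clos}_E(F(x))$, which remains LSC (closure preserves LSC), convexvalued, closedvalued in $E$, and finite-dimensional (the closure of a convex set lies in its finite-dimensional linear span, which is closed in the Hausdorff space $E$). Apply Theorem 3 to $\overline F$ on $U$, then form $H(x)=\mathrm{Clos}(\mathrm{conv}(G(x)))$ as above, and invoke Theorem 9 to obtain a singlevalued continuous $h$ with $h(x)\in H(x)\subset\overline F(x)$.

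The main obstacle is precisely the gap in part (2) between $\overline F(x)$ and $F(x)$: $h(x)$ could in principle land on the part of the boundary of $\overline F(x)$ that lies outside $F(x)$. To close this gap I would build $H$ so that $H(x)$ is contained in the relative interior of $\overline F(x)$; since values are finite-dimensional convex, relative-interior points of $\overline F(x)$ automatically lie in $F(x)$. One way is to fix an auxiliary singlevalued continuous selection $\sigma$ of an LSC compactvalued $\sigma$-selection sitting inside $F$, and contract $H$ toward $\sigma$, i.e.\ work with $H_\lambda(x)=\lambda\sigma(x)+(1-\lambda)H(x)$ for a small $\lambda>0$; this preserves convexity, compactness, the dimension bound, and LSC, while forcing all values strictly inside $F(x)$. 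A secondary obstacle in part (1) is that Theorem 9 is stated for a metrizable domain while our localizing $U$ is only paracompact; this would be addressed by either refining to an open cover by metrizable subsets or by reinspecting the proof of Theorem 9 to confirm that once the values of $H$ are already compact convex and uniformly finite-dimensional, paracompactness of the domain is enough.
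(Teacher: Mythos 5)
Your overall strategy (localize, extract a compactvalued LSC selection via Theorem 3, pass to convex hulls, and feed the result to Theorem 9) is the natural one, but it leaves the two genuinely hard points unresolved. First, in part (1) the domain is only paracompact, while Theorem 9 is stated for metrizable domains. Your first fix --- refining to an open cover by metrizable subsets --- is not available: a paracompact space need not be locally metrizable, and the Localization Principle cannot manufacture metrizability. Your second fix --- ``reinspect the proof of Theorem 9'' --- is precisely the content of the result you are trying to prove; the point of \cite{RSClos} is to rework the inside-approximation argument of Dobrowolski and van Mill so that it applies to paracompact domains and to values that are only closed in a $G_\delta$-subset $Y$, so Theorem 9 cannot be quoted here as a black box. (There is also a minor technical point: an open subset $U(x)$ of a paracompact space need not itself be paracompact, so even the localization step should be run on a shrinking by closed, hence paracompact, subsets.)

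Second, and more seriously, your treatment of part (2) is circular at its crux. To push the selection of $\overline F(x)=\mathrm{Clos}_E(F(x))$ back into $F(x)$ you form $H_\lambda(x)=\lambda\sigma(x)+(1-\lambda)H(x)$; but for this to land in the relative interior of $\overline{F(x)}$ (and hence in $F(x)$) you need $\sigma(x)$ to lie in the relative interior of $F(x)$ for every $x$ --- an auxiliary selection merely lying in $\overline{F(x)}$, or even in $F(x)$, does not suffice. The mapping $x\mapsto \mathrm{ri}(F(x))$ has non-closed values, so producing a continuous selection of it is exactly the original difficulty in another guise, and your sketch does not construct $\sigma$. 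This is where the actual argument departs from yours: as the survey itself points out, part (2) is proved using Michael's density selection theorem \cite{M58}, which supplies selections of dense (hence non-closed-valued) sub-mappings and which works exactly for metrizable domains --- this is why metrizability of $X$ is an essential hypothesis in (2). In your sketch the metrizability of $X$ is never genuinely used beyond invoking Theorem 9 locally, which is a reliable sign that the boundary step has not been closed.
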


It is interesting to note that the metrizability of the
domain in Theorem 11$(2)$ (in comparison with the paracompactness in $(1)$) is an essential restriction because the proof is based on the
density selection theorem of Michael \cite{M58} which works exactly for metrizable spaces.

Under dimensional restrictions for the
domain, not for the values of the mapping, van Mill  \cite[Cor.5.2]{vMB} obtained the following:

\begin{theorem}[\cite{vMB}]
Let $X$ be a locally finite-dimensional paracompact space and $Y$ a  convex subset of a
vector metric space. Then each LSC mapping $F: X \to Y$ with complete convex values admits a singlevalued continuous selection.
\end{theorem}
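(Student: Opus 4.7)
The plan is to localize via the Localization Principle (Lemma~\ref{LocPrinc}) to finite-dimensional patches of $X$, and on each patch to run a controlled outside-approximation scheme whose Cauchy estimates survive the absence of local convexity in $E$ thanks to a uniform bound on the order of the partitions of unity used at every step.

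First, I fix $x_0 \in X$. Local finite-dimensionality gives an open neighborhood $U = U(x_0)$, paracompact (after possibly shrinking and using a locally finite refinement), with $\dim U \leq n$ for some $n = n(x_0)$. By Lemma~\ref{LocPrinc} it suffices to construct a continuous singlevalued selection of $F|_U$ on each such $U$, since $F$ is convex-valued and $X$ is paracompact. On a fixed $U$, I then build inductively a sequence of continuous $\varepsilon_k$-selections $f_k : U \to E$ of $F|_U$ with $\varepsilon_k \downarrow 0$ by the classical Michael recipe: lower semicontinuity attaches to every $x$ an open neighborhood $W_x$ and a point $y_x \in F(x) \cap B(f_k(x),\varepsilon_k)$ such that $F(x') \cap B(y_x,\varepsilon_k) \neq \emptyset$ for every $x' \in W_x$. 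The dimension bound $\dim U \leq n$ refines $\{W_x\}$ to a locally finite open cover in which each point of $U$ meets at most $n+1$ members, and the usual formula $f_{k+1}(x) = \sum_\alpha \lambda_\alpha(x) y_\alpha$ with a subordinated partition of unity defines the next approximation.

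The crucial consequence of the order bound is that at every $x$ this sum has at most $n+1$ nonzero terms, each within $2\varepsilon_k$ of $f_k(x)$; translation invariance of the metric on $E$ together with continuity of addition on $(n+1)$-tuples yields a uniform estimate $\sup_x d(f_{k+1}(x), f_k(x)) \leq \eta_n(\varepsilon_k)$ with $\eta_n(\varepsilon_k) \to 0$. Decreasing $\varepsilon_k$ fast enough secures uniform Cauchyness, so the $f_k$ converge uniformly to a continuous $f : U \to E$.

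The hard step, and the principal obstacle of the proof, is verifying $f(x) \in F(x)$ rather than merely $f(x) \in \overline{F(x)}$. From $d(f_k(x), F(x)) < \varepsilon_k$ pick $z_k \in F(x)$ with $d(z_k, f_k(x)) < 2\varepsilon_k$; the telescoping estimate on $\{f_k(x)\}$ combined with the triangle inequality forces $\{z_k\}$ to be a Cauchy sequence inside the metric subspace $F(x)$, whose completeness produces a limit $z \in F(x)$ that must coincide with $f(x)$. It is precisely here that completeness of the values, rather than closedness of the values in the ambient $E$, is indispensable: in a non-locally-convex vector metric space the closure of a convex subset $F(x) \subset Y$ can escape from $Y$, so outside-approximation alone would only locate $f(x)$ in $\overline{F(x)}$. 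This is the domain-side analog of Theorem~\ref{5.4}: the bound $\dim U \leq n$ plays the same qualitative role in taming $(n+1)$-fold convex combinations in $E$ as did the bound $\max\{\dim F(x)\} \leq n$ there, with completeness of $F(x)$ replacing the compactness used on the value side.
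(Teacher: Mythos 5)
Your argument is essentially sound, but it is worth recording that the paper itself gives no proof of this statement: it is quoted as \cite[Cor.~5.2]{vMB}, and the surrounding discussion (see the remarks after Theorem~\ref{5.4}) explicitly lists van Mill's book among the rare places where selections are produced by the \emph{method of inside approximations}, i.e.\ as uniform limits of $\delta$-continuous but exact (discontinuous) selections $f_n(x)\in F(x)$. You instead run the classical \emph{outside} scheme of continuous $\varepsilon_k$-selections, and you make it survive the lack of local convexity by forcing every partition of unity to have order $\le n+1$ on a finite-dimensional patch; completeness of the values then replaces closedness in recovering $f(x)\in F(x)$ from a Cauchy sequence $z_k\in F(x)$. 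This is a legitimate and arguably more elementary route for \emph{this} theorem, precisely because the dimension bound sits on the domain, so order-bounded refinements are available at every stage; the inside method is what one is forced into when, as in Theorem~\ref{5.4}, the bound sits on the values and no control on the order of covers of the domain is possible.

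Three points need shoring up. First, an open neighborhood $U(x_0)$ of a paracompact space need not be paracompact, so before running the induction you should pass to a closed neighborhood $\overline{V}\subset U$ (regularity), which is paracompact and, being closed in the normal space $U$, still satisfies $\dim\overline{V}\le n$; the Localization Principle (Lemma~\ref{LocPrinc}) is then applied to the open cover by the interiors $V$. Second, "translation invariance plus continuity of addition on $(n+1)$-tuples" does not by itself give a \emph{uniform} modulus $\eta_n$: the estimate you need is that for every $\varepsilon'>0$ there is $\delta>0$ such that $d\bigl(\sum_{i}\lambda_i z_i,0\bigr)<\varepsilon'$ for \emph{all} $(\lambda_i)\in\Delta^{n}$ whenever $d(z_i,0)<\delta$, and the uniformity over the simplex comes from compactness of $\Delta^{n}$ together with joint continuity of $(\lambda,z)\mapsto\sum\lambda_i z_i$ at the compact set $\Delta^{n}\times\{0\}$. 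This compactness step is the actual engine of the proof and should be stated. Third, your induction only records that $f_{k+1}$ is uniformly close to $f_k$; one must also verify that $f_{k+1}$ is again an $\varepsilon_{k+1}$-selection, which uses the same order-bounded estimate applied to the points $w_\alpha\in F(x')\cap B(y_{x_\alpha},\delta)$ supplied by lower semicontinuity, whose convex combination lies in $F(x')$ and is $\eta_n(\delta)$-close to $f_{k+1}(x')$. With these repairs the proof is complete.
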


On the other hand, Example 5.3 \cite{DM} shows that in general,
Theorem 12 does not  hold for domains which are unions of countably many finite-dimensional compacta.
However,
if in the assumptions of Theorem 12 one passes to $C$-domains (which look as approximately finite-dimensional spaces) then exact selections can be replaced by $\varepsilon$-selections \cite[Theorem 6.3]{DM}.

\begin{theorem} [\cite{DM}]
Let $X$ be a $C$-space and  $Y$
a  convex subset of a
vector metric space. Then each LSC mapping $F: X \to Y$ with convex values admits a singlevalued continuous $\varepsilon$-selections for any $\varepsilon >0$.
\end{theorem}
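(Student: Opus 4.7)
The plan is to assemble the desired continuous $\varepsilon$-selection from locally constant $\varepsilon/2$-selections, using the $C$-space property of $X$ to distribute them over a countable family of pairwise disjoint open sets.

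First I would produce the local data. Fix $\varepsilon>0$. For each $x\in X$ pick $y_x\in F(x)$ and set $U_x:=F^{-1}(B(y_x,\varepsilon/2))=\{x'\in X:F(x')\cap B(y_x,\varepsilon/2)\neq\emptyset\}$. Lower semicontinuity makes $U_x$ open with $x\in U_x$, and by definition $d(y_x,F(x'))<\varepsilon/2$ for every $x'\in U_x$. Thus $\mathcal{U}:=\{U_x\}_{x\in X}$ is an open cover of $X$, each of whose members carries a constant singlevalued $\varepsilon/2$-selection $x'\mapsto y_x$ of $F|_{U_x}$. Invoking the $C$-property with the constant sequence $\mathcal{U}_n=\mathcal{U}$ produces pairwise disjoint open families $\mathcal{V}_1,\mathcal{V}_2,\ldots$, each refining $\mathcal{U}$, whose union covers $X$. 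Write $W_n:=\bigcup\mathcal{V}_n$, and, using that every $C$-space is paracompact, shrink $\{W_n\}$ to a locally finite open cover. For each $V\in\mathcal{V}_n$ choose a parent $U_{x(V)}\supset V$ and set $y_V:=y_{x(V)}$; the map $g_n:W_n\to Y$ with $g_n|_V\equiv y_V$ is then continuous (by disjointness of $\mathcal{V}_n$) and satisfies $d(g_n(x),F(x))<\varepsilon/2$ for every $x\in W_n$.

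Next I would sew these local $\varepsilon/2$-selections into a single continuous map. Let $\{\phi_n\}$ be a locally finite partition of unity subordinated to the shrunken cover $\{W_n\}$ and define $f(x):=\sum_n\phi_n(x)\,g_n(x)$. At every $x$ this is a finite convex combination of points of $Y$, hence lies in $Y$; continuity of $f$ follows from local finiteness of $\{\phi_n\}$, continuity of each $g_n$, and continuity of vector operations in $E$.

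The main obstacle---and the point where the proof departs from the locally convex case---is the verification that $d(f(x),F(x))<\varepsilon$ for every $x$. If $E$ were locally convex the estimate would be one line: choosing witnesses $z_n\in F(x)$ with $d(g_n(x),z_n)<\varepsilon/2$ yields $z:=\sum\phi_n(x)z_n\in F(x)$ by convexity of $F(x)$, and $d(f(x),z)<\varepsilon/2$ by convexity of open balls. In a general vector metric target no such bound is automatic, and one must argue through continuity of the convex-combination map $\Delta^{k-1}\times Y^k\to Y$. Local finiteness of $\{\phi_n\}$ bounds the number of active summands at $x$ by some $k=k(x)$, and compactness of $\Delta^{k-1}$ together with continuity of this map at $(\lambda,y_x,\ldots,y_x)$ yields a modulus $\delta>0$ (depending on $x$, $k$, $\varepsilon$) such that, whenever all active values $g_n(x)$ lie in $B(y_x,\delta)$, one has $f(x)\in B(y_x,\varepsilon/2)$ and hence $d(f(x),F(x))\leq d(f(x),y_x)<\varepsilon/2$. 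Forcing this clustering of active values around a single $y_x\in F(x)$ is the delicate engineering step: it requires refining $\mathcal{U}$ and iterating the $C$-space property with $\delta$ in place of $\varepsilon/2$, in the spirit of the inside-approximation construction of Dobrowolski and van Mill \cite{DM}.
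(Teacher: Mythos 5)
Your scaffolding is the right one and matches the route to \cite{DM}, Theorem 6.3 (which this survey states without proof): lower semicontinuity yields the cover by sets $F^{-1}(B(y_x,\cdot))$, the $C$-property yields pairwise disjoint open families $\mathcal{V}_1,\mathcal{V}_2,\ldots$ refining the given covers and jointly covering $X$, and one glues locally constant maps by a partition of unity. The gap is in the only step that actually carries weight, the estimate $d(f(x),F(x))<\varepsilon$, and the repair you propose is misdirected. Refining the covers and iterating the $C$-property can force each active value $y_V$ (for $V\ni x$) to lie within $\delta$ of the \emph{set} $F(x)$, but it can never force the active values to lie within $\delta$ of a \emph{single} point $y_x\in F(x)$: two sets $V,V'\ni x$ may have been manufactured from sample points sitting near far-apart parts of $F(x)$, whose diameter is not controlled, and closeness to a set does not imply mutual closeness. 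So "clustering around one $y_x$" is unattainable in general, and the compactness-of-$\Delta^{k-1}$ argument built on it cannot be completed.

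The correct fix is already latent in your locally convex paragraph and requires no clustering. Keep the decomposition $f(x)=\sum_i\phi_i(x)y_i=\sum_i\phi_i(x)z_i+\sum_i\phi_i(x)\,(y_i-z_i)$ with witnesses $z_i\in F(x)$: the first sum lies in $F(x)$ by convexity of the value, and the second is a convex combination of small vectors, which for an invariant metric chosen so that $d(\lambda w,0)\le d(w,0)$ when $|\lambda|\le 1$ (every metric linear space admits such an $F$-norm) is bounded simply by $\sum_i d(y_i-z_i,0)$ via the triangle inequality --- local convexity is not needed for this. What \emph{is} needed is that this sum be small, and that is exactly where your choice of the constant sequence $\mathcal{U}_n=\mathcal{U}$ with fixed tolerance $\varepsilon/2$ fails: with $k$ active summands it gives only the useless bound $k\varepsilon/2$. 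Instead, feed the $C$-property a sequence of covers $\mathcal{U}_n$ built from balls of radii $\delta_n$ with $\sum_n\delta_n<\varepsilon$. Since each family $\mathcal{V}_n$ is pairwise disjoint, at most one summand active at $x$ comes from level $n$; the active levels $n_1<n_2<\cdots$ are distinct, so the total error is at most $\sum_i\delta_{n_i}\le\sum_n\delta_n<\varepsilon$. With that single change (decreasing tolerances indexed by the level, rather than a fixed tolerance plus clustering) your outline becomes a complete proof.
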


In a volumninous paper, Gutev, Ohta and Yamazaki \cite{GOY-2} systematically used selections and extensions for obtaining the criteria for various kinds of displacement of a subset in the entire space.
Recall that $A \subset X$ is 
$C$-embedded\index{$C$-embedding} 
in $X$ (resp., 
$C^{*}$-embedded\index{$C^{*}$-embedding}
in $X$) if every continuous (resp., every bounded continuous) function $f:A \to \mathds{R}$ has a continuous extension to entire $X$. Below are some of their typical results, Theorems 4.3, 4.6, and 6.1.

\begin{theorem} [\cite{GOY-2}]\label{C-starEmb}
For a subset $A$ of $X$ the following statements  are equivalent:
\begin{description}
\item{(1)} $A$ is $C^{*}$-embedded in $X$;
\item{(2)}  For every Banach space $B$, every continuous mapping $F:X \to B$ with compact convex values $F(x)$ and every continuous selection $g: A \to B$ of the restriction $F|_A$, there is a continuous extension $f: X \to B$ of $g$ which is also a selection of $F$;
\item{(3)} The same as $(2)$, but without convexity of $F(x)$ and without $f$ being a selection of $F$; 
\item{(4)} For every cardinal $\lambda$, every continuous maps $g,h: X \to c_{0}(\lambda)$ with $g(a)\leq h(a),  a \in A,$ and every  $f: A \to c_{0}(\lambda)$ which separates $g|_A$ and $h|_A$,
there exists a continuous extension $\widehat{f}:X \to c_{0}(\lambda)$ of $f$.
\end{description}
\end{theorem}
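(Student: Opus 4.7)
The plan is to establish the cycle $(3) \Rightarrow (1) \Rightarrow (2) \Rightarrow (3)$, giving equivalence of $(1)$, $(2)$, $(3)$, and then to handle $(1) \Leftrightarrow (4)$ separately.

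\textbf{Easy directions.} For $(3) \Rightarrow (1)$, given a bounded continuous $g_{0}: A \to \mathbb{R}$ with $|g_{0}| \le M$, I would specialize (3) to $B = \mathbb{R}$ and the constant compact-valued mapping $F(x) = [-M, M]$. Since $g_{0}$ is a continuous selection of $F|_{A}$, condition (3) produces a continuous extension of $g_{0}$ to $X$, which is exactly what $C^{*}$-embedding demands. For $(2) \Rightarrow (3)$, replace $F$ by $\widetilde{F}(x) = \overline{\mathrm{conv}}(F(x))$; in a Banach space the closed convex hull of a compact set is compact, and continuity in the Hausdorff metric is preserved under $\overline{\mathrm{conv}}$, so $\widetilde{F}$ is continuous compact convex valued. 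The selection $g$ of $F|_{A}$ remains a selection of $\widetilde{F}|_{A}$, so (2) delivers a continuous selection extension $f: X \to B$; dropping the selection property yields the extension demanded by (3).

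\textbf{Main direction $(1) \Rightarrow (2)$.} Given that $A$ is $C^{*}$-embedded, a continuous compact convex-valued $F: X \to B$, and a continuous selection $g: A \to B$, I would construct a uniform Cauchy sequence $\{f_{n}: X \to B\}$ of continuous selections of a shrinking sequence of convex-valued enlargements $F_{n} \supset F$ with $F_{n} \searrow F$, subject to $f_{n}|_{A} = g$ at every stage. At each inductive step the mismatch $\delta_{n} = g - f_{n}|_{A}: A \to B$ is continuous with small norm, and has to be extended to a continuous $\widehat{\delta}_{n}: X \to B$ of comparable norm; setting $f_{n+1}$ to be $f_{n} + \widehat{\delta}_{n}$ (projected back into a suitable enlargement of $F$ by a Michael-type patching) preserves all invariants. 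The uniform limit $f$ is continuous, agrees with $g$ on $A$, and is a selection of $F$ by closedness of the compact values of $F$.

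\textbf{Main obstacle.} The hard part is that $C^{*}$-embedding directly extends only bounded real-valued functions, while $\delta_{n}$ takes values in the Banach space $B$. The remedy exploits the continuous compact-valuedness of $F$: by upper semicontinuity plus compactness of $F(x)$, each $x$ has a neighborhood $U$ on which $F(U)$ lies inside a relatively compact, hence separable, subset of $B$, which embeds isometrically into $\ell_{\infty}$. Thus $\delta_{n}$ is realized coordinate-wise as a family of bounded real-valued continuous functions on $A$, each extended by $C^{*}$-embedding. Uniform norm control across coordinates, together with a locally finite patching, glues these partial extensions into the sought vector-valued extension $\widehat{\delta}_{n}$. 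This local-to-global assembly (and the care needed to keep the selection-of-enlargement property along with the exact interpolation on $A$) is the crux of the argument.

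\textbf{Equivalence with (4).} For $(4) \Rightarrow (1)$, take $\lambda = 1$, so $c_{0}(\lambda) = \mathbb{R}$: for any bounded $f_{0}: A \to \mathbb{R}$ with $|f_{0}| \le M$, the constants $g \equiv -M$ and $h \equiv M$ on $X$ make $f_{0}$ a separator on $A$, and (4) produces the $C^{*}$-extension. Conversely, $(1) \Rightarrow (4)$ is a coordinate-wise sandwich extension: each real coordinate $f_{\alpha}$ of $f$ satisfies $g_{\alpha}|_{A} \le f_{\alpha} \le h_{\alpha}|_{A}$, and extends to a $\widehat{f}_{\alpha}: X \to \mathbb{R}$ with $g_{\alpha} \le \widehat{f}_{\alpha} \le h_{\alpha}$ by $C^{*}$-embedding combined with a Kat\v{e}tov--Tong-type insertion. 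Membership of $\widehat{f}$ in $c_{0}(\lambda)$ is inherited from the uniform bound $|\widehat{f}_{\alpha}| \le \max(|g_{\alpha}|, |h_{\alpha}|)$ together with the $c_{0}$-property of $g$ and $h$; verifying that this pointwise coordinate construction assembles into a continuous $c_{0}(\lambda)$-valued map is the nontrivial final check.
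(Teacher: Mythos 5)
A preliminary remark: this survey states Theorem~14 without proof --- it is quoted from Gutev, Ohta and Yamazaki \cite{GOY-2} (their Theorems 4.3 and 4.6) --- so there is no in-paper argument to compare yours against, and I can only assess the proposal on its own merits. Your overall organization and the easy reductions are sound: $(3)\Rightarrow(1)$ via the constant mapping $F\equiv[-M,M]$, $(2)\Rightarrow(3)$ via $\widetilde F=Clos(conv(F(\cdot)))$ (Mazur's theorem plus the fact that the closed convex hull operation is nonexpansive for the Hausdorff distance), and $(4)\Rightarrow(1)$ with $\lambda=1$ are all correct.

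The genuine gaps are in $(1)\Rightarrow(2)$ and $(1)\Rightarrow(4)$, and it is essentially the same gap twice. First, coordinatewise extension does not yield continuity into a sup-normed space: if you embed a separable piece of $B$ into $\ell_\infty$ and extend each real coordinate of the mismatch $\delta_n$ separately by $C^*$-embeddedness, the resulting family of extensions need not be equicontinuous at points of $X\setminus A$, so the assembled map need not be continuous into $\ell_\infty$; exactly the same problem is what you defer as ``the nontrivial final check'' in $(1)\Rightarrow(4)$, and it is not a routine check --- it is the heart of the matter. Second, both your ``locally finite patching'' and the ``Michael-type patching'' inside the iteration presuppose partitions of unity subordinated to open covers of $X$. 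But $X$ here is an arbitrary space with no paracompactness (or even normality) assumed, and $A$ is an arbitrary subset; the whole point of the theorem is to characterize an embedding property of $A$ in $X$ without such hypotheses, so the standard Michael machinery is simply unavailable. This is precisely where the content of \cite{GOY-2} lies: their technique replaces partitions of unity by ``expansions'' of the compact-valued mapping and by a characterization of $C^*$-embedding in terms of simultaneously extending uniformly bounded, suitably indexed families of real-valued functions. Finally, note that $(4)$ is most naturally derived \emph{from} $(2)$ rather than by coordinatewise insertion: the order interval $F(x)=\{y\in c_0(\lambda): g(x)\le y\le h(x)\}$ is a compact convex subset of $c_0(\lambda)$ (it is totally bounded because $g(x),h(x)\in c_0(\lambda)$ --- this is exactly why $c_0(\lambda)$ appears in the statement), the mapping $x\mapsto F(x)$ is continuous, and a separator of $g|_A$ and $h|_A$ is a selection of $F|_A$; then $(2)$ hands you a continuous extension $\widehat f$ with $g\le\widehat f\le h$ on all of $X$, sidestepping the continuity-of-assembly problem entirely.
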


Here $c_{0}(\lambda)$ denotes the Banach space of all mappings $x$ from $\lambda$ to the reals such that the sets $\{\tau < \lambda:\,\,|x(\tau)| \geq \varepsilon\}$ are finite for all $\varepsilon>0$. Note that $c_{0}(1)=\mathds{R}$ and $c_{0}(\aleph_0)=c_0$.

\begin{theorem} [\cite{GOY-2}] \label{C-Emb}
For a subset $A$ of $X$ the following statements  are equivalent:
\begin{description}
\item{(1)} $A$ is $C$-embedded in $X$;
\item{(2)}  For every Banach space $B$,  every lower $\sigma$-continuous mapping $F:X \to B$ with closed convex values $F(x)$ and  every continuous selection $g: A \to B$ of the restriction $F|_A$, there is a continuous extension $f: X \to B$ of $g$ which is also the selection of $F$;
\item{(3)} The same as $(2)$, but without convexity of $F(x)$ and without $f$ being a selection of $F$; 
\item{(4)} For every cardinal $\lambda$,  every continuous $g_n,h_n: X \to c_{0}(\lambda), n \in \mathds{N},$ with $\liminf g_n(a) \leq \limsup h_n(a),  a \in A,$ and every  $f: A \to c_{0}(\lambda)$ which separates $\liminf g_n(a)$ and $\limsup h_n(a)$, there exists a continuous extension $\widehat{f}:X \to c_{0}(\lambda)$ of $f$.
\end{description}
\end{theorem}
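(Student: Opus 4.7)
The plan is to prove the cyclic chain $(1)\Rightarrow(2)\Rightarrow(3)\Rightarrow(4)\Rightarrow(1)$, in direct analogy with the scheme for Theorem \ref{C-starEmb}, with the essential upgrade from bounded objects to sequential/$\sigma$-type objects corresponding to the passage from $C^{*}$-embedding to $C$-embedding. The basic heuristic is that a bounded datum is encoded by a single pair (continuous map, continuous selection), while an unbounded one must be decomposed into a countable tower controlled by a real-valued ``height'' function whose extendability is exactly the $C$-embedding property.

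For $(1)\Rightarrow(2)$, I would interpret lower $\sigma$-continuity of $F:X\to B$ as the existence of a countable closed cover $\{X_n\}$ of $X$ with each $F|_{X_n}$ LSC (or, equivalently, $F=\bigcup F_n$ with each $F_n$ LSC closed-convex-valued, dominated by $F$). The $C$-embedding of $A$ gives a continuous extension of the real-valued ``level'' map indicating in which $X_n$ a point sits, so that $A\cap X_n$ is $C^{*}$-embedded in a suitable closed saturation; on each piece Theorem \ref{ConvSel} applied to the intersection of $F|_{X_n}$ with a shrinking neighborhood of the prescribed continuous selection $g$ produces a local selection-extension $f_n$ of $F|_{X_n}$ restricting to $g$ on $A\cap X_n$. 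These are then patched by a Michael-type inductive convex combination using a partition of unity subordinate to the cover, where again $C$-embedding is what allows the partition of unity on $A$ to be extended to $X$ without collapse.

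For $(2)\Rightarrow(3)$, apply (2) to the constant convex-closed-valued map $F\equiv B$: any continuous $g:A\to B$ is then trivially a continuous selection of $F|_A$, and (2) supplies a continuous extension $\widehat f:X\to B$; dropping the selection requirement this is exactly (3). For $(3)\Rightarrow(4)$, given the sequences $g_n,h_n:X\to c_0(\lambda)$ and the separating $f:A\to c_0(\lambda)$, define
\[
F(x)=\bigl\{y\in c_0(\lambda)\,:\,\liminf_n g_n(x)(\tau)\le y(\tau)\le \limsup_n h_n(x)(\tau),\ \tau<\lambda\bigr\},
\]
on the set where this box is nonempty and lies in $c_0(\lambda)$. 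Since $\liminf$ of continuous maps is a countable supremum of infima of continuous maps (and dually for $\limsup$), $F$ is lower $\sigma$-continuous with closed values; the separation hypothesis means $f(a)\in F(a)$ for $a\in A$, so (3) produces the required extension. For $(4)\Rightarrow(1)$, take $\lambda=1$ so that $c_0(\lambda)=\mathds{R}$; given a continuous $f:A\to\mathds{R}$ that we wish to extend, choose any two continuous sequences on $X$ with $g_n\to -\infty$ and $h_n\to+\infty$ uniformly on compacta (e.g.\ constants $\mp n$). These trivially satisfy $\liminf g_n(a)\le f(a)\le \limsup h_n(a)$, and (4) yields a continuous extension of $f$, which is $C$-embedding.

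The main obstacle is the implication $(1)\Rightarrow(2)$. Three conditions must be preserved simultaneously across the patching step: continuity of $f$, the selection condition $f(x)\in F(x)$, and the boundary condition $f|_A=g$. Lower $\sigma$-continuity does not give a single global LSC structure, so the Michael-type selection cannot be invoked once and for all; instead one must glue local selections across the pieces $X_n$, and it is precisely at the interfaces that $F(x)$ might fail to be LSC. The $C$-embedding of $A$ is what allows one to replace an awkward $\sigma$-decomposition of $X$ by a genuinely continuous real-valued ``height'' on $X$ extending a prescribed height on $A$, turning the $\sigma$-structure into a tractable tower on which each step is a standard Michael extension-selection. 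The subtle part is to arrange these choices compatibly so that the resulting uniform limit remains a selection of $F$ everywhere, rather than merely of $\mathrm{Clos}(F)$.
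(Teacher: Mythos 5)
You should first be aware that the paper contains no proof of this statement: it is a survey, and Theorem~\ref{C-Emb} is quoted verbatim from Gutev, Ohta and Yamazaki \cite{GOY-2} (their Theorem~4.6), so there is nothing in the text to compare your argument against except the definitions the survey does supply. Measured against those definitions, your proposal has one structural flaw that undermines the hard implication. The survey defines lower $\sigma$-continuity of $F$ as the property of being the \emph{pointwise closure of a union of countably many continuous compactvalued mappings}, i.e.\ $F(x)=Clos\bigl(\bigcup_n F_n(x)\bigr)$ with each $F_n:X\to B$ continuous and compactvalued. This is a countable decomposition of the \emph{values}, not of the \emph{domain}. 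Your entire plan for $(1)\Rightarrow(2)$ — a closed cover $\{X_n\}$ of $X$ with $F|_{X_n}$ LSC, a real-valued ``height'' function recording which piece a point lies in, and a patching of local selections across the interfaces — is built on the wrong notion and does not get off the ground: there are no pieces $X_n$ to patch over, and the actual difficulty is to exploit the tower $F_1(x)\subset F_1(x)\cup F_2(x)\subset\cdots$ inside each value together with the countable insertion property (4) that characterizes $C$-embedding.

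Two further concrete problems. In $(2)\Rightarrow(3)$ you apply (2) to the constant mapping $F\equiv B$; but a countable union of compacta has separable closure, so for nonseparable $B$ the constant mapping $B$ is \emph{not} lower $\sigma$-continuous and (2) does not apply to it. The repair is to apply (2) to $G(x)=Clos(conv(F(x)))$: writing $F(x)=Clos(\bigcup_n F_n(x))$ and replacing $F_n$ by the increasing sequence $F_1\cup\cdots\cup F_n$, one checks that $G$ is again the pointwise closure of a union of countably many continuous compactvalued (now convex) mappings, and $g$ is still a selection of $G|_A$. In $(3)\Rightarrow(4)$ the order-interval mapping $F(x)=\{y:\liminf g_n(x)\le y\le\limsup h_n(x)\}$ may be empty off $A$ (the inequality is only hypothesized on $A$), need not take values in $c_0(\lambda)$, and its lower $\sigma$-continuity in the above sense is asserted rather than verified — such a ``box'' in $c_0(\lambda)$ is far from compact, so it is not clear it is a pointwise closure of countably many continuous compactvalued maps. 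The direction $(4)\Rightarrow(1)$ with $\lambda=1$ and constants $\mp n$ is the one step that is essentially fine, provided the formulation of (4) tolerates infinite $\liminf$/$\limsup$.
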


Here {\it 
lower $\sigma$-continuity}\index{lower $\sigma$-continuous}
of a multivalued map means that it is the pointwise closure of a
union of countably many continuous compactvalued mappings.
Yet another characterization of $C$-embeddability can be formulated via mappings into open convex subsets of a Banach spaces.

\begin{theorem} [\cite{GOY-2}] \label{CC-Emb}
For a subset $A$ of $X$ the following statements  are equivalent:
\begin{description}
\item{(1)} $A$ is $C$-embedded in $X$;
\item{(2)}  For every Banach space $B$, every open convex $Y \subset B$, every lower $\sigma$-continuous mapping $F:X \to Clos(Y)$ with closed convex values $F(x)$ and  every continuous selection $g: A \to B$ of $F|_A$ with $g^{-1}(Y)=A \cap F^{-1}(Y)$, there is a continuous extension $f: X \to B$ of $g$ which is also the selection of $F$ and $f^{-1}(Y)=F^{-1}(Y)$; 
\item{(3)} The same as $(2)$, but with continuous compactvalued mapping $F$.
\end{description}
\end{theorem}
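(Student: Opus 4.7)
The plan is to show $(2) \Rightarrow (3) \Rightarrow (1) \Rightarrow (2)$.

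The implication $(2) \Rightarrow (3)$ is immediate: any continuous compactvalued $F$ is trivially the pointwise closure of the constant sequence $\{F\}$, hence lower $\sigma$-continuous in the sense of the paper, so (3) is a special case of (2). For $(3) \Rightarrow (1)$, let $\varphi : A \to \R$ be continuous and fix a homeomorphism $\sigma : \R \to (0,1)$. I apply (3) with $B = \R$, $Y = (0,1)$, the constant compactvalued mapping $F \equiv [0,1] = \mathrm{Clos}(Y)$, and the continuous selection $g := \sigma \circ \varphi : A \to (0,1)$. The compatibility condition $g^{-1}(Y) = A \cap F^{-1}(Y)$ holds trivially, since both sides equal $A$. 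The extension $f : X \to [0,1]$ supplied by (3) satisfies $f^{-1}(Y) = F^{-1}(Y) = X$, so $f(X) \subset (0,1)$, and $\sigma^{-1} \circ f$ is the desired continuous extension of $\varphi$ to $X$.

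For the main implication $(1) \Rightarrow (2)$, set $U := F^{-1}(Y)$. Writing $F = \overline{\bigcup_n F_n}$ pointwise with each $F_n$ continuous compactvalued, one has $U = \bigcup_n F_n^{-1}(Y)$, a union of open sets (by LSC of each $F_n$), so $U$ is open; by hypothesis $A \cap U = g^{-1}(Y)$, and for $x \notin U$ we have $F(x) \subset \mathrm{Clos}(Y) \setminus Y$. I apply Theorem~15 in its $(1) \Rightarrow (2)$ direction (available since $A$ is $C$-embedded) to obtain a continuous selection $f_0 : X \to B$ of $F$ extending $g$; automatically $f_0(x) \notin Y$ for $x \notin U$. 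The remaining task is to modify $f_0$ on $U$ so that the new extension $f$ takes values in $Y$ on $U$, still selects from $F$, and still agrees with $g$ on $A$.

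For the modification, observe that $G(x) := F(x) \cap Y$ on $U$ is LSC, convex-valued, with values closed in $Y$, so by Lemma~3 it admits a continuous selection $p : U \to Y$; using the $C$-embedding of $A$ in combination with a partial-selection extension via Lemma~3 one may further arrange $p|_{A \cap U} = g|_{A \cap U}$. Using the representation $U = \bigcup_n F_n^{-1}(Y)$, construct a continuous cutoff $\lambda : X \to [0,1]$ with cozero set equal to $U$ and with $\lambda(x)\|p(x) - f_0(x)\| \to 0$ as $x \to \partial U$ from within $U$, and set $f := (1-\lambda) f_0 + \lambda p$ on $U$, $f := f_0$ on $X \setminus U$. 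Since $Y$ is open convex, any strictly convex combination of a point of $\mathrm{Clos}(Y)$ with a point of $Y$ lies in $Y$; hence $f(x) \in F(x) \cap Y$ for $x \in U$, $f$ is continuous, extends $g$, is a selection of $F$, and satisfies $f^{-1}(Y) = U = F^{-1}(Y)$.

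The main obstacle is the joint construction of $p$ and $\lambda$ with all the required properties: making $\lambda$ strictly positive on the possibly non-cozero open set $U$ while simultaneously achieving the decay $\lambda\|p - f_0\| \to 0$ at $\partial U$ requires delicately combining the $F_\sigma$-representation of $U$ coming from the lower $\sigma$-continuity of $F$ with the $C$-embedding of $A$ (which extends distance-like real-valued data from $A$ to $X$). A slicker alternative is to replace $F$ by a lower $\sigma$-continuous subselection $F^* \subset F$ whose values on $U$ are closed convex subsets of $Y$ (obtained by intersecting $F(x)$ with expanding closed ``$\varepsilon$-interior'' subsets of $Y$), and then apply Theorem~15 directly to $F^*$ with the partial selection $g$.
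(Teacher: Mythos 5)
First, a remark on scope: this survey only \emph{states} the theorem as a quoted result of Gutev, Ohta and Yamazaki \cite{GOY-2} and contains no proof of it, so your argument can only be measured against the original paper; there the equivalences are obtained via set-valued expansions and $c_0(\lambda)$-valued insertion theorems, not by the patching scheme you propose. Your cycle $(2)\Rightarrow(3)\Rightarrow(1)\Rightarrow(2)$ is sensibly organized, and the two easy legs are correct: a continuous compactvalued convexvalued mapping is trivially lower $\sigma$-continuous, so $(3)$ is a special case of $(2)$; and your instantiation $B=\mathbb{R}$, $Y=(0,1)$, $F\equiv[0,1]$, $g=\sigma\circ\varphi$ cleanly recovers $C$-embeddedness from $(3)$.

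The implication $(1)\Rightarrow(2)$, however, has a genuine gap. Your modification step invokes Lemma 3 to select from $G(x)=F(x)\cap Y$ over $U=F^{-1}(Y)$, but Lemma 3 requires a \emph{paracompact} domain, and in Theorems 14--16 the space $X$ carries no such hypothesis --- the whole point of the lower $\sigma$-continuity assumption is to compensate for the absence of paracompactness, so the open set $U$ is not a space on which any of the survey's selection lemmas apply. The subsequent claims are likewise unsupported: arranging $p|_{A\cap U}=g|_{A\cap U}$ would require $A\cap U$ to be suitably embedded in $U$, which does not follow from $A$ being $C$-embedded in $X$; and the cutoff $\lambda$ with cozero set exactly $U$ and the decay $\lambda\|p-f_0\|\to 0$ at the boundary of $U$ is precisely the hard analytic content (you correctly observe that $U=\bigcup_n F_n^{-1}(Y)$ is a countable union of cozero sets, but the decay estimate needs $p$ and $f_0$ to be jointly controlled near the boundary, which nothing in your construction provides). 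Your ``slicker alternative'' --- shrinking $F$ to $F^*(x)=F(x)\cap Y_{\varepsilon(x)}$, with $Y_{\varepsilon}$ the closed convex $\varepsilon$-interior of $Y$, and then applying Theorem 15 to $F^*$ --- is much closer to a workable strategy, but it is only a sentence: one must produce a continuous $\varepsilon(\cdot)\ge 0$ positive exactly on $U$, dominated on $A$ by $d(g(\cdot),B\setminus Y)$ (this is where $C$-embeddedness must actually do work, extending real-valued data from $A$ to $X$), small enough that $F(x)\cap Y_{\varepsilon(x)}\neq\emptyset$, and one must then verify that $F^*$ is still lower $\sigma$-continuous, which is not automatic under such intersections. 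As it stands, $(1)\Rightarrow(2)$ is a plan rather than a proof.
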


One more recent paper in which selections and extensions are simultaneously studied is  due to
Michael \cite{M11}. Below we unify 3.1 and 4.1 from \cite{M11}.

\begin{theorem} [\cite{M11}]\label{M11}
For a metrizable space $Y$  the following statements  are equivalent:
\begin{description}
\item{(1)} $Y$ is completely metrizable;
\item{(2)} For every paracompact domain $X$ and every LSC mapping $F:X \to Y$ with closed values, there exists a LSC selection $G:X \to Y$ with compact values;
\item{(3)} For every closed subset $A$ of a paracompact domain $X$ and  every continuous $g: A \to Y$, there exists a LSC mapping $G:X \to Y$ with compact values which extends $g$;
\item{(4)} Similar to $(2)$ but for USC selection $H:X \to Y$ with compact values;
\item{(5)} Similar to $(3)$ but for USC extension $H:X \to Y$ with compact values.
\end{description}
\end{theorem}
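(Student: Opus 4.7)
My plan is to establish the two cycles
$(1) \Rightarrow (2) \Rightarrow (3) \Rightarrow (1)$ and
$(1) \Rightarrow (4) \Rightarrow (5) \Rightarrow (1)$,
from which the five-way equivalence follows.

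For $(1) \Rightarrow (2)$ and $(1) \Rightarrow (4)$ I would apply Theorem \ref{CompSel} directly: for any LSC closed-valued $F:X\to Y$ from a paracompact $X$ into a completely metrizable $Y$, that theorem supplies both a compact-valued USC selection $H$ and a compact-valued LSC selection $G \subset H$, which are exactly the selections demanded by $(2)$ and $(4)$.

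For $(2) \Rightarrow (3)$ and $(4) \Rightarrow (5)$ I would use the extension-as-selection reduction recalled at the beginning of Section \ref{sec:3}. Given a closed $A \subset X$ and continuous $g:A \to Y$, set $F_A(x) = \{g(x)\}$ for $x \in A$ and $F_A(x) = Y$ for $x \notin A$. A short check (writing $g^{-1}(U) = A \cap V$ for any open $U \subset Y$ and some open $V \subset X$, and using that $A$ is closed) shows that $F_A$ is LSC with closed values. Applying $(2)$ or $(4)$ then yields a compact-valued LSC or USC selection $G$ of $F_A$; since $G(a) \subset \{g(a)\}$ is nonempty for every $a \in A$, $G$ coincides with $g$ on $A$ and is the required extension.

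The hard step is $(3) \Rightarrow (1)$ (and symmetrically $(5) \Rightarrow (1)$). Here I would embed $Y$ into a metric completion $\widetilde Y$ and try to show, via the Aleksandrov theorem invoked earlier in the survey, that $Y$ is a $G_{\delta}$-subset of $\widetilde Y$. Taking $X = \widetilde Y$ (paracompact), the strategy is to construct a countable family of witness pairs $(A_n, g_n)$ --- with $A_n$ closed in $\widetilde Y$, $A_n \subset Y$ (so that each $A_n$ is automatically complete), and $g_n$ a natural continuous map into $Y$ such as the inclusion --- and to combine the resulting compact-valued extensions $G_n : \widetilde Y \to Y$ furnished by $(3)$ to realise $\widetilde Y \setminus Y$ as an $F_{\sigma}$-set. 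Each $G_n(x)$ is compact in $Y$ and therefore closed in $\widetilde Y$, which provides the crucial control at points of $\widetilde Y \setminus Y$. The principal obstacle is precisely the choice of this countable family: one must arrange that for every $p \in \widetilde Y \setminus Y$ some $G_n$ detects the failure of $p$ to lie in $Y$, and that this detection translates into an open layer of the $G_{\delta}$-decomposition; in the USC variant $(5) \Rightarrow (1)$ the preimage of closed sets under $G_n$ replaces that of open sets, but the compactness-of-values trick driving the argument is the same.
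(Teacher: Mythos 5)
Your implications $(1)\Rightarrow(2)$, $(1)\Rightarrow(4)$, $(2)\Rightarrow(3)$ and $(4)\Rightarrow(5)$ are correct and are the expected route: Theorem~\ref{CompSel} delivers the USC and LSC compact-valued selections simultaneously, and the reduction of extension to selection via $F_A$ (with $F_A^{-1}(U)=(X\setminus A)\cup V$ open because $A$ is closed) is exactly the device recalled at the start of Section~\ref{sec:3}. Bear in mind that the survey itself gives no proof of this theorem --- it is quoted from Michael \cite{M11} --- so everything rests on whether your sketch of $(3)\Rightarrow(1)$ and $(5)\Rightarrow(1)$ can be completed.

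As described, it cannot; this is a genuine gap rather than a missing detail. Your witnesses are pairs $(A_n,g_n)$ with $A_n\subset Y$ closed in the completion $\widetilde Y$ and $g_n$ essentially the inclusion, and you hope the resulting extensions $G_n:\widetilde Y\to Y$ will detect the points of $\widetilde Y\setminus Y$. But the hypothesis only constrains $G_n$ on $A_n$, where $G_n(a)=\{g_n(a)\}$; at a point $p\in\widetilde Y\setminus Y$ the value $G_n(p)$ is an arbitrary nonempty compact subset of $Y$ with no link whatsoever to $p$, so there is no mechanism by which ``some $G_n$ detects the failure of $p$ to lie in $Y$.'' The most your compactness observation yields is that the set $E_n=\{p\in\widetilde Y:\ p\in G_n(p)\}$ is sandwiched between $A_n$ and $Y$ (and is closed in $\widetilde Y$ in the USC case, via closedness of the graph); assembling countably many such sets can only exhibit $Y$ as an $F_\sigma$ in $\widetilde Y$ --- and even that only if $Y=\bigcup_n A_n$ were already known --- whereas you need $Y$ to be a $G_\delta$, and $F_\sigma$ does not suffice ($\mathbb{Q}$ is $F_\sigma$ in $\R$ but not completely metrizable). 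The logical structure of the true argument is the reverse of yours: one assumes $Y$ is \emph{not} $G_\delta$ in $\widetilde Y$ and then constructs a single pathological test pair $(X,A,g)$ (or a test LSC mapping built from a sieve of open covers of $\widetilde Y$) for which a compact-valued USC or LSC extension would force a sequence of points of $Y$ to converge, inside a compact subset of $Y$, to a point of $\widetilde Y\setminus Y$ --- a contradiction. This sieve/completeness machinery, in the spirit of van Mill--Pelant--Pol \cite{MPP} and Michael \cite{M10,M11}, is the missing idea, and it cannot be recovered from the existence of extensions of countably many tame inclusion maps.
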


The implication $(5) \Rightarrow (1)$ is true in a more general case, namely when $Y$ is Czech-complete and $X$ is a paracompact $p$-space \cite{NV_Ext,Psz}.

To finish the section we return once again to comparison of the Dugundji extension theorem and the Michael selection theorem. Arvanitakis \cite{Arv} proposed a uniform approach to proving both of these theorems. He worked with paracompact $k$-domains. Recall that a Hausdorff space $X$ is called a 
$k$-{\it space}\index{$k$-space}
if closedness of $A \subset X$ coincides with closedness of $A \cap K$ for all compact $K \subset X$. 
Below,
completness of a locally convex vector space $E$ means that the closed convex hull operation preserves the compactness of subsets in $E$. Next, $C(T;E)$ denotes the vector space
of all continuous mappings from a topological space $T$ into $E$,
endowed by the topology of uniform convergence on compact subsets.

\begin{theorem} [\cite{Arv}]\label{Arvani}
Let $X$ be a paracompact $k$-space, $Y$ a complete metric space, $E$ a locally convex complete vector space, and $F:X \to Y$ a LSC mapping. Then there exists a linear continuous operator $S:C(Y;E) \to C(X;E)$ such that
$$ S(f)(x) \in Clos(conv(f(F(x)))), \quad f \in C(Y;E), \quad x \in X.$$
\end{theorem}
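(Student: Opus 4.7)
The plan is to produce $S$ by an averaging construction. Specifically, I would equip $X$ with a continuous surjection $\pi:Z\to X$ from a zero-dimensional paracompact space $Z$ together with a weak-$\ast$ continuous family of Radon probability measures $\{\sigma_x\}_{x\in X}$ on $Z$ with compact supports $\mathrm{supp}(\sigma_x)\subset\pi^{-1}(x)$, and a continuous singlevalued selection $g:Z\to Y$ of the pullback multifunction $F\circ\pi$. Given this data, I would set
$$
S(f)(x)\;:=\;\int_Z f(g(z))\,d\sigma_x(z),\qquad x\in X,\ f\in C(Y;E),
$$
interpreted as a vector-valued integral into the complete locally convex space $E$. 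Linearity of $S$ in $f$ is immediate. For the containment $S(f)(x)\in Clos(conv(f(F(x))))$, note that the pushforward $g_{\ast}\sigma_x$ is a probability measure on $Y$ with compact support contained in $g(\pi^{-1}(x))\subset F(x)$, so by the standard fact that the integral of a continuous $E$-valued function against a compactly supported probability measure lies in the closed convex hull of the image of the support, $S(f)(x)=\int_Y f\,d(g_\ast\sigma_x)\in Clos(conv(f(\mathrm{supp}(g_\ast\sigma_x))))\subset Clos(conv(f(F(x))))$; the completeness hypothesis on $E$ is exactly what ensures the closed convex hull of a compact set is compact and the integral well-defined.

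To produce the selection $g$ I would first replace $F$ by its pointwise closure $\tilde F(x):=Clos_Y(F(x))$, which does not enlarge $Clos(conv(f(F(x))))$ since $f$ is continuous; then $\tilde F\circ\pi:Z\to Y$ is LSC and closed-valued, $Z$ is zero-dimensional paracompact, and $Y$ is completely metrizable, so Theorem \ref{ZeroDim} delivers $g$ with $g(z)\in\tilde F(\pi(z))$ for every $z\in Z$. To produce the apparatus $(\pi,\sigma)$ I would use a Milyutin-type construction assembled from compact pieces: by the $k$-space hypothesis pick a locally finite cover of $X$ by compacta $\{K_\alpha\}$, invoke the classical compact Milyutin theorem on each $K_\alpha$ to obtain a zero-dimensional compactum $Z_\alpha$, a surjection $\pi_\alpha:Z_\alpha\to K_\alpha$, and a weak-$\ast$ continuous family $\{\sigma^\alpha_x\}$ of probability measures on $\pi_\alpha^{-1}(x)$; form $Z:=\bigsqcup_\alpha Z_\alpha$ (still zero-dimensional and paracompact), and glue the measures through a continuous partition of unity $\{\varphi_\alpha\}$ on $X$ subordinate to an enlargement of the cover by setting $\sigma_x:=\sum_\alpha\varphi_\alpha(x)\sigma^\alpha_x$, with $\pi$ defined piecewise from the $\pi_\alpha$.

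Continuity of $S\colon C(Y;E)\to C(X;E)$ in the compact-open topologies will reduce, via the $k$-space assumption on $X$, to continuity on each compact $K\subset X$. On such a $K$ the local finiteness built into the construction forces $\bigcup_{x\in K}\mathrm{supp}(\sigma_x)$ to sit in a compact subset of $Z$, so that $K^\prime:=g(\bigcup_{x\in K}\mathrm{supp}(\sigma_x))$ is compact in $Y$; weak-$\ast$ continuity of $x\mapsto\sigma_x$ then yields pointwise continuity of $S(f)|_K$ for fixed $f$, and uniform convergence on $K^\prime$ yields the operator continuity. I expect the main obstacle to be the careful assembly of the Milyutin apparatus on a paracompact (rather than compact) $k$-space: one must verify that gluing the families $\sigma^\alpha_x$ through the partition of unity preserves both weak-$\ast$ continuity and the support condition $\mathrm{supp}(\sigma_x)\subset\pi^{-1}(x)$, and that the disjoint union $Z$ retains the zero-dimensional paracompact structure required by Theorem \ref{ZeroDim}; the $k$-space hypothesis on $X$ and the completeness of $E$ are the precise tools that make these verifications go through.
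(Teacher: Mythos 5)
Your overall architecture --- average a zero-dimensional selection against a Milyutin system of measures --- is sound and is in fact the route the survey attributes to Valov's generalization (Theorem 18), who uses Milyutin mappings, averaging operators and the universality of the zero-dimensional selection Theorem 2; the survey explicitly notes that Arvanitakis' own proof of the statement at hand proceeds differently, via regular ``exave'' operators \emph{without} explicit probability measures or Milyutin maps. The steps you carry out once the apparatus $(\pi,\{\sigma_x\},g)$ is in hand (passing to the pointwise closure $\tilde F$, applying Theorem \ref{ZeroDim} on $Z$, the barycentric containment $S(f)(x)\in Clos(conv(f(F(x))))$, and the reduction of continuity of $S$ to compacta via the $k$-space hypothesis) are all correct.

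The genuine gap is in your construction of the Milyutin apparatus itself. You propose to cover $X$ by a \emph{locally finite family of compacta} $\{K_\alpha\}$ and glue compact Milyutin systems through a partition of unity. But a Hausdorff space admitting a locally finite cover by compact sets is automatically locally compact: each point has a neighborhood meeting only finitely many $K_\alpha$, hence contained in their (compact) union. The theorem, however, applies to paracompact $k$-spaces that are far from locally compact --- every metric space is a paracompact $k$-space, so for instance an infinite-dimensional Banach space, or any infinite-dimensional CW-complex, falls under the hypotheses, and for such $X$ no locally finite cover by compacta exists. So your assembly of $(\pi,\sigma)$ breaks down exactly in the cases that make the theorem interesting. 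The correct tool is the existence of zero-dimensional Milyutin mappings over \emph{arbitrary} paracompact spaces, established in \cite{RSS} (and exploited by Valov via the functors $P_\beta$ and $\widehat P$); its construction is not a gluing of compact pieces but goes through the zero-dimensional selection theorem applied to the multivalued inverse of a suitable map from a zero-dimensional paracompact space, together with a functorial choice of measures. If you replace your gluing step by an appeal to that result (checking in addition that the resulting $\pi$ is perfect, so that $\bigcup_{x\in K}\mathrm{supp}(\sigma_x)$ is compact for compact $K$ --- which your continuity argument needs), the rest of your proof goes through.
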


The proof is based on study of regular 
{\it exaves} (extensions/averagings) operators but without any explicit use of 
probability measures and 
Milyutin mappings\index{Milyutin mapping}.
Recently, Valov \cite{VArv} suggested a generalization of this
theorem to the case of an arbitrary paracompact domain. He extensively exploited in its full
force the technique of functors $P_\beta$ and $\widehat{P}$, 
averaging operators\index{averaging operator},
Milyutin mappings, and so on. He also used the universality of the zero-dimensional selection Theorem 2, i.e. the fact obtained in \cite{RSS} that Theorem 2 implies both Theorems 1 and 3.

\begin{theorem}[\cite{VArv}] \label{ValArvani}
Let $X$ be a paracompact space, $Y$ a complete metric space and $F:X \to Y$ a LSC mapping. Then:
\begin{description}
\item{(1)}  For every locally convex complete vector space $E$ there exists a linear  operator $S_b:C_{b}(Y;E) \to C_{b}(X;E)$ such that
$$ S(f)(x) \in Clos(conv(f(F(x)))), \quad f \in C(Y;E), \quad x \in X,$$
and such that $S_b$ is continuous with respect to the uniform topology and the topology
of uniform convergence on compact subsets;
\item{(2)} If $X$ is a $k$-space
and
$E$ is a Banach space then $S_b$ can be continuously extended (with respect to both topologies) to a linear  operator $S:C(Y;E) \to C(X;E)$ with the  property  that $ S(f)(x) \in Clos(conv(f(F(x))))$.
\end{description}
\end{theorem}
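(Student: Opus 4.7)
The plan is to factor the problem through a zero-dimensional Milyutin cover of $X$, apply the zero-dimensional Theorem~2 there, and then average back to $X$ by integrating against a continuous family of probability measures. Concretely, invoke the functorial constructions $P_\beta$ and $\widehat{P}$ to obtain a perfect surjection $\pi:Z\to X$ with $Z$ zero-dimensional paracompact, together with an averaging operator realized as a weak$^{*}$-continuous assignment $\mu:X\to P_\beta(Z)$ of Radon probability measures with $\mathrm{supp}(\mu(x))\subseteq \pi^{-1}(x)$. This is exactly the Milyutin-type data whose construction over a genuinely paracompact base (rather than compact as in the classical setting, or $k$-space as in Arvanitakis) is the technical heart of Valov's extension.

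Form the pullback $\widetilde{F}:=F\circ\pi:Z\to Y$, which is LSC with values in the complete metric space $Y$. Passing to pointwise closures if necessary, Theorem~2 yields a continuous singlevalued selection $\phi:Z\to Y$ of $\widetilde{F}$. For $f\in C_b(Y;E)$ define
\[
S_b(f)(x)\;=\;\int_{\pi^{-1}(x)} f(\phi(z))\,d\mu(x)(z),
\]
interpreted as a Pettis-type integral in $E$. Linearity in $f$ is obvious; since $\mu(x)$ is a probability measure and $f\circ\phi$ is bounded and continuous, $S_b(f)(x)$ lies in $\mathrm{Clos}(\mathrm{conv}(f(\phi(\pi^{-1}(x)))))\subseteq \mathrm{Clos}(\mathrm{conv}(f(F(x))))$, giving the required selection property.

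Continuity of $x\mapsto S_b(f)(x)$ follows from the weak$^{*}$-continuity of $\mu$ applied to the bounded continuous integrand $f\circ\phi$, and the resulting $E$-valued map is bounded because each $\mu(x)$ is a probability measure. Continuity of $S_b$ in the uniform topology is immediate from a pointwise estimate of defining seminorms; continuity in the topology of uniform convergence on compacta uses that $\pi$ is perfect, so $\pi^{-1}(K)\subset Z$ and hence $\phi(\pi^{-1}(K))\subset Y$ are compact for compact $K\subset X$, and uniform convergence of $f_\alpha\to f$ on the latter set transfers through the integral to $K$. For part~(2), with $X$ a paracompact $k$-space and $E$ a Banach space, exactly the same compactness argument allows the identical integral formula to define $S(f)(x)$ for unbounded $f\in C(Y;E)$: on any compact $C\subseteq X$, $f$ is bounded on $\phi(\pi^{-1}(C))$, so the integral is well-defined on $C$, and the $k$-property of $X$ upgrades continuity of $S(f)$ on every compactum to global continuity.

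The main obstacle is the construction in the first paragraph: producing a weak$^{*}$-continuous Milyutin-type family $\mu:X\to P_\beta(Z)$ over a paracompact (not compact, not $k$-space) domain, together with a zero-dimensional $Z$ supporting it, is the substantive new ingredient compared with Arvanitakis and uses the full machinery of $P_\beta$, $\widehat{P}$, and the universality of Theorem~2 established in \cite{RSS}. A secondary subtlety is the vector-valued integral itself: the hypothesis that closed convex hulls of compacta in $E$ are compact (the paper's notion of completeness of $E$) is precisely what guarantees that the barycenter $\int f(\phi(z))\,d\mu(x)(z)$ exists as an element of $E$ rather than merely of some completion.
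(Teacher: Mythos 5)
Your proposal follows exactly the route the survey attributes to Valov: a zero-dimensional Milyutin surjection with an averaging family of compactly supported probability measures (the functors $P_\beta$, $\widehat{P}$), a selection of $F\circ\pi$ obtained from the zero-dimensional Theorem 2, and the barycentric integral formula for $S_b$ — the same scheme the paper itself displays for Zarichnyi's theorem. The paper gives no further proof beyond citing \cite{VArv} and naming these ingredients, and your reconstruction, including the correct identification of the Milyutin construction over a general paracompact base as the substantive new difficulty, is consistent with it.
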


Therefore
by taking $Y=E=B$ to be a Banach space, $F$ a mapping with closed convex values and $f=id|_Y,$
one can see that $S(f)$ is a selection of $F$: $S(id)(x) \in Clos(conv(F(x))),\,\, x\in X.$

Next, if $A$ is a
completely metrizable closed subspace of $X$, $E$ a locally convex complete vector space,
and $F=F_A$ a mapping defined by $F(x)=\{x\}, x\in A,$
and $F(x)=A,\, x \in X \setminus A,$ then we see  that $S_{b}(f)(x) \in Clos(conv(f(F(x)))),$
for any $f \in C_{b}(A;E)$ and hence $S_{b}(f)(x)=f(x)$, whenever $x \in A$. Therefore
$S_{b}(f)$ is an extension of $f$.
Thus the result is on the one hand stronger than the Dugundji theorem because $X$ can be nonmetrizable, but on the other hand it is weaker because $A$ should be completely metrizable and the result relates to $C_{b}(A;E)$,
not to $C(A;E)$.

As a corollary, the Banach-valued version of the celebrated
Milyutin theorem can be obtained:

\begin{theorem} [\cite{VArv}] \label{ArvanMil}
Let $X$ be an uncountable compact metric space, $K$ the Cantor set and $B$ a Banach space. Then $C(X;B)$ is isomorphic to $C(K;B)$.
\end{theorem}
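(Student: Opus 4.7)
The plan is to realize each of $C(X;B)$ and $C(K;B)$ as a complemented subspace of the other via two applications of Theorem~18(2), and then invoke the Pe{\l}czy{\'n}ski decomposition method to conclude isomorphism. The hypotheses of Theorem~18(2) are available in both applications: the compact metric space $X$ is automatically a paracompact $k$-space, the spaces $K$ and closed subsets of $X$ are complete metric, and $B$ is a Banach space.

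For the first complementation I would fix a continuous surjection $\pi\colon K \to X$ which is open; this can be obtained by the classical dyadic-tree construction of Milyutin maps for compact metric spaces. Openness of $\pi$ is equivalent to the fibre mapping $F_{\pi}\colon X \to K$, $F_{\pi}(x) = \pi^{-1}(x)$, being LSC (and its values are compact). Theorem~18(2) applied to $F_{\pi}$ yields a bounded linear operator $S\colon C(K;B) \to C(X;B)$ with $S(\phi)(x) \in \mathrm{Clos}(\mathrm{conv}(\phi(\pi^{-1}(x))))$. For $\phi = g \circ \pi$ with $g \in C(X;B)$, one has $\phi(\pi^{-1}(x)) = \{g(x)\}$ and thus $S(g \circ \pi) = g$. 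Hence $S$ is a left inverse of the isometric embedding $\pi^{*}\colon C(X;B) \hookrightarrow C(K;B)$, $g \mapsto g \circ \pi$, exhibiting $C(X;B)$ as a complemented subspace of $C(K;B)$.

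For the reverse complementation, pick a closed subset $A \subseteq X$ homeomorphic to $K$; such an $A$ exists by the Cantor-Bendixson theorem, which guarantees that every uncountable compact metric space contains a Cantor subset. Define the ``extension'' mapping $F_{A}\colon X \to A$ by $F_{A}(a) = \{a\}$ for $a \in A$ and $F_{A}(x) = A$ for $x \in X \setminus A$. This $F_{A}$ is LSC with nonempty compact values, so Theorem~18(2) produces a bounded linear operator $T\colon C(A;B) \to C(X;B)$ with $T(f)(a) = f(a)$ for $a \in A$. In other words, $T$ is a continuous linear extension operator, so the restriction $C(X;B) \to C(A;B)$ splits and $C(A;B)$, hence $C(K;B)$, is complemented in $C(X;B)$.

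Since $K \cong K \sqcup K$ we have $C(K;B) \cong C(K;B) \oplus C(K;B)$, and the stronger absorption $C(K;B) \cong \bigl( \sum_{n} C(K;B) \bigr)_{c_{0}} \oplus B$ follows from the homeomorphism between $K$ and the one-point compactification of $\bigsqcup_{n \in \mathbb{N}} K$. These properties verify the self-absorption hypothesis of the Pe{\l}czy{\'n}ski decomposition method, which converts the two one-sided complementations above into the desired isomorphism $C(K;B) \cong C(X;B)$. The main technical obstacle is the first complementation: Theorem~18(2) requires $F_{\pi}$ to be LSC, which translates into openness of $\pi$. Openness is not part of the abstract definition of a Milyutin map, but it is a property of the standard concrete constructions via refining dyadic partitions of $X$; alternatively, one may bypass openness by unpacking the averaging-operator and Milyutin-map machinery already internal to Valov's proof of Theorem~18. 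The extension step and the final decomposition are essentially formal once the averaging complementation is in place.
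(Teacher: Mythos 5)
Your overall architecture---mutual complementation of $C(X;B)$ and $C(K;B)$ followed by the Pe{\l}czy{\'n}ski decomposition method---is exactly the intended route: the survey states this theorem without proof, and Valov's derivation runs through regular averaging and extension operators in precisely this way. (A small labelling slip: the operator you need is the one from Theorem~19(2); Theorem~18 has no part (2).) The extension half is correct: $F_A$ is LSC with compact values in the complete metric space $A$, Theorem~19 yields a bounded linear extension operator, and hence $C(K;B)$ is complemented in $C(X;B)$. The absorption $C(K;B)\cong B\oplus\bigl(\sum_n C(K;B)\bigr)_{c_0}$ and the decomposition step are likewise standard and fine.

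The gap is in the first complementation. You are right that $F_{\pi}(x)=\pi^{-1}(x)$ is LSC precisely when $\pi$ is open; but no open continuous surjection of the Cantor set onto a positive-dimensional compactum such as $[0,1]$ exists, so the hypothesis you need is unattainable for most of the spaces $X$ covered by the theorem. Indeed, if $\pi:K\to X$ is open and $x\in U$ with $U$ open, cover the compact fiber $\pi^{-1}(x)$ by finitely many clopen subsets of $\pi^{-1}(U)$; their union $W$ is clopen, and $\pi(W)$ is simultaneously open (openness of $\pi$) and compact, hence a clopen neighborhood of $x$ inside $U$. Thus any open surjective image of $K$ is zero-dimensional, and the ``classical dyadic-tree construction'' cannot produce an open $\pi$ onto $[0,1]$. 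What the argument actually requires is a Milyutin surjection $\pi:K\to X$, i.e.\ one admitting a continuous assignment $x\mapsto m(x)\in P(K)$ of probability measures with $supp(m(x))\subset\pi^{-1}(x)$---equivalently, for your purposes, an LSC compact-valued selection $G$ of the fiber map, to which Theorem~19(2) can be applied in place of $F_{\pi}$ to give $S(g\circ\pi)=g$. The existence of such a $\pi$ for every compact metric $X$ is a genuine theorem (Ditor; cf.\ the zero-dimensional Milyutin maps of \cite{RSS}), not a feature of the naive dyadic construction. Your closing remark that one can ``bypass openness by unpacking the averaging-operator and Milyutin-map machinery'' correctly locates where the repair lives, but that machinery is the missing nontrivial ingredient rather than an optional alternative, so as written the averaging half of the proof does not go through.
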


\section{Selection characterizations of domains}
\label{sec:4}

Theorem 1 states that assumptions $(1)-(5)$ imply the existence of selections of a multivalued mapping $F$. Conversely, assumptions $(2)-(5)$ together with existence of selections imply the condition $(1)$ that a domain $X$ is a paracompact space. In other words,
Theorem 1 gives a {\it selection characterization of paracompactness}. By varying the types of the range Banach spaces $B$, types of families of convex subsets of $B$, types of continuity of $F$, etc. one can try to find a selection characterization of some other topological types of domains. Originally, Michael \cite{Sel-1} found such types of characterization for normality, collectionwise normality, normality and countable paracompactness, and
perfect normality. Below we concentrate on recent results in this direction.

Gutev, Ohta and Yamazaki \cite{GOY-1} obtained selection characterizations for three classes of domains inside the class of all 
$\lambda$-collectionwise normal spaces\index{$\lambda$-collectionwise normal space}.
Recall that this property means that for each discrete family $\{F_{\gamma}\}_{\gamma \in \Gamma}$ of
closed subsets with $\left|\Gamma \right| \leq \lambda$ there is a discrete family $\{G_{\gamma}\}_{\gamma \in \Gamma}$ of open sets such that $F_{\gamma} \subset G_{\gamma}$.
Note that the equivalence of $(1)$ and $(2)$ in Theorem 21
was proposed by Michael \cite{Sel-1} 
(cf. the discussion concerning the proofs in Ch. II of \cite{RS}).
We also observe that $(4)$ in  Theorems 21-13 resembles the classical Dowker separation theorem.

\begin{theorem}[\cite{GOY-1}]\label{GOY_1}
Let $\lambda$ be an infinite cardinal. Then for any $T_1$-space $X$ the following statements  are equivalent:
\begin{description}
\item{(1)} $X$ is $\lambda-$collectionwise normal;
\item{(2)} For every Banach space $B$ of the weight less than or equal to $\lambda$
and every LSC mapping 
$F: X \to B$ whose values $F(x)$ are convex compacta, or $F(x)=B$, there exists a continuous singlevalued selection of $F$;
\item{(3)}  Same as $(2)$ but for the Banach space $B=c_{0}(\lambda)$;
\item{(4)}  For every closed $A \subset X$ and  every singlevalued $g,h : A \to c_{0}(\lambda)$ such that $g \leq h$, $g$ is upper semicontinuous, and
$h$ is lower semicontinuous, there exists a singlevalued continuous $f : X \to c_{0}(\lambda)$ such that $f|_A$ separates $g$ and $h$, i.e. $g \leq f|_A \leq h$.
\end{description}
\end{theorem}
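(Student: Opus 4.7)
The plan is to take the Michael equivalence $(1) \Leftrightarrow (2)$ for granted, as indicated in the remark preceding the statement, and to close the cycle via $(2) \Rightarrow (3) \Rightarrow (4) \Rightarrow (1)$. The step $(2) \Rightarrow (3)$ is immediate because $c_{0}(\lambda)$ is a Banach space of weight exactly $\lambda$, so it is a special instance of the target class in $(2)$.

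For $(3) \Rightarrow (4)$ I would encode the sandwich problem as a selection problem. Given a closed $A \subseteq X$ with $g \leq h$, $g$ USC and $h$ LSC, define
\[
F(x) = \begin{cases} [g(x), h(x)] & x \in A, \\ c_{0}(\lambda) & x \in X \setminus A, \end{cases}
\]
where $[g(x), h(x)] = \{ y \in c_{0}(\lambda) : g(x) \leq y \leq h(x) \}$ is the coordinatewise order interval. Each such interval is convex, and is norm-compact in $c_{0}(\lambda)$: after translating by $-g(x)$ it sits inside $\prod_{\gamma}[0,(h(x)-g(x))_{\gamma}]$, and since $h(x)-g(x)\in c_{0}(\lambda)$ only finitely many coordinates exceed any fixed $\varepsilon$, yielding total boundedness in sup-norm. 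Closedness of $A$ makes $F$ trivially LSC at points of $X \setminus A$; at $x_{0}\in A$ one uses the USC of $g$, the LSC of $h$, and the Banach-lattice operations (the clipping $y \mapsto g(x) \vee (y \wedge h(x))$) on $c_{0}(\lambda)$ to continuously push any target value $y_{0}\in F(x_{0})$ into nearby values in $F(x)$. Applying (3) to $F$ yields a continuous singlevalued selection $f : X \to c_{0}(\lambda)$, and by construction $g \leq f|_{A} \leq h$.

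For $(4) \Rightarrow (1)$, let a discrete family $\{F_{\gamma}\}_{\gamma \in \Gamma}$ of closed sets be given with $|\Gamma|\leq\lambda$. The union $A = \bigcup_{\gamma} F_{\gamma}$ is closed because any point outside every $F_{\gamma}$ has, by discreteness, a neighborhood disjoint from $A$, and the members of the family are automatically pairwise disjoint. Let $\{e_{\gamma}\}$ denote the canonical basis of $c_{0}(\lambda)$ and define a common map $g = h : A \to c_{0}(\lambda)$ by $g(x) = e_{\gamma}$ whenever $x \in F_{\gamma}$; this map is locally constant, hence continuous, so it satisfies the USC/LSC hypotheses trivially. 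Applying (4) produces a continuous $f : X \to c_{0}(\lambda)$ with $f(x) = e_{\gamma}$ on $F_{\gamma}$. The set $\{e_{\gamma}\}$ is $1$-uniformly discrete in $c_{0}(\lambda)$, and the balls $V_{\gamma} = B(e_{\gamma}, 1/4)$ form a discrete family, because if $\|x - e_{\gamma}\| < 1/2$ then $x_{\gamma} > 1/2$, and this can hold for at most one $\gamma$. The preimages $G_{\gamma} = f^{-1}(V_{\gamma})$ then form the required discrete open expansion of $\{F_{\gamma}\}$.

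The main obstacle is the LSC verification of the multivalued $F$ in $(3) \Rightarrow (4)$, since one must reconcile the essentially coordinatewise nature of the USC of $g$ and LSC of $h$ with a genuine norm-LSC statement for order intervals in $c_{0}(\lambda)$. This is precisely where the structural features of $c_{0}(\lambda)$ do the work: the uniform $c_{0}$-decay of $h(x)-g(x)$, together with the norm-continuity of the lattice operations $\vee,\wedge$, allows the coordinatewise control to be promoted to the required norm-level statement. Once $F$ has been confirmed LSC with values that are either convex compact or the whole space, the remaining implications are essentially bookkeeping, and the extraction of collectionwise normality from (4) becomes routine using only that the canonical basis of $c_{0}(\lambda)$ is a discrete closed subset of a metric, hence collectionwise normal, space.
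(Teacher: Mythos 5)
The survey does not actually prove Theorem~21: it is quoted from Gutev--Ohta--Yamazaki with only the remark that $(1)\Leftrightarrow(2)$ goes back to Michael, so there is no in-paper argument to compare against, and your cycle ($(1)\Leftrightarrow(2)$ assumed, then $(2)\Rightarrow(3)\Rightarrow(4)\Rightarrow(1)$) is the natural architecture for it. The implications $(2)\Rightarrow(3)$ and $(4)\Rightarrow(1)$ are correct and essentially complete. (One small repair in $(4)\Rightarrow(1)$: the claim that ``$x_\gamma>1/2$ can hold for at most one $\gamma$'' is false for a general $x\in c_0(\lambda)$; the intended conclusion follows instead from $\|e_{\gamma_1}-e_{\gamma_2}\|=1$ and the triangle inequality, since a ball of radius $1/4$ can meet at most one of the balls $B(e_\gamma,1/4)$.) Your verification that order intervals of $c_0(\lambda)$ are convex norm-compacta is also correct.

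The genuine gap is the one you flag yourself: the lower semicontinuity of the sandwich mapping $F$ in $(3)\Rightarrow(4)$ is asserted rather than proved, and it cannot be proved before fixing what ``$g$ upper semicontinuous'' and ``$h$ lower semicontinuous'' mean for $c_0(\lambda)$-valued maps --- a definition you never state, and on which the whole step hinges. The argument works if semicontinuity is taken \emph{uniformly in the coordinates}: for every $\varepsilon>0$ there is a single neighborhood $U$ of $x_0$ on which $g(x)_\gamma\le g(x_0)_\gamma+\varepsilon$ and $h(x)_\gamma\ge h(x_0)_\gamma-\varepsilon$ for all $\gamma$ simultaneously. With that definition the clipping $y(x)=(y_0\vee g(x))\wedge h(x)$ lies in $[g(x),h(x)]$ and satisfies, for each coordinate, $|y(x)_\gamma-(y_0)_\gamma|\le\max\{(g(x)_\gamma-g(x_0)_\gamma)^{+},\,(h(x_0)_\gamma-h(x)_\gamma)^{+}\}$, because $g(x_0)_\gamma\le(y_0)_\gamma\le h(x_0)_\gamma$; taking the supremum over $\gamma$ gives $\|y(x)-y_0\|\le\varepsilon$ on $U$, which is exactly the norm-LSC of $F$ at points of $A$. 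If instead one assumes only that each coordinate function $g_\gamma$ is USC and each $h_\gamma$ is LSC separately, the neighborhood depends on $\gamma$, the supremum need not be small, and the step fails. To complete the proof you must therefore (i) state the uniform definition explicitly and (ii) record the median estimate above; with those two additions the rest of your proposal goes through.
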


\begin{theorem} [\cite{GOY-1}] \label{GOY_2}
Let $\lambda$ be an infinite cardinal. Then for any $T_1$-space $X$ the following statements  are equivalent:
\begin{description}
\item{(1)} $X$ is countably paracompact and $\lambda$-collectionwise normal;
\item{(2)} For every generalized $c_{0}(\lambda)$-space $B$ and every LSC mapping
$F: X \to B$ with values $F(x)$ being convex compacta, or $F(x)=B$, and with $|F(x)|>1, x \in X$, there exists a continuous singlevalued selection $f$ of $F$ such that $f(x)$ is not an extreme point of $F(x), \,x\in X$;
\item{(3)}  Same as $(2)$ but for the Banach space $B=c_{0}(\lambda)$;
\item{(4)}  Same as $(4)$ in Theorem 21 but with strong inequalities $g < h$ and $g < f|_A < h$.
\end{description}
\end{theorem}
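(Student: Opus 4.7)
Cycle $(1) \Rightarrow (2) \Rightarrow (3) \Rightarrow (4) \Rightarrow (1)$ in parallel with Theorem 21, injecting countable paracompactness precisely where the strict separator and the non-extreme selection appear. The implication $(2) \Rightarrow (3)$ is immediate because $c_{0}(\lambda)$ is itself a generalized $c_{0}(\lambda)$-space. For $(3) \Rightarrow (4)$, given the strict Dowker data $g < h$ on a closed $A \subset X$ with $g$ USC and $h$ LSC, I would encode the problem as an LSC mapping $F: X \to c_{0}(\lambda)$ with compact convex values on $A$ chosen so that a non-extreme selection $f$ of $F$ forces $g < f|_{A} < h$ coordinate-wise; concretely, pick a nowhere-zero LSC buffer $\delta: A \to c_{0}(\lambda)_{+}$ with $g + \delta < h - \delta$ and take $F(x)$ to be the segment from $g(x)+\delta(x)$ to $h(x)-\delta(x)$ on $A$ and $F(x) = c_{0}(\lambda)$ otherwise. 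For $(4) \Rightarrow (1)$, weakening the strict inequalities to non-strict reduces (4) to Theorem 21(4), yielding $\lambda$-collectionwise normality; countable paracompactness follows from a standard Dowker argument: from a decreasing sequence $\{A_{n}\}$ of closed subsets of $X$ with $\bigcap_{n} A_{n} = \emptyset$, build $g \equiv 0$ USC and an LSC $h: X \to c_{0}(\lambda)$ whose $n$-th coordinate is positive precisely on $A_{n}$, and read off the open shrinking from the strict separator supplied by (4).

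\textbf{The main step $(1) \Rightarrow (2)$.} Theorem 21's $(1) \Rightarrow (2)$ direction at once furnishes a continuous selection $f_{0}$ of $F$. The remaining task is to perturb $f_{0}$ off the extreme points of $F(x)$. My strategy is to construct a second continuous selection $f_{1}$ with $f_{1}(x) \neq f_{0}(x)$ for every $x$ and set $f = \tfrac{1}{2}(f_{0} + f_{1})$; by convexity $f$ is a selection, and by strict distinctness $f(x)$ is never an extreme point. The values $F(x) = B$ cause no trouble since a Banach space has no extreme points as a subset of itself, so the issue concentrates on the compact-convex values. The sets $U_{n} = \{x : \mathrm{diam}\,F(x) > 1/n\}$ are open by LSC of $F$ and cover $X$ since $|F(x)| > 1$; countable paracompactness now provides a closed shrinking $\{C_{n} \subset U_{n}\}$ together with a locally finite continuous partition of unity $\{\psi_{n}\}$. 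On $C_{n}$ I apply Theorem 21 to an appropriate LSC submapping of $F$ that forces distance $\geq 1/(3n)$ from $f_{0}$, producing $f_{1}^{(n)}$, and glue $f_{1} = \sum_{n} \psi_{n} f_{1}^{(n)}$ via $\lambda$-collectionwise normality.

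\textbf{Main obstacle.} The delicate step is ensuring that the pointwise separation $f_{1}(x) \neq f_{0}(x)$ survives the convex combination: displacements $f_{1}^{(n)} - f_{0}$ arising on different strata may point in opposing directions and cancel at points lying in several supports. The remedy is to build the $f_{1}^{(n)}$ coherently, for instance by fixing a continuously varying direction (say, via a separating functional chosen from the dual of $B$) and requiring every $f_{1}^{(n)}$ to lie strictly on the same side of the affine hyperplane through $f_{0}$. This coherent stratified construction, carried out over nested countable refinements of $\{C_{n}\}$, is precisely what countable paracompactness delivers beyond the bare $\lambda$-collectionwise normality used in Theorem 21, and constitutes the heart of the argument.
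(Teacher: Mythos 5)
The paper does not actually prove this theorem: it is quoted from Gutev--Ohta--Yamazaki \cite{GOY-1}, and the survey's only hint at the argument is that ``for a closedvalued and convexvalued mapping $F$, a selection avoiding all extreme points exists provided that $F$ admits two families of \emph{local} disjoint selections.'' Your overall shape (two selections, then a midpoint) is in the right spirit, but your realization of it has a genuine gap exactly at the point you flag. Building a single \emph{global} selection $f_1$ with $f_1(x)\neq f_0(x)$ everywhere and then averaging is strictly harder than what is needed, and your proposed repair --- a continuously varying separating functional keeping all the $f_1^{(n)}$ on one side of a hyperplane through $f_0$ --- is not substantiated and is essentially the whole difficulty in disguise. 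The standard way out is to localize the midpoint trick \emph{before} gluing: if on each member of a locally finite cover you have two disjoint continuous local selections, their midpoints are local selections that are pointwise non-extreme, and a partition-of-unity combination $\sum\psi_\alpha s_\alpha$ of selections in which at least one positively weighted term $s_\alpha(x)$ is non-extreme in the convex set $F(x)$ is itself non-extreme (write $s_\alpha(x)=\tfrac12(a+b)$ with $a\neq b$ and redistribute). With this observation the cancellation problem you worry about disappears, and what remains is to produce the local disjoint pairs on the countable strata $U_n=\{x:\mathrm{diam}\,F(x)>1/n\}$ --- which is where countable paracompactness genuinely enters, as you correctly sense. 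You should replace your coherent-direction construction by this local-midpoint lemma; as stated, your $(1)\Rightarrow(2)$ is not a proof.

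Two further steps are also not sound as written. In $(3)\Rightarrow(4)$, the segment-valued map with endpoints $g+\delta$ and $h-\delta$, with $\delta$ a fraction of $h-g$, has endpoints that are neither USC nor LSC (they are convex combinations of a USC and an LSC function), so the lower semicontinuity of $F$ does not follow; moreover, for uncountable $\lambda$ no element of $c_0(\lambda)$ has all coordinates strictly positive, so coordinatewise-strict reasoning about $g<h$ cannot be taken at face value and the order-theoretic bookkeeping in \cite{GOY-1} is a real part of the proof, not a formality. In $(4)\Rightarrow(1)$, the claim that ``weakening the strict inequalities to non-strict reduces $(4)$ to Theorem~21$(4)$'' is not an implication: statement $(4)$ here has both a stronger hypothesis ($g<h$) and a stronger conclusion, and passing from a $g\le h$ instance to a $g<h$ instance by thickening fails in $c_0(\lambda)$ for the reason just given. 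The derivation of $\lambda$-collectionwise normality from the strict insertion property therefore needs its own argument (encoding a discrete family of closed sets so that the data already satisfy the strict hypothesis), and your Dowker-type argument for countable paracompactness, while plausible in outline, should be written so that the functions it produces actually lie in $c_0(\lambda)$.
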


\begin{theorem} [\cite{GOY-1}]\label{GOY_3}
Let $\lambda$ be an infinite cardinal. Then for any $T_1$-space $X$ the following statements  are equivalent:
\begin{description}
\item{(1)} $X$ is perfectly normal and $\lambda$-collectionwise normal;
\item{(2)} For every generalized $c_{0}(\lambda)$-space $B$ and  every LSC mapping $F: X \to B$ with values $F(x)$ being convex compacta, or $F(x)=B$,  there exists a continuous singlevalued selection $f$ of $F$ such that $f(x)$ is not an extreme point of $F(x)$, whenever  $|F(x)|>1$;
\item{(3)}  Same as $(2)$ but for the Banach space $B=c_{0}(\lambda)$;
\item{(4)}   Same as $(4)$ in Theorem 21 but with strong inequalities  $g(x) < f(x) < h(x)$ for all $x \in A$ with $g(x)< h(x)$.
\end{description}
\end{theorem}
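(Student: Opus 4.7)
The plan is to prove the cycle $(1) \Rightarrow (2) \Rightarrow (3) \Rightarrow (4) \Rightarrow (1)$, paralleling the structure of Theorems 21 and 22. The implication $(2) \Rightarrow (3)$ is immediate since $c_0(\lambda)$ is itself a generalized $c_0(\lambda)$-space. For $(3) \Rightarrow (4)$, given closed $A \subset X$ with $g \leq h$, $g$ USC and $h$ LSC on $A$, I would define $F: X \to c_0(\lambda)$ by setting $F(x) = \{y \in c_0(\lambda) : g(x) \leq y \leq h(x)\}$ (the order interval) for $x \in A$ and $F(x) = c_0(\lambda)$ otherwise. Order intervals in $c_0(\lambda)$ with endpoints in $c_0(\lambda)$ are norm-compact because $\max(|g_\tau|, |h_\tau|)$ vanishes at infinity, and the coordinate-wise USC/LSC of $g$ and $h$ makes $F$ lower semicontinuous on $A$ (and trivially on $X \setminus A$). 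Since the extreme points of $[g(x), h(x)]$ in $c_0(\lambda)$ are exactly those $y$ with $y_\tau \in \{g_\tau(x), h_\tau(x)\}$ for every $\tau$, a non-extreme selection $f$ furnished by (3) must have at least one coordinate lying strictly between, which in the partial order of $c_0(\lambda)$ forces $g(x) < f(x) < h(x)$ whenever $g(x) < h(x)$.

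For $(4) \Rightarrow (1)$, the $\lambda$-collectionwise normality of $X$ follows from the characterization in Theorem 21, because the present condition (4) is formally stronger than its non-strict version there. To deduce perfect normality, given a closed $C \subset X$ I would apply (4) with $A = X$, with $g$ the characteristic function of $C$ (USC since $C$ is closed) and $h \equiv 1$, viewed in a single-coordinate copy of $\R$ inside $c_0(\lambda)$. The resulting continuous $f$ satisfies $f|_C = 1$ and $f(x) < 1$ strictly on $X \setminus C$, so
$$ C = \bigcap_n f^{-1}\bigl((1 - 1/n,\, \infty)\bigr)$$
is a $G_\delta$ set; combined with normality (a consequence of $\lambda$-collectionwise normality for $\lambda \geq 2$), this yields perfect normality.

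The main obstacle is $(1) \Rightarrow (2)$. The set $W = \{x : |F(x)| > 1\}$ is open, being the superlevel set of the LSC diameter function, and on its closed complement $V$ the selection is forced to equal the unique point of $F(x)$. My plan is to produce two continuous selections $f_0, f_1$ of $F$ with $f_0(x) \neq f_1(x)$ for every $x \in W$, and then return $f = (f_0 + f_1)/2$; at each $x \in W$ this $f$ is the midpoint of two distinct points of the convex value $F(x)$, hence non-extreme. Theorem 21 furnishes $f_0$ immediately. The construction of $f_1$ is the crux: using perfect normality, write $W = \bigcup_n C_n$ with $C_n$ closed in $X$ and $C_n \subset C_{n+1}$; each $C_n$ inherits $\lambda$-collectionwise normality and, through perfect normality, countable paracompactness, so Theorem 22 applied to $F|_{C_n}$ yields non-extreme (hence $\neq f_0$) selections on $C_n$. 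The delicate part is assembling these pieces into a global $f_1$ that still differs from $f_0$ throughout $W$ \emph{and} matches $f_0$ continuously as $x$ approaches $\partial W$; I would do this by an inductive extension over the sets $C_n \cup V$, at each stage controlling the new piece by a norm-perturbation of $f_0$ whose magnitude is governed by a continuous function vanishing on $V$ and strictly positive on $W$ (available because $V$ is a zero-set in a perfectly normal space). This boundary synchronization between the non-extreme selection inside $W$ and the forced single-valued selection on $V$ is where the principal technical effort lies.
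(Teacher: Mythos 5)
The survey you are working from does not itself prove this theorem: it is quoted from Gutev--Ohta--Yamazaki \cite{GOY-1}, and the only hint the text gives is the remark that the key ingredient of the proofs of Theorems 21--23 is a lemma asserting that a closed- and convex-valued mapping admits a selection avoiding all extreme points provided it admits \emph{two families of local disjoint selections}. Measured against that, your cycle $(2)\Rightarrow(3)\Rightarrow(4)\Rightarrow(1)$ is sound and is the standard route: the order interval $[g(x),h(x)]$ is a norm-compact convex subset of $c_0(\lambda)$ whose extreme points are exactly the coordinatewise-endpoint selections, so non-extremeness of $f(x)$ does force $g(x)<f(x)<h(x)$ in the order of $c_0(\lambda)$; and the Dowker-type argument with $g=\chi_C$, $h\equiv 1$ correctly extracts perfect normality from the strict sandwich property, while $\lambda$-collectionwise normality comes from Theorem 21.

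The genuine gap is in $(1)\Rightarrow(2)$, at exactly the place you flag, and it is twofold. First, the parenthetical ``non-extreme (hence $\neq f_0$)'' is a non sequitur: the selection $f_0$ produced by Theorem 21 may itself take non-extreme values at many points of $C_n$, so a non-extreme selection of $F|_{C_n}$ supplied by Theorem 22 need not be disjoint from $f_0$. What you actually need on $C_n$ is a selection avoiding the moving obstacle $f_0(x)$, and Theorem 22 does not provide that. Second, even granting such selections $g_n$ on each $C_n$, the gluing step is unsubstantiated: the constraint set $\{y\in F(x):\, 0<\|y-f_0(x)\|\le\varepsilon(x)\}$ from which your ``controlled norm-perturbation of $f_0$'' would have to select is neither closed nor convex (a convex compactum minus a point, cut by a ball), so none of the selection theorems at your disposal applies to it; moreover ``non-extreme'' is not a closed condition, so partition-of-unity assembly of the pieces can destroy it in the limit. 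Closing this is precisely the content of the lemma alluded to in the survey: one uses perfect normality to insert a continuous $\eta$ with $0<\eta(x)<\mathrm{diam}\,F(x)$ on $W$ and $\eta\equiv 0$ on $V=X\setminus W$, and uses $\eta$ to manufacture two LSC convex-valued sub-mappings of $F$ (equivalently, two families of local disjoint selections) whose midpoint selection is automatically non-extreme on $W$ and automatically collapses to the forced singleton selection on $V$. Without that lemma or an equivalent substitute, your argument for $(1)\Rightarrow(2)$ does not go through.
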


One of the key ingredients of the proofs is the fact that for a closedvalued and convexvalued mapping $F,$ a selection avoiding all extreme points exists provided that $F$ admits two families of local disjoint selections. Certainly Theorems 21-23 constitute a base for Theorems 14-16 above.

In \cite{GOY-1} authors stated the following question: Do
Theorems 22 and 23 remain valid
if in $(2)$ one replaces $c_{0}(\lambda)$-space by an arbitrary Banach space $B$ of weight less than or equal to $\lambda$? Yamauchi  answered this question 
in the affirmative.

\begin{theorem} [\cite{Yextr}] \label{Yamauch_1}
Let $\lambda$ be an infinite cardinal. Then for any $T_1$-space $X$ the following statements  are equivalent:
\begin{description}
\item{(1)} $X$ is countably paracompact and $\lambda$-collectionwise normal;
\item{(2)} For every Banach space $B$ of 
weight less than or equal to $\lambda$,  every LSC mapping $F: X \to B$ with values $F(x)$ being convex compacta, or $F(x)=B$  and with $|F(x)|>1, x \in X$,
there exists a continuous singlevalued selection $f$ of $F$ such that $f(x)$ is not an extreme point of $F(x), \,x\in X$.
\end{description}
\end{theorem}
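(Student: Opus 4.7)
The direction $(2) \Rightarrow (1)$ is immediate from Theorem 22. Specializing hypothesis (2) to the Banach space $B = c_0(\lambda)$, which is trivially a generalized $c_0(\lambda)$-space, one recovers exactly clause (3) of Theorem 22; the implication $(3) \Rightarrow (1)$ there already provides countable paracompactness and $\lambda$-collectionwise normality of $X$.

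For the substantial direction $(1) \Rightarrow (2)$, I would follow the ``two families of local disjoint selections'' scheme pointed out after Theorems 21--23. The key reduction is this: it suffices to construct two continuous singlevalued selections $f_1, f_2 : X \to B$ of $F$ with $f_1(x) \neq f_2(x)$ whenever $|F(x)| > 1$. Once such a pair is available, the midpoint $f := \tfrac{1}{2}(f_1 + f_2)$ is again a continuous selection of $F$ by convexity of $F(x)$, and at every $x$ with $f_1(x) \ne f_2(x)$ it is the midpoint of two distinct points of the convex set $F(x)$, hence not an extreme point.

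To produce such $f_1$ and $f_2$, I would proceed as follows. Since $X$ is $\lambda$-collectionwise normal (in particular normal) and $B$ has weight $\leq \lambda$, a first continuous selection $f_1$ of $F$ is obtained either directly from Theorem 1 (using paracompactness-like properties of $X$ along bases of cardinality $\leq \lambda$, which is what $\lambda$-collectionwise normality controls) or from the $c_0(\lambda)$-case of Theorem 22 composed with a linear embedding. For the second selection, note that $\{x : |F(x)| > 1\}$ is the union of the open sets $A_n := \{x : \mathrm{diam}(F(x)) > 1/n\}$, openness following from lower semicontinuity of $F$ and compactness of its values. On each $A_n$ the auxiliary mapping $F_n(x) := \{y \in F(x) : \|y - f_1(x)\| \geq 1/(3n)\}$ is lower semicontinuous with nonempty closed convex values in $B$, and Theorem 1 furnishes a continuous selection $g_n$ of $F_n$ on $A_n$. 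Countable paracompactness of $X$ yields a locally finite partition of unity subordinate to a shrinking of $\{A_n\}$ together with the open set $\{x : F(x) = B\}$, and convex-combination patching along this partition produces a global continuous $f_2$ with $f_2(x) \ne f_1(x)$ throughout $\{x : |F(x)| > 1\}$.

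The main obstacle lies precisely where Yamauchi's theorem improves on the $c_0(\lambda)$-version of Theorem 22: in $c_0(\lambda)$ a coordinate-flip produces a second disjoint selection essentially for free, while in an arbitrary Banach space $B$ of weight $\leq \lambda$ no such coordinate structure is available. One must therefore ensure simultaneously that the open covers and their shrinkings required in the patching step can be chosen of cardinality $\leq \lambda$ (so that $\lambda$-collectionwise normality genuinely applies to expand them into discrete families of open sets), and that the midpoint selection $f$ remains continuous across the boundaries of the strata $A_n$. The delicate interplay between the countable stratification by diameter (controlled by countable paracompactness) and the weight-$\lambda$ geometry of $B$ (controlled by $\lambda$-collectionwise normality) is exactly what makes the passage from $c_0(\lambda)$ to an arbitrary Banach space of weight $\leq \lambda$ work.
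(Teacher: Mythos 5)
The paper is a survey and states Yamauchi's theorem without proof, so the only thing to measure your argument against is the hint given after Theorems 21--23: the key ingredient is the existence of \emph{two families of local disjoint selections}. Your overall reduction (produce two selections $f_1,f_2$ with $f_1(x)\neq f_2(x)$ everywhere and pass to the midpoint $\tfrac12(f_1+f_2)$, which cannot be an extreme point of the convex set $F(x)$) is the right general idea, and your direction $(2)\Rightarrow(1)$ via specialization to $B=c_0(\lambda)$ and Theorem 22 is correct. The problem lies entirely in your construction of $f_2$.

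The central gap: the auxiliary mapping $F_n(x)=\{y\in F(x): \|y-f_1(x)\|\ge 1/(3n)\}$ does \emph{not} have convex values --- it is the intersection of the convex set $F(x)$ with the complement of an open ball, which is nonconvex in general. So Theorem 1 (or its collectionwise-normal analogue, Theorem 21) simply does not apply to $F_n$; this is precisely the obstruction that makes ``select while avoiding a given selection'' a genuinely nonconvex problem, and it is why the $c_0(\lambda)$ coordinate-flip argument of Gutev--Ohta--Yamazaki does not transfer for free. Two further difficulties: (i) $A_n$ is merely an open subset of $X$, and neither countable paracompactness nor ($\lambda$-collectionwise) normality is inherited by open subspaces, so even a convex-valued selection theorem could not be invoked on $A_n$ without extra work ($X$ itself is not assumed paracompact); (ii) convex-combination patching of the $g_n$ does not guarantee $f_2(x)\neq f_1(x)$: two points of $F(x)$, each at distance at least $1/(3n)$ from $f_1(x)$, can average to exactly $f_1(x)$. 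The route indicated in the survey sidesteps all three issues by working with \emph{local} pairs of disjoint selections rather than one global pair: near a point $x_0$ with $y_1\neq y_2$ in $F(x_0)$ one selects from the convex-valued LSC mappings $x\mapsto \mathrm{Clos}(F(x)\cap B(y_i,\delta))$, $i=1,2$, with $3\delta<\|y_1-y_2\|$, which are disjoint by construction; the substantial work --- and the place where countable paracompactness and $\lambda$-collectionwise normality actually enter --- is in globalizing these local pairs into a single selection avoiding extreme points, and that step is missing from your sketch.
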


Passing to $\lambda$-paracompactness,
the following was proved in \cite[Theorem 8]{Yextr}:
\begin{theorem} \label{Yamauch_2}
Let $\lambda$ be an infinite cardinal. Then
for any $T_1$-space $X$ the following statements
are equivalent:
\begin{description}
\item{(1)} $X$ is normal and $\lambda$-paracompact; 
\item{(2)} The same as $(2)$ in Theorem 24 but with closed values $F(x)$.
\end{description}
\end{theorem}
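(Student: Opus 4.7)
The strategy is to treat the two implications separately. For the reverse direction $(2)\Rightarrow(1)$, I would reduce to the classical Michael--Nedev characterization of normal $\lambda$-paracompact $T_1$-spaces as exactly those $X$ for which every LSC closed-convex-valued $F:X\to B$ into a Banach space $B$ of weight $\leq\lambda$ admits a continuous singlevalued selection. Given any such $F$, I would thicken it to $F':X\to B\times\R$ by $F'(x)=F(x)\times[0,1]$: this is LSC with closed convex values in a Banach space still of weight $\leq\lambda$, and now $|F'(x)|>1$. Assumption $(2)$ produces a continuous selection $(f,t)$ of $F'$, and the first coordinate $f$ is a continuous selection of the original $F$, so the classical characterization yields $(1)$.

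The heart of the theorem is $(1)\Rightarrow(2)$. Assuming $X$ normal and $\lambda$-paracompact, $B$ a Banach space of weight $\leq\lambda$, and $F:X\to B$ LSC with closed convex values and $|F(x)|>1$ everywhere, my plan is to produce two continuous selections $g,h:X\to B$ of $F$ with $g(x)\neq h(x)$ for every $x$; then $f=(g+h)/2$ is a continuous selection of $F$ by convexity of the values, and being the midpoint of two distinct points of $F(x)$ it is not an extreme point of $F(x)$. A first selection $g$ is supplied by the Michael--Nedev theorem applied directly. For $h$, I would fix a base $\{V_\alpha:\alpha<\lambda\}$ of $B$ consisting of open convex sets, and for each pair $\alpha\neq\beta$ with $\overline{V}_\alpha\cap\overline{V}_\beta=\emptyset$ consider $U_{\alpha\beta}=F^{-1}(V_\alpha)\cap F^{-1}(V_\beta)$. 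Since $|F(x)|>1$, the family $\{U_{\alpha\beta}\}$ is an open cover of $X$ of cardinality $\leq\lambda$, so $\lambda$-paracompactness together with normality provides a locally finite open refinement $\{W_i\}_{i\in I}$ with $W_i\subset U_{\alpha_i\beta_i}$, a closed shrinking $\{A_i\}$ with $A_i\subset W_i$, and a partition of unity $\{\varphi_i\}$ subordinate to $\{W_i\}$. On each closed subspace $A_i$ (itself normal and $\lambda$-paracompact), the Michael--Nedev theorem applied to the LSC closed-convex-valued mappings $x\mapsto F(x)\cap\overline{V}_{\alpha_i}$ and $x\mapsto F(x)\cap\overline{V}_{\beta_i}$, followed by a Michael-style extension to all of $X$, yields continuous selections $g_i,h_i:X\to B$ of $F$ with $g_i(A_i)\subset\overline{V}_{\alpha_i}$ and $h_i(A_i)\subset\overline{V}_{\beta_i}$, so in particular $g_i(x)\neq h_i(x)$ on $A_i$.

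The hard part will be the gluing. The obvious candidates $g=\sum_i\varphi_i g_i$ and $h=\sum_i\varphi_i h_i$ are continuous selections of $F$ by convexity, but the pointwise inequalities $g_i\neq h_i$ need not survive the convex combination: the difference vectors $h_i(x)-g_i(x)$ (for indices $i$ active at $x$) can cancel in the sum $h(x)-g(x)=\sum_i\varphi_i(x)(h_i(x)-g_i(x))$. Overcoming this is the main technical obstacle: I would need to refine the cover, in the spirit of the $c_0(\lambda)$-based techniques underlying Theorem~\ref{Yamauch_1}, so that at each $x$ the active target balls $\overline{V}_{\beta_i}$ lie uniformly on one side of a hyperplane separating them from the corresponding $\overline{V}_{\alpha_i}$, forcing the difference $h(x)-g(x)$ into a half-space missing the origin, hence nonzero. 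Once $g\neq h$ is secured pointwise, $f=(g+h)/2$ is the required extreme-point-avoiding selection.
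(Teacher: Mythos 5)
You should first be aware that the paper contains no proof of this statement: it is a survey item quoted from Yamauchi \cite{Yextr}, so your attempt can only be measured against the technique the survey describes. Your direction $(2)\Rightarrow(1)$ is fine: the thickening $F'(x)=F(x)\times[0,1]$ cleanly disposes of the hypothesis $|F(x)|>1$ and reduces to the classical characterization of normal and $\lambda$-paracompact spaces by selections into Banach spaces of weight $\le\lambda$. Your plan for $(1)\Rightarrow(2)$ --- produce two continuous selections disagreeing everywhere and take their midpoint --- is also the right shape, and your reduction to local pairs of disjoint selections over a locally finite cover is precisely what the survey flags as ``the key ingredient of the proofs'': that for a closedvalued convexvalued $F$ a selection avoiding all extreme points exists \emph{provided} $F$ admits two families of local disjoint selections.

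The difficulty is that this key ingredient is exactly the step you leave unproved, and your sketched remedy is not a viable argument. At a fixed $x$ one can indeed pick a functional $\phi$ with $\phi\bigl(h_i(x)-g_i(x)\bigr)\ne 0$ for the finitely many active indices, and after swapping the roles of $g_i$ and $h_i$ for some $i$ make all these values positive, so that the convex combination is nonzero. But both $\phi$ and the pattern of swaps depend on $x$: as $x$ moves, the active index set $I(x)$ changes, newly entering indices carry difference vectors that need not be compatible with the orientation already chosen, and there is no reason a locally constant, globally coherent choice of signs exists over the overlaps of a locally finite cover of an arbitrary normal $\lambda$-paracompact space. Cancellation in $\sum_i\varphi_i(x)\bigl(h_i(x)-g_i(x)\bigr)$ is not a technicality to be refined away --- it is the entire content of the theorem, and the published proofs (Gutev--Ohta--Yamazaki for $c_0(\lambda)$, Yamauchi for general $B$) replace the naive averaging by a substantially more careful recursive construction. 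Two smaller points: $x\mapsto F(x)\cap\overline{V}_{\alpha_i}$ need not be LSC, so you must select from $x\mapsto Clos\bigl(F(x)\cap V_{\alpha_i}\bigr)$ over $F^{-1}(V_{\alpha_i})$; and you need $g_i,h_i$ to land in the respective sets $\overline{V}_{\alpha_i},\overline{V}_{\beta_i}$ on all of the support of $\varphi_i$, not merely on $A_i$, which requires shrinking the cover twice.
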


Considering $\lambda=\aleph_0$ one observes that in the Michael selection criteria for $X$ being  normal and countably paracompact one can assume that a selection $f$ always avoids extreme points of values of multivalued mapping $F$ with $|F(x)|>1, x\in X$.

Analogously, a domain $X$ is perfectly normal and $\lambda$-paracompact if and only if for every Banach space $B$ with $w(B) \leq \lambda$ and every LSC mapping $F: X \to B$ with convex closed values $F(x)$ (not necessarily with $|F(x)|>1, x\in X$\,) there exists a continuous singlevalued selection $f$ of $F$ such that $f(x)$ is not an extreme point of $F(x)$, whenever  $|F(x)|>1$ (cf.  \cite{Yextr}).

Before stating one more result recall that {\it normality of a covering $\omega$} means an existence of a sequence $\omega_1=\omega,  \omega_2, \omega_3,...$ of coverings such that each $\omega_{n+1}$ is
a strong star refinement of $\omega_n$ and that a space is called a
$\lambda-PF$-{\it normal space}\index{$\lambda-PF$-normal space}
if each its point-finite open coverings is normal. Yamauchi \cite{YSim}
characterized the class of $\lambda-PF$-normal  spaces.

\begin{theorem}[\cite{YSim}]\label{Yamauch_3}
Let $\lambda$ be an infinite cardinal. Then for any $T_1$-space $X$ the following statements  are equivalent:
\begin{description}
\item{(1)} $X$ is $\lambda-PF$-normal;
\item{(2)} For every simplicial complex $K$ with cardinality less than or equal to $\lambda$,  every simplex-valued LSC mapping $F: X \to |K|$  has
a continuous singlevalued selection.
\end{description}
\end{theorem}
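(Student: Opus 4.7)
My plan has two directions, each modelled on the classical Michael-style selection characterizations of normality-type properties. For $(1)\Rightarrow(2)$, the idea is to pull back the open-star cover of $|K|$ along $F$ and convert a subordinate partition of unity into a barycentric selection. For $(2)\Rightarrow(1)$, the idea is to form the nerve of the given point-finite cover, solve the resulting selection problem, and extract a subordinated partition of unity whose level sets witness normality.

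For $(1)\Rightarrow(2)$: given a simplex-valued LSC mapping $F\colon X\to|K|$ with $K$ of cardinality at most $\lambda$, set $U_v=F^{-1}(\mathrm{st}(v))$ for each vertex $v$ of $K$, where $\mathrm{st}(v)$ denotes the open star. Lower semicontinuity makes each $U_v$ open, and $\{U_v\}$ covers $X$ since every value $F(x)$ contains at least one vertex. The cover is point-finite: a closed simplex $F(x)=\sigma$ meets $\mathrm{st}(v)$ exactly when $v$ is a vertex of $\sigma$, because any point of $\sigma\cap\mathrm{st}(v)$ lies in the interior of a face of $\sigma$ that must contain $v$. The cardinality of $\{U_v\}$ does not exceed the number of vertices of $K$, hence is at most $\lambda$. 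By $\lambda$-$PF$-normality, $\{U_v\}$ is a normal covering and admits a locally finite partition of unity $\{\phi_v\}$ with $\phi_v^{-1}((0,1])\subset U_v$. Then $f(x)=\sum_v\phi_v(x)\,v$ defines a continuous map $f\colon X\to|K|$, and since $\phi_v(x)>0$ forces $v$ to be a vertex of $F(x)$, the value $f(x)$ is a convex combination of vertices of $F(x)$, so $f(x)\in F(x)$.

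For $(2)\Rightarrow(1)$: given a point-finite open cover $\omega=\{U_\alpha\}_{\alpha\in A}$ with $|A|\leq\lambda$, take $K$ to be the nerve of $\omega$, whose vertices $v_\alpha$ are indexed by $A$ and whose simplices correspond to finite subfamilies of $\omega$ with nonempty intersection; its cardinality is at most $|A|^{<\omega}=\lambda$. Define $F\colon X\to|K|$ by letting $F(x)$ be the closed simplex spanned by $\{v_\alpha:x\in U_\alpha\}$, a finite nonempty vertex set whose associated family of $U_\alpha$'s shares the point $x$ and hence indeed spans a simplex of $K$. Then $F$ is simplex-valued, and LSC: if $y\in F(x)\cap W$ for $W$ open in $|K|$, let $\tau$ be the minimal face of $F(x)$ containing $y$; for every $x'$ in the open neighborhood $\bigcap_{v_\alpha\in\tau}U_\alpha$ of $x$ we have $\tau\subset F(x')$, hence $y\in F(x')\cap W$. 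Apply (2) to get a continuous selection $f\colon X\to|K|$, and let $\phi_\alpha(x)$ be the barycentric coordinate of $f(x)$ at $v_\alpha$; then $\phi_\alpha(x)>0$ forces $v_\alpha$ to be a vertex of $F(x)$, i.e.\ $x\in U_\alpha$, so $\{\phi_\alpha\}$ is a partition of unity subordinated to $\omega$.

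The main obstacle is the concluding step of $(2)\Rightarrow(1)$: promoting the subordinated family $\{\phi_\alpha\}$ into a witness of normality in the Tukey sense of a sequence of strong star refinements. This requires local finiteness of $\{\phi_\alpha\}$, and the natural way to secure it is to place the metric (rather than the weak) topology on $|K|$, under which continuity of $f$ into $|K|$ directly forces local finiteness of the family $\{f^{-1}(\mathrm{st}(v_\alpha))\}$. Once local finiteness is in hand, the normality of $\omega$ follows from the classical recipe of forming the level-set refinements $\omega_n=\{\phi_\alpha^{-1}((2^{-n},1])\}_\alpha$ and checking inductively that each $\omega_{n+1}$ strongly star-refines $\omega_n$; the cardinality bookkeeping $|K|\leq\lambda$ remains routine throughout because the vertex set of $K$ has cardinality at most $\lambda$ in both constructions.
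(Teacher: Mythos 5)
The survey only states this theorem with a citation to Yamauchi and gives no proof, so there is nothing in the text to compare against line by line; your nerve/partition-of-unity architecture is the standard Michael-style route and is surely the intended one. The direction $(1)\Rightarrow(2)$ is essentially correct: the pulled-back star cover $\{F^{-1}(\mathrm{st}(v))\}$ is open by lower semicontinuity, point-finite because $F(x)\cap\mathrm{st}(v)\neq\emptyset$ forces $v$ to be a vertex of the finite simplex $F(x)$, and of cardinality at most $\lambda$; a locally finite partition of unity subordinated to it (which a normal cover admits) then yields the barycentric selection. The nerve construction, the definition of $F$, and the verification of lower semicontinuity in $(2)\Rightarrow(1)$ are also correct.

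The genuine gap is exactly at the step you yourself flagged, and your proposed repair fails. It is not true that continuity of $f$ into the \emph{metric} realization $|K|$ forces local finiteness of $\{f^{-1}(\mathrm{st}(v_\alpha))\}$, because the open stars are not a locally finite family in the metric topology to begin with: take $K$ to be the countable hedgehog with edges $[v_0,v_n]$, $n=1,2,\dots$, sharing the vertex $v_0$; every metric neighborhood of $v_0$ meets $\mathrm{st}(v_n)$ for all $n$, so already $f=\mathrm{id}$ is a counterexample. What you actually have, and what suffices, is that the functions $\phi_\alpha=\phi_{v_\alpha}\circ f$ form a \emph{point-finite} partition of unity subordinated to $\omega$ (point-finiteness comes from $f(x)$ lying in the finite simplex $F(x)$, not from any local finiteness of the stars). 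One must then invoke the classical shrinking lemma: every point-finite partition of unity is dominated by a locally finite partition of unity with the same index set and smaller cozero sets; and a cover admitting a subordinated locally finite partition of unity is normal, which is proved via the associated pseudometric $d(x,y)=\sum_\alpha|\psi_\alpha(x)-\psi_\alpha(y)|$ and its $\varepsilon$-balls rather than via your level sets $\phi_\alpha^{-1}((2^{-n},1])$ --- those need not strong-star-refine one another. So the theorem is reachable from your outline, but the concluding paragraph as written rests on a false local-finiteness claim and an unjustified star-refinement recipe, and both must be replaced by the two standard lemmas above.
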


An analogue of Theorem 26 for dimensional-like properties was also given in \cite{YSim}.

Here $|K|$ stands for a 
{\it geometric realization}\index{geometric realization}
of $K$, for example in the Banach space $l_{1}(Vert_K)$, where $Vert_K$ is the set of vertices
of $K$, and  $|K|$ is endowed with the metric topology, induced by this embedding. In fact, the initial result here was a theorem of Ivan\v{s}i\'{c} and Rubin \cite{IR} that every simplex-valued mapping $F: X \to |K|_w$ admits a selection provided that  $F: X \to |K|_w$ is locally selectionable, where $|K|_w$ denotes $|K|$ endowed with the weak topology.

Yamauchi \cite{YBlum} proposed selection criteria for classes of 
realcompact spaces\index{realcompact space}, 
Dieudonn\'{e} complete spaces\index{Dieudonn\'{e} complete space}
and 
Lindel\"{o}f spaces.
The starting point was the result of  Blum and Swaminatham \cite{BlSw} on selection characterization of realcompactness in terms of the so-called 
$\mathcal{S}$-fixed LSC mapping\index{$\mathcal{S}$-fixed LSC mapping} 
into a locally convex topological vector space. 
To avoid specific notations we collect here only the results for Lindel\"{o}f spaces.

\begin{theorem} [\cite{YBlum}]\label{Yamauch_4}
For any regular space $X$ the following statements  are equivalent:
\begin{description}
\item{(1)} $X$ is Lindel\"{o}f;
\item{(2)} For every completely metrizable space $Y$ and  
every closedvalued LSC mapping $F:X \to Y$ there exist
compactvalued USC mapping $H:X \to Y$ and compactvalued LSC mapping $G:X \to Y$ such that  $G(x) \subset H(x) \subset F(x), x \in X$ and $H(X)=\bigcup\{H(x): x\in X\}$ is separable;
\item{(3)} For every Banach space $B$ and every LSC mapping $F:X \to B$ with closed convex values there exists a continuous singlevalued selection $f$ of $F$ with separable image $f(X)$.
\end{description}
\end{theorem}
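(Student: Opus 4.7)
The plan is to establish the cycle $(1) \Rightarrow (2) \Rightarrow (3) \Rightarrow (1)$. For $(3) \Rightarrow (1)$, I would encode an arbitrary open cover $\{U_\alpha\}_{\alpha \in A}$ of $X$ by the mapping $F : X \to \ell_1(A)$ defined by $F(x) = \overline{\operatorname{conv}}\{e_\alpha : x \in U_\alpha\}$, where $\{e_\alpha\}$ is the standard basis. Each $F(x)$ is nonempty, closed and convex, and $F$ is LSC: any point of $F(x_0) \cap V$ with $V$ open in $\ell_1(A)$ can be approximated by a finite convex combination $\sum_{\alpha \in K} s_\alpha e_\alpha$ lying in $V$, with $K \subset A$ finite and $x_0 \in \bigcap_{\alpha \in K} U_\alpha$; this same combination persists in $F(x') \cap V$ throughout the open neighborhood $\bigcap_{\alpha \in K} U_\alpha$ of $x_0$. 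Condition $(3)$ then yields a continuous selection $f$ with $f(X)$ separable. Every point of each $F(x)$ has $\ell_1$-norm one, being a limit of convex combinations of pairwise disjointly-supported unit vectors, so $f(x) \neq 0$. Taking a countable dense set $\{d_n\} \subset f(X)$ and letting $A_0 = \bigcup_n \operatorname{supp}(d_n)$, the triangle inequality forces $\operatorname{supp}(f(x)) \subset A_0$ for every $x$; nonvanishing of $f(x)$ then gives $x \in U_\alpha$ for some $\alpha \in A_0$, so $\{U_\alpha\}_{\alpha \in A_0}$ is a countable subcover.

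For $(2) \Rightarrow (3)$, let $F : X \to B$ be LSC with closed convex values in a Banach space $B$. Since $B$ is completely metrizable, $(2)$ produces $G \subset H \subset F$ with $G$ compact-valued LSC, $H$ compact-valued USC, and $H(X)$ separable. The mapping $x \mapsto \overline{\operatorname{conv}}\,G(x)$ is again a selection of $F$ (the values of $F$ are closed convex), is compact-valued by Mazur's theorem, is convex-valued, and is LSC (one transfers a finite convex combination witnessing $y \in \overline{\operatorname{conv}}\,G(x_0) \cap V$ to nearby $x'$ using the LSC of $G$ at each selected point of $G(x_0)$ and the continuity of the affine combination map). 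Theorem \ref{ConvSel} applied to $\overline{\operatorname{conv}}\,G$ then supplies a continuous singlevalued selection $f$ of $F$, whose image satisfies $f(X) \subset \overline{\operatorname{conv}}\,H(X)$. Since the closed convex hull of a separable subset of a Banach space is separable (rational convex combinations of a countable dense set are dense in the closed convex hull), $f(X)$ is separable.

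For $(1) \Rightarrow (2)$, regularity together with the Lindel\"of property imply paracompactness, so Theorem \ref{CompSel} already yields compact-valued $G \subset H \subset F$ with the correct semicontinuity; the remaining task is to enforce separability of $H(X)$. I would achieve this by re-running the inductive construction behind Theorem \ref{CompSel} with countable covers at each stage: Lindel\"ofness of $X$ ensures that each locally finite open refinement employed in the construction can be chosen countable. At stage $n$ only countably many distinguished points $y_{n,\alpha} \in Y$ are then used to build the $n$-th approximation $H_n$, the final $H$ is the Hausdorff-metric limit of these stagewise approximations, and $H(X)$ lies in the closure of the countable set $\bigcup_{n,\alpha}\{y_{n,\alpha}\}$, hence is separable. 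The main obstacle is precisely here: verifying that the Michael-style inductive construction in Theorem \ref{CompSel} can indeed be arranged to use only countably many representative points of $Y$ without destroying local finiteness of the covers or the Cauchy property of the approximations in the Hausdorff metric. Once this quantitative separability information is pushed through, $(3) \Rightarrow (1)$ and $(2) \Rightarrow (3)$ are essentially formal.
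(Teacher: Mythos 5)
The survey itself contains no proof of this theorem --- it is quoted from Yamauchi \cite{YBlum} --- so your argument can only be judged on its own terms. Your implication $(3)\Rightarrow(1)$ is complete and correct: the $\ell_1(A)$-encoding of an open cover, the observation that every element of $F(x)$ has norm one and support contained in $\{\alpha : x\in U_\alpha\}$, and the extraction of a countable index set from a countable dense subset of $f(X)$ all work exactly as you describe. The other two implications each have a gap. In $(2)\Rightarrow(3)$ you invoke Theorem~\ref{ConvSel} on the mapping $x\mapsto Clos(conv(G(x)))$, but that theorem requires $X$ to be paracompact, and at that point in your cycle you only know that $X$ is regular and satisfies $(2)$. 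This is repairable, but it must be said: first derive $(2)\Rightarrow(1)$ directly by applying $(2)$ to a mapping into a discrete space $Y=A$ indexing an open cover ($F(x)=\{\alpha: x\in U_\alpha\}$ is LSC and closed-valued, a discrete space is completely metrizable, and a separable subset of a discrete space is countable, so the countably many indices meeting some $H(x)$ give a countable subcover); then regular plus Lindel\"of yields paracompactness and the rest of your $(2)\Rightarrow(3)$ argument, including separability of $Clos(conv(H(X)))$, goes through.

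The second gap is the one you flag yourself: in $(1)\Rightarrow(2)$ you leave unverified that the construction behind Theorem~\ref{CompSel} can be arranged to use only countably many distinguished points of $Y$. The observation that closes this is elementary and should be stated: in a Lindel\"of space every locally finite family of nonempty sets is countable, since each point has a neighbourhood meeting only finitely many members and countably many such neighbourhoods cover $X$. Consequently the locally finite open refinements appearing at each stage of Michael's construction are \emph{automatically} countable --- no modification of the construction and no threat to local finiteness or to the Cauchy property is involved --- so only countably many points $y_{n,\alpha}$ ever occur, and $H(X)$ lies in the closure of this countable set, hence is separable. With these two patches your plan does yield a complete proof.
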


Next, we refer to our previous survey on selections to cite the Choban-Gutev-Nedev conjecture.

\begin{problem} [\cite{RS-2}]\label{ChGuNe}
For every $T_1$ space $X$ the following statements  are equivalent:
\begin{description}
\item{(1)} $X$ is countably paracompact and collectionwise normal;
\item{(2)} For every Hilbert space $H$ and every LSC mapping $F:X \to H$ with closed convex values there exists a continuous singlevalued selection $f$ of $F$.
\end{description}
\end{problem}

The implication $(2) \Rightarrow (1)$ is a standard exercise, while $(1) \Rightarrow (2)$ was a hard problem. During the last decade, in a series of papers, Shishkov  successfully resolved the problem step by step. Here is a short list of his results.
First, he reduced the situation to the case of 
{\it bounded} mappings\index{bounded mapping}
$F: X \to Y$, i.e. mappings with the bounded in $Y$ image $F(X)=\bigcup \{F(x): x \in X \}$.

\begin{theorem} [\cite{Sh2000}]\label{Sh_1}
For every countably paracompact space $X$ and every normed space $Y$ the following statements  are equivalent:
\begin{description}
\item{(1)} For every LSC mapping from $X$ to $Y$ with closed convex values there exists a continuous singlevalued selection;
\item{(2)} Same as $(1)$ but for bounded LSC mappings.
\end{description}
\end{theorem}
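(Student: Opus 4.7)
The direction $(1)\Rightarrow(2)$ is immediate, so the plan is to handle $(2)\Rightarrow(1)$. Given an LSC mapping $F:X\to Y$ with nonempty closed convex values, the strategy is to reduce $F$ to a bounded LSC mapping by a continuous rescaling, apply hypothesis $(2)$ to obtain a selection of the rescaled mapping, and then transfer that selection back to $F$.

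The first step is to produce a continuous function $\varphi:X\to(0,\infty)$ such that $F(x)\cap\varphi(x)U_Y\neq\emptyset$ for every $x\in X$, where $U_Y$ denotes the open unit ball of $Y$. Writing $d(x)=\mathrm{dist}(0,F(x))$, this amounts to $\varphi(x)>d(x)$. Since $F$ is LSC, the function $d$ is upper semicontinuous, and the sets $X_n=\{x:d(x)<n\}=F^{-1}(nU_Y)$ form an increasing open cover of $X$. Countable paracompactness of $X$, applied via Ishikawa's characterization to the decreasing closed sets $A_n=X\setminus X_n$ (whose intersection is empty because $\bigcup_n X_n=X$), yields a decreasing sequence of open expansions $V_n\supset A_n$ with $\bigcap_n V_n=\emptyset$; from this data one manufactures the desired continuous strict majorant $\varphi$ of $d$.

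With $\varphi$ in hand, define the rescaled mapping $\widetilde F:X\to Y$ by
$$\widetilde F(x)=\bigl(\varphi(x)^{-1}F(x)\bigr)\cap\overline{U_Y}.$$
Scaling by the continuous positive function $\varphi^{-1}$ preserves lower semicontinuity, so $x\mapsto\varphi(x)^{-1}F(x)$ is LSC. Because $d(x)/\varphi(x)<1$, this mapping meets the open ball $U_Y$ at every point, and the intersection of an LSC mapping with a fixed open set is LSC; taking pointwise closure again preserves the LSC property. For a closed convex $C\subset Y$ with $C\cap U_Y\neq\emptyset$ the elementary identity $\overline{C\cap U_Y}=C\cap\overline{U_Y}$ holds (join a point of $C\cap U_Y$ to any point of $C\cap\overline{U_Y}$ by a segment and use convexity of the norm), so $\widetilde F$ itself is LSC with closed convex values, and by construction $\widetilde F(X)\subset\overline{U_Y}$. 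Thus $\widetilde F$ is a bounded LSC closed-convex-valued mapping to which hypothesis $(2)$ applies. Letting $g:X\to Y$ be a continuous singlevalued selection of $\widetilde F$, the function $f(x)=\varphi(x)g(x)$ is continuous and satisfies $f(x)\in\varphi(x)\widetilde F(x)\subset F(x)$, hence is the required selection of $F$.

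The hard part is the very first step: the passage from the upper semicontinuous function $d$ to a continuous strict majorant $\varphi$ is precisely where countable paracompactness of $X$ is used in an essential way (the rest of the argument would work over any paracompact domain), and it is where I expect Shishkov's proof to invest its real technical effort.
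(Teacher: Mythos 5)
The survey only cites this theorem to Shishkov's note \cite{Sh2000} and contains no proof of it, so there is no argument in the paper to compare yours against; I can only judge the proposal on its own terms. The second half of your argument is correct and complete: granting a continuous $\varphi:X\to(0,\infty)$ with $\varphi>d$, the mapping $\widetilde F(x)=\bigl(\varphi(x)^{-1}F(x)\bigr)\cap\overline{U_Y}$ is LSC (scaling by a continuous positive function, intersecting with an open ball that meets every value, and taking pointwise closures all preserve lower semicontinuity, and your convexity argument for $\overline{C\cap U_Y}=C\cap\overline{U_Y}$ is sound), its values are nonempty, closed, convex and lie in $\overline{U_Y}$, and $f=\varphi\cdot g$ transfers a selection of $\widetilde F$ back to a selection of $F$. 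This rescaling reduction is surely the intended mechanism.

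The gap is exactly at the step you yourself flag as the hard part, and flagging it is not the same as closing it. What you need is: on a countably paracompact space, every nonnegative upper semicontinuous function admits a continuous strict majorant. Ishikawa's characterization hands you open sets $V_n\supset A_n$ with $\bigcap_n V_n=\emptyset$, but open sets do not by themselves manufacture continuous real-valued functions: to pass from the $V_n$ to a continuous $\varphi$ with $\{\varphi<n\}\subset X_{n-1}$ one would normally interpose Urysohn functions between $A_n$ and $X\setminus V_n$, or invoke the Dowker insertion theorem, and both of these require normality, which is not among the hypotheses. The statement you are asserting --- every USC (equivalently, every locally bounded above) real function is majorized by a continuous one --- is precisely the defining property of Mack's cb-spaces; it implies countable paracompactness and is equivalent to it for \emph{normal} spaces, but it is not a routine consequence of countable paracompactness alone, and no argument you give bridges that distance. (Note also that the obvious fallback, patching local bounded truncations of $F$ via a partition of unity subordinate to $\{X_n\}$, is blocked for the same reason.) So either you must produce the continuous majorant by a construction that genuinely avoids normality, or the reduction to the bounded case must go by a different route; as written, the proof of $(2)\Rightarrow(1)$ is incomplete at its load-bearing step.
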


Next, he solved the problem in the case of the domain $X$ 
a $\sigma$-product of metric spaces \cite{Sh2001} and extended LSC mappings with normal and countably paracompact domains over the Dieudonn\'{e} completions of the domains \cite{Sh2002}. Then he proved the conjecture for domains which are hereditarily ``nice'' \cite{Sh2004}:

\begin{theorem} [\cite{Sh2004}] \label{Sh_2}
Let $X$ be a countably paracompact and hereditarily collectionwise normal space, $B$ a reflexive Banach space and $F: X \to B$ a LSC mapping with convex closed values. Then there exists a continuous singlevalued selection $f$ of $F$.
\end{theorem}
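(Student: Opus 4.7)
The plan is to combine three ingredients: the bounded-reduction theorem already cited (Theorem 28), the weak compactness of bounded closed convex sets in a reflexive Banach space, and a transfinite inductive construction of approximate selections that exploits the hereditary collectionwise normality of $X$. First I would apply Theorem 28 (with $Y=B$) to reduce to the case where $F$ is bounded, so that $F(X)$ lies in some norm-ball. By Kakutani's theorem, reflexivity of $B$ implies that every bounded norm-closed convex subset is weakly compact, so each $F(x)$ is weakly compact convex. It is also useful to renorm $B$ by an equivalent locally uniformly rotund norm (Troyanski), so that the new norm enjoys the Kadec property: weak convergence together with norm convergence of norms implies norm convergence.

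Next I would build, by the method of outside approximations, a sequence of continuous $\varepsilon_n$-selections $f_n:X\to B$ with $\varepsilon_n\to 0$. In Michael's original argument this requires paracompactness in order to refine the open cover $\{F^{-1}(U_\alpha)\}$ given by Lemma 2 into a locally finite partition of unity. Here the available hypothesis is weaker, but hereditary collectionwise normality allows one to separate the relevant discrete families of closed sets arising on subspaces of $X$ by discrete families of open sets, and countable paracompactness delivers countable shrinkings needed to keep the construction going from stage $n$ to stage $n{+}1$. Concretely, at each stage one chooses an appropriate locally discrete refinement whose index set has prescribed cardinality, obtains a pseudopartition of unity subordinated to it via hereditary collectionwise normality, and defines $f_{n+1}$ as a convex combination of values of $f_n$ perturbed into $F$ so that $\mathrm{dist}(f_{n+1}(x),F(x))<\varepsilon_{n+1}$.

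The main obstacle is the passage to the limit. Without full paracompactness one cannot guarantee that $\{f_n\}$ is uniformly norm-Cauchy, so the usual completeness argument in Theorem 1 does not apply directly. This is exactly where reflexivity enters: for each $x$, the sequence $\{f_n(x)\}$ is bounded, hence has a weak cluster point $f(x)$; since $F(x)$ is norm-closed and convex it is weakly closed, so $f(x)\in F(x)$. One then has to arrange the construction so that the cluster point is unique and the map $x\mapsto f(x)$ is continuous. The Kadec renorming is used here to upgrade weak convergence of $f_n(x)$ to norm convergence once convergence of the norms $\|f_n(x)\|$ has been secured by the inductive control on $\varepsilon_n$, and hereditary collectionwise normality provides the extension mechanism that lets one propagate continuity from each closed subspace of a transfinite stratification of $X$ to the next.

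The hardest step will be this last coupling between the combinatorial topology of $X$ (the transfinite choice of refinements dictated by hereditary collectionwise normality and countable paracompactness) and the geometric analysis in $B$ (the Kadec/weak-compactness arguments forcing norm convergence of the approximate selections). The sequence $\varepsilon_n$, the countable shrinkings, and the locally discrete refinements must be chosen compatibly so that on every stratum the weak limit $f(x)$ inherits continuity from the approximants $f_n$. Once this bookkeeping is organized, the limit selection $f$ is continuous, singlevalued, and satisfies $f(x)\in F(x)$ for all $x\in X$, which is the desired conclusion.
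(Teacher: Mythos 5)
The paper itself contains no proof of this theorem --- it is a survey entry citing Shishkov's paper [Sh2004] --- so your proposal can only be judged on its own terms, and as it stands it has two genuine gaps, both located exactly where the real difficulty of the theorem lies. First, the stage-by-stage construction of continuous $\varepsilon_n$-selections is not actually carried out. Hereditary collectionwise normality lets you expand \emph{discrete families of closed sets} into discrete families of open sets; it does not let you refine an arbitrary open cover of the form $\{F^{-1}(D(y_\alpha,\varepsilon))\}$ into a locally finite partition of unity --- that is precisely paracompactness, which you do not have. The known positive result for collectionwise normal domains (Theorem 21 of the survey) requires the values to be convex \emph{compacta} or all of $B$; for general closed convex values the construction of even a single continuous $\varepsilon$-selection over such a domain is the open content of the Choban--Gutev--Nedev problem, and your phrase ``chooses an appropriate locally discrete refinement \dots obtains a pseudopartition of unity'' simply restates the goal rather than achieving it. The reduction to bounded $F$ via Theorem 28 is legitimate (that theorem applies to countably paracompact domains and normed ranges), but everything after it is unsupported.

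Second, the limit passage is not salvageable as described. If the $f_n$ cannot be made uniformly norm-Cauchy, taking a pointwise weak cluster point $f(x)$ of the bounded sequence $\{f_n(x)\}$ gives membership $f(x)\in F(x)$ (weak closedness of norm-closed convex sets), but destroys any control of continuity: a pointwise limit --- let alone a pointwise choice of cluster point --- of continuous maps need not be continuous, and the survey's own Theorem 10 (Dobrowolski--van Mill) is an explicit warning that pointwise limits of partial selections can fail to be continuous even in very concrete settings. Your appeal to the Kadec property of a locally uniformly rotund renorming requires that the norms $\|f_n(x)\|$ converge, but the inductive data $\mathrm{dist}(f_n(x),F(x))<\varepsilon_n$ gives no control whatsoever on $\|f_n(x)\|$; nothing in the construction secures this. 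So the ``hardest step'' you flag at the end is not a bookkeeping issue to be organized later --- it is the theorem itself, and the weak-compactness/renorming machinery you import does not close it. A correct proof (Shishkov's) requires a genuinely different exploitation of the \emph{hereditary} collectionwise normality, not merely collectionwise normality applied on subspaces inside the standard Michael iteration.
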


In  \cite{Sh2007} Shishkov worked with a paracompactness-like restriction on domain.

\begin{theorem} [\cite{Sh2007}] \label{Sh_3}
Let $X$ be a $\mathfrak{c}$-paracompact and collectionwise normal space, $B$ a reflexive Banach space, and $F: X \to B$ a LSC mapping with convex closed values. Then there exists a continuous singlevalued selection $f$ of $F$.
\end{theorem}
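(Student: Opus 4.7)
The plan is to run Michael's approximate-selection scheme on a $\mathfrak{c}$-paracompact, collectionwise normal domain, using the reflexivity of $B$ to keep the cardinalities of the open covers appearing in the construction bounded by $\mathfrak{c}$, so that $\mathfrak{c}$-paracompactness can substitute for ordinary paracompactness and collectionwise normality (at cardinal $\mathfrak{c}$) can substitute for full normality in producing subordinated partitions of unity.

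First, I would apply Theorem 29 to reduce to the case where $F$ is bounded, so $F(X)\subset rB_B$ for some $r>0$. By reflexivity of $B$, the closed ball $rB_B$, and hence each norm-closed convex value $F(x)$, is weakly compact; the weakly closed convex hull $K$ of $F(X)$ is therefore a weakly compact convex subset of $B$ containing all values. This situates $F$ inside the weakly compact convex set $K$ and makes Eberlein-type structure available, which is the main reason the cardinality $\mathfrak{c}$ enters the picture at all.

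Next, for each $n\in\N$ I would construct an open cover $\omega_n$ of $X$ of cardinality at most $\mathfrak{c}$, refining the pullback under $F$ of a cover of $K$ by norm balls of radius $2^{-n}$. Using $\mathfrak{c}$-paracompactness of $X$, I refine $\omega_n$ to a locally finite open cover of the same cardinality; collectionwise normality then yields a subordinated partition of unity $\{\lambda_\alpha\}$, and choosing $y_\alpha\in F(x_\alpha)$ inside the $\alpha$-th ball gives a continuous approximate selection
\[
f_n(x)=\sum_\alpha\lambda_\alpha(x)y_\alpha,\qquad \mathrm{dist}(f_n(x),F(x))<2^{-n}.
\]
Iterative refinement of the $\omega_n$ (so that $\omega_{n+1}$ refines $\omega_n$ sufficiently finely) arranges $\|f_n-f_{n+1}\|_\infty\leq 3\cdot 2^{-n}$, making $(f_n)$ uniformly Cauchy, and the limit $f:X\to B$ is a continuous selection of $F$ by norm-closedness of the values.

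The main obstacle is bringing the cardinality of $\omega_n$ down to $\mathfrak{c}$ in the first place: a bounded subset of a reflexive Banach space need not be norm-separable (consider $\ell^2$ over an index set of cardinality larger than $\mathfrak{c}$), and the natural device for extracting a norm-separably-valued LSC sub-selection, the compact-valued Theorem 3, requires full paracompactness of $X$, which is exactly what we lack. Shishkov's argument must therefore exploit reflexivity more subtly, presumably via the weak topology and an Eberlein--\v{S}mulyan-type sequential extraction, to produce norm approximations whose indexing set can be kept of size $\mathfrak{c}$ using only $\mathfrak{c}$-paracompactness and collectionwise normality. Verifying that collectionwise normality alone (rather than the full normality of arbitrary open covers implicitly used in Michael's original scheme) suffices to construct the required subordinated partitions of unity at each stage is the delicate technical heart of the theorem.
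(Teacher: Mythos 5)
The survey you are working from does not reproduce Shishkov's proof --- Theorem 30 is only cited from \cite{Sh2007} --- so your attempt can only be judged on its own terms, and on those terms it has a genuine gap at exactly the point you flag yourself. The opening reduction is fine: $\mathfrak{c}$-paracompactness implies countable paracompactness, so Theorem 29 does let you assume $F$ is bounded, and reflexivity then makes all values weakly compact. But the step ``construct an open cover $\omega_n$ of $X$ of cardinality at most $\mathfrak{c}$ refining the pullback of a cover of $K$ by norm balls of radius $2^{-n}$'' is not a construction; it is a restatement of the difficulty. A bounded closed convex subset of a reflexive space can have norm density character strictly greater than $\mathfrak{c}$ (take the unit ball of $\ell^2(\Gamma)$ with $|\Gamma|>\mathfrak{c}$), so no cover of $K$ by $2^{-n}$-balls of cardinality $\leq\mathfrak{c}$ exists, and weak compactness does not repair this: weakly compact sets are Eberlein compacta but need not be norm-separable or even of weight $\leq\mathfrak{c}$. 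The same obstruction recurs at every stage of the iteration (the sets of centres needed to handle $F(x)\cap D(f_n(x),2^{-n})$ are just as large), so the approximate-selection scheme never gets off the ground. Your concluding paragraph, which defers to an ``Eberlein--\v{S}mulyan-type sequential extraction'' still to be supplied, is an acknowledgement that the essential idea is missing rather than a proof of it.

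A second, smaller point: you have the roles of the two hypotheses on $X$ slightly confused. Once a cover is locally finite, plain normality already yields a subordinated partition of unity, so collectionwise normality is not being ``substituted for normality'' there; in Michael-type arguments over collectionwise normal domains it enters elsewhere, namely to separate discrete families that arise from compactness of the values, as in the characterization recorded as Theorem 21 of the survey. Any honest reconstruction of Shishkov's argument has to explain what collectionwise normality is actually doing and where the cardinal $\mathfrak{c}$ genuinely comes from; your outline does neither. What you have is a correct identification of the shape of the problem together with the standard reduction to bounded maps; the theorem itself remains unproved.
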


Note that $X$ from the last theorem can be a nonparacompact space, and that collectionwise normality plus $\mathfrak{(c)}$-paracompactness of domain is not in general, a necessary restriction for existence od selections
(cf.  Nedev's theorem for mappings over $\omega_1$ \cite{N}).

Then Shishkov \cite{Sh2008} proved the following fact which,
together with Theorem 28,
finally resolved Choban-Gutev-Nedev problem.

\begin{theorem} [\cite{Sh2008}] \label{Sh_4}
Let $X$ be a collectionwise normal space, $H$ a Hilbert space and $F: X \to H$ a LSC mapping with convex closed and bounded values. Then there exists a continuous singlevalued selection $f$ of $F$.
\end{theorem}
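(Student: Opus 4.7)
My plan is to realize the selection $f$ as the uniform limit of a Cauchy sequence of continuous $\varepsilon_n$-selections $f_n\colon X \to H$ with $\varepsilon_n \searrow 0$; since each $F(x)$ is closed, such a limit automatically lies in $F(x)$. Two features of the hypotheses will drive the construction: the Hilbert structure of $H$, which provides the $1$-Lipschitz nearest-point projection $P_C\colon H\to C$ onto any convex closed $C\subset H$; and the collectionwise normality of $X$, which expands every discrete family of closed sets to a discrete family of open sets and thereby substitutes for paracompactness.

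First I would reduce to the case $F(X)\subset B_R$ for a fixed ball, by exploiting the pointwise boundedness of the values together with $1$-Lipschitz truncation onto nested closed balls of $H$; this keeps all approximants uniformly bounded. Next, given $\varepsilon_n>0$, I would use Lemma \ref{LSC} to produce, for each $(x_0,y_0)\in\Gamma_F$, an open neighborhood $V(x_0)$ on which $F$ meets $B(y_0,\varepsilon_n)$. Instead of forming a partition of unity (which requires paracompactness), I would construct $f_n$ by a transfinite recursion on a well-ordering of this local data, using collectionwise normality at each step to expand a discrete family of closed sets into a discrete family of open sets and defining $f_n$ on each new open stage as the nearest-point projection in $H$ onto a convex closed target of diameter $<\varepsilon_n$. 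The $1$-Lipschitz property of the projection, together with the parallelogram identity of $H$, ensures that successive stages glue into a continuous uniformly bounded $\varepsilon_n$-selection with no accumulation of error.

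The passage from $f_n$ to $f_{n+1}$ is then effected by projecting $f_n$ pointwise onto a slightly enlarged LSC convex closed "tube" $\widetilde F_{n+1}(x)\supset F(x)$ of thickness $\varepsilon_{n+1}$, which is produced by the same construction applied to a suitable enlargement of $F$. The $1$-Lipschitz bound gives $\|f_{n+1}-f_n\|_\infty<\varepsilon_n+\varepsilon_{n+1}$, so for $\sum\varepsilon_n<\infty$ the sequence $\{f_n\}$ is uniformly Cauchy and its limit $f$ is a continuous selection of $F$.

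The hard part is the inductive construction of a single continuous $\varepsilon_n$-selection on a merely collectionwise normal domain. This is precisely where Michael's classical partition-of-unity technique breaks down and where the theorem's specificity to \emph{Hilbert} spaces with \emph{bounded} values is essential: the unique, $1$-Lipschitz nearest-point projection is what fuses the transfinitely built discrete pieces without accumulating error, and the boundedness hypothesis is what makes the inductive control on target diameters terminate. This step is where the substantial new input of Shishkov \cite{Sh2008} must enter, building on the techniques of \cite{Sh2004, Sh2007}.
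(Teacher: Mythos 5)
The survey you are working from does not actually reproduce Shishkov's proof: Theorem~\ref{Sh_4} is stated with a citation to \cite{Sh2008}, and the text only remarks that the argument ``essentially uses the geometric and analytical structure of a Hilbert space.'' So the only question is whether your outline could be completed, and as written it cannot, because the two steps carrying all the difficulty are respectively unjustified and explicitly deferred. First, the reduction to $F(X)\subset B_R$ does not follow from pointwise boundedness of the values. To trap a pointwise bounded LSC mapping inside a ball of even continuously varying radius one must majorize the lower semicontinuous function $x\mapsto\sup\{\|y\|:y\in F(x)\}$ by a continuous function, and that insertion is exactly what countable paracompactness supplies and what a merely collectionwise normal domain need not supply. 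Indeed this reduction is the content of Theorem~\ref{Sh_1}, which is the \emph{other} half of the Choban--Gutev--Nedev solution and assumes countable paracompactness; it is not available inside the present theorem, whose whole point is to avoid that hypothesis.

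Second, and more seriously, the heart of the matter --- producing a single continuous $\varepsilon$-selection over a merely collectionwise normal domain --- is named but not constructed. Collectionwise normality expands a \emph{given} discrete family of closed sets to a discrete family of open sets, but you never indicate where a discrete (or even point-finite) closed family refining the cover $\{F^{-1}(B(y,\varepsilon))\}$ is supposed to come from: for noncompact closed bounded convex values in an infinite-dimensional $H$ this cover admits no such refinement in general, which is precisely why Michael's selection theorem for collectionwise normal domains (Theorem~\ref{GOY_1}, going back to \cite{Sel-1}) requires compact values and why the conjecture stood open for so long. Nor do you explain why a transfinitely glued family of nearest-point projections remains continuous at limit stages, or how the ``enlarged tube'' $\widetilde F_{n+1}$ is shown to be LSC with closed convex values. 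The outer iteration you describe ($\varepsilon_n$-selections, $1$-Lipschitz metric projections, a uniform Cauchy estimate) is the routine part shared with the proof of Theorem~\ref{ConvSel}; conceding that ``this step is where the substantial new input of Shishkov must enter'' means the proposal is a plan around the theorem rather than a proof of it.
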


We note that the proof essentially uses the geometric and analytical structure of a Hilbert space. Thus for reflexive range spaces the problem is still open.

We end the section by recent results of Gutev and Makala \cite{GM-1} who have
suggested a characterization for classes of domains by using a controlled local improvement of $\varepsilon$-selections up to a genuine selection, rather than using exact selections.

\begin{theorem} [\cite{GM-1}] \label{GuMak_1}
Let $\lambda$ be an infinite cardinal. For any $T_1$-space $X$ the following statements  are equivalent:
\begin{description}
\item{(1)} $X$ is countably paracompact and $\lambda$-collectionwise normal;
\item{(2)} For every Banach space $B$ with $w(B) \leq \lambda$, every LSC mapping $F: X \to B$ with values $F(x)$ being convex compacta, or $F(x)=B$, every continuous $\varepsilon: X \to (0;+\infty)$ and every $\varepsilon$-selection $f_{\varepsilon}:X \to B$ of $F$, there exists a continuous singlevalued selection $f$ of $F$ such that $dist(f(x),f_{\varepsilon}(x) < \varepsilon(x), \,\,x \in X$.
\end{description}
\end{theorem}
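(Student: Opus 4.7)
The plan is to prove the two implications separately, with $(1)\Rightarrow(2)$ carrying the main weight.

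For $(2)\Rightarrow(1)$, I would first note that (2) trivially yields continuous singlevalued selections for every mapping of the form described (take any initial continuous $\varepsilon$-selection, apply (2) once), whence Theorem \ref{GOY_1} gives $\lambda$-collectionwise normality of $X$. To recover countable paracompactness, I would exploit the freedom in prescribing $f_\varepsilon$: given a Dowker-type pair $g \le h$ on a closed $A \subset X$ with $g$ USC and $h$ LSC, design an $F$ with nontrivial convex compact values encoding the interval $[g(x),h(x)]$, and choose $\varepsilon, f_\varepsilon$ so that any $\varepsilon$-near selection is forced to satisfy $g < f < h$ strictly. Combined with the collectionwise normality just established, this produces the strict Dowker insertion characterizing countable paracompactness in the style of Theorem \ref{GOY_2}.

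For $(1)\Rightarrow(2)$, the plan is to refine $f_\varepsilon$ into an exact selection by a controlled uniform-Cauchy construction. Set $g_0 = f_\varepsilon$ and inductively construct continuous $g_n : X \to B$ satisfying
$$
\operatorname{dist}(g_n(x), F(x)) < \frac{\varepsilon(x)}{2^{n+2}} \quad\text{and}\quad \|g_n(x) - g_{n-1}(x)\| < \frac{\varepsilon(x)}{2^{n+1}}.
$$
The sequence is then uniformly Cauchy at each $x$ with total deviation from $g_0$ bounded by $\varepsilon(x)/2 < \varepsilon(x)$, and its pointwise limit $f$ is continuous; closedness of $F(x)$ (convex compact, or all of $B$) ensures $f(x) \in F(x)$, yielding $\|f(x) - f_\varepsilon(x)\| < \varepsilon(x)$ as required.

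The inductive step producing $g_{n+1}$ from $g_n$ proceeds as follows. For each $x_0 \in X$, pick $y_{x_0} \in F(x_0)$ with $\|y_{x_0} - g_n(x_0)\| < \varepsilon(x_0)/2^{n+2}$, and use Lemma \ref{LSC} together with continuity of $g_n$ and $\varepsilon$ to obtain an open neighborhood $U_{x_0}$ of $x_0$ and a continuous $h_{x_0} : U_{x_0} \to B$ with $h_{x_0}(x) \in F(x)$ and $\|h_{x_0}(x) - g_n(x)\| < \varepsilon(x)/2^{n+3}$ on $U_{x_0}$. Glue the $h_{x_0}$ by a partition of unity subordinate to a locally finite open refinement of $\{U_{x_0}\}$; convexity of $F(x)$ forces the resulting convex combination back into $F(x)$, while the pointwise distance bound to $g_n$ is preserved by the partition.

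The main obstacle is producing a usable partition of unity under only $\lambda$-collectionwise normality plus countable paracompactness, rather than full paracompactness. Here the weight restriction $w(B) \le \lambda$ becomes essential: reindexing the local data through a base of $B$ of cardinality $\le \lambda$ yields an open cover of $X$ of size $\le \lambda$, for which the combination of $\lambda$-collectionwise normality and countable paracompactness does deliver the needed locally finite refinement and associated partition of unity, exactly as in the machinery behind Theorem \ref{Yamauch_1}. The delicate point is verifying that this size-$\lambda$ machinery is compatible with the additional proximity constraint $\|g_{n+1} - g_n\| < \varepsilon/2^{n+1}$; this compatibility is the technical heart of the argument and the step I expect to be the most intricate.
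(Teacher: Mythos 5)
The survey contains no proof of this theorem: it is quoted from Gutev and Makala \cite{GM-1}, with only the remark that Theorem \ref{GuMak_3} was the starting point of their argument. So your proposal can only be judged on its own terms, and as written it has two concrete failures, both in the direction $(1)\Rightarrow(2)$.

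First, the base case of your induction is numerically impossible. You set $g_0=f_\varepsilon$, which is only guaranteed to satisfy $dist(g_0(x),F(x))<\varepsilon(x)$, yet you demand $\|g_1(x)-g_0(x)\|<\varepsilon(x)/4$ together with $dist(g_1(x),F(x))<\varepsilon(x)/8$; when $dist(g_0(x),F(x))$ is close to $\varepsilon(x)$ these are incompatible by the triangle inequality. This is not fixable by adjusting constants: one must first interpolate a continuous function strictly between the function $x\mapsto dist(f_\varepsilon(x),F(x))$ (which is upper semicontinuous because $F$ is LSC and $f_\varepsilon$ is continuous) and $\varepsilon(\cdot)$, so as to create room for the geometric series. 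That interpolation is exactly Dowker's insertion theorem, i.e.\ exactly where countable paracompactness of $X$ enters; since mere existence of selections for compact-convex-or-$B$-valued maps is already equivalent to $\lambda$-collectionwise normality alone (Theorem \ref{GOY_1}), the $\varepsilon$-control is the \emph{only} place countable paracompactness can be used, and your sketch never identifies it.

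Second, the assertion carrying your gluing step is false: $\lambda$-collectionwise normality plus countable paracompactness does not yield locally finite open refinements (hence partitions of unity) for open covers of cardinality $\le\lambda$ -- that would be $\lambda$-paracompactness, which is strictly stronger. The ordinal space $[0,\omega_1)$ is collectionwise normal and countably paracompact, but its cover by the initial segments $[0,\alpha)$, $\alpha<\omega_1$, has no locally finite open refinement. Internally to the survey the same point is visible: if your claimed machinery worked, Theorem \ref{Yamauch_2} would give selections for arbitrary closed convex values on every countably paracompact $\lambda$-collectionwise normal domain, trivializing the Choban--Gutev--Nedev problem (Problem \ref{ChGuNe}, Theorems \ref{Sh_1}--\ref{Sh_4}), which the survey emphasizes was hard and is still open for reflexive ranges. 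The actual proofs for collectionwise normal domains (Michael's argument behind Theorem \ref{GOY_1}, and Gutev--Makala's) must exploit compactness of the values $F(x)$ to replace locally finite covers by discrete families (handled by collectionwise normality) combined with countable ones (handled by normality and countable paracompactness); this reduction is the technical heart of the theorem and cannot be replaced by a generic partition-of-unity step. A smaller gap in $(2)\Rightarrow(1)$: condition $(2)$ is vacuous for mappings admitting no continuous $\varepsilon$-selection, so you cannot ``take any initial continuous $\varepsilon$-selection'' for an arbitrary $F$ on an arbitrary $T_1$-space; the test mappings must be built so that an $\varepsilon$-selection (e.g.\ a constant map) can be exhibited explicitly.
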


Similarly  we have for normality:
\begin{theorem}[\cite{GM-1}]\label{GuMak_2}
For any $T_1$-space $X$ the following statements  are equivalent:
\begin{description}
\item{(1)} $X$ is countably paracompact and normal;
\item{(2)} Same as $(2)$ in Theorem 32 but for separable Banach spaces.
\end{description}
\end{theorem}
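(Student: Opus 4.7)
My plan is to deduce this theorem directly from Theorem 32 by recognizing that separability of a Banach space $B$ is equivalent to $w(B) \leq \aleph_0$. Condition $(2)$ here is thus literally condition $(2)$ of Theorem 32 specialized to $\lambda = \aleph_0$, so it suffices to establish the topological equivalence
$$\text{normal} + \text{countably paracompact} \iff \aleph_0\text{-collectionwise normal} + \text{countably paracompact}.$$

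The implication $\Leftarrow$ is immediate, since separating two disjoint closed sets amounts to separating a discrete family of size $2$, so $\aleph_0$-collectionwise normality trivially entails normality.

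For $\Rightarrow$, I would rely on the classical Ishikawa--Morita characterization: a normal space is countably paracompact iff every countable open cover admits a locally finite open refinement. Given a countable discrete family $\{F_n\}_{n \in \mathds{N}}$ of closed sets in such a space, I would first use normality to produce, for each $n$, disjoint open sets $U_n \supset F_n$ and $V_n \supset \bigcup_{m \ne n} F_m$ (valid because discreteness makes $\bigcup_{m \ne n} F_m$ closed), then apply countable paracompactness to an appropriate countable open cover built from the $U_n$ and $X \setminus \bigcup_n F_n$ to extract a locally finite open expansion $\{G_n\}$ of $\{F_n\}$, and finally shrink the $G_n$ by a further application of normality to arrive at a genuinely discrete expansion. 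The step that I expect to demand the most care---and the principal obstacle---is this last passage from locally finite to discrete: it crucially uses that the original family $\{F_n\}$ is already discrete, so that each point $x$ admits a neighborhood meeting at most one $F_n$, which can then be intersected with the local-finiteness neighborhood of the $G_n$ to yield the required discreteness.

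Once this equivalence is established, both implications of the theorem follow at once from Theorem 32 without any additional selection-theoretic construction. As a sanity check on the $(2)\Rightarrow(1)$ direction, one could also verify normality and countable paracompactness directly by plugging into $(2)$ the familiar test mappings from the proofs of Theorems 21 and 32 (with target $B = \mathds{R}$ or $B = c_0$); the $\varepsilon$-selection hypothesis adds no essential obstruction, since for any fixed LSC $F$ one may take $\varepsilon \equiv 1$ and $f_\varepsilon$ a suitable constant mapping whose range lies within distance $1$ of every value $F(x)$.
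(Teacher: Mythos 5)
Your reduction is essentially the intended one, and in fact the survey offers no proof of this statement at all: it is quoted from Gutev--Makala and introduced only by the phrase ``similarly we have for normality,'' so the entire mathematical content of your argument is the purely topological equivalence of normality with $\aleph_0$-collectionwise normality; granting that, the theorem is literally the case $\lambda=\aleph_0$ of Theorem 32, since a Banach space is separable iff its weight is at most $\aleph_0$. That equivalence is true, and your $\Leftarrow$ direction is fine, but two points in your $\Rightarrow$ sketch deserve comment. First, the detour through countable paracompactness and the Ishikawa--Morita criterion is unnecessary: normality alone suffices. For a countable discrete closed family $\{F_n\}$ each $H_n=\bigcup_{m\neq n}F_m$ is closed, so choosing open $W_n\supset F_n$ with $\overline{W_n}\cap H_n=\emptyset$ and setting $U_n=W_n\setminus\bigcup_{m<n}\overline{W_m}$ gives a pairwise disjoint open expansion; then taking open $V$ with $\bigcup_n F_n\subset V\subset \overline{V}\subset\bigcup_n U_n$ and putting $G_n=U_n\cap V$ yields a discrete expansion (a point outside $\overline{V}$ misses all $G_n$, a point in $\overline{V}$ lies in exactly one $U_n$). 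Second, your passage from locally finite to discrete, as literally stated, is insufficient: intersecting a neighborhood that meets at most one $F_n$ with a local-finiteness neighborhood of the $G_n$ does not prevent that neighborhood from meeting several $G_n$. What you actually need is that the expansion be pairwise disjoint (or be shrunk so that its closures are pairwise disjoint), after which local finiteness does upgrade to discreteness by deleting the finitely many irrelevant closures. Both issues are repairable and neither undermines the overall strategy; the closing ``sanity check'' on $(2)\Rightarrow(1)$ is harmless but redundant, since that implication already follows from Theorem 32 together with the equivalence above.
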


Note that the starting point of proofs in \cite{GM-1} was the following:

\begin{theorem} [\cite{GM-1}] \label{GuMak_3}
For any  Banach space $B$ the following statements  are equivalent:
\begin{description}
\item{(1)} For every collectionwise normal domain $X$ and every LSC $F: X \to B$ with values $F(x)$ being convex compacta, or $F(x)=B$ there exists a continuous selection $f$ of $F$;
\item{(2)} Same as $(1)$ but without possibility of $F(x)=B$.
\end{description}
\end{theorem}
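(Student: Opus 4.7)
The direction $(1)\Rightarrow(2)$ is immediate, since $(2)$ is the special case of $(1)$ in which no value of $F$ is allowed to equal the whole space $B$.

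For $(2)\Rightarrow(1)$, given $F:X\to B$ as in $(1)$, the plan is to construct an LSC compact-convex-valued selection $G:X\to B$ of $F$; a continuous singlevalued selection of $G$ furnished by $(2)$ is then automatically a selection of $F$. Writing $X=X_c\sqcup X_0$ with $X_0=\{x:F(x)=B\}$, I would set $G=F$ on $X_c$, where the values are already compact convex; the task reduces to choosing compact convex values $G(x)\subset B$ for $x\in X_0$ making the combined $G$ globally LSC. The plan is to use collectionwise normality of $X$ to produce a locally finite open cover $\{U_\alpha\}_{\alpha\in A}$ of $X$ together with distinguished points $x_\alpha\in U_\alpha$, arranged so that every $x_c\in X_c$ coincides with $x_\alpha$ for at least one chart $U_\alpha\ni x_c$; attach to each index the compact convex template $K_\alpha=F(x_\alpha)$ when $x_\alpha\in X_c$, and an arbitrary fixed compact convex template such as $\{0\}$ otherwise. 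For $x\in X_0$ one then defines
\[
G(x)=Clos\Bigl(conv\bigl(\bigcup\{K_\alpha:x\in U_\alpha\}\bigr)\Bigr),
\]
the closed convex hull of a finite union of compact convex subsets of $B$, which is itself compact convex by Mazur's theorem.

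Lower semicontinuity of $G$ must then be verified at each point. At $x_0\in X_0$ it follows from $F(x_0)=B$ together with LSC of $F$ (which handles $x\in X_c$ near $x_0$) and the local finiteness of the cover (which handles $x\in X_0$ near $x_0$). At $x_0\in X_c$ the nontrivial subcase is $x_0\in\overline{X_0}$: one needs $G(x)\cap U_y\ne\emptyset$ for each $y\in F(x_0)$ and each $x\in X_0$ near $x_0$, which is arranged by the compatibility requirement above---some chart $U_\alpha\ni x_0$ has $x_\alpha=x_0$, so $F(x_0)=K_\alpha\subset G(x)$ for $x\in U_\alpha\cap X_0$. The main obstacle is precisely the construction of such a cover: arranging that every $x_c\in X_c$ is the distinguished point of at least one of its own charts while keeping the whole family locally finite, which is where the full strength of collectionwise normality (allowing discrete refinements of open families indexed by points of $X_c$) is used in an essential way. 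Once $G$ is constructed with compact convex values and verified to be LSC, a single appeal to $(2)$ concludes the proof.
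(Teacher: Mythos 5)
The survey itself gives no proof of this theorem (it is quoted from Gutev and Makala \cite{GM-1} as the ``starting point'' of their paper), so your argument has to stand on its own, and its central construction cannot be carried out. You require a locally finite open cover $\{U_\alpha\}$ of $X$ with distinguished points $x_\alpha\in U_\alpha$ such that \emph{every} $x\in X_c$ equals $x_\alpha$ for some chart containing it. Distinct points of $X_c$ are then distinguished points of distinct members of the family, so any neighborhood $V$ of any point meets at least $|V\cap X_c|$ members; local finiteness therefore forces $X_c$ to be a locally finite, hence closed discrete, subset of $X$. This already fails for $X=[0,1]$, $B=\mathbb{R}$, $F(0)=\mathbb{R}$, $F(t)=[-1/t,1/t]$ for $t>0$, where $X_c=(0,1]$: every neighborhood of $1/2$ meets infinitely many of the charts $U_{\alpha(t)}\ni t$. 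Even point-finiteness --- which is the minimum you need for $G(x)$ to be a hull of finitely many templates and for $\bigcap\{U_\alpha: x_0\in U_\alpha\}$ to be open --- is impossible here, since a point-finite indexed family of nonempty open subsets of a separable space is countable while $X_c$ is uncountable. Collectionwise normality cannot rescue this: it provides discrete expansions of \emph{discrete} families of closed sets, and $X_c$ is typically nothing of the sort.

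There is a second, independent obstruction: the object you are trying to build --- an LSC compact-convex-valued selection $G$ of $F$ with $G=F$ on $X_c$ --- need not exist, so no choice of values on $X_0$ alone can succeed. Take $X=[0,1]$, $B=\mathbb{R}$, enumerate $\mathbb{Q}\cap[0,1]=\{q_k\}$, and put $F(q_k)=[-k,k]$ and $F(x)=\mathbb{R}$ for irrational $x$; one checks directly that $F$ is LSC with values convex compacta or $B$. If $G$ is LSC and $G(q_k)=[-k,k]$, then lower semicontinuity at $q_k$, applied to the points $\pm k$ and $\varepsilon=1$, produces $\delta_k>0$ with $\mathrm{diam}\,G(x)\geq 2k-2$ whenever $|x-q_k|<\delta_k$; by the Baire category theorem some irrational $x^*$ lies in $(q_k-\delta_k,q_k+\delta_k)$ for infinitely many $k$, so $G(x^*)$ is unbounded, let alone compact. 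Hence any correct reduction must be allowed to shrink the values on $X_c$ as well (here $G\equiv\{0\}$ works), and producing such a shrinking globally is precisely the nontrivial content of the Gutev--Makala theorem. What survives of your proposal is the trivial direction $(1)\Rightarrow(2)$ and the correct observation that a compact-convex-valued LSC minorant of $F$ would finish the proof; the construction of that minorant has to be replaced entirely.
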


\section{Generalized convexities}
\label{sec:5}

{\bf 5.1.}\,Roughly speaking, there exists an entire mathematical "universe" devoted to
various generalizations and versions of convexity. In our
opinion, even if one simply lists the titles of  "generalized
convexities" one will find as a minimum, nearly 20 different
notions.

As for the specific situation with continuous selections perhaps
two principal
approaches are really used here. With the {\it inner} point of view, one starts by introducing
some type of ``convex hull'' operation and defines a convex set as a set which is preserved
by such an operation. Typical examples are 
Menger's metric convexity \cite{Men}, 
Michael's convex and geodesic structures \cite{MConv},
M\"{a}gerl's paved spaces \cite{Mag}, 
Bielawski's simplicial convexity \cite{Bel},
Horvath's structures \cite{Hor}, 
Saveliev's convexity \cite{Sav},
etc.

With respect to {\it outer} constructions, convex sets are introduced by some list of axioms and then the convex hull $conv A$ of a set $A$ is defined as the intersection $\cap\{C:  A \subset C,\,\,C \,\,{\rm is \,convex}\}$. Among examples are: 
Levy's abstract convexity \cite{Lev},
Jamison's convexity \cite{Jam}, 
van de Vel's topological convexity \cite{Vel}, 
decomposable sets \cite{Fr, RS},
and many others. 
The following notion was introduced by van de Vel \cite{Vel}.

\begin{definition} \label{ConvVel}
A family $\cal{C}$ of subsets of a set $Y$ is called a
{\it convexity} on $Y$\index{convexity on a space}
if it contains $\emptyset$ and $Y$, is closed with respect to intersections of arbitrary subfamilies and is closed with respect to unions of an arbitrary updirected subfamilies.
\end{definition}

Van de Vel \cite{Vel} proved  a Michael's selection type theorem for LSC mappings into completely metrizable spaces $Y$ endowed with a convexity $\cal{C}$ which satisfies a set of assumptions such as compatibility with metric uniformity, compactness and connectedness of 
{\it polytopes}\index{polytope}
(i.e. convex hulls of finite sets), etc.
One of the crucial restriction was the so-called 
{\it Kakutani} $S_4$-property \index{Kakutani $S_4$-property}
which means that every pair of disjoint convex sets
admits extensions up to two complementary convex sets (i.e. half-spaces). 
In the special issue of "Topology and Applications" entirely dedicated
to 50-th anniversary of selection theory and to the 80-th anniversary of Ernest Michael, 
Horvath \cite{HorM} proposed an approach which gives a selection theorem for convexities
with the relative $S_4$-property.

\begin{theorem}[\cite{HorM}] \label{Horth_1}
Let $(Y;\cal{C})$ be a completely metrizable space with convexity, let all polytopes be compact and connected, and let Kakutani $S_4$-property hold on polytopes with respect to the induced convexities. Then every LSC mapping $F:X \to Y$ from a paracompact domain and with closed convex values admits a continuous singlevalued selection
\end{theorem}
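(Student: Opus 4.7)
The plan is to imitate the classical Michael iterative scheme, replacing the linear convex combination by a selection procedure on polytopes that is made possible by the relative Kakutani $S_4$-property.

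First, I would extract the crucial consequence of the hypotheses: every polytope $P=\mathrm{conv}(A)$, $A$ finite, is an absolute retract. The induced convexity on $P$ inherits the $S_4$-property by assumption, and $P$ is compact and connected, so van de Vel's structural theory applies to $P$: disjoint (relatively) convex subsets are separated by complementary half-spaces, which in turn yields a continuous join operation on $P$ and forces $P$ to be contractible and, more strongly, an $AR$. This is the technical heart of the argument and is precisely where the relative $S_4$-assumption is consumed.

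Second, given $\varepsilon>0$, I would build a continuous $\varepsilon$-selection of $F$ whose image lies in polytopes. For each $x\in X$ choose $y_x\in F(x)$; by Lemma \ref{LSC} there is an open $V_x\ni x$ with $F(x')\cap B(y_x,\varepsilon)\neq\emptyset$ for all $x'\in V_x$. Paracompactness furnishes a locally finite refinement $\{V_{x_i}\}$ with a partition-of-unity-indexed nerve $N$. For each $x$ let $S(x)=\{i:x\in V_{x_i}\}$, $P(x)=\mathrm{conv}\{y_i:i\in S(x)\}$, and note that $P(x)$ is a polytope, hence an $AR$. Mapping $X\to N$ via the carrier and then inductively extending simplex by simplex into the corresponding polytopes — using at each step the $AR$ property of the polytope to extend a map from the boundary of a simplex to the whole simplex — yields a continuous $f_\varepsilon:X\to Y$ with $f_\varepsilon(x)\in P(x)$; since each $y_i$ with $i\in S(x)$ lies within $\varepsilon$ of $F(x)$ and $P(x)$ is a polytope, $f_\varepsilon(x)$ is at controlled distance from $F(x)$.

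Third, I would iterate. Set $\varepsilon_n=2^{-n}$ and, after $f_n$ has been constructed, replace $F$ by $F_n(x)=\mathrm{Clos}(F(x)\cap B(f_n(x),\varepsilon_n))$, which is again LSC with closed convex values provided the open balls are ``convex enough'' on each polytope — here one uses compatibility of the convexity with the metric to approximate $F_n$ by a mapping of the same type. Applying the second step to $F_n$ produces $f_{n+1}$ with $d(f_{n+1},f_n)<2^{-n}$. Completeness of $Y$ then gives a uniform limit $f$, and closedness of $F(x)$ forces $f(x)\in F(x)$.

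The main obstacle is the second step: in the absence of a linear structure there is no canonical formula $\sum\phi_i(x)y_i$, so one must manufacture a continuous cross-section of the assignment $x\mapsto P(x)$ purely from the abstract convexity. This is exactly where the $AR$-conclusion forced by the relative Kakutani $S_4$-property is indispensable, and it is the delicate point where Horvath's weakening from van de Vel's global hypothesis to the polytope-relative one has to pay its way — the inductive extension over skeleta of the nerve works simplex by simplex precisely because each individual polytope (not the whole of $Y$) is an $AR$.
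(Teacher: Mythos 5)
Your overall scheme is the classical Michael outside-approximation: nerve-indexed $\varepsilon$-selections followed by an iteration in which $F$ is replaced by $F_n(x)=\mathrm{Clos}\left(F(x)\cap B(f_n(x),\varepsilon_n)\right)$. Under the stated hypotheses this breaks down at two places, both of the same nature. First, in your second step you conclude that $f_\varepsilon(x)\in P(x)=\mathrm{conv}\{y_i: i\in S(x)\}$ lies ``at controlled distance from $F(x)$'' because each $y_i$ does; in an abstract convexity nothing forces the polytope spanned by an $\varepsilon$-net near $F(x)$ to stay near $F(x)$ --- that is exactly the ``compatibility of the convexity with the metric uniformity'' which is one of van de Vel's hypotheses and which Horvath's theorem deliberately does \emph{not} assume. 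Second, and more seriously, the set $F(x)\cap B(f_n(x),\varepsilon_n)$ has no reason to be convex: balls need not be convex, and intersections of convex sets with balls need not be convex even in mildly nonlinear settings (the survey makes this very point in Section 3 about nonlocally convex vector spaces). So $F_n$ is not a mapping ``of the same type'' and the induction cannot be restarted. You flag this yourself (``provided the open balls are convex enough \dots one uses compatibility of the convexity with the metric''), but that compatibility is not among the hypotheses; getting by without it is the whole point of the theorem.

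The proof in \cite{HorM} is organized differently: from compactness, connectedness and the relative $S_4$-property of polytopes one derives the \emph{van de Vel property}, i.e.\ a supply of reflexive entourages $R\subset Y\times Y$ for which all the complexes $S_R(Z)$ are homotopically trivial, and the selection is then produced by a nerve/filtration argument driven by that property rather than by intersecting with metric balls. (Gutev's alternative proof, also described in Section 5.1, replaces your ball-intersection step by a convexity-internal one: a decreasing sequence of \emph{convex-valued LSC multi-selections} $\Phi_n$ of $F$ with $\sup_{x}\mathrm{diam}\,\Phi_n(x)\to 0$, whose pointwise closed intersections give the selection.) A further, lesser point: your claim that each polytope is an AR is stronger than what the hypotheses are recorded to yield (homotopical triviality) and stronger than what your skeletal extension needs --- vanishing of the homotopy groups of each $P(\sigma)$ already suffices to extend over each simplex --- so the argument should not be made to rest on the AR property.
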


He also added facts on selections with results on extensions, approximations and fixed points.
In fact, compactness and connectedness of polytopes together with the Kakutani $S_4$-property imply homotopical triviality of polytopes and moreover, of all completely metrizable convex sets. Therefore  the following Horvath's theorem generalizes the previous one.

\begin{theorem} [\cite{HorM}] \label{Horth_2}
Let $(Y;\cal{C})$ be a completely metrizable space with convexity for which all polytopes are homotopically trivial. Then every LSC mapping $F:X \to Y$ with paracompact domain and with closed convex values admits a continuous singlevalued selection
\end{theorem}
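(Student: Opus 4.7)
The plan is to build a genuine selection as a uniform limit of a Cauchy sequence of continuous approximate selections $f_n:X\to Y$, in the spirit of Michael's proof of Theorem~1, with barycentric averaging in a Banach space replaced by skeletal extensions into polytopes, where the homotopical triviality of polytopes plays the role of contractibility of balls. I would arrange that $f_n$ is an $\varepsilon_n$-selection of $F$, with $\varepsilon_n=2^{-n}$, and that $d(f_n,f_{n+1})<2^{-n}$. Completeness of $Y$ then yields a uniform limit $f:X\to Y$, continuity is automatic, and $d(f(x),F(x))\leq d(f(x),f_n(x))+\varepsilon_n\to 0$; since $F(x)$ is closed, $f(x)\in F(x)$.

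For a single $\varepsilon$-selection, fix $\varepsilon>0$. Using Lemma~1 and paracompactness, I would extract a locally finite open cover $\{U_\alpha\}_{\alpha\in A}$ of $X$ together with points $y_\alpha\in Y$ satisfying $d(y_\alpha,F(x))<\varepsilon$ for every $x\in U_\alpha$. Let $\{\varphi_\alpha\}$ be a partition of unity subordinated to $\{U_\alpha\}$, let $N$ denote the nerve of the cover, and let $\kappa:X\to|N|$, $\kappa(x)=\sum_\alpha\varphi_\alpha(x)\cdot\alpha$, be the canonical map. I would then define $\sigma:|N|\to Y$ by induction on the skeleton of $N$ so that for every closed simplex $\tau=\langle\alpha_0,\ldots,\alpha_k\rangle$ the restriction $\sigma|_\tau$ takes values in the polytope $P_\tau:=\mathrm{conv}\{y_{\alpha_0},\ldots,y_{\alpha_k}\}$. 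On vertices, $\sigma(\alpha):=y_\alpha$. At the inductive step the previously constructed map $\sigma|_{\partial\Delta^k}:S^{k-1}\to P_\tau$ is well defined, because every proper face $\tau'\subsetneq\tau$ satisfies $P_{\tau'}\subset P_\tau$; the homotopical triviality of $P_\tau$, i.e.\ the vanishing of $\pi_{k-1}(P_\tau)$, then allows extension across $\Delta^k$. Setting $f_\varepsilon:=\sigma\circ\kappa$ yields a continuous map with $f_\varepsilon(x)\in P_{\tau(x)}=\mathrm{conv}\{y_\alpha:\varphi_\alpha(x)>0\}$, whose generators all lie within $\varepsilon$ of $F(x)$.

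To iterate, at stage $n+1$ I would refine the cover so that the new representatives $y_\alpha^{(n+1)}$ are chosen near $f_n(x)\cap F(x)$ on their supports; the propagation of each pointwise choice $y\in F(x)\cap B(f_n(x),\varepsilon_{n+1})$ to a neighborhood of $x$ is again Lemma~1 applied at $f_n(x)$. A standard diagonal argument then produces $f_{n+1}$ with $d(f_n,f_{n+1})<2^{-n}$, and the uniform limit $f=\lim f_n$ is the desired continuous selection.

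The main obstacle is the metric bookkeeping at the skeletal extension: homotopical triviality of $P_\tau$ guarantees the abstract existence of an extension $\Delta^k\to P_\tau$ but provides no diameter control. In order to conclude $d(f_n,f_{n+1})<2^{-n}$ one needs the relevant polytopes $P_\tau^{(n+1)}$ to already have small diameter, which in turn requires a progressively finer refinement of the covers together with a tacit compatibility between the convexity $\mathcal{C}$ and the metric, implicit in the hypothesis that $Y$ is completely metrizable. Coordinating the homotopy-theoretic extension with this uniform metric control---equivalently, carrying out the inductive extension inside a shrinking system of neighborhoods of the $f_n(x)$---is the delicate point, and is where the proof must depart from the purely linear combinatorics of Michael's original argument.
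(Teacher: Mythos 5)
Your overall scheme (outer approximations built on nerves of locally finite covers, with skeletal extension into polytopes replacing barycentric combination, and homotopical triviality replacing convexity of balls) is the right circle of ideas, but the proof is not complete, and the point you flag at the end as "metric bookkeeping" is in fact the entire content of the theorem. Homotopical triviality of $P_\tau=\mathrm{conv}\{y_{\alpha_0},\dots,y_{\alpha_k}\}$ gives an extension $\Delta^k\to P_\tau$ whose image is only known to lie somewhere in $P_\tau$; in an abstract convexity the polytope spanned by points that are pairwise $\varepsilon$-close can have enormous diameter. Consequently, as written, your argument does not even show that $f_\varepsilon=\sigma\circ\kappa$ is an $\varepsilon$-selection (a point of $\mathrm{conv}\{y_\alpha:\varphi_\alpha(x)>0\}$ need not be near $F(x)$ merely because each generator is), let alone the Cauchy estimate $d(f_n,f_{n+1})<2^{-n}$ or the final inclusion $f(x)\in F(x)$. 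Your remark that the needed compatibility between $\mathcal{C}$ and the metric is "implicit in the hypothesis that $Y$ is completely metrizable" is exactly where the gap lives: that compatibility is an additional structural hypothesis on $(Y,\mathcal{C})$ (a uniform convex structure in the sense of van de Vel), not a consequence of metrizability, and the theorem is false without some such assumption in the background. The survey records that Horvath's actual technical engine is the \emph{van de Vel property}: a basis of entourages $R\subset Y\times Y$ such that the complexes $S_R(Z)$ are homotopically trivial, which is precisely the device that lets one perform the skeletal extension \emph{inside a prescribed entourage} of the data rather than inside the whole polytope. That is the step your outline is missing.

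For contrast, the paper also sketches Gutev's alternative route to the same theorem, which avoids the Cauchy sequence of $\varepsilon$-selections altogether: one constructs a decreasing sequence of convex-valued LSC selections $\Phi_n$ of $F$ whose families of values refine finer and finer uniform covers, so that $\sup_{x}\mathrm{diam}\,\Phi_n(x)\to 0$, and then takes the singleton $\bigcap_n \mathrm{Clos}(\Phi_n(x))$. There, too, the step that makes the diameters shrink is exactly the interaction between the convexity and the uniformity. So your skeleton is reasonable, but until the homotopy-theoretic extension is coordinated with a controlled entourage structure the argument does not prove the theorem.
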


The key technical ingredient proposed in \cite{HorM} was the
{\it van de Vel} property\index{van de Vel property}, 
which roughly speaking, fixes the existence of enough reflexive relations (entourages) 
$R \subset Y \times Y$ such that for every subset $Z \subset Y$ all simplicial complexes
$$
S_R(Z) = \{A \,\, {\rm is\,\,a\,\,finite\,\, subset\,\, of}\, Z:\,\,Z \cap (\cap_{a \in A} R(a)) \not= \emptyset \}
$$
are homotopically trivial.

In the same issue of {\it Topology and its Applications},
Gutev \cite{GuConv} presented results on a somewhat similar matter. Briefly, he proposed another approach to proving the van de Vel selection theorem. He incorporated the proof into the technique of the so-called {\it $c$-structures} which was suggested around
1990 by Horvath \cite{Hor}. A 
$c$-structure\index{$c$-structure}
$\chi$ on a space $Y$ associates to every finite subset $A \subset Y$ some contractible subset $\chi(A) \subset Y$ such that $A \subset B$ implies $\chi(A) \subset \chi(B)$. In the case of the finite subsets $A$ of some prescribed $S \subset Y$ Gutev use the term
{\it $c$-system on $S$}\index{$c$-system}.
Of course, as usual $\chi(\{y\})=\{y\})$. Perhaps the typical statements in \cite{GuConv} are, for example:

\begin{theorem} [\cite{GuConv}]\label{GuConv-1}
Let $X$ be a paracompact space, $Y$ a space, $\chi: Fin(S) \to Y$ a $c$-system on $S \subset Y$, and $G:X \to S$ a Browder mapping\index{Browder mapping} (i.e. a multivalued mapping with all point-preimages open).
Then $conv_{\chi}(G)$ has a continuous singlevalued selection.
\end{theorem}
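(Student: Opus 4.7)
The plan is to use the Browder condition to produce a nerve-style partition-of-unity representation of $X$, and then to build the selection by a skeletal induction that exploits the monotonicity and contractibility axioms of the $c$-system.

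First I would set up the cover. Since $G$ has nonempty values and each point-preimage $G^{-1}(s) = \{x \in X : s \in G(x)\}$ is open, the family $\{G^{-1}(s)\}_{s \in S}$ is an open cover of the paracompact space $X$. Take a subordinated locally finite continuous partition of unity $\{\varphi_\alpha\}_{\alpha \in \Lambda}$, and for each index $\alpha$ pick $s_\alpha \in S$ with $supp(\varphi_\alpha) \subset G^{-1}(s_\alpha)$. Local finiteness guarantees that $A(x) := \{s_\alpha : \varphi_\alpha(x) > 0\}$ is a finite set for every $x$; the choice of $s_\alpha$ then forces $A(x) \subset G(x)$, so any element of $\chi(A(x))$ automatically belongs to $conv_\chi(G)(x)$.

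Next I would construct the selection via the nerve. Let $\mathcal{N}$ be the abstract nerve of the cover $\{supp(\varphi_\alpha)\}$ and let $\pi : X \to |\mathcal{N}|$ be the canonical barycentric map $\pi(x) = \sum_\alpha \varphi_\alpha(x)\, e_\alpha$, which is continuous because the partition of unity is locally finite. It suffices to build a continuous $\rho : |\mathcal{N}| \to Y$ such that, for every simplex $\sigma_J = [e_{\alpha_0},\dots,e_{\alpha_k}]$ of $|\mathcal{N}|$, the image $\rho(\sigma_J)$ is contained in $\chi(\{s_{\alpha_0},\dots,s_{\alpha_k}\})$; then $f := \rho \circ \pi$ satisfies $f(x) \in \chi(A(x)) \subset conv_\chi(G)(x)$. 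Build $\rho$ by induction on skeleta: on vertices define $\rho(e_\alpha) = s_\alpha \in \chi(\{s_\alpha\})$; assuming $\rho$ has been defined on the $(k-1)$-skeleton so that each face of $\sigma_J$ lands inside $\chi$ of the corresponding subset of $\{s_{\alpha_i}\}$, the monotonicity implication $A \subset B \Rightarrow \chi(A) \subset \chi(B)$ places $\rho(\partial \sigma_J)$ inside the contractible set $\chi(\{s_{\alpha_0},\dots,s_{\alpha_k}\})$, and contractibility supplies an extension over $\sigma_J$ that stays in that set.

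The main obstacle is precisely this inductive extension step, which must use both defining properties of the $c$-system simultaneously: monotonicity (to guarantee that the already-defined boundary map stays inside the larger $\chi$-set) and contractibility (to supply the extension from $\partial \sigma_J$ to $\sigma_J$). A second, more technical, point is to endow $|\mathcal{N}|$ with a topology that makes $\pi$ continuous and that lets the skeletally assembled $\rho$ be globally continuous; the standard remedy is the metric topology induced by the locally finite partition of unity, under which continuity of $\rho \circ \pi$ on each preimage $\pi^{-1}(\sigma_J)$ is automatic, delivering a continuous singlevalued selection of $conv_\chi(G)$.
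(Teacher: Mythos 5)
Your argument is correct and is essentially the standard proof of this Browder-type selection theorem (the survey itself states the result with only a citation to \cite{GuConv}, where the argument is of exactly this kind): a locally finite partition of unity subordinated to the open cover $\{G^{-1}(s)\}_{s \in S}$, followed by a skeletal extension of the nerve into the sets $\chi(A)$, using monotonicity to keep the boundary of each simplex inside the larger contractible set and contractibility to extend over the simplex. The one point to tighten is the topology on $|\mathcal{N}|$: rather than the metric topology (under which simplexwise continuity of $\rho$ is \emph{not} automatic), give the nerve the weak topology, so that the skeletally assembled $\rho$ is continuous by construction, and note that local finiteness makes $\pi$ continuous into the weak topology because every point of $X$ has a neighborhood carried into a finite subcomplex, on which the two topologies coincide.
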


\begin{theorem} [\cite{GuConv}] \label{GuConv-2}
Let $X$ be a paracompact space, $(Y,\mu, \cal{C})$ a uniform space endowed with a $S_4$ convexity for which all polytopes are compact and all convex sets are connected. Let $\cal{V} \in \mu$ be an open convex cover of $X$ and $F:X \to Y$ a convexvalued LSC mapping. Then $F$ admits a LSC convexvalued selection $\Phi: X \to Y$ such that the family $\{\Phi(x)\}_{x \in X}$ of its values refines $\cal{V}$.
\end{theorem}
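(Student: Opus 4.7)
My approach is to combine paracompactness of $X$ with LSC of $F$ to pull the uniform cover $\mathcal{V}$ back to a locally finite open refinement on $X$, and then to build $\Phi$ pointwise by the convex hull operation in $\mathcal{C}$. First I would exploit the fact that $\mathcal{V}\in\mu$ is a uniform cover and that $\mathcal{C}$ is compatible with $\mu$ to pass to a uniform refinement $\mathcal{V}'=\{V'_\beta\}_{\beta\in B}$ of $\mathcal{V}$ by open convex sets, sufficiently fine that whenever finitely many members of $\mathcal{V}'$ simultaneously meet a common convex subset of $Y$, their union already lies inside one member of $\mathcal{V}$. Such refinements arise via iterated star-refinement in uniform convex spaces with the Kakutani $S_4$ property, compact polytopes, and connected convex sets.

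Next, LSC of $F$ makes each preimage $F^{-1}(V'_\beta)$ open in $X$, and these sets cover $X$ since $F$ has nonempty values. Paracompactness then supplies a locally finite open refinement $\{W_\alpha\}_{\alpha\in A}$ together with a map $\tau:A\to B$ satisfying $W_\alpha\subset F^{-1}(V'_{\tau(\alpha)})$. For each $x\in X$ set $A(x)=\{\alpha\in A:x\in W_\alpha\}$, which is nonempty and finite. Every $V'_{\tau(\alpha)}$ with $\alpha\in A(x)$ meets the convex set $F(x)$, so the refinement property from Step~1 yields some $V_x\in\mathcal{V}$ containing $\bigcup_{\alpha\in A(x)} V'_{\tau(\alpha)}$. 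I then define
$$\Phi(x) \;=\; F(x)\,\cap\,\mathrm{conv}\!\Bigl(\bigcup_{\alpha\in A(x)} V'_{\tau(\alpha)}\Bigr).$$
Since $V_x$ is convex and contains the generating union, the convex hull is contained in $V_x$, so $\Phi(x)\subset V_x\in\mathcal{V}$ and the family $\{\Phi(x)\}_{x\in X}$ refines $\mathcal{V}$. Nonemptiness is immediate from $F(x)\cap V'_{\tau(\alpha)}\subset\Phi(x)$ for any $\alpha\in A(x)$, and $\Phi(x)$ is convex as the intersection of two members of $\mathcal{C}$.

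The main obstacle is verifying LSC of $\Phi$. I would work through the local-selection criterion of Lemma~\ref{LSC}: given $(x_0,y_0)\in\Gamma_\Phi$ and an open $U\ni y_0$, the task is to produce a local singlevalued selection of $\Phi$ on a neighborhood of $x_0$ sending $x_0$ into $U$. Local finiteness of $\{W_\alpha\}$ gives a neighborhood $N$ of $x_0$ on which $A(x)\supseteq A(x_0)$, so the outer convex hull in the definition of $\Phi(x)$ only \emph{grows} as $x$ varies in $N$; consequently the difficulty concentrates on the factor $F(x)$. Here I would use LSC of $F$, again via Lemma~\ref{LSC}, to manufacture local singlevalued selections of $F$ into $U$, and then invoke the $S_4$ axiom, compactness of polytopes and connectedness of convex sets to certify that these selections actually land inside $\mathrm{conv}\bigl(\bigcup_{\alpha\in A(x)} V'_{\tau(\alpha)}\bigr)$. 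The technical sticking point is precisely that an abstract convex-hull operator need not be open, so one cannot simply transport $y_0$ along LSC of $F$: instead, the $S_4$ property together with the compact generating polytopes must be used to approximate an arbitrary point of $\Phi(x_0)$ by points lying inside the open sets $V'_{\tau(\alpha)}$, which can then be tracked into $\Phi(x)$ for $x$ near $x_0$ via LSC of $F$.
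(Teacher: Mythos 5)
The survey does not prove this theorem; it is quoted from Gutev's paper, where the argument runs through the Browder-mapping selection theorem (Theorem 37 in this survey): one first produces a \emph{singlevalued} continuous map $g$ landing in $\mathrm{conv}_{\chi}(G(x))$ for a suitable Browder mapping $G$ subordinated to a refinement of $\mathcal{V}$, checks that $g(x)$ is uniformly close to the convex set $F(x)$, and then obtains $\Phi(x)$ by intersecting $F(x)$ with a single small convex neighborhood of $g(x)$. The smallness of $\Phi(x)$ comes entirely from that one small convex set.

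Your construction replaces this mechanism with the refinement property of Step 1, and that property is false. Since $Y$ itself is convex (every convexity contains $Y$), ``finitely many members of $\mathcal{V}'$ meeting a common convex set'' imposes no constraint at all: any two members of $\mathcal{V}'$ meet the convex set $Y$, so the property would force every pair of members of $\mathcal{V}'$ to lie in a single member of $\mathcal{V}$, which fails for any nontrivial cover. In the application the common convex set is $F(x)$, which can be large, so the sets $V'_{\tau(\alpha)}$, $\alpha\in A(x)$, are only constrained to meet $F(x)$ and may be spread across all of it; concretely, with $Y=\mathbb{R}$, $F\equiv[0,100]$ and $\mathcal{V}$ the cover by unit intervals, a legitimate locally finite refinement can assign to overlapping $W_\alpha$'s members $V'_{\tau(\alpha)}$ near $0$ and near $100$, so $\mathrm{conv}\bigl(\bigcup_{\alpha\in A(x)}V'_{\tau(\alpha)}\bigr)$ contains essentially all of $[0,100]$ and $\Phi(x)$ does not refine $\mathcal{V}$. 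This is not a repairable detail within your framework: without a singlevalued continuous approximate selection (via a partition of unity and convex combinations in the Banach case, or via the $c$-system/Browder machinery here) there is nothing to force the members of $\mathcal{V}'$ that get mixed at $x$ to be near one another. The unresolved LSC verification at the end is a secondary concern by comparison, but note that it too is nontrivial precisely because abstract convex hulls are not open, which is again what the Browder-mapping route is designed to circumvent.
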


By using the latter fact, under the assumptions of the van de Vel theorem, one can construct a decreasing sequence $\Phi_n: X \to Y$ of convexvalued LSC mappings with $\sup_{x \in X}  diam(\Phi_{n}(x)) \to 0,\,\,n \to \infty.$ Therefore  the pointwise passing to $\cap_{n} \{Clos(\Phi_{n}(x))\}$ gives the desired selection of $F=\Phi_0$.

\medskip
{\bf 5.2.} Some results on selections appeared for {\it hyperconvex} range spaces. Recall, that a metric space is 
{\it hyperconvex}\index{hyperconvex space}
if and only if it is injective with respect to extensions which preserves
the modulus of continuity. In more direct terms,
a metric space $(Y,d)$ is hyperconvex if and only if for every family $\{(y_\alpha; r_\alpha) \in Y \times (0;+\infty)\}_{\alpha \in I}$ the inequalities $d(y_\alpha, y_\beta) \leq r_\alpha + r_\beta$ for all $\alpha, \beta \in I$ imply the nonemptiness of the intersection $\cap_{\alpha} D[y_\alpha; r_\alpha]$ of closed balls $D[y_\alpha; r_\alpha]=Clos(D(y_\alpha; r_\alpha))$.

Each hyperconvex space admits the natural 
{\it ball convexity}\index{ball convexity}
in which polytopes are exactly the sets $\cap \{D[y,r]:  A \subset D[y,r]\}$ with finite $A \subset Y$. Some authors \cite{Mar1, Wu} also use the term {\it sub-admissible} for sets which are convex with respect to the ball convexity. For example, Wu \cite[Theorem 2.4]{Wu}
proved a selection theorem for the so-called {\it locally uniform weak LSC} mappings into hyperconvex spaces.  Note that hyperconvex space equipped with the ball convexity is a uniform convex space  with homotopically trivial polytopes, \cite{HorM}. This is why a selection theorem for LSC mappings \cite[Theorem 2.3]{Wu}  is a special case of 
selection theorems for generalized convexities. Markin \cite{Mar1} generalized Wu's result to a 
wider class of multivalued mappings which he named {\it quasi LSC} although this is exactly the class of {\it almost LSC} mappings
introduced by Deutsch and Kenderov \cite{DK}.

Another type of selection theorems for hyperconvex range spaces deals with various Lipschitz-type restrictions on a mappings and selections. A subset $Z \subset (Y;d)$ is said to be
{\it externally hyperconvex}\index{externally hyperconvex set} 
if for any points $x_\alpha \in Z$ and any reals $r_\alpha$ with
$d(x_\alpha, Z) \leq r_\alpha \, \, \hbox{\rm{and}} \, \,  d(x\alpha, x_\beta)\leq r_\alpha + r_\beta,$ the intersection
$\cap_\alpha\,D[x_\alpha, r_\alpha] \bigcap Z$ is nonempty. Khamsi, Kirk and Yanez \cite{KKY} proved the following:

\begin{theorem} [\cite{KKY}] \label{KKYan}
Let $(Y;d)$ be any hyperconvex space, $S$ any set, and $F:S \to Y$ any mapping with externally hyperconvex values. Then there exists a singlevalued selection $f$ of $F$ such that
$$
d(f(x),f(y)) \leq Hausd_{d}(F(x),F(y))\,\,\,x,y \in S.
$$
\end{theorem}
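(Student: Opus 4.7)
The plan is to construct $f$ by a Zorn's lemma argument on the collection $\mathcal{P}$ of all pairs $(T,g)$ where $T\subset S$ and $g:T\to Y$ is a selection of $F|_T$ satisfying the Lipschitz-type bound $d(g(x),g(y))\le \text{Hausd}_d(F(x),F(y))$ for all $x,y\in T$. Order $\mathcal{P}$ by graph extension. A chain in $\mathcal{P}$ has the union of its graphs as an upper bound, and this union inherits the Lipschitz condition pointwise. Hence Zorn's lemma produces a maximal pair $(T_0,f)$, and the proof reduces to showing that $T_0=S$.

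Suppose, towards a contradiction, that some $x_0\in S\setminus T_0$ exists. Define $r_y=\text{Hausd}_d(F(x_0),F(y))$ for each $y\in T_0$. The aim is to show that $F(x_0)\cap\bigcap_{y\in T_0}D[f(y),r_y]$ is nonempty; any point $f(x_0)$ in this intersection extends $f$ to $T_0\cup\{x_0\}$ while preserving the Lipschitz condition, contradicting maximality.

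To apply external hyperconvexity of $F(x_0)$ to the family $\{(f(y),r_y)\}_{y\in T_0}$, the two standing hypotheses must be verified. First, since $f(y)\in F(y)$, we have $d(f(y),F(x_0))\le \text{Hausd}_d(F(y),F(x_0))=r_y$. Second, for any $y,z\in T_0$ the inductively maintained Lipschitz property of $f$, combined with the triangle inequality for the Hausdorff distance, yields
$$ d(f(y),f(z))\le \text{Hausd}_d(F(y),F(z))\le \text{Hausd}_d(F(y),F(x_0))+\text{Hausd}_d(F(x_0),F(z))=r_y+r_z. $$
External hyperconvexity of $F(x_0)$ then delivers the required point, which completes the extension step and hence the proof.

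The only nontrivial moment is this double verification of the external hyperconvexity hypotheses; everything else follows a standard transfinite extension template. No restriction on the cardinality of $T_0$ is needed, since external hyperconvexity is formulated for arbitrary index sets, so the argument applies uniformly to any set $S$ without additional assumption. I would expect the main conceptual step — rather than obstacle — to be the realization that the correct radii to feed into the external hyperconvexity property are exactly the Hausdorff distances $\text{Hausd}_d(F(x_0),F(y))$, which is precisely what matches both the Lipschitz target bound and the required compatibility between centers and the target set $F(x_0)$.
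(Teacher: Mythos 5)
The survey states this result only with a citation to [KKY] and gives no proof, so there is nothing in the paper itself to compare against; your argument is, however, exactly the one used in the original reference of Khamsi, Kirk and Yanez: transfinite (Zorn) extension of partial Lipschitz selections, with the extension step supplied by external hyperconvexity of $F(x_0)$ applied to the centers $f(y)$ and radii $r_y=Hausd_d(F(x_0),F(y))$. Both required hypotheses are verified correctly ($d(f(y),F(x_0))\le r_y$ since $f(y)\in F(y)$, and $d(f(y),f(z))\le r_y+r_z$ via the triangle inequality for the Hausdorff pseudometric), so the proof is complete and correct; note only that in the paper's displayed definition of external hyperconvexity the centers should be read as arbitrary points of $Y$ (not of $Z$), which is the version your argument needs and the one intended in [KKY].
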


In particular, for a metric domain $S=(M;\rho)$ and for a nonexpansive $F:S \to Y,$ one can assume a selection $f$ of $F$ to be also nonexpansive,
provided that all values $F(x)$ are bounded  externally hyperconvex sets.

These results were applied by Askoy and Khamsi \cite{AsKh} for range spaces which are {\it metric trees}. Briefly, a 
metric tree\index{metric tree}
is a metric space $(Y,d)$ such that for any $x,y \in Y$ there exists a unique arc joining $x$ and $y$ and such that the arc is isometric to a segment on the real line.

\begin{theorem} [\cite{AsKh}]\label{KhAsk}
Let $(Y;d)$ be a metric tree and $F:Y \to Y$ a mapping all of whose values all are bounded closed convex sets. Then there exists a singlevalued selection $f$ of $F$ such that
$$
d(f(x),f(y)) \leq Hausd_{d}(F(x),F(y))\,\,\,x,y \in Y.
$$
\end{theorem}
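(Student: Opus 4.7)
The plan is to deduce Theorem~39 from Theorem~38 by verifying the two hypotheses of the latter for a metric tree: (i) $(Y,d)$ itself is hyperconvex, and (ii) every bounded closed convex subset of $Y$ is externally hyperconvex. Once (i) and (ii) are in place, applying Theorem~38 with the underlying set $S=Y$ and range $Y$ yields a selection $f\colon Y\to Y$ of $F$ with $d(f(x),f(y))\le Hausd_d(F(x),F(y))$ for all $x,y\in Y$.

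For (i), I would verify hyperconvexity via the \emph{median} property of metric trees: for any three points $a,b,c\in Y$ the geodesic segments $[a,b]$, $[b,c]$, $[a,c]$ share a unique common point $m(a,b,c)$. A short four-point computation shows that $m(y_\alpha,y_\beta,y_\gamma)\in D[y_\alpha,r_\alpha]\cap D[y_\beta,r_\beta]\cap D[y_\gamma,r_\gamma]$ whenever the pairwise inequalities $d(y_i,y_j)\le r_i+r_j$ hold. Hence any triple of pairwise intersecting closed balls has nonempty intersection, and a routine induction together with completeness of $Y$ extends this to arbitrary pairwise intersecting families.

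For (ii), let $Z\subset Y$ be bounded, closed and convex and take a family $\{(x_\alpha,r_\alpha)\}$ with $d(x_\alpha,Z)\le r_\alpha$ and $d(x_\alpha,x_\beta)\le r_\alpha+r_\beta$. In a metric tree the metric projection onto a closed convex subset is single-valued; write $p_\alpha=P_Z(x_\alpha)$ and set $r'_\alpha=r_\alpha-d(x_\alpha,p_\alpha)\ge 0$. The key structural fact I would establish is the dichotomy: either $p_\alpha=p_\beta$, or the geodesic $[x_\alpha,x_\beta]$ traverses $Z$ exactly along $[p_\alpha,p_\beta]$, so that
$$
d(x_\alpha,x_\beta)=d(x_\alpha,p_\alpha)+d(p_\alpha,p_\beta)+d(p_\beta,x_\beta).
$$
In either case $d(p_\alpha,p_\beta)\le r'_\alpha+r'_\beta$. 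Since $Z$, being a closed convex subtree, is itself a complete metric tree, part (i) applied inside $Z$ produces $z\in Z$ with $d(z,p_\alpha)\le r'_\alpha$ for every $\alpha$. The tree identity $d(z,x_\alpha)=d(x_\alpha,p_\alpha)+d(p_\alpha,z)$ (valid because $p_\alpha$ is the first hit of the geodesic from $x_\alpha$ to any point of $Z$, in particular to $z$) then gives $d(z,x_\alpha)\le r'_\alpha+d(x_\alpha,p_\alpha)=r_\alpha$, so $z\in\bigcap_\alpha D[x_\alpha,r_\alpha]\cap Z$.

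The main obstacle is the dichotomy and the additivity identity in step (ii): both rest on the no-branching character of a metric tree and on the fact that the closest-point projection onto a convex subtree coincides with the first entry point of any geodesic from outside into the subtree. Once those tree-geometric lemmas are in hand, (i) and (ii) combine through Theorem~38 to produce the Lipschitz selection asserted in Theorem~39.
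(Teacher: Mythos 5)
Your overall route is exactly the one the paper intends: the survey gives no proof of Theorem~39 beyond the remark that Aksoy and Khamsi obtained it by applying Theorem~38 to metric trees, and their argument is precisely your (i) and (ii) --- a complete metric tree is hyperconvex, and its closed convex subsets are externally hyperconvex, after which Theorem~38 with $S=Y$ yields the Lipschitz selection. Your step (ii) is sound: the gate property of the projection onto a convex subtree, the dichotomy for $d(x_\alpha,x_\beta)$, and the transfer to the reduced radii $r'_\alpha=r_\alpha-d(x_\alpha,p_\alpha)$ all check out.

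There is, however, one step in (i) that fails as written: the median $m(y_\alpha,y_\beta,y_\gamma)$ of the three \emph{centers} need not lie in $D[y_\alpha;r_\alpha]\cap D[y_\beta;r_\beta]\cap D[y_\gamma;r_\gamma]$. Take $Y=\R$ with centers $0,1,2$ and radii $0,1,2$: the pairwise inequalities $d(y_i,y_j)\le r_i+r_j$ all hold and the intersection of the three balls is $\{0\}$, yet the median of the centers is $1\notin D[0;0]$. The correct Helly argument chooses points $p_{\alpha\beta}\in D[y_\alpha;r_\alpha]\cap D[y_\beta;r_\beta]$, $p_{\alpha\gamma}$, $p_{\beta\gamma}$ in the pairwise intersections and takes the median of \emph{those} three points: since closed balls in a metric tree are convex, the segment $[p_{\alpha\beta},p_{\alpha\gamma}]$ lies in $D[y_\alpha;r_\alpha]$, and the median lies on all three such segments, hence in all three balls. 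With this repair (plus the induction to finite families and the completeness argument for arbitrary ones --- which is also where the completeness hypothesis, present in Aksoy--Khamsi's statement though suppressed in the survey's, becomes indispensable, as the example of pairwise intersecting balls $D[1/n;1/n]$ in the incomplete tree $(0,1)$ shows), your proof goes through and coincides with the published one.
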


A somewhat similar result was proved by Markin \cite[Theorem 4.3]{Mar2}:
\begin{theorem} [\cite{Mar2}]\label{KhAsk_1}
Let $X$ be a paracompact space, $(Y;d)$ a complete metric tree and $F:X \to Y$ an almost LSC mapping all
of whose values  are bounded closed convex sets. Then there exists a singlevalued continuous selection $f$ of $F$.
\end{theorem}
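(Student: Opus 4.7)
The plan is to reduce the theorem to the Michael scheme of uniform limits of continuous approximate selections, exploiting two structural features of a complete metric tree $(Y,d)$: first, that $Y$ is hyperconvex, so every family of pairwise meeting closed balls has nonempty intersection (in particular, Theorem~\ref{KKYan} of Khamsi--Kirk--Yanez and the discussion around it apply); and second, that every bounded closed convex subset $C \subset Y$ is \emph{gated}, i.e.\ the nearest-point projection $p_C: Y \to C$ is single-valued and $1$-Lipschitz. Together these imply that $Y$ admits a continuous finite weighted centroid operation, and that the centroid of points lying within $\varepsilon$ of a bounded closed convex value $F(x)$ again lies within $\varepsilon$ of $F(x)$.

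First I produce a continuous $\varepsilon$-selection for every $\varepsilon > 0$. Fix $\varepsilon$. By the Deutsch--Kenderov almost lower semicontinuity of $F$, for each $x_0 \in X$ there exist an open neighborhood $U_{x_0}$ of $x_0$ and a point $y_{x_0} \in Y$ with $d(y_{x_0}, F(x)) < \varepsilon$ for every $x \in U_{x_0}$. Paracompactness of $X$ gives a locally finite partition of unity $\{\phi_i\}$ subordinate to a refinement of $\{U_{x_0}\}$; let $y_i$ be the distinguished point attached to the member containing $\mathrm{supp}\,\phi_i$. Define $f_\varepsilon(x)$ as the weighted metric-tree centroid of $\{(y_i, \phi_i(x)) : \phi_i(x) > 0\}$; continuity in $x$ follows from CAT$(0)$ continuity of the Fr\'echet mean, and applying $p_{F(x)}$ to each $y_i$ together with convexity of $F(x)$ gives $d(f_\varepsilon(x), F(x)) < \varepsilon$.

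Next I iterate to a uniformly Cauchy sequence $\{f_n\}$ of such approximate selections. Starting from any continuous $1$-selection $f_0$, I construct $f_n$ continuous with $d(f_n(x), F(x)) < 2^{-n}$ and $d(f_n(x), f_{n-1}(x)) < 2^{-n+2}$. The inductive step is as above, but in selecting the distinguished point $y_{x_0}$ at scale $2^{-n}$ I also demand $d(y_{x_0}, f_{n-1}(x_0)) < 2^{-n+1}$: by the induction hypothesis there is a point of $F(x_0)$ within $2^{-n+1}$ of $f_{n-1}(x_0)$, and the hyperconvex binary intersection property lets me replace it by a single point which is simultaneously within $2^{-n}$ of $F(x)$ for $x$ in a smaller neighborhood of $x_0$. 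The resulting $\{f_n\}$ is uniformly Cauchy; its limit $f = \lim f_n$ is continuous; and since $d(f(x), F(x)) = 0$ and $F(x)$ is closed, $f$ is the desired selection of $F$.

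The main obstacle is the inductive coordination inside this scheme. Almost lower semicontinuity guarantees only a single point of $Y$ close to all nearby $F(x')$, and this point need not be close to $f_{n-1}$; almost LSC is too weak to allow the direct Michael partition-of-unity argument that works in Banach spaces or for genuinely LSC maps into hyperconvex targets (as in Theorem~\ref{KKYan} and Wu's theorem). The tree-specific features---gatedness of bounded closed convex sets, $1$-Lipschitz projections, and the strong (even $S_4$) binary ball-intersection property---are precisely what let one merge the two constraints ``close to $F(x)$'' and ``close to $f_{n-1}(x)$'' into one closed ball that still meets $F(x_0)$, and thus keep the Cauchy control along the iteration.
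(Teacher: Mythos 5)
The survey states this theorem only with a citation to Markin and reproduces no proof, so I can only measure your argument against what the statement actually requires. Your overall architecture is the natural one and, I believe, the intended one: continuous $\varepsilon$-near selections obtained from almost lower semicontinuity together with a locally finite partition of unity and a continuous finite ``convex combination'' in the tree, followed by a uniformly Cauchy iteration whose control comes from the tree geometry. The first stage is essentially correct; note that you do not even need the contraction property of the Fr\'echet mean there, since in a metric tree the closed $\varepsilon$-neighbourhood $N_\varepsilon[F(x)]=\{y: d(y,F(x))\le\varepsilon\}$ of a convex set is itself closed and convex, and any barycenter of points of a closed convex set lies in that set.

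The gap is in the inductive merging step, which is the only genuinely hard point of the theorem (it is exactly what separates almost LSC from LSC). As written, you justify the choice of the distinguished point $y_{x_0}$ by (a) the induction hypothesis at $x_0$ alone, producing a point of $F(x_0)$ within $2^{-n+1}$ of $f_{n-1}(x_0)$, and (b) the binary ball-intersection property. Neither suffices: a point of $F(x_0)$ close to $f_{n-1}(x_0)$ need not be close to $F(x)$ for $x$ near $x_0$ --- that would be lower semicontinuity, which you do not have --- and the two balls you would intersect, one centered at $f_{n-1}(x_0)$ and one centered at the almost-LSC witness $w$, both of radius $O(2^{-n})$, need not meet, since $d(f_{n-1}(x_0),w)$ can be of the order of ${\rm diam}\, F(x_0)$. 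The claim you need is true, but for a different reason. Shrink the neighbourhood $U$ of $x_0$ so that $f_{n-1}$ oscillates on it by less than $\eta$; then $d(f_{n-1}(x_0), C_x)\le 2^{-n+1}+\eta=:r$ for \emph{every} $x\in U$, where $C_x=N_\delta[F(x)]$, by the induction hypothesis applied at $x$, not at $x_0$. The sets $C_x$ are closed and convex, hence gated, and $D=\bigcap_{x\in U}C_x$ is nonempty because it contains $w$. Put $p=f_{n-1}(x_0)$, pick $w'\in D$, and let $g_x$ be the gate of $p$ in $C_x$; every $g_x$ lies on the geodesic $[p,w']$ at distance at most $r$ from $p$, and the farthest of these points belongs to every $C_x$, whence $d(p,D)\le r$. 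This Helly-type statement for the infinite family $\{C_x\}_{x\in U}$ together with one ball is strictly more than the binary intersection property for balls that you invoke, and it is where the continuity of $f_{n-1}$ (not merely its value at $x_0$) enters essentially. With this step repaired, your Cauchy estimates and the passage to the limit go through.
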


Selection theorems with respect to various types of convexities, $L$-structu\-res, and
$G$-structures, were obtained in \cite{Di,Li,YD}, etc. As a rule, all results here are special cases or versions of van de Vel's convexities, or Horvath's convexities.

\medskip
{\bf 5.3.} As for some other ``inner convexities'', in a series of papers
de Blasi and Pianigiani  studied multivalued mappings into so-called 
{\it $\alpha$-convex} metric spaces $(Y;d)$\index{$\alpha$-convex metric space}. 
This means the existence a continuous mapping $\alpha: Y \times Y \times [0;1] \to Y$ with natural restrictions
$$\alpha(y,y,t)=y,\, \,
\alpha(y,z,0)=y,\,\, \, \alpha(y,z,1)=z$$ and with assumption that for some suitable $r=r(\alpha)>0$ and for every $\varepsilon < r$ there exists $0<\delta \leq \varepsilon$ such that the inequality
$$
Hausd(\{\alpha(y,z,t): t \in [0;1]\}, \{\alpha(y',z',t): t \in [0;1]\}) < \varepsilon
$$
for Hausdorff distance between curvilinear segments holds for each $(y,y') \in Y^2,\, (z,z') \in Y^2$ with $d(y,y')<\varepsilon$ and $d( z,z') < \delta$.
Clearly, the last assumption reminds one of
the estimate for
$d(\alpha(y,z,t), \alpha(y',z',t))$ from Michael's geodesic structure \cite{MConv}.
As usual, $C \subset Y$ is convex (with respect to $\alpha: Y \times Y \times [0;1] \to Y$) if
$\{\alpha(y,z,t): t \in [0;1]\} \subset C$ provided that $y \in C$ and $z \in C$.

\begin{theorem} [\cite{BlP-5}]\label{Blasi_1}
Let $X$ be a paracompact space and $Y$ an $\alpha$-convex complete metric space. Then every LSC mapping $F:X \to Y$ with closed convex values admits a continuous singlevalued selection
\end{theorem}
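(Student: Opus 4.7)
The plan is to mirror the classical Michael argument, substituting iterated $\alpha$-combinations for linear convex combinations, and using the uniform continuity hypothesis built into the definition of $\alpha$-convexity as a replacement for the linearity of $(y,z,t)\mapsto (1-t)y+tz$. So I would construct continuous $\varepsilon_n$-selections $f_n:X\to Y$ with $\varepsilon_n\downarrow 0$ fast enough that $(f_n)$ is uniformly Cauchy, and take $f=\lim f_n$. Completeness gives a limit mapping, continuity of the limit follows from uniform convergence, and closedness of the values $F(x)$ together with $\mathrm{dist}(f_n(x),F(x))<\varepsilon_n$ force $f(x)\in F(x)$.

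First, I would define an iterated $\alpha$-combination $\alpha(y_1,\dots,y_k;\lambda_1,\dots,\lambda_k)$ for a finite family with weights summing to $1$, for instance by the recursion $\alpha(y_1,w,1-\lambda_1)$ where $w$ is the $\alpha$-combination of $y_2,\dots,y_k$ with renormalised weights $\lambda_i/(1-\lambda_1)$. An induction on $k$ shows that, if $C\subset Y$ is $\alpha$-convex and $y_1,\dots,y_k\in C$, the iterated combination lies in $C$; and another induction, using the $\varepsilon$-$\delta$ Hausdorff estimate on the curvilinear segments $\{\alpha(y,z,t):t\in[0,1]\}$, shows that this $k$-ary operation is jointly continuous in the points and the weights, with a modulus of continuity controlled by $k$ iterates of the modulus of $\alpha$.

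Next, given $\varepsilon>0$ (below $r(\alpha)$ and small compared to the previous iterate), I would produce a continuous $\varepsilon$-selection $f_\varepsilon$ exactly as in Michael: for each $x\in X$ pick $y_x\in F(x)$; by lower semicontinuity the preimage $F^{-1}(B(y_x,\varepsilon/3))$ is open; refine these preimages to a locally finite open cover with a subordinated partition of unity $\{\phi_\gamma\}$ and, for each $\gamma$, choose a point $y_\gamma\in Y$ that lies within $\varepsilon/3$ of $F(x)$ for every $x$ in the support of $\phi_\gamma$. Then set $f_\varepsilon(x)$ to be the iterated $\alpha$-combination of the finitely many active $y_\gamma$ with weights $\phi_\gamma(x)$; by local finiteness and the preceding step this is continuous. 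To see that $\mathrm{dist}(f_\varepsilon(x),F(x))<\varepsilon$, at each point $x$ replace every active $y_\gamma$ by some $z_\gamma\in F(x)$ with $d(y_\gamma,z_\gamma)<\varepsilon/3$; the $\alpha$-combination of the $z_\gamma$ lies in $F(x)$ by convexity, and the iterated $\varepsilon$-$\delta$ estimate for $\alpha$ shows that it is close to $f_\varepsilon(x)$.

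Finally, to make this a Cauchy construction I would proceed inductively: given $f_n$ that is an $\varepsilon_n$-selection, apply the construction above to the \emph{smaller} LSC mapping $F_n(x)=F(x)\cap \overline{B(f_n(x),\varepsilon_n)}$ (which has closed convex nonempty values and is LSC by a standard argument using continuity of $f_n$), producing $f_{n+1}$ with $d(f_n,f_{n+1})$ and $\mathrm{dist}(f_{n+1},F)$ both controlled by a constant multiple of $\varepsilon_n$; choosing $\varepsilon_{n+1}$ suitably small then yields a uniformly Cauchy sequence. The main obstacle I expect is the bookkeeping around this iterated modulus: the $\alpha$-hypothesis only provides a uniform binary modulus, which a priori degrades when iterated through many $\alpha$-combinations, so the crucial technical point is that local finiteness of the covers bounds the number of iterations used at each point and lets one choose $\varepsilon_n$ fast enough to dominate this growth. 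Once this is arranged, the limit $f$ is a continuous selection of $F$.
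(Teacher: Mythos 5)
The survey does not reproduce a proof of this theorem: it is quoted from de Blasi and Pianigiani \cite{BlP-5}, so your proposal can only be judged on its own terms. It is the natural first attempt, but two of its steps fail as written. First, your inductive Cauchy step replaces $F$ by $F_n(x)=F(x)\cap \overline{B(f_n(x),\varepsilon_n)}$ and needs this to have convex values again. In an $\alpha$-convex space balls need not be convex: the curvilinear segment $\{\alpha(y,z,t):t\in[0,1]\}$ joining two points of a ball may leave the ball, so $F_n(x)$ need not be $\alpha$-convex and the induction hypothesis is not restored. (The survey itself stresses, in the discussion of Theorem 9, that outside local convexity the intersections of convex sets with balls are in general nonconvex --- this is exactly the obstruction.) The weak substitute the definition does provide --- taking $y'=z'=y$ shows that a segment with $\delta$-close endpoints stays $\varepsilon$-close to its endpoint --- is not by itself enough to run the standard shrinking argument; note also that the hypothesis controls only the Hausdorff distance between whole segments, not the distance between points at equal parameter values, which is what the estimate $d(f_n(x),f_{n+1}(x))$ would require.

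Second, and more seriously, the ``bookkeeping around the iterated modulus'' that you yourself flag is not resolved by local finiteness. Local finiteness makes the number $k(x)$ of active partition functions finite at each point but gives no uniform bound over $X$, and the accuracy with which the chosen points $y_\gamma$ approximate $F(x)$ is fixed once the cover is fixed, i.e. before $k(x)$ is known. Since the $\varepsilon$--$\delta$ condition is asymmetric ($d(y,y')<\varepsilon$ but $d(z,z')<\delta$ with $\delta$ possibly far smaller than $\varepsilon$), after $k$ nested applications you need the innermost data accurate to a $k$-fold iterate of $\delta$, with $k$ unbounded; taking $\varepsilon_n\downarrow 0$ quickly does not help because the degradation occurs within a single stage. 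In a Banach space this problem is invisible because $\bigl\|\sum_i\lambda_i y_i-\sum_i\lambda_i z_i\bigr\|\le\max_i\|y_i-z_i\|$ uniformly in the number of summands. It is telling that the survey records that the earlier paper \cite{BlP-4} obtains the result only for compact $X$ and $Y$ with $\dim X<\infty$ --- precisely the settings where covers of uniformly bounded order are available --- which indicates that the general paracompact case of \cite{BlP-5} rests on a device other than direct iteration of the binary estimate. A smaller point: your iterated combination depends on an ordering of the indices, and its continuity across changes of the active set (where some $\phi_\gamma$ vanishes and the renormalised weights degenerate) also requires an argument.
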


As it was shown earlier in \cite{BlP-4} for the case of compact $X$ and $Y$, and $dim X < \infty,$ Theorem 43 is true for every  $\alpha: Y \times Y \times [0;1] \to Y$ with
$\alpha(y,y,t)=y,\, \alpha(y,z,0)=y,\,\,\alpha(y,z,1)=z$. A set of applications in fixed point theory and in degree theory are also presented in \cite{BlP-9, BlP-5}. Kowalska \cite{Kow} proved a theorem which unifies selection Theorem 42 with a graph-approximation theorems in the spirit of Ben-El-Mechaiekh and
Kryszewski \cite{BMKr} who considered the case of classical convexity of a Banach space.

Among others results let us mention the paper of Kisielewicz \cite{Kis} in which he used a convexity structures in functional Banach spaces $C(S,R^n)$  and $L^{\infty}(T,R^n)$ of all continuous mappings over a compact Hausdorff domain $S$ and all equivalence classes of almost everywhere bounded mappings over a measure space $(T, \mu)$.
Both of these convexities remind of the notion of 
{\it decomposable} set\index{decomposable set}
of functions \cite{Fr,RS}.

\begin{definition}
\begin{description}
\item{(1)} A subset $E \subset L^{\infty}(T,R^n)$ is said to be {\it decomposable}
if $\chi_A f+\chi_{T \setminus}g$ belongs to $E$ provided that $f \in E, g \in E$ and $A$ is a measurable subset of $T$.
\item{(2)} A subset $E \subset L^{\infty}(T,R^n)$ is said to be $L$-{\it convex}\index{$L$-convex set} if $pf+(1-p)g$ belongs to $E$ provided that $f \in E, g \in E$ and $p:T \to [0;1]$ is a measurable function.
\item{(3)} A subset $E \subset C(S,R^n)$ is said to be $C$-{\it convex}\index{$C$-convex set} if $hf+(1-h)g$ belongs to $E$ provided that $f \in E, g \in E$ and $h:T \to [0;1]$ is a continuous function.
\end{description}
\end{definition}

\begin{theorem}[\cite{Kis}] \label{Kis_1}
\item{(1)} Let $X$ be a paracompact space and $F:X \to C(S,R^n)$ a LSC mapping with closed $C$-convex values. Then $F$ admits a continuous singlevalued selection if and only if its $n$-th derived mapping $F^{(n)}$ has nonempty values.
\item{(2)} Same as $(1)$ but for $F:X \to L^{\infty}(T,R^n)$ with closed $L$-convex values.
\end{theorem}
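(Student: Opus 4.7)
The plan is to adapt Michael's classical ``derived mapping'' machinery (developed for finite-dimensional-valued LSC mappings into Banach spaces) to the generalized convexity structures of $C$-convexity in $C(S,\R^n)$ and $L$-convexity in $L^\infty(T,\R^n)$. The parameter $n$ (the dimension of the pointwise target $\R^n$) controls exactly how many times we must iterate the derivation, because the pointwise evaluation map $f \mapsto f(s)$ (resp.\ $f \mapsto f(t)$) at a fixed parameter takes values in an $n$-dimensional space.

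The necessity direction is the easy half. If $f\colon X \to C(S,\R^n)$ is a continuous singlevalued selection of $F$, then I would argue by induction on $k \in \{0, 1, \ldots, n\}$ that $f(x) \in F^{(k)}(x)$ for every $x$, using the standard property of the Michael-type derivation that any continuous selection of the previous stage $F^{(k-1)}$ remains inside its derived mapping. This immediately forces $F^{(n)}(x) \neq \emptyset$ for all $x \in X$, and verbatim the same argument works for $L^\infty(T,\R^n)$.

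The sufficiency direction is where the work lies, and I would split it as follows. First, verify that the derivation $F \mapsto F'$ preserves lower semicontinuity and sends closed $C$-convex (resp.\ closed $L$-convex) values to closed $C$-convex (resp.\ closed $L$-convex) values. This requires showing that the pointwise ``convex combinations'' with continuous weight $h\colon S \to [0,1]$ (resp.\ measurable $p\colon T \to [0,1]$) descend to the derived mapping. Second, iterate the derivation $n$ times; the dimensional bound $n$ on $\R^n$ ensures that after $n$ steps the pointwise structure of $F^{(n)}(x)$ reduces to a situation in which Michael's convex-valued theorem (Theorem \ref{ConvSel}) can be invoked inside the Banach space $C(S,\R^n)$, because the $C$-convex structure coincides with the ordinary Banach convexity on sufficiently ``large'' sub-families. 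Third, lift a continuous selection of $F^{(n)}$ back to one of $F$ via the nesting $F^{(n)} \subseteq F^{(n-1)} \subseteq \cdots \subseteq F$ by building a uniformly Cauchy sequence of $\varepsilon$-selections at each stage and passing to the limit, where closedness of the values guarantees that the limit is a genuine selection.

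The main obstacle, as I see it, is step one of the sufficiency: matching the pointwise ``dimension drop'' produced by derivation with the functional-analytic $C$-convex (resp.\ $L$-convex) combinations. The subtlety is that the derivation operation is defined via local behavior in the function space topology, while $C$-convex (resp.\ $L$-convex) combinations involve weights that vary in the $S$- (resp.\ $T$-) parameter; one must show that a weighted combination $h f + (1-h) g$ of two functions both lying in $F^{(k)}(x)$ still lies in $F^{(k)}(x)$, which is a compatibility statement between a topological/local construction and an algebraic/pointwise operation. Once this compatibility is in place, the remaining steps are essentially Michael's original induction transcribed into the new context, and part (2) is handled by the same template with the continuous weights replaced by measurable weights throughout.
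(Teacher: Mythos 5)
The skeleton you chose --- iterate the derivation $n$ times and then invoke Michael's convexvalued theorem --- is the route the survey indicates, but your proposal omits the one step that carries all the content. The survey's pointer is Brown's theorem: for a convexvalued $F:X\to\R^n$, nonemptiness of all the sets $F^{(n)}(x)$ \emph{forces $F^{(n)}$ to be lower semicontinuous}. The whole purpose of deriving $n$ times is to manufacture lower semicontinuity out of a mapping that does not have it, and the exponent $n$ enters as the number of possible ``dimension drops'' available in $\R^n$. Your step one asks instead whether derivation \emph{preserves} lower semicontinuity, which inverts the logic: if $F$ were genuinely LSC then $F^{(1)}=F$ (lower semicontinuity at $x$ says exactly that $dist(y,F(x'))\to 0$ as $x'\to x$ for every $y\in F(x)$), the derivation does nothing, and the theorem would collapse to Theorem~\ref{ConvSel}, since taking constant weights shows every closed $C$-convex or $L$-convex set is closed and convex in the ordinary sense. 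The point of Kisielewicz's result is precisely to handle mappings that fail to be LSC. So the statement you must prove, and never address, is the transfer of Brown's finite-dimensional argument to $C(S,\R^n)$ and $L^\infty(T,\R^n)$: that the pointwise (continuous, resp.\ measurable) weighted combinations built into $C$- and $L$-convexity allow one to run the $\R^n$ dimension induction fibrewise over $S$ (resp.\ $T$) and conclude that the nonempty-valued $F^{(n)}$ is LSC with closed convex values. Your appeal to ``the $C$-convex structure coinciding with ordinary Banach convexity on sufficiently large sub-families'' is not that argument, and it is not needed for the final application of Theorem~\ref{ConvSel} either: the paper already records that for closed sets $L$-convexity is exactly decomposability plus ordinary convexity.

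Your third step is also superfluous: there is nothing to ``lift back'' through the tower $F^{(n)}\subseteq F^{(n-1)}\subseteq\cdots\subseteq F$, and no Cauchy sequence of $\varepsilon$-selections is required at that stage. Since $F^{(n)}(x)\subseteq F(x)$ for every $x$, any continuous singlevalued selection of $F^{(n)}$ is already a selection of $F$; once $F^{(n)}$ is known to be LSC with closed convex values, Theorem~\ref{ConvSel} finishes the proof in one stroke. Your necessity argument is correct: a continuous selection $f$ of $F$ satisfies $dist(f(x),F^{(k-1)}(x'))\le\|f(x)-f(x')\|\to 0$, so by induction $f(x)\in F^{(k)}(x)$ for every $k$, and in particular $F^{(n)}$ has nonempty values.
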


Note that for closed subsets of $L^{\infty}(T,R^n)$ their $L$-convexity coincides with decomposability plus usual convexity
\cite[Proposition 4]{Kis}. Recall that
the
{\it derived mapping}\index{derived mapping}
$F^{(1)}(x)$ of a multivalued mapping $F:X \to Y$ is defined by setting
$$
F^{(1)}(x) = \{y \in F(x):  (x' \to x) \Rightarrow dist(y,F(x')) \to 0 \} \subset F(x),\,\,x \in X
$$
and $F^{(k+1)}(x)=(F^{(k)})^{(1)}(x)$. Also, a
well-known result of Brown \cite{Br} states that for a convexvalued map
$F:X \to R^n,$ the nonemptiness of all $F^{(n)}(x), x \in X,$ implies that $F^{(n)}: X \to R^n$ is a
LSC selection of $F$. Hence the standard selection techniques can be applied to $F^{(n)}$. Rather simple examples show the essentiality of the
finite-dimensionality of the range space.

\medskip
{\bf 5.4.} Based on the ingenious idea of Michael who proposed in
\cite{MPar} the notion of a
{\it paraconvex set}\index{paraconvex set},
the authors in \cite{
RSPol,
RSUn, 
RSNei,
RSMin,
S1, 
S2} 
systematically studied  another approach to weakening (or, controlled omission) of convexity.
Roughly speaking, to each closed subset $P \subset B$ of a Banach space one associates
a
numerical function, say $\alpha_{P}:(0,+\i) \to [0,2)$, the so-called
function of nonconvexity of  $P$. The identity $\alpha_{P}\equiv 0$ is
equivalent to the convexity of $P$ and the more $\alpha_{P}$ differs
from zero the "less convex"\,is the set $P$ .

\begin{definition} The 
{\it function of nonconvexity}\index{function of nonconvexity}
$\alpha_{P}(\cdot)$ of the set $P$ associates to each number $r > 0$
the supremum of the set
$$\lbrace \sup \lbrace dist(q,P)/r \, : \, q \in conv(P \cap D_r) \rbrace \rbrace$$
over all open balls $D_r$ of the radius $r$.
\end{definition}

For a function $\alpha: (0;+\infty) \to (0;+\infty)$ a nonempty closed subset $P$ of a Banach
space is said to be 
$\alpha$-paraconvex\index{$\alpha$-paraconvex set} 
provided that function
$\alpha(\cdot)$ pointwisely majorates the function of nonconvexity
$\alpha_{P}(\cdot)$. Then $P$  is said to be paraconvex provided that $\sup \alpha_{P}(\cdot) < 1$.

In \cite{RSMin} we proved a paraconvex version of the Ky Fan-Sion minimax theorem.

\begin{theorem} [\cite{RSMin}] \label{MinMax}
Let $\alpha :(0,\infty) \to (0,1)$
be a function with the right upper limits less than 1 over the
closed ray $[0,\infty)$. Let $f: X \times Y \to {\mathbb R}$ be a real
valued function on Cartesian product of two $AR$-subcompacta $X$ and
$Y$ of a Banach spaces and suppose that:
\begin{description}
\item{(1)} For each $c \in {\mathbb R}$ and each $x_{0} \in X$ the
set $\lbrace y \in Y : \enspace f(x_0,y) \leq c \rbrace$ is
$\alpha$-paraconvex compact; and
\item{(2)} For each $d \in R$ and each $y_{0} \in Y$ the
set $\lbrace x \in X : \enspace f(x,y_{0}) \geq d \rbrace$ is
$\alpha$-paraconvex compact for a fixed $\alpha :(0,\infty) \to [0,1)$.
Then $$\max_{X} (\min_{Y}f(x,y)) = \min_{Y}( \max_{X}f(x,y)).$$
\end{description}
\end{theorem}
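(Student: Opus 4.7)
The plan is to establish the nontrivial inequality $\max_{x\in X}\min_{y\in Y} f(x,y) \geq \min_{y\in Y}\max_{x\in X} f(x,y)$, since the reverse is a tautology. Set $\underline v = \max_x \min_y f(x,y)$ and $\overline v = \min_y \max_x f(x,y)$, both attained thanks to the compactness and separate semicontinuity of $f$ hidden in $(1)$ and $(2)$. Suppose for contradiction that $\underline v < \overline v$ and fix $c$ strictly between them. I would introduce the two level-set multivalued maps
$$
F:X\to Y,\quad F(x)=\{y\in Y: f(x,y)\leq c\}, \qquad G:Y\to X,\quad G(y)=\{x\in X: f(x,y)\geq c\}.
$$
By $(1)$ each $F(x)$ is an $\alpha$-paraconvex compactum, nonempty since $\min_y f(x,y)\leq \underline v<c$; symmetrically for $G(y)$ by $(2)$.

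The second step is to check that $F$ and $G$ are lower semicontinuous. Compactness (hence closedness) of the superlevel sets $\{x:f(x,y_0)\geq d\}$ in $(2)$ encodes upper semicontinuity of $f(\cdot,y_0)$, and compactness of the sublevel sets in $(1)$ encodes lower semicontinuity of $f(x_0,\cdot)$. Combined with compactness of the targets, these separate semicontinuity properties yield LSC of $F$ and $G$ by a standard correspondences argument in the spirit of Berge's maximum theorem.

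Next I would invoke Michael's paraconvex selection theorem, whose hypothesis on $\alpha$ -- right upper limits less than $1$ on $[0,\infty)$ -- is precisely what is assumed here; the domains $X$ and $Y$ are paracompact since they are compact. This produces continuous singlevalued selections $s:X\to Y$ of $F$ and $t:Y\to X$ of $G$. Since $Y$ is an $AR$-subcompactum of a Banach space, $Y$ is a compact $AR$ and hence has the fixed-point property. The continuous self-map $s\circ t:Y\to Y$ therefore admits a fixed point $y^*$; setting $x^*=t(y^*)$, the defining inclusions $y^*=s(x^*)\in F(x^*)$ and $x^*\in G(y^*)$ force both $f(x^*,y^*)\leq c$ and $f(x^*,y^*)\geq c$, hence equality.

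The main obstacle is to upgrade the identity $f(x^*,y^*)=c$ into a genuine contradiction, since as stated the equality is still consistent with $\underline v<c<\overline v$. I expect the argument to refine into a saddle-point assertion -- either by running the paraconvex-selection-plus-fixed-point machinery directly on the $\alpha$-paraconvex-valued product map $(x,y)\mapsto G(y)\times F(x)$ into the compact $AR$ space $X\times Y$, or by arranging that at the fixed point $y^*$ one actually has $y^*\in\arg\min_y f(x^*,y)$ and $x^*\in\arg\max_x f(x,y^*)$. Either refinement yields $\underline v\geq\min_y f(x^*,y)=c$ and $\overline v\leq\max_x f(x,y^*)=c$, contradicting $\underline v<c<\overline v$.
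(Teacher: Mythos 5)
Your overall strategy --- level-set multimaps, paraconvex selections of them, and a fixed point in a compact $AR$ --- is in the same spirit as the authors' (the survey explicitly notes that the theorem is obtained from selection theory rather than from a KKM-principle), but the proposal has two genuine gaps. The first is the endgame, which you yourself flag: with a single level $c$ used for both $F$ and $G$, the fixed point only yields $f(x^*,y^*)=c$, which is consistent with $\underline v<c<\overline v$ and is no contradiction. You leave this as an ``expected refinement,'' but the needed device is the classical two-level trick: choose $\underline v<c_1<c_2<\overline v$, set $F(x)=\{y: f(x,y)\le c_1\}$ and $G(y)=\{x: f(x,y)\ge c_2\}$ (both nonempty by the definitions of $\underline v$ and $\overline v$), and the fixed point then gives $f(x^*,y^*)\le c_1<c_2\le f(x^*,y^*)$, the desired contradiction. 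Neither of the vaguer ``refinements'' you sketch is needed, and the saddle-point upgrade you gesture at is not obviously available.

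The second gap is more serious: the lower semicontinuity you assert for the non-strict level-set maps is false in general. If $f(x_0,y_0)=c_1$ exactly, upper semicontinuity of $f(\cdot,y_0)$ gives no upper control of $f(x,y_0)$ near $x_0$, so $x\mapsto\{y:f(x,y)\le c_1\}$ need not be LSC. The standard repair is to pass to the strict sublevel map $x\mapsto\{y: f(x,y)<c_1\}$, which is LSC and nonempty (since $\min_y f(x,\cdot)\le\underline v<c_1$), and then take its pointwise closure, which is again LSC and contained in $F$. But the values of that closure are closures of the unions $\bigcup_{c<c_1}\{y:f(x,y)\le c\}$ of a directed family of $\alpha$-paraconvex sets, and before the paraconvex selection theorem can be applied one must prove that such closures are still paraconvex. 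This is precisely the lemma the paper identifies as the crux of the argument (``the closure of unions of directly ordered families of paraconvex sets are also paraconvex''), and it is entirely absent from your proposal; unlike convexity, paraconvexity is not obviously preserved under this operation, so the selection step is left without its hypothesis.
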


It is interesting to note that our minimax theorem includes cases when
finite intersections
$$\bigcap_{i=1}^{n}
\lbrace \lbrace x \in X :  f(x,y_i) \geq c \rbrace : y_i \in Y
\rbrace,\quad  \bigcap_{j=1}^{k} \lbrace \lbrace y \in Y : f(x_j,y)
\leq d \rbrace : x_j \in X \rbrace
$$
of sublevel and uplevel sets are possibly nonconnected: intersection of two paraconvex sets can be nonconnected.

Usually a proof of a minimax theorem for generalized convexities exploits the
{\it intersection} property of convex sets and reduces minimax theorem to some kind of  KKM-principle. In our case we used the fact that the closure of {\it unions} of
directly ordered family of arbitrary paraconvex sets are also paraconvex.
Therefore  as a base for obtaining minimax theorem we have used the
selection theory of multivalued mappings instead of versions of the
KKM-principle.

In \cite{RSRet} we examined the following natural question:
{\it Does paraconvexity of a set with respect to the classical
convexity structure coincide with convexity under some generalized
convexity structure?} In other words,
is paraconvexity a real nonconvexity, or is it maybe a kind of some generalized convexity?
It turns out that sometimes the answer is affirmative.

\begin{theorem} [\cite{RSRet}] \label{ContRetr}
Let $0\leq \alpha < 0,5$ and $F: X \to
B$ be a continuous multivalued mapping of a paracompact
space $X$ into a Banach space $B$ such that all values $F(x)$
are bounded $\alpha$-paraconvex sets. Then there
exists a continuous singlevalued mapping $\mathfrak{F}:X \to
C_{b}(B,B)$ such that for every $x \in X$ the mapping
$\mathfrak{F}_{x}:B \to B$ is a continuous retraction of $B$ onto the
value $F(x)$ of $F$.
\end{theorem}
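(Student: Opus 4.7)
The plan is to exhibit $\mathfrak F$ as a uniform limit of a Cauchy sequence of continuous approximations $\mathfrak F_n\colon X\to C_b(B,B)$, exploiting the decisive feature of $\alpha$-paraconvexity with $\alpha<1/2$: every convex combination of points lying in $P\cap D_r$ is within distance $\alpha r<r/2$ of $P$, so each paraconvex ``correction'' contracts by a geometric factor strictly less than $1/2$. This is the quantitative ingredient that turns the Michael-type construction of paraconvex selections into a parametric retraction result.

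Setup and induction. Start with $\mathfrak F_0(x)(b)=b$, which satisfies $\mathrm{dist}(\mathfrak F_0(x)(b),F(x))=d(b,F(x))$ and restricts to the identity on $F(x)$. Inductively, build $\mathfrak F_{n+1}$ from $\mathfrak F_n$ by: taking a locally finite open cover $\{U_\gamma\}$ of $X$ on which the Hausdorff oscillation of $F$ is sufficiently small (using continuity of $F$), choosing base points $x_\gamma\in U_\gamma$, and for each $b\in B$ selecting $p_\gamma(x,b)\in F(x_\gamma)$ close to $\mathfrak F_n(x)(b)$; then averaging via a subordinated partition of unity $\{\varphi_\gamma\}$,
\begin{equation*}
\widetilde{\mathfrak F}_{n+1}(x)(b)=\sum_\gamma \varphi_\gamma(x)\,p_\gamma(x,b),
\end{equation*}
and finally correcting $\widetilde{\mathfrak F}_{n+1}(x)(b)$ by a small paraconvex projection back toward $F(x)$. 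The $\alpha<1/2$ estimate guarantees that the correction improves the distance to $F(x)$ by a factor $\alpha$, so the oscillation $\|\mathfrak F_{n+1}-\mathfrak F_n\|_\infty$ decays geometrically. By boundedness of the values and the construction, each $\mathfrak F_n(x)$ lies in $C_b(B,B)$, restricts to the identity on $F(x)$, and the sequence $\{\mathfrak F_n\}$ is uniformly Cauchy in $C(X,C_b(B,B))$, so its limit $\mathfrak F$ is the desired continuous parametric retraction.

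The main obstacle is uniform control in $b\in B$ needed for convergence in the sup-norm topology on $C_b(B,B)$. Since $b$ ranges over the entire (unbounded) Banach space, naive pointwise convergence of $\mathfrak F_n(x)(b)$ does not suffice. I would therefore split the argument into two regimes: a \emph{near} regime, where $b$ lies within bounded distance of $F(x)$ and the geometric contraction $\alpha^n$ acts directly on $d(b,F(x))$; and a \emph{far} regime, where after the first correction all subsequent iterates $\mathfrak F_n(x)(b)$ already lie in (a bounded neighborhood of) $F(x)$, so their mutual differences are uniformly bounded by $\mathrm{diam}\,F(x)$ plus the Hausdorff oscillation of $F$ near $x$. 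Patching these two regimes uniformly in $b$ is the technical heart of the argument, and this is precisely where the stronger assumption $\alpha<1/2$ is essential, in contrast with the weaker $\alpha<1$ sufficient for mere existence of singlevalued selections.
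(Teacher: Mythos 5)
Your plan runs the Michael iteration with an extra parameter $b\in B$, and this is where it breaks down. First, the selected points $p_\gamma(x,b)\in F(x_\gamma)$ must depend continuously on $b$ (and uniformly so, since convergence is required in the sup-norm of $C_b(B,B)$); choosing a point of the paraconvex set $F(x_\gamma)$ continuously near a moving target $\mathfrak F_n(x)(b)$ is itself a selection/retraction problem of exactly the kind you are trying to solve, so the induction is circular as stated. Second, nothing in the averaging step $\sum_\gamma\varphi_\gamma(x)\,p_\gamma(x,b)$ keeps points of $F(x)$ fixed: if $b\in F(x)$, the points $p_\gamma(x,b)$ lie in the different sets $F(x_\gamma)$, so $\widetilde{\mathfrak F}_{n+1}(x)(b)\neq b$ in general; consequently the limit need not be the identity on $F(x)$, nor need its image be exactly $F(x)$ --- you would only obtain a map into a small neighborhood of $F(x)$, which is not a retraction. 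Third, your explanation of the hypothesis $\alpha<1/2$ does not hold up: a per-step contraction by the factor $\alpha$ gives geometric decay for every $\alpha<1$, so your argument, if it worked, would prove the theorem under the weaker hypothesis $\alpha<1$; the ``near/far'' patching does not single out $1/2$ either.

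The paper's route is different and it is what explains the constant. One lifts the whole problem to the function space: for a fixed $\alpha$-paraconvex set $P\subset B$, the set $URetr_P\subset C_b(B,B)$ of uniform retractions of $B$ onto $P$ is shown to be $\frac{\alpha}{1-\alpha}$-paraconvex as a subset of the Banach space $C_b(B,B)$. Since $\frac{\alpha}{1-\alpha}<1$ exactly when $\alpha<\frac12$, the multivalued mapping $x\mapsto URetr_{F(x)}$ has paraconvex values, and a single application of the Michael-type selection theorem for paraconvex-valued mappings, carried out in $C_b(B,B)$, produces the continuous singlevalued selection $\mathfrak F$. All the uniformity in $b$ that you try to engineer by hand is absorbed into that one application of the selection theorem in the sup-norm space, and the requirements that $\mathfrak F_x$ fix $F(x)$ pointwise and have image exactly $F(x)$ are built into the definition of the sets $URetr_{F(x)}$ rather than having to be preserved through an iteration. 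If you want to salvage a direct construction, you would in effect have to reprove that paraconvexity estimate for $URetr_P$, so the lifted formulation is the natural one.
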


Here $C_{b}(B,B)$ denotes the Banach space of all continuous bounded mappings of a Banach space $B$ into itself. The key point of the proof is that the set $URetr_P \subset C_{b}(B,B)$ of all uniform retractions onto an $\alpha$-paraconvex set $P \subset B$ is a $\frac{\alpha}{1-\alpha}$-paraconvex subset of $C_{b}(B,B)$.

As a corollary, by continuously choosing a retraction onto a paraconvex sets, we showed that if in addition all values $F(x),\,\,x \in X$, are pairwise disjoint
then the metric subspace $Y= \bigcup_{x \in X}F(x) \subset B$ admits a
convex metric structure $\sigma$ such that each value $F(x)$ is convex with respect to
$\sigma$.

Finally, let us mentioned the result of Makala \cite[Theorem 3.1]{Maka} who obtained the selection theorem for LSC mappings $F:X \to Y$ from a collectionwise normal domains $X$ such that each value $F(x)$ equals to $Y$ or, is a compact paraconvex subsets of $Y$. The main difficulty here was that the class of such values in general,
is not  closed with respect to intersections with balls.

\medskip
{\bf 5.5.}    Yet another type of a controlled "nonconvexity" which is in some sense intermediate
between paraconvexity and Menger's metric convexity goes back to Vial \cite{Vi} and during the last decade was intensively studied in \cite{BalIv,Iv-1,Iv-2}.

For every two points $x$ and $y$ of a normed space $(Y;\|\cdot \|)$ and for every $R\geq 0,5\|x-y \|$\,
denote by $D_{R}[x;y]$ the intersection of all closed $R$-balls containing $x$ and $y$. Clearly,
when $R \to +\infty,$ such set $D_{R}[x;y]$ tends (with respect to the Hausdorff distance)
to the usual segment $[x;y]$.

\begin{definition}[\cite{Vi}]
A subset $A$ of a  normed space $( Y ; \| \cdot \| )$ is said to be
{\it weakly convex w.r.t $R>0$}\index{weakly convex set}
if for every $x, y \in A,$ with $0<\|x-y\|<2R,$ there exists a point $z \in A \cap D_{R}[x;y]$ that differs from $x$ and $y$.
\end{definition}

In a Hilbert space $H$ the metric projection $P_A$ of an $R$-neighborhood of a weakly convex w.r.t. $R$ set $A$ is singlevalued. The set $\{(x;y) \in \mathbb{R}^2:  x\geq 0\,\, {\rm or}\,\, y\geq 0\}$ is $(\sqrt{2}/2-)$paraconvex but is not weakly convex with respect to arbitrary $R>0$. The set
$\{x \in \mathbb{R}^n: \|x\|\geq R\}$  is not paraconvex and is weakly convex w.r.t $R$. However, sometimes weak convexity implies paraconvexity.  For example, 
in a Hilbert space $H$, if
$z \in H$ and $0<r<R$, then every weakly convex (w.r.t. $R$) subset $A \subset D(z;r)$
is $(r/R)$-paraconvex \cite{Iv-1}.

\begin{theorem}[\cite{Iv-2}]
Let $X$ be a paracompact space and $0< \varepsilon <R$. Then for every continuous singlevalued
$\varepsilon$-selection $f_{\varepsilon}:X \to H$ of a LSC mapping $F:X \to H$ with closed
and weakly convex (w.r.t. $R$) values
there exists a continuous singlevalued selection.
\end{theorem}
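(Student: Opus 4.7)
The plan is to bootstrap $f_\varepsilon$ to a genuine continuous selection by producing a uniformly Cauchy sequence $(f_n)$ of continuous $\varepsilon_n$-selections of $F$, with $\varepsilon_n\downarrow 0$ and $\|f_{n+1}-f_n\|_\infty\leq\varepsilon_n$. The uniform limit $f$ is then continuous, and the inclusion $f(x)\in F(x)$ follows from the closedness of $F(x)$ together with $dist(f_n(x),F(x))<\varepsilon_n\to 0$. Set $f_0:=f_\varepsilon$, $\varepsilon_0:=\varepsilon$, and arrange $\varepsilon_{n+1}\leq\varepsilon_n/2$.

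Given $f_n$, introduce
\[
G_n(x) \;:=\; F(x)\cap\overline{D}(f_n(x),\varepsilon_n),
\]
an LSC mapping with nonempty values (since $f_n$ is an $\varepsilon_n$-selection) lying inside a ball of radius $\varepsilon_n<R$. The crucial Hilbert-space input is the explicit identity
\[
D_R[y_1,y_2] \;=\; \overline{D}\!\Bigl(\tfrac{y_1+y_2}{2},\;R-\sqrt{R^2-\tfrac{\|y_1-y_2\|^2}{4}}\Bigr),
\]
together with the bound $R-\sqrt{R^2-s^2}\leq s^2/R$. By weak convexity of $F(x)$, any two points $y_1,y_2\in G_n(x)$ admit a witness $z\in F(x)$ with $\|z-m\|\leq\|y_1-y_2\|^2/(4R)$, where $m$ is their midpoint; a short computation using the parallelogram identity to bound $\|m-f_n(x)\|$ shows that when $\varepsilon_n\leq R/2$ one in fact has $z\in G_n(x)$. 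Hence the function of nonconvexity of $G_n(x)$ is majorised by $\varepsilon_n/R\leq 1/2$, so $G_n$ is paraconvex-valued with constant strictly less than one. Michael's paraconvex selection theorem \cite{MPar} then yields a continuous $\varepsilon_{n+1}$-selection $f_{n+1}$ of $G_n$, which is automatically an $\varepsilon_{n+1}$-selection of $F$ satisfying $\|f_{n+1}(x)-f_n(x)\|\leq\varepsilon_n$ by construction.

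Choosing $\varepsilon_n=\varepsilon\cdot 2^{-n}$ makes $\sum\varepsilon_n$ convergent, so $(f_n)$ is uniformly Cauchy and its limit is the desired continuous selection of $F$. The initial case $\varepsilon\in[R/2,R)$ is handled by first extracting, from the strict inequality $dist(f_\varepsilon(x),F(x))<\varepsilon$ and a partition-of-unity argument on the paracompact domain $X$, a continuous $\varepsilon'$-selection with $\varepsilon'<R/2$, after which the main iteration applies.

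The principal obstacle is the paraconvexity estimate for $G_n(x)$: since the intersection of a weakly convex set with a ball need not itself be weakly convex, the fact from \cite{Iv-1} quoted just before the theorem does not apply verbatim to $G_n(x)$, and the nonconvexity bound must be verified directly from the Hilbert-space formula for the ``$R$-lens'' $D_R[y_1,y_2]$. It is precisely the quadratic-order estimate $R-\sqrt{R^2-s^2}=O(s^2/R)$ --- which fails in a general Banach space --- that drives the convergence of the iteration and makes the Hilbert hypothesis of the theorem essential.
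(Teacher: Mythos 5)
The survey states this result without proof (it is simply quoted from Ivanov's paper \cite{Iv-2}), so your argument has to stand on its own, and it has a genuine gap at its central step: the paraconvexity estimate for $G_n(x)=F(x)\cap\overline{D}(f_n(x),\varepsilon_n)$. First, the ``explicit identity'' you invoke is false: $D_R[y_1,y_2]$ is the intersection of \emph{all} closed $R$-balls containing $y_1$ and $y_2$, hence it contains $y_1$, $y_2$ and the whole segment $[y_1,y_2]$; it is a lens-shaped set, not the small ball of radius $R-\sqrt{R^2-\|y_1-y_2\|^2/4}$ about the midpoint (that radius is of order $\|y_1-y_2\|^2/R$, far smaller than $\|y_1-y_2\|/2$, so $y_1$ itself would not belong to it). The correct Hilbert-space fact is only the containment of $D_R[y_1,y_2]$ in the sagitta-neighborhood of the segment. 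Second, and more seriously, weak convexity in Vial's sense furnishes merely \emph{some} third point $z\in A\cap D_R[y_1,y_2]$, which may lie arbitrarily close to $y_1$; it gives no point near the midpoint, so your estimate $\|z-m\|\le\|y_1-y_2\|^2/(4R)$ does not follow. Passing from ``some third point in each lens'' to a genuine bound on the function of nonconvexity (which must control $dist(q,P)$ for \emph{every} $q$ in the convex hull of $P\cap D_r$, not just for midpoints of pairs) requires an iteration exploiting closedness and completeness; this is exactly the content of the lemma from \cite{Iv-1} that you explicitly set aside, and your sketch does not replace it.

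Third, even granting a well-placed witness $z$, it need not lie in $\overline{D}(f_n(x),\varepsilon_n)$: the lens $D_R[y_1,y_2]$ is not contained in every $\varepsilon_n$-ball containing $y_1$ and $y_2$ (points of the lens near $y_1$ can lie at distance up to roughly $\varepsilon_n$ plus a term of order $\varepsilon_n^2/R$ from the centre), so $G_n$ may fail to inherit any weak convexity or paraconvexity at all; the unsupplied ``short computation using the parallelogram identity'' cannot close this as stated, and the standard repair (enlarging the ball slightly at each stage) changes the bookkeeping your convergence argument relies on. A smaller issue: the intersection of an LSC map with a \emph{closed} ball about a continuous function is not automatically LSC; one should intersect with the open ball and pass to pointwise closures. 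Finally, note a structural oddity: if $G_0$ really were closed-valued, LSC and $\alpha$-paraconvex-valued with $\alpha$ strictly below the threshold, the paraconvex selection theorem would already produce an exact selection of $G_0\subset F$ in one step, so the need for an infinite iteration signals that you are not actually within the hypotheses of that theorem. The overall strategy --- bootstrap the $\varepsilon$-selection using the quadratic sagitta estimate, which is indeed where the Hilbert hypothesis enters --- is the right intuition, but the proof as written does not go through.
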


\begin{theorem}[\cite{Iv-2}]
Let $X$ be a paracompact subset of a topological vector space $Z$ and a uniformly functional contractible subset of $Z$. Then for every $R>0$,
each Hausdorff uniformly continuous
mapping  $F:X \to H$
with closed and weakly convex (w.r.t. $R$) values admits a continuous singlevalued selection.
\end{theorem}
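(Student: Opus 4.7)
The plan is to reduce the theorem to Theorem 49 by producing, for some $\varepsilon<R$, a continuous singlevalued $\varepsilon$-selection $f_\varepsilon$ of $F$; Theorem 49 then upgrades $f_\varepsilon$ to a genuine continuous selection of $F$. Note that Hausdorff uniform continuity of $F$ means that for every $\delta>0$ there is a $Z$-entourage $U$ on $X$ with $(x,x')\in U$ implying $Hausd(F(x),F(x'))<\delta$; in particular $F$ is LSC, so the hypotheses of Theorem 49 are automatically in force once an $\varepsilon$-selection with $\varepsilon<R$ is available. Uniform functional contractibility of $X\subset Z$ should furnish a continuous contraction $h:X\times[0,1]\to X$ with $h(\cdot,0)=id_X$ and $h(\cdot,1)\equiv x_0$ for a base point $x_0\in X$, whose time-slices form an equicontinuous family in $t$.

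To build $f_\varepsilon$, fix $\varepsilon<R$, pick any $y_0\in F(x_0)$, and track $y_0$ backwards along the contraction by iterated metric projection. By equicontinuity of $h$ together with Hausdorff uniform continuity of $F$, choose an integer $N$ and a subdivision $0=t_0<t_1<\cdots<t_N=1$ so fine that, for every $x\in X$ and every $i$, $Hausd(F(h(x,t_{i-1})),F(h(x,t_i)))<\varepsilon/N$. Starting from the constant map $f_N\equiv y_0$ on $X$ (which lies in $F(h(x,1))=F(x_0)$ for every $x$), inductively define continuous maps $f_{N-1},\dots,f_0:X\to H$ by letting $f_{i-1}(x)$ be the metric projection of $f_i(x)$ onto the weakly $R$-convex set $F(h(x,t_{i-1}))$. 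Since at each step the argument lies within $\varepsilon/N\ll R$ of the target set, and since weakly $R$-convex subsets of a Hilbert space admit a singlevalued, Lipschitz metric projection on their $R$-neighborhood (Vial's lemma), each $f_{i-1}$ is well defined; by induction on $i$ the distance from $f_{i-1}(x)$ to $F(x)$ is at most $(N-i+1)\varepsilon/N$. The final map $f_\varepsilon:=f_0$ then satisfies $dist(f_\varepsilon(x),F(x))<\varepsilon$, and Theorem 49 produces the desired continuous singlevalued selection.

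The principal obstacle is establishing joint continuity of the iterated metric projection in the parameter $x$. Vial's singlevaluedness gives pointwise Lipschitz continuity of the projection onto a \emph{fixed} weakly $R$-convex set, but here the target $F(h(x,t_{i-1}))$ itself varies with $x$. One therefore needs a joint-continuity lemma combining the Hausdorff continuity of $x\mapsto F(h(x,t_{i-1}))$ with the Lipschitz dependence of the projection on its argument, together with careful bookkeeping ensuring that the cumulative error $\sum_{i=1}^{N}\varepsilon/N=\varepsilon$ stays strictly below $R$ at every step. This joint-continuity statement, anchored on weak $R$-convexity of the values and on the Hilbert structure of the range, is the technical heart of the argument in \cite{Iv-2}.
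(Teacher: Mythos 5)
First, a caveat: the survey only states this theorem with a citation to \cite{Iv-2} and gives no proof of it, so there is no in-paper argument to compare yours against; what follows assesses the proposal on its own terms. Your overall strategy --- transport a point $y_0\in F(x_0)$ backwards along the contraction by iterated metric projections onto the sets $F(h(x,t_i))$, obtain a continuous $\varepsilon$-selection with $\varepsilon<R$, and then invoke Ivanov's $\varepsilon$-selection-improvement theorem --- is the natural route and very plausibly the one taken in \cite{Iv-2}. Note, however, that the reduction target is the theorem immediately preceding the statement in Section 5.5 (the one upgrading $\varepsilon$-selections of weakly convex valued LSC maps to exact selections), not Theorem 49, which in the compiled numbering is Gutev's sieve-completeness result and is irrelevant here.

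The proposal has a genuine gap precisely at its load-bearing step. Each $f_{i-1}(x)=P_{F(h(x,t_{i-1}))}(f_i(x))$ is well defined by Vial--Ivanov singlevaluedness, but its continuity in $x$ does not follow from anything you establish: the projection varies both through its argument $f_i(x)$ and through the target set $F(h(x,t_{i-1}))$, and singlevaluedness (or Lipschitzness) of $P_A$ for a \emph{fixed} weakly convex $A$ says nothing about the modulus of $A\mapsto P_A(y)$ in the Hausdorff metric. The required stability estimate (that $\|P_A(y)-P_{A'}(y')\|$ is controlled by $\|y-y'\|$ and $Hausd(A,A')$ whenever $dist(y,A)\le r<R$) is a nontrivial theorem about weakly convex sets in Hilbert space, essentially the content of \cite{BalIv,Iv-1}; you name it as ``the technical heart'' but neither prove it nor pin it to a citable statement, so the induction already fails to certify that $f_{N-1}$ is continuous. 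Two further points. As written, your recursion terminates with $f_0(x)\in F(h(x,0))=F(x)$, i.e.\ an \emph{exact} selection, so the cumulative-error bookkeeping ``$dist(f_{i-1}(x),F(x))\le (N-i+1)\varepsilon/N$'' is beside the point and the appeal to the improvement theorem is redundant; the reduction becomes genuinely necessary only if one stops the projections at some $t_i>0$ or replaces exact projections by approximate ones to salvage continuity, and the proposal does not say which. Finally, the uniform choice of the subdivision $\{t_i\}$ needs the contraction to be equi-uniformly continuous in $t$, in the same uniformity in which $F$ is Hausdorff uniformly continuous; this is presumably exactly what ``uniformly functionally contractible'' supplies, but since the definition is stated nowhere, that step also needs to be made explicit.
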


Both contractibility and uniform continuity restrictions are essential as Examples 1-3 from \cite{Iv-2} show.

\medskip
{\bf 5.6.} To complete this
section we mention one more nontrivial convexity structure. Namely, the so called {\it tropical (or, max-plus)} geometry in $(\R \cup \{-\infty\}^n$. It is very intensively studied area with many various applications in abstract convex analysis, algebraic geometry, combinatorics, phylogenetic analysis, etc. For a survey and details cf.  \cite{Wiki}.

\begin{definition}
For an ordered $N$-tuple $t=(t_j)$ of "numbers" $t_j \in [-\infty; 0]$ with $\max\{t_j\} = 0$ and
for a points $A_1(x_1^1, x_2^1,....,x_n^1),..., A_N(x_1^N, x_2^N,....,x_n^N)$ from $(\R \cup \{-\infty\})^n$ their max-plus $t$-combination is defined as the point
$\left(\max_j \{x_1^j + t_j\}, \max_j \{x_2^j + t_j\} ..., \max_j \{x_n^j + t_j\} \right)$.
A subset $C \subset (\R \cup \{-\infty\})^n$ is said to be 
{\it max-plus convex}\index{max-plus convex set}
if it contains all max-plus $t$-combinations of all of
its points.
\end{definition}

Zarichnyi proved a selection theorem for max-plus convexvalued mappings.

\begin{theorem}[\cite{Zar}]
Let $X$ and $Y$ be compact metrizable spaces and $Y \subset \R^n$. Then every LSC mapping
$F:X \to Y$ with max-plus convex values admits a continuous singlevalued selection.
\end{theorem}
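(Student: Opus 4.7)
The plan is to fit Zarichnyi's theorem into the selection theory for abstract convexities, most directly into Horvath's Theorem 37 (or the $c$-structure Theorem 38 of Gutev). First I would observe that the max-plus convex subsets of $(\R \cup \{-\infty\})^n$ form a convexity $\mathcal{C}_{\oplus}$ in the sense of Definition 9: they contain $\emptyset$ and the ambient space, are closed under arbitrary intersections, and are closed under directed unions (every finite family of points, and hence every max-plus combination of them, lies in a single member of a directed subfamily). Assigning to each finite $A$ its max-plus convex hull $\chi(A)$ gives a compatible $c$-structure.

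The geometric heart of the argument is that every max-plus polytope is contractible. I would parameterize the polytope $\chi(\{v_1,\dots,v_N\})$ by the tropical simplex
$$
\Delta_{N-1}^{\oplus} = \{t \in [-\infty,0]^N : \max_j t_j = 0\}
$$
through the evaluation $e(t) = \bigvee_j (v_j + t_j)$, with $\vee$ the coordinatewise maximum. The change of coordinates $u_j = e^{t_j} \in [0,1]$ identifies $\Delta_{N-1}^{\oplus}$ with the union of top faces $\{u_j = 1\}$ of the cube $[0,1]^N$, which is star-shaped at $(1,\dots,1)$; concretely, the homotopy $H_s(t) = \log\bigl((1-s)\,e^{t} + s\bigr)$, computed coordinatewise with the conventions $e^{-\infty}=0$ and $\log 0 = -\infty$, preserves $\max_j H_s(t)_j = 0$ and deforms $t$ to $(0,\dots,0)$. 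Pushing this through $e$ contracts the polytope to $\bigvee_j v_j \in \chi(\{v_1,\dots,v_N\})$; moreover, since $F(x)$ is max-plus convex, the entire homotopy stays inside $F(x)$ whenever the vertices do, so every value $F(x)$ is itself contractible.

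With these ingredients in place, Horvath's Theorem 37 produces a continuous selection once the ambient is correctly arranged. If $Y$ is not itself max-plus convex then $\chi(A)$ for $A \subset Y$ can escape $Y$, and the convexity induced on $Y$ alone is poorly behaved; I would resolve this either by passing to a compact metrizable max-plus convex enlargement $\widetilde Y \subset (\R \cup \{-\infty\})^n$ (built by compactifying via a homeomorphism $[-\infty,0] \cong [0,1]$ and restricting to a suitable tropical box), or by using the variant of the $c$-structure theorem in which a $c$-system is only required on a designated subset. In either case max-plus convexity of $F(x)$ forces the resulting selection to land back in $F(x) \subset Y$. If closedness of $F(x)$ is not assumed, I would replace $F$ by its pointwise closure $\overline F$, which is still LSC and max-plus convex-valued because the max-plus operations are continuous on the compactified target, and then impose closedness as an (arguably implicit) extra hypothesis.

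The main obstacle I anticipate is precisely this interplay between the target $Y \subset \R^n$ and the natural ambient $(\R \cup \{-\infty\})^n$ of the max-plus convexity: the classical abstract-convexity selection machinery is designed for a single convexity on the whole target, so the reduction must simultaneously preserve compactness, metrizability, and the max-plus convexity of every value. Carefully verifying continuity of the evaluation map $e$ and the homotopy $H_s$ in the compactified topology, and checking that the produced selection really lands in $F(x)$ rather than in its enlargement, is the essential technical step that makes the approach go through.
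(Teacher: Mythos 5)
Your route is genuinely different from the one the survey attributes to Zarichnyi. The paper's proof does not pass through the abstract-convexity selection machinery at all: it constructs a Milyutin surjection $M:Z\to X$ of a zero-dimensional compact space $Z$ onto $X$ together with an associated map $m:X\to I(Z)$ into \emph{idempotent} probability measures, obtains a selection $s$ of the composition $F\circ M$ from the zero-dimensional selection Theorem 2, and then averages: $f(x)=\int_{M^{-1}(x)}s(t)\,dm(x)$ is an idempotent (max-plus) barycenter, which lands in $F(x)$ precisely because $F(x)$ is max-plus convex. The survey stresses that no sequential approximation is used; the whole homotopy-theoretic analysis of tropical polytopes is sidestepped in favor of the existence of a continuous max-plus averaging operation, exactly mirroring the scheme of \cite{RSS} for ordinary convexity. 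Your plan --- realize max-plus convexity as a convexity in the sense of Definition 9, prove polytopes contractible, and invoke Horvath's Theorem 37 --- is a legitimate alternative strategy, and its first step is correct as stated.

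Two of your steps, however, need repair. First, your contraction argument only shows that the evaluation map $e:\Delta^{\oplus}_{N-1}\to\chi(\{v_1,\dots,v_N\})$ is null-homotopic: the homotopy $H_s$ lives on the parameter simplex, and $e(H_s(t))$ is not a function of $e(t)$ alone, so the homotopy does not descend to a contraction of the polytope itself. This is fixable by contracting directly inside the convex set: for $q$ in a max-plus convex set $C$, first deform $p$ to $p\vee(q+\log s)$ for $s\in[0,1]$ and then deform $p\vee q$ to $(p+\log(1-s))\vee q$; both stages are max-plus combinations with coefficient vectors having maximum $0$, hence stay in $C$, and they concatenate to a contraction onto $q$. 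Second, and more seriously, Horvath's theorem as actually proved requires more than homotopical triviality of polytopes: one must verify the compatibility of the convexity with the complete metric uniformity --- in the survey's language, the van de Vel property, i.e.\ the existence of enough entourages $R$ for which the complexes $S_R(Z)$ are homotopically trivial, together with the appropriate semicontinuity of the polytope operation. For the max-plus convexity on your compactified tropical box this is where the substantive work lies, and the proposal does not address it, so the appeal to Theorem 37 is not yet justified. The ambient-space and closedness issues you flag are real but comparatively minor.
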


It is interesting to observe
that the proof never uses any sequential procedure of approximation.
Instead, Zarichnyi constructs a version of Milyutin surjection $M:Z \to X$ of a zero-dimensional compact space $Z$ onto $X$ and associating map $m:X \to I(Z)$ with values in {\it idempotent} probability
measures. Next, exactly as in \cite{RSS}, the desired selection $f$ of $F$ is defined as the idempotent barycenter $f(x)= \int_{M^{-1}(x)} s(t)dm(x)$ for a suitable selection
$s$ of the composition $F \circ M$ (such a selection exists due to Theorem 2).

\section{Multivalued selections}
\label{sec:6}

The foundation
for results of this section is the compactvalued selection Theorem 3.
Historically there were various ways to prove this result or its variants: 
the original Michael's
approach \cite{MultSel} via pointwise closures of limit point sets of certain "tree" of 
$2^{-n}$-singlevalued selections,
Choban's method of coverings \cite{ChobGen} which axiomatized and transformed Michael's construction into a maximally possible general form, approach based on the notion of (complete)\,
{\it sieves} \cite{ChKenR}, and a proof via the $0$-dimensional selection theorem \cite{RS}.

In a series of papers Gutev recently proposed a more advanced point of view for sieves on a set $X$. Recall that a 
{\it tree}\index{tree}
is a partially ordered set $(T;\preceq)$ with all well-ordered sublevel
sets $\{a: a \preceq b, a\neq b\}_{b \in T}$. Roughly speaking, in \cite{GuSie} a sieve\index{sieve} on $X$ is defined as  some kind of multivalued mapping $(T; \preceq) \to X$ which is order-preserving with respect to inverse inclusion. 
In particular, as a corollary of his techniques,
Gutev \cite[Corollary 7.3]{GuSie}
obtained the following generalization 
of compactvalued selection Theorem 3,
which was   proposed earlier
in \cite{AllCal}.

\begin{theorem} [\cite{GuSie}]\label{Seive_1}
A multivalued mapping $F:X \to Y$
admits a compactvalued USC selection $H:X \to Y$, which, in turn, admits a compactvalued LSC selection $G:X \to Y$, provided
that the following conditions are satisfied:
\begin{description}
\item{(1)} $X$ is a paracompact space;
\item{(2)} $Y$ is a monotonically developable and sieve-complete space;
\item{(3)} $F$ is a LSC mapping; and
\item{(4)} For every $x \in X$, $F(x)$ is a closed subset of $Y$.
\end{description}
\end{theorem}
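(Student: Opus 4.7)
The plan is to mimic the tree-of-approximations proof of the classical compactvalued selection theorem (Theorem \ref{CompSel}), replacing the metric tools on $Y$ by the sieve structure. First, I would fix a complete sieve on $Y$ indexed by a tree $T$ whose $n$-th level is a family $\mathcal{V}_n$ of open subsets of $Y$, each member of $\mathcal{V}_{n+1}$ contained in a member of $\mathcal{V}_n$, and intertwine it with a monotone development $\{\mathcal{U}_n\}$ of $Y$. Sieve-completeness guarantees that for every thread $(V_{t_n})_n$ along $T$ the intersection $\bigcap_n Clos_Y(V_{t_n})$ is a nonempty compact subset of $Y$; monotone developability guarantees that the $\mathcal{U}_n$-stars of this intersection form a base of neighborhoods of it, so the sieve pieces really do shrink to a compactum.

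Next, by induction on $n$ I would construct LSC mappings $F_n:X\to Y$ with $F_0=F$ and $F_{n+1}(x)\subseteq F_n(x)\subseteq F(x)$, together with a locally finite open cover $\{W_{n,\alpha}\}$ of $X$ and an assignment $\alpha\mapsto t(n,\alpha)$ of tree-nodes at level $n$, so that whenever $x\in W_{n,\alpha}$ one has $F_n(x)\subseteq V_{t(n,\alpha)}$ and $F_n(x)\cap V_{t(n,\alpha)}\ne\emptyset$. The inductive step uses Lemma \ref{LSC}: for each $(x_0,y_0)\in\Gamma_{F_n}$ with $x_0\in W_{n,\alpha}$, pick a successor $t'$ of $t(n,\alpha)$ in $T$ with $y_0\in V_{t'}$ and extract a local selection of $F_n\cap V_{t'}$ on some neighborhood of $x_0$. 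Paracompactness of $X$ refines the resulting open cover to a locally finite cover $\{W_{n+1,\beta}\}$, on each member of which the local piece of $F_{n+1}$ is set to be $F_n(x)\cap V_{t(n+1,\beta)}$; taking the union of these local pieces over members containing $x$ produces the required LSC $F_{n+1}$.

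Then I would define $H(x)=\bigcap_n Clos_Y(F_n(x))$. Every compatible decreasing chain of labels over $x$ yields, by Step 1, a nonempty compactum contained in $H(x)$; closedness of $F(x)$ in $Y$ forces $H(x)\subseteq F(x)$; and local-finiteness of the covers $\{W_{n,\beta}\}$ together with the shrinking of $\mathcal{U}_n$-stars yields that for every open $U\supseteq H(x)$ some $Clos_Y(F_n(x'))\subseteq U$ on a neighborhood of $x$, giving upper semicontinuity and compactness of the values of $H$. The LSC compactvalued selection $G\subseteq H$ is then obtained by the standard ``pointwise density inside $H$'' trick: for every thread $\sigma$ of labels surviving at $x$ set $K_\sigma(x)=\bigcap_n Clos_Y(F_n(x)\cap V_{\sigma(n)})$, and let $G(x)$ be the closure of the union of the $K_\sigma(x)$ over all such $\sigma$; $G$ is compactvalued by sieve-completeness and inherits lower semicontinuity from the $F_n$.

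The hard part will be the inductive step in the second paragraph: coordinating the local sieve-refinements chosen via Lemma \ref{LSC} into a globally defined LSC map requires simultaneously tracking continuity, which is carried by paracompactness and locally finite refinements on $X$, and the tree combinatorics, which is what sieve-completeness demands in order for the limits in the third paragraph to be nonempty and compact. This is precisely the place where Gutev's sieve-theoretic framework does the work that complete metrizability plus the classical $2^{-n}$-net construction do in the standard proof of Theorem \ref{CompSel}.
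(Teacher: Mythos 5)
You should first be aware that the survey does not prove Theorem~\ref{Seive_1} at all: it is quoted from Gutev \cite[Corollary 7.3]{GuSie}, and the only information the surrounding text gives about the argument is that the selections in \cite{GuSie} are built as compositions of two multivalued mappings (one encoding completeness of $Y$, the other coming from a tree of coverings of the \emph{domain}), together with Wicke's characterization that $Y$ is monotonically developable and sieve-complete if and only if $Y$ is the image of a completely metrizable non-Archimedean space under an open surjection \cite{Wicke}. So there is no in-paper proof to match your sketch against, and it has to be judged on its own. Your plan --- transplant the tree-of-$2^{-n}$-approximations proof of Theorem~\ref{CompSel}, with sieve elements replacing balls --- is the right spirit, but as written it has three concrete soft spots. (i) Your inductive invariant is inconsistent with your own construction: you require $F_n(x)\subseteq V_{t(n,\alpha)}$ for \emph{every} $\alpha$ with $x\in W_{n,\alpha}$, yet you define $F_{n+1}(x)$ as a union of pieces $F_n(x)\cap V_{t(n+1,\beta)}$ over all $\beta$ whose member contains $x$; the correct invariant, as in the metric proof, is that $F_n(x)$ is contained in the union of the locally finitely many active $V_{t(n,\alpha)}$, each of which it meets. (ii) You credit sieve-completeness alone with compactness of $\bigcap_n Clos_Y(V_{t_n})$ and with the trapping property that every open set containing this intersection absorbs some $Clos_Y(V_{t_n})$. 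Sieve-completeness only gives clustering of filters subordinate to a thread; both the compactness of $H(x)$ (which is now a union over the branches of a finitely-branching tree of threads, not over a single thread) and the trapping property needed for upper semicontinuity are exactly the points where the metric proof uses total boundedness plus completeness, and they need a genuine replacement here. Indeed the survey notes that dropping monotone developability still leaves the USC compactvalued selection $H$ but loses the inner LSC selection $G$ (\cite[Corollary 7.2]{GuSie}), so the division of labour between the two hypotheses on $Y$ is not the one your sketch suggests. (iii) The definition of $G$ via ``all surviving threads'' is not shown to be well defined, compactvalued, or LSC.

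There is also a much shorter complete proof, suggested by the sentence the survey places immediately after the theorem: write $Y=p(Z)$ with $p:Z\to Y$ an open continuous surjection and $Z$ completely metrizable \cite{Wicke}, and set $\widetilde F(x)=p^{-1}(F(x))$. Then $\widetilde F$ is nonempty- and closed-valued (surjectivity and continuity of $p$) and LSC, since $\widetilde F^{-1}(U)=F^{-1}(p(U))$ and $p$ is open. Theorem~\ref{CompSel} applied to $\widetilde F:X\to Z$ yields compactvalued $G'(x)\subset H'(x)\subset\widetilde F(x)$ with $H'$ USC and $G'$ LSC, and $G=p\circ G'$, $H=p\circ H'$ are the required selections: $p(H'(x))\subset p\bigl(p^{-1}(F(x))\bigr)=F(x)$, continuous images of compacta are compact, and the identities $(p\circ H')_{-1}(U)=H'_{-1}\bigl(p^{-1}(U)\bigr)$ and $(p\circ G')^{-1}(U)=G'^{-1}\bigl(p^{-1}(U)\bigr)$ give the two semicontinuities. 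If you want an intrinsic sieve-theoretic argument you will need to work through \cite{GuSie}; if you only want the statement, this factorization closes all of the gaps above at once.
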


Here, in comparison with compactvalued selection Theorem 3, only the restriction $(2)$ is changed. Monotonically developable spaces are a natural generalization of Moore spaces. Note that $Y$ is monotonically developable and sieve-complete space if and only if $Y$ is the image of a completely metrizable non-Archimedean space under some open surjection \cite{Wicke}. 
If  one omits in $(2)$
the assumption that $Y$ is a monotonically developable then by
\cite[Corollary 7.2]{GuSie} it is possible to guarantee only the existence an USC compactvalued selection $H:X \to Y$. 

%  Similarly, for collectionwise normal domains Theorem 49 is true, provided that for every $x \in X$ either the  value $F(x)$ is compact or $F(x)=Y$. For weakly paracompact domains Theorem 49 holds if one restricts only to LSC compactvalued selection $G:X \to Y$. 
If  we equip a paracompact domain $X$ in Theorem 49
by a sequence $\{X_n\}$ of its finite-dimensional subspaces
$dimX_n \leq n$, then by
 \cite[Corollary 7.7]{GuFin}, 
we always obtain an USC compactvalued selection
$H:X \to Y$ with $|H(x)| \leq n+1,\,\,x \in X_n$.
If we eqip a paracompact domain $X$ by a sequence $\{X_n\}$ of its finite-dimensional subspaces, $dimX_n \leq n$, then we always obtain an USC compactvalued selection $H:X \to Y$ with $|H(x)| \leq n+1,\,\,x \in X_n$.

As a rule, all selections in \cite{GuSie} are constructed as a composition of two suitable multivalued mappings. The first one is related to completeness and the other one arises from a system (tree) of various coverings of the domain and their refinements.

Applying the same "trees-sieves"\,technique in \cite{GuClos} upper semicontinuity of a selection was replaced by closedness of its graph. Below are two typical examples.

\begin{theorem} [\cite{GuClos}] \label{ClosedGr}
For a $T_1$-space $X$ the following statements  are equivalent:
\begin{description}
\item{(1)} $X$ is normal;
\item{(2)} If $Y$ is a metrizable space and $F:X \to Y$ is a compactvalued LSC mapping then there are compactvalued mappings $G:X \to Y$ and $H:X \to Y$ such that $G(x) \subset H(x) \subset F(x), \,x \in X$,\, $G$ is LSC and the graph of $H$ is a closed subset of $X \times Y$;
\item{(3)} If $Y$ is a metrizable space and $F:X \to Y$ is a compactvalued LSC mapping then there exists a compactvalued selection of $F$ with a closed graph.
\end{description}
\end{theorem}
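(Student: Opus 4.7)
The plan is to establish the cycle $(2) \Rightarrow (3) \Rightarrow (1) \Rightarrow (2)$. The first implication is immediate, since the closed-graph mapping $H$ supplied by $(2)$ is itself already a compactvalued selection of $F$ with closed graph, so the auxiliary $G$ plays no role in deducing $(3)$.

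For $(3) \Rightarrow (1)$, I would use the following compactness/separation trick. Given two disjoint closed sets $A,B \subset X$, take $Y=\{0,1\}$ with the discrete metric and define
\[
F(x)=\begin{cases}\{0\}, & x\in A,\\ \{1\}, & x\in B,\\ \{0,1\}, & \text{otherwise.}\end{cases}
\]
Then $F$ is compactvalued, and it is LSC because $F^{-1}(\{0\})=X\setminus B$ and $F^{-1}(\{1\})=X\setminus A$ are both open. Apply $(3)$ to obtain a compactvalued selection $H$ with closed graph. Since the target is discrete, closedness of $\Gamma_H$ in $X\times\{0,1\}$ forces each of the sets $H^{-1}(0)=\{x:0\in H(x)\}$ and $H^{-1}(1)=\{x:1\in H(x)\}$ to be closed in $X$. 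The inclusion $H(x)\subset F(x)$ together with $H(x)\ne\emptyset$ yields $A\subset H^{-1}(0)$, $B\subset H^{-1}(1)$, the disjointness $A\cap H^{-1}(1)=B\cap H^{-1}(0)=\emptyset$, and $H^{-1}(0)\cup H^{-1}(1)=X$. Consequently $U=X\setminus H^{-1}(1)$ and $V=X\setminus H^{-1}(0)$ are disjoint open sets with $A\subset U$ and $B\subset V$, proving that $X$ is normal.

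The substantive content lies in $(1) \Rightarrow (2)$, which I would prove by adapting the ``trees-sieves'' route Gutev used for the classical compactvalued selection theorem (Theorem 3 above) — trading paracompactness of $X$ for normality and completeness of $Y$ for mere metrizability, with the price being that upper semicontinuity of $H$ is relaxed to closedness of its graph. Fix a bounded compatible metric $d$ on $Y$. Recursively build a descending chain $F=F_0\supset F_1\supset F_2\supset\cdots$ of compactvalued LSC mappings together with, at each level $n$, a point-finite open cover of $X$ indexed by the nodes of a tree, where every node carries an open $d$-ball of radius $2^{-n}$ in $Y$ that meets the attached values of $F_n$. The key input from normality is the shrinking lemma for point-finite open covers, which allows one to shrink each level's data to a closed cover with the same combinatorics. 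Define $H(x):=\bigcap_{n} F_n(x)$: the closed shrunk witnesses at each level make $\Gamma_H$ the intersection of closed subsets of $X\times Y$, hence closed. The LSC compactvalued refinement $G\subset H$ is then obtained by the standard construction of a ``filling sequence'' of continuous local selection points converging into $H(x)$ and taking pointwise closures.

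The main obstacle will be pushing the classical sieve construction through under the two simultaneous weakenings. Without paracompactness one cannot invoke locally finite refinements, so the entire recursive machinery has to be driven by point-finite open covers and their normal-space closed shrinkings, which is delicate because shrunk covers need not themselves admit further normal refinements in an automatic way. Without completeness of $Y$ one has to argue that the descending intersection $\bigcap_n F_n(x)$ is nonempty and stays inside the given compactum $F(x)$; this one achieves by arranging at every stage that $F_n(x)\subset F(x)$ is a nonempty closed subset of the compactum $F(x)$, so that the finite intersection property inside a fixed compactum yields both nonemptiness and compactness of $H(x)$ without any appeal to completeness of the ambient $Y$.
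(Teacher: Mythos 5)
Your cycle $(2)\Rightarrow(3)\Rightarrow(1)\Rightarrow(2)$ is the natural one, and the first two legs are correct. Note first that the survey contains no proof of this theorem to compare against: it is quoted from Gutev's paper \cite{GuClos}, with only the remark that the ``trees--sieves'' technique from his earlier work is adapted so that upper semicontinuity is weakened to closedness of the graph. So your argument has to stand on its own. The implication $(2)\Rightarrow(3)$ is indeed trivial, and your $(3)\Rightarrow(1)$ argument with $Y=\{0,1\}$ discrete is complete and correct: $F$ is LSC because $F^{-1}(\{0\})=X\setminus B$ and $F^{-1}(\{1\})=X\setminus A$ are open, closedness of $\Gamma_H$ in $X\times\{0,1\}$ makes $H^{-1}(0)$ and $H^{-1}(1)$ closed, and the complements separate $A$ and $B$.

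The genuine gap is in $(1)\Rightarrow(2)$, and it sits exactly where the theorem's content lies. Your recursion is to be driven by ``a point-finite open cover of $X$ indexed by the nodes of a tree'' at each level, with normality entering only through the shrinking lemma for point-finite open covers. But normality provides no mechanism whatsoever for \emph{producing} point-finite open covers: the shrinking lemma applies to a point-finite cover once you have one, and the existence of point-finite open refinements of arbitrary open covers is metacompactness, which is independent of normality. The covers that actually arise from a compactvalued LSC mapping $F:X\to Y$ are of the form $\{F^{-1}(D(y,2^{-n}))\}_{y\in Y}$, or the neighborhoods $W_x=\bigcap_{i\le k_x}F^{-1}(D(y_i^x,2^{-n}))$ obtained from finite $2^{-n}$-nets of the compacta $F(x)$; neither family is point-finite nor admits a point-finite refinement in a general normal $T_1$-space. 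So the recursion cannot get started as described, and the whole hard direction collapses onto an unavailable tool. (A secondary, repairable-looking vagueness: you assert that the shrunk witnesses make $\Gamma_H=\bigcap_n\Gamma_{F_n}$ an intersection of closed sets, but the graphs of LSC compactvalued mappings are not closed, so one must arrange something like $\mathrm{Clos}(\Gamma_{F_{n+1}})\subset\Gamma_{F_n}$ explicitly; and in Michael-type constructions the $F_n$ are usually not genuinely nested with closed values, so the finite-intersection-property argument for nonemptiness of $\bigcap_nF_n(x)$ also needs the nesting to be secured, not just asserted.) To fix the main gap you would have to replace the point-finite machinery by the actual sieve apparatus of \cite{GuSie,GuClos}, in which the tree of index sets is built from covers of the metrizable range $Y$ (which does have $\sigma$-discrete, hence manageable, bases) rather than from refinements of covers of the merely normal domain $X$.
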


\begin{theorem} [\cite{GuClos}] \label{ClosedGr_2}
For a $T_1$-space $X$ the following statements  are equivalent:
\begin{description}
\item{(1)} $X$ is countably paracompact and normal;
\item{(2)} Same as $(2)$ in theorem above, but for closedvalued mappings into a separable range space.
\end{description}
\end{theorem}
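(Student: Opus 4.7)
The plan is to parallel the proof of Theorem~\ref{ClosedGr} by replacing paracompactness with the conjunction (normality + countable paracompactness) and compensating for this weakening by exploiting the Lindel\"of property of a separable metrizable range.

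For the easy direction $(2)\Rightarrow(1)$, I would extract normality and countable paracompactness from two separate test-mappings. To obtain normality, feed the closedvalued LSC mapping $F:X\to[0,1]$ with $F|_A=\{0\}$, $F|_B=\{1\}$, and $F|_{X\setminus(A\cup B)}=[0,1]$ (for disjoint closed $A,B\subset X$) into $(2)$; the closed graph of the resulting $H$ yields a Urysohn-type separation. For countable paracompactness, starting from a sequence of closed sets $F_n\downarrow\emptyset$, I would construct an LSC closedvalued mapping into the one-point compactification $\mathbb{N}\cup\{\infty\}$ so that decoding the closed graph of the hypothetical $H$ returns the expanding open cover required by the Dowker criterion.

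For the hard direction $(1)\Rightarrow(2)$, the separability of $Y$ allows me to fix a countable base $\{U_n\}$ with diameters tending to zero, organized into levels. By lower semicontinuity, each $W_n=F^{-1}(U_n)$ is open, so at every level I obtain a countable open cover of $X$ refining $\{F^{-1}(U_n)\}$. Countable paracompactness replaces this cover by a locally finite countable refinement $\{V_{n,k}\}$, and normality then provides a shrinking $\{V'_{n,k}\}$ with $\overline{V'_{n,k}}\subset V_{n,k}$. Arranging this data as a sieve on $X$ indexed by the base of $Y$ (in the spirit of Gutev's sieve method \cite{GuSie}), I would define
\[
H(x)=\bigcap_{n=1}^{\infty}\overline{\bigcup\{U_k : x\in \overline{V'_{n,k}}\}},
\]
and take $G$ to be an LSC pruning of $H$ obtained from the interior parts $V'_{n,k}$. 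The inclusions $G(x)\subset H(x)\subset F(x)$ come from the level-by-level refinement, while the closed graph of $H$ follows from local finiteness at each level: if $(x_\alpha,y_\alpha)\to(x,y)$ with $y_\alpha\in H(x_\alpha)$, only finitely many $V_{n,k}$ meet a neighbourhood of $x$ at level $n$, forcing $y$ into the $n$-th approximation of $H(x)$.

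The main obstacle I expect is ensuring simultaneously that $H(x)\subset F(x)$ and that the graph of $H$ is closed in $X\times Y$: the closure operation in $Y$ can produce points outside $F(x)$ since $F(x)$ is only closed, not compact. One must coordinate the shrinking at level $n$ with the diameters of the base elements so that the intersection over all $n$ stays inside each $F(x)$. Separability of $Y$ is precisely what permits this coordination with only countable paracompactness on $X$; without separability one would need full paracompactness on $X$, and without the diameter control the theorem would collapse into Theorem~\ref{CompSel} (with compactvalued instead of closedvalued selections).
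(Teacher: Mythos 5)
The survey itself contains no proof of this theorem: it is quoted from Gutev's paper \cite{GuClos} with only the remark that it is obtained by the same ``trees--sieves'' technique as Theorem~\ref{Seive_1}, so your proposal can only be measured against that method. Your direction $(2)\Rightarrow(1)$ is right in spirit; note, though, that countable paracompactness decodes more cleanly from the LSC member $G$ than from the closed graph of $H$: for closed $F_n\downarrow\emptyset$ take $Y=\mathbb{N}$ discrete and $F(x)=\{n:x\notin F_n\}$ (closed-valued, in general not compact-valued, which is exactly why this test is unavailable in Theorem~\ref{ClosedGr}); a compactvalued LSC $G\subset F$ has finite values, so the open sets $V_n=G^{-1}(\{n+1,n+2,\dots\})\supset F_n$ satisfy $\bigcap_n V_n=\emptyset$, which is Ishikawa's criterion. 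The $[0,1]$-valued test for normality is fine, since there the compactness of the range turns a closed graph into upper semicontinuity.

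In the hard direction your skeleton is the correct one (countable covers pulled back from a countable base, locally finite refinements from countable paracompactness, shrinkings from normality, the closed graph from local finiteness plus the shrinking, and $H(x)\subset F(x)$ from the diameter control -- the last two points you handle correctly), but there are genuine gaps. First, in your displayed formula the levels are independent, so the closed sets $H_n(x)=\overline{\bigcup\{U_k:x\in\overline{V'_{n,k}}\}}$ need not even have the finite intersection property: level $1$ may select a small piece of $F(x)$ near one of its points and level $2$ a piece near a far-away point, making $H(x)=\bigcap_n H_n(x)$ empty. The levels must be constructed inductively so that the level-$(n+1)$ cover refines both $\{F^{-1}(U)\}$ for the finer base elements and the previously chosen locally finite cover; this coherent tree of mutually refining covers is precisely what a sieve is, and countable paracompactness survives the induction because the common refinement of two countable open covers is countable. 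Second, even with nesting, $H(x)$ is a decreasing intersection of nonempty closed \emph{totally bounded} sets, and to conclude that it is nonempty and compact you must invoke completeness of the range; ``separable range space'' here has to be read as separable \emph{completely} metrizable, and this hypothesis never appears in your argument. Separability alone cannot suffice once the values of $F$ are merely closed -- compare with Theorem~\ref{ClosedGr}, where arbitrary metrizable $Y$ is allowed precisely because compactness of the values substitutes for completeness, and with Theorem~\ref{CompSel}, which likewise demands a completely metrizable range. Finally, ``take $G$ to be an LSC pruning of $H$'' is not a construction: producing the LSC compactvalued minorant is a separate task, and in the sieve approach $G$ and $H$ are extracted simultaneously from the open parts $V'_{n,k}$ and the closed parts $\overline{V'_{n,k}}$ of the same tree.
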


Choban, Mihaylova and Nedev \cite{CMN} collected various types of selection characterizations of classes of topological spaces formulated in terms of multivalued selections. Recall that the $n$-th image of a set is defined inductively by setting $F^1(A)=F(A)$, $F^{n+1}(A)=F(F^{-1}(F^{n}(A)))$,\, and that the {\it largest image} is defined as the union of all $n$-th images, $n \in \N$.

\begin{theorem} [\cite{CMN}] \label{ChMihNed}
For a $T_1$-space $X$ the following statements  are equivalent:
\begin{description}
\item{(1)} $X$ is strongly paracompact (i.e Hausdorff and each open cover admits a star-finite refinement);
\item{(2)} For every LSC mapping $F:X \to Y$ into a discrete space $Y$ there exists a discrete space $Z$, a singlevalued map $g:Z \to Y$, a LSC mapping $G:X \to Z$, and an USC finitevalued mapping $H:X \to Z$ such that $g(G(x)) \subset g(H(x)) \subset F(x), \,\,x \in X$,\, and all sets $H^{\infty}(x)$ are countable;
\item{(3)} Same as $(2)$ but without $LSC$ mapping $G$ and without finiteness of the values $H(x)$;
\item{(4)} Same as $(2)$ but with a regular $X$, without $USC$ mapping $H$, and with a countable $G^{\infty}(x)$.
\end{description}
\end{theorem}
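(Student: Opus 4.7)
I plan to prove the cycle $(1)\Rightarrow(2)\Rightarrow(3)\Rightarrow(1)$ together with $(2)\Rightarrow(4)\Rightarrow(1)$. The forward directions from $(1)$ convert a star-finite refinement into the selection data, and the converse directions extract a star-finite refinement of an arbitrary open cover from the hypotheses on $Z,g,G,H$.

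For $(1)\Rightarrow(2)$: given a LSC $F:X\to Y$ with $Y$ discrete, the family $\{F^{-1}(\{y\})\}_{y\in Y}$ is an open cover of $X$, so strong paracompactness yields a star-finite open refinement $\{U_\alpha\}_{\alpha\in A}$. Pick $y_\alpha\in Y$ with $U_\alpha\subset F^{-1}(\{y_\alpha\})$, take $Z=A$ discrete, and set $g(\alpha)=y_\alpha$. A star-finite cover is locally finite, and paracompact Hausdorff spaces are normal, so there is an open shrinking $\{V_\alpha\}$ with $\overline{V_\alpha}\subset U_\alpha$ still covering $X$. I then set
\[
G(x)=\{\alpha:x\in V_\alpha\}, \qquad H(x)=\{\alpha:x\in\overline{V_\alpha}\}.
\]
Then $G$ is LSC because $G^{-1}(\{\alpha\})=V_\alpha$, $H$ is finite-valued by local finiteness and USC because $H_{-1}(W)$ complements the locally finite closed union $\bigcup_{\alpha\notin W}\overline{V_\alpha}$, and $g(G(x))\subset g(H(x))\subset F(x)$ by construction. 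Star-finiteness of $\{U_\alpha\}$ forces its intersection equivalence classes to be at most countable, hence $H^\infty(x)$ is countable. Implications $(2)\Rightarrow(3)$ and $(2)\Rightarrow(4)$ are direct weakenings (note $G(x)\subset H(x)$ gives $G^\infty(x)\subset H^\infty(x)$).

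For the converses, fix an arbitrary open cover $\omega$ of $X$ and apply the hypothesis to the LSC mapping $F:X\to\omega$ (discrete) defined by $F(x)=\{U\in\omega:x\in U\}$. In case $(4)$ the sets $U_z:=G^{-1}(\{z\})$ are open, refine $\omega$ since $U_z\subset g(z)$, and for any $x\in U_z$ one has $\{z':U_z\cap U_{z'}\ne\emptyset\}\subset G(G^{-1}(\{z\}))\subset G^\infty(x)$, hence countable; so $\{U_z\}$ is star-countable and $X$ decomposes into pairwise disjoint open ``chunks'' $C_{[z]}=\bigcup_{z'\in[z]}U_{z'}$, each covered by countably many $U_{z'}$. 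I then refine the countable open cover of each chunk to a star-finite open family via a Morita-type inductive shrinking exploiting regularity of $X$, and the disjointness of chunks lets me concatenate into a global star-finite open refinement of $\omega$. Case $(3)$ follows the same blueprint after using regularity to thicken the closed sets $H^{-1}(\{z\})\subset g(z)$ into an open star-countable refinement of $\omega$ that inherits the countable-chunk structure.

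The main obstacle I expect is the star-countable-to-star-finite conversion inside a chunk, executed without a priori paracompactness of $X$. Each chunk is an open regular subspace covered by a countable family, and one must inductively shrink this cover so that each piece meets only finitely many of the others while the shrinking still covers. This is a delicate chunk-local variant of Morita's proof that regular Lindel\"of spaces are strongly paracompact, and the careful interplay of regularity, countability of the chunk, and the inductive bookkeeping is where the substantive work of $(3)\Rightarrow(1)$ and $(4)\Rightarrow(1)$ resides.
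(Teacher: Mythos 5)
The survey only states this theorem, citing Choban--Mihaylova--Nedev, and contains no proof of it, so I can only assess your plan on its own terms. Your direction $(1)\Rightarrow(2)$ is correct and essentially complete: star-finite implies point- and locally finite, paracompact Hausdorff gives normality and hence the shrinking $\overline{V_\alpha}\subset U_\alpha$, and the finite-degree intersection graph of $\{U_\alpha\}$ makes $H^\infty(x)$ countable. Your reduction of the converses to the tautological cover $F(x)=\{U\in\omega:x\in U\}$, and the extraction in case $(4)$ of a star-countable open refinement $\{U_z\}$ whose chain-components lie in the countable sets $G^\infty(x)$, are also right.

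The genuine gap is exactly the step you flag as the ``main obstacle'': passing from the countable open cover of a chunk to a star-finite open refinement ``via a Morita-type inductive shrinking exploiting regularity of $X$''. Regularity alone cannot do this. A Dowker space is normal (hence regular) and has a countable open cover with no locally finite --- a fortiori no star-finite --- open refinement; since any countable cover is trivially star-countable and may form a single chunk, an argument using only the countability of the chunk's cover plus regularity would prove a false statement. What makes Morita's Lindel\"of argument work is that regular Lindel\"of spaces are paracompact, hence normal \emph{and countably paracompact}, and it is this pair of properties (not regularity) that licenses the shrinking $\overline{W'_n}\subset W_n$ of the increasing cover $W_n=V_1\cup\dots\cup V_n$ on which the star-finite family $W_{n+1}\setminus\overline{W'_{n-1}}$ is built. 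The missing idea is to exploit the star-countable refinement globally first: the chunks are pairwise disjoint clopen sets, so enumerating the members of each chunk turns the refinement into a $\sigma$-discrete, hence $\sigma$-locally finite, open refinement of $\omega$; by Michael's characterization a regular space all of whose open covers admit such refinements is paracompact, and only after normality and countable paracompactness are secured is the chunk-local shrinking legitimate. Two secondary points: statement $(3)$ carries no regularity hypothesis, so ``using regularity to thicken the closed sets $H^{-1}(\{z\})$'' is not available there and one must instead work with the open sets $H_{-1}(W)$ supplied by upper semicontinuity; and your $(2)\Rightarrow(4)$ silently needs the regularity of $X$, which has to be routed through $(2)\Rightarrow(3)\Rightarrow(1)$ rather than read off as a ``direct weakening''.
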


For a space $X$, let $c\omega(X)$ denote the 
{\it cozero dimensional kernel}\index{cozero dimensional kernel} 
of $X$, i.e. the complement of the union of all open zero-dimensional subsets of $X$.

\begin{theorem} [\cite{CMN}] \label{ChMihNed_2}
For a $T_1$-space $X$ the following statements  are equivalent:
\begin{description}
\item{(1)} $X$ is strongly paracompact and $c\omega(X)$ is Lindel\"{o}f ;
\item{(2)} See $(2)$ in previous theorem with $Y=Z$ and $g=id|_Z$;
\item{(3)} Same as $(2)$ but without $LSC$ mapping $G$ and without finiteness of values $H(x)$;
\item{(4)} Same as $(2)$ but with a regular $X$, without $USC$ mapping $H$ and with a countable $G^{\infty}(x)$.
\end{description}
\end{theorem}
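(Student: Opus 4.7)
The plan is to establish the cyclic chain $(1) \Rightarrow (2) \Rightarrow (3) \Rightarrow (4) \Rightarrow (1)$, paralleling the structure of the preceding Theorem 52 but exploiting the fact that here we must take $Y = Z$ and $g = \mathrm{id}_Z$, so everything is forced to happen inside the given discrete space. The extra hypothesis ``$c\omega(X)$ is Lindel\"of'' is exactly what compensates for losing the freedom to pass to an auxiliary discrete space $Z$.

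For $(1) \Rightarrow (2)$, I would start from a LSC mapping $F : X \to Y$ into a discrete $Y$. Since $Y$ is discrete and $F$ is LSC, the family $\omega = \{F^{-1}(y)\}_{y \in Y}$ is an open cover of $X$. Strong paracompactness produces a star-finite open refinement $\mathcal{U} = \{U_\alpha\}_{\alpha \in A}$ of $\omega$; pick for each $\alpha$ a point $y_\alpha \in Y$ with $U_\alpha \subset F^{-1}(y_\alpha)$. Define
\[
H(x) = \{ y_\alpha : x \in U_\alpha\}, \qquad G(x) = \{ y_{\alpha(x)} \},
\]
where $\alpha(x)$ is chosen by a locally finite partition-of-unity type selection. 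Star-finiteness of $\mathcal{U}$ makes $H$ finitevalued and USC, while $G$ is a (singletonvalued) LSC selection of $H$, and both have values in $F(x)$ because each $U_\alpha \subset F^{-1}(y_\alpha)$. The remaining and decisive point is that $H^\infty(x)$ is countable. The star-finite cover $\mathcal{U}$ determines an equivalence relation on $A$ by the transitive closure of ``$U_\alpha \cap U_\beta \neq \emptyset$'', and the equivalence classes produce a clopen decomposition of $X$. On the zero-dimensional clopen pieces that avoid $c\omega(X)$ one refines further so that the classes become countable; on the pieces meeting $c\omega(X)$ one uses the Lindel\"of property of $c\omega(X)$ to show that each class is at most countable. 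This gives countability of $H^\infty(x)$, which is exactly the chain-component index set relative to $x$.

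The implications $(2) \Rightarrow (3)$ and $(3) \Rightarrow (4)$ follow from the preceding Theorem 52 applied with $Y = Z$ essentially by book-keeping: $(2) \Rightarrow (3)$ is trivial (just forget the LSC and finiteness conditions), while for $(3) \Rightarrow (4)$ one uses regularity of $X$ to shrink the USC finitevalued selection to a compactvalued LSC one with countable largest image, following the same discretization argument as in Theorem 52 (4).

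For $(4) \Rightarrow (1)$, the scheme is the converse construction. Given any open cover $\omega$ of the regular space $X$, encode it as an LSC mapping $F : X \to Y$ into the discrete space $Y = \omega$ by setting $F(x) = \{U \in \omega : x \in U\}$. Apply $(4)$ to obtain a LSC selection $G : X \to Y$ with $G^\infty(x)$ countable for every $x$. The sets $\{G^{-1}(U)\}_{U \in \omega}$ form an open refinement of $\omega$, and the countability of $G^\infty(x)$ forces the chain-components to be countable, giving a star-finite refinement; this yields strong paracompactness. For the Lindel\"of property of $c\omega(X)$, run the same construction restricted to covers of $c\omega(X)$: countability of the chain components of any star-finite refinement together with the non-zero-dimensionality of $c\omega(X)$ reduces to a countable subcover, since on a non-zero-dimensional space the chain-components of a star-finite cover cannot be split further into clopen pieces without contradiction.

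The main obstacle I anticipate is the delicate bookkeeping in $(1) \Rightarrow (2)$ that forces $H^\infty(x)$ to be countable while simultaneously keeping the values in the prescribed $Y$. Without the freedom to introduce an auxiliary $Z$ and a map $g : Z \to Y$, collisions $y_\alpha = y_\beta$ for distinct $\alpha, \beta$ can spuriously merge chain-components and inflate $H^\infty(x)$; the argument must track the combinatorics of $\mathcal{U}$ rather than of $Y$, and the Lindel\"of hypothesis on $c\omega(X)$ enters precisely at this step to bound the cardinalities of the relevant chain-components.
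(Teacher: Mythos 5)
First, note that the survey does not actually prove this statement: Theorem 53 is quoted from Choban--Mihaylova--Nedev \cite{CMN}, so your attempt can only be measured against what a correct proof must contain, not against a proof in the text. Within your sketch there are several genuine gaps. In $(1)\Rightarrow(2)$, the map $H(x)=\{y_\alpha: x\in U_\alpha\}$ built from \emph{open} sets $U_\alpha$ is lower semicontinuous (its large preimages are unions of the $U_\alpha$) but not upper semicontinuous: $\{x: H(x)\subset W\}$ is only closed. To get USC you must pass to a locally finite \emph{closed} shrinking $\overline{V_\alpha}\subset U_\alpha$ and set $H(x)=\{y_\alpha: x\in\overline{V_\alpha}\}$, keeping the open sets for $G$. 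Your single-valued $G$ chosen by a ``partition-of-unity type selection'' cannot work at all: a single-valued LSC map into a discrete space is a continuous, hence locally constant, function. For the countability of $H^{\infty}(x)$ your mechanism is also misstated: since $H^{\infty}(x)$ lives in $Y$ and not in the index set, the correct device is to first use the Lindel\"of property of $c\omega(X)$ to cover it by countably many sets $F^{-1}(y_n)$, then use zero-dimensionality of the closed remainder (contained in the kernel complement) to produce a \emph{disjoint} open refinement there, one label per piece; chaining through a coincidence of labels then reproduces the same label, so $H^{\infty}(x)$ stays inside the countable set $\{y_n\}\cup H(x)$. The claim that ``each chain class is at most countable'' is neither what is needed nor what the Lindel\"of hypothesis directly gives.

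The step $(3)\Rightarrow(4)$ is broken as written: regularity of $X$ is part of the data of statement $(4)$ and is not available under $(3)$, and the mapping $H$ in $(3)$ is explicitly \emph{not} assumed finitevalued, so there is no ``USC finitevalued selection to shrink.'' You must first extract strong paracompactness (hence regularity) from $(3)$, i.e.\ route this implication through $(1)$. Finally, in $(4)\Rightarrow(1)$ the countability of $G^{\infty}(x)$ yields only a \emph{star-countable} open refinement $\{G^{-1}(y)\}$, and passing from star-countable refinements to strong paracompactness requires the classical Smirnov--Morita theorem that a regular space is strongly paracompact iff every open cover has a star-countable open refinement; this has to be invoked explicitly. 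Your argument for the Lindel\"ofness of $c\omega(X)$ does not work: chain components of a star-finite (or star-countable) open cover \emph{always} produce a clopen partition of $X$, irrespective of dimension, so ``on a non-zero-dimensional space the chain-components cannot be split further'' is not a usable principle; a separate argument, producing from a putative uncountable irreducible cover of $c\omega(X)$ an LSC discrete-valued $F$ violating $(4)$, is still needed here.
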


In the next theorem $l(X)$ denotes the Lindel\"{o}f number of the space $X$ and singlevalued selections are not assumed to be continuous.

\begin{theorem} [\cite{CMN}] \label{ChMihNed_3}
For any regular space $X$ and any cardinal number $\tau$ the following statements  are equivalent:
\begin{description}
\item{(1)} $l(X) \leq \tau$;
\item{(2)} For every LSC closedvalued mapping $F:X \to Y$ into a complete metrizable space $Y$ there exists a LSC closedvalued selection $G$ of $F$ such that $l(G(X))\leq \tau$;
\item{(3)} For every LSC mapping $F:X \to Y$ into a complete metrizable space $Y$ there exists a singlevalued selection $g$ of \,\,$Clos(F)$ such that $l(g(X))\leq \tau$;
\item{(4)} For every LSC mapping $F:X \to Y$ into a discrete space $Y$ there exists a singlevalued selection $g$ of \,\,$Clos(F)$ such that $|g(X)|\leq \tau$;
\item{(5)} Every open cover of $X$ admits a refinement of cardinality $\leq \tau$.
\end{description}
\end{theorem}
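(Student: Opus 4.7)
My plan is to prove the cycle $(1) \Rightarrow (2) \Rightarrow (3) \Rightarrow (4) \Rightarrow (5) \Rightarrow (1)$, treating the equivalence $(1) \Leftrightarrow (5)$ as immediate: a subcover of an open cover is a refinement of the same cardinality, and conversely any open refinement of cardinality $\leq \tau$ yields a subcover by assigning to each refinement element a containing member of the original cover. The bulk of the argument will lie in $(1) \Rightarrow (2)$; the other implications should be quick.

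For $(2) \Rightarrow (3)$ I would apply $(2)$ to $Clos(F)$ (which is LSC and closedvalued, since the pointwise closure operator preserves lower semicontinuity, as used already in the proof of Lemma \ref{CountEx}) and obtain a LSC closedvalued selection $G$ of $Clos(F)$ with $l(G(X)) \leq \tau$; then any singlevalued selection $g$ of $G$ works, since in the metric space $Y$ the Lindel\"of number coincides with density and is hereditary, so $l(g(X)) \leq l(G(X)) \leq \tau$. The implication $(3) \Rightarrow (4)$ is immediate because discrete spaces are complete metrizable, $Clos(F)=F$ over a discrete range, and $l$ equals cardinality in the discrete topology. For $(4) \Rightarrow (5)$: given an open cover $\{U_\alpha\}_{\alpha \in A}$ of $X$, endow $A$ with the discrete topology and define $F:X \to A$ by $F(x) = \{\alpha : x \in U_\alpha\}$; this $F$ is LSC because $F^{-1}(\{\alpha\}) = U_\alpha$ is open, and $(4)$ then supplies $g:X \to A$ with $g(x) \in F(x)$ and $|g(X)| \leq \tau$, so $\{U_\alpha : \alpha \in g(X)\}$ is the required subcover.

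For the main step $(1) \Rightarrow (2)$ I plan an explicit construction. For each $n \geq 1$ the open cover $\{F^{-1}(B(y,1/n)) : y \in Y\}$ of $X$ admits, by $(1)$, a subcover indexed by some $D_n \subset Y$ with $|D_n| \leq \tau$. Set $Y_0 = \bigcup_n \bigcup_{y \in D_n} B(y, 1/n)$, an open subset of $Y$. Each ball $B(y, 1/n)$ is separable, so the density of $Y_0$, and hence of its closure $\overline{Y_0}$ in $Y$, is at most $\tau$; by hereditariness of the Lindel\"of number in metric spaces, every subset of $\overline{Y_0}$ then has Lindel\"of number at most $\tau$. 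I would then define
\[ G(x) = \overline{F(x) \cap Y_0}, \]
the closure taken in $Y$. By construction $G$ is closedvalued; for every $x$ some $y \in D_n$ satisfies $x \in F^{-1}(B(y,1/n))$, giving $G(x) \supset F(x) \cap B(y,1/n) \neq \emptyset$; closedness of $F(x)$ forces $G(x) \subset \overline{F(x)} = F(x)$, so $G$ is a selection of $F$; intersection with an open set and pointwise closure both preserve lower semicontinuity, so $G$ is LSC; and $G(X) \subset \overline{Y_0}$ yields $l(G(X)) \leq \tau$. The main obstacle lies in this construction, namely coordinating those four requirements on $G$ simultaneously: the closedness of the values of $F$ is precisely what rescues the closure step from leaving $F(x)$, and the open set $Y_0$ of controlled density is what lets one preserve lower semicontinuity while bounding $l(G(X))$.
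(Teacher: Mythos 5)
Your reduction cycle is sound everywhere except the main step: $(2)\Rightarrow(3)$, $(3)\Rightarrow(4)$, $(4)\Rightarrow(5)$ and the equivalence $(1)\Leftrightarrow(5)$ are all correct as you describe them. (The survey itself gives no proof of this theorem -- it is cited from \cite{CMN} -- so the comparison can only be on the merits.) The gap is in $(1)\Rightarrow(2)$, and it sits exactly where you locate the ``main obstacle'': the assertion that each ball $B(y,1/n)$ is separable is simply false in a general metric space, and with it the bound on the density of $\overline{Y_0}$ collapses. Take $Y$ to be an uncountable set with the complete metric $d(y,y')=1/2$ for $y\neq y'$, let $X$ be a single point, $\tau=\aleph_0$, and $F(x)=Y$. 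Each cover $\{F^{-1}(B(y,1/n)): y\in Y\}$ is just $\{X\}$, so every $D_n$ may be taken to be a singleton, yet $B(y,1)=Y$, hence $Y_0=Y$ and your $G(x)=\overline{F(x)\cap Y_0}=Y$ has Lindel\"of number $|Y|>\tau$. (More generally, balls of any fixed radius in a metric hedgehog of spininess $\tau^{+}$ have weight $\tau^{+}$, so no choice of radii rescues the claim.)

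The obvious repair -- replace $Y_0$ by the set of centers $D=\bigcup_n D_n$, which genuinely has cardinality $\leq\tau$ and hence $w(\overline{D})\leq\tau$, and put $G(x)=\overline{F(x)\cap\overline{D}}$ -- fails for a different reason: choosing the $D_n$ independently for each $n$ only guarantees $dist(F(x),D)=0$, not $F(x)\cap\overline{D}\neq\emptyset$ (consider $F(x)=\mathbb{R}\times\{0\}\subset\mathbb{R}^2$ and $D=\{(n,1/n): n\in\mathbb{N}\}$, a closed discrete set missing $F(x)$). This is precisely the ``limit point escaping into the boundary'' phenomenon that Section 1 of the survey warns about. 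What is actually required is the recursive machinery of the compactvalued selection Theorem 3 with cardinality bookkeeping: build the index families $D_{n+1}$ so that each new center refines a ball of the previous stage and lies within, say, $1/n+1/(n+1)$ of its predecessor, so that branches of the resulting tree are Cauchy in $Y$; completeness of $Y$ and closedness of $F(x)$ then force the branch limits into $F(x)\cap\overline{D}$, and the pointwise closure of the set of branch limits over $x$ is the desired LSC closedvalued selection $G$ with $G(X)\subset\overline{D}$ and $l(\overline{D})=w(\overline{D})\leq\tau$. A one-shot formula of the kind you propose cannot simultaneously force membership in $F(x)$ and confinement to a weight-$\leq\tau$ subspace.
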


Similar characterization  was obtained in \cite{CMN} for the degree of compactness of a space. Gutev and Yamauchi in \cite{GuY}, using once again the "trees-sieves"\,technique, presented a generalizations of results \cite{CMN} to arbitrary complete metric range spaces. For example, in comparison with Theorem 52 they proved the following:

\begin{theorem} [\cite{GuY}] \label{GutYam}
For a $T_1$-space $X$ the following statements  are equivalent:
\begin{description}
\item{(1)} $X$ is strongly paracompact;
\item{(2)} For every LSC closedvalued mapping $F:X \to Y$ into a complete metric space $(Y;\rho)$ there exist a complete ultrametric space $(Z;d)$, a uniformly continuous map $g:Z \to Y$, and a USC compactvalued mapping $H:X \to Z$ such that $g(H(x)) \subset F(x),\,\, x \in X$, and the set $H(H^{-1}(S))$ is totally $\varepsilon$-bounded whenever $\varepsilon>0$ and $S \subset Z$ is totally $\varepsilon$-bounded;
\item{(3)} For every LSC closedvalued mapping $F:X \to Y$ into a discrete space $Y$ there exist a discrete space $Z$, a singlevalued map $g:Z \to Y$, and a USC compactvalued mapping $H:X \to Z$ such that $g(H(x)) \subset F(x), \,\,x \in X$ and the set $H(H^{-1}(S))$ is finite whenever $S \subset Z$ is finite.
\end{description}
\end{theorem}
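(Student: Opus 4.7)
The plan is to establish the three implications (1)$\Rightarrow$(2), (2)$\Rightarrow$(3), (3)$\Rightarrow$(1) in sequence, following the ``trees--sieves'' philosophy already used for Theorems \ref{Seive_1}--\ref{ChMihNed_3}.

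\textbf{(1)$\Rightarrow$(2).} I would build a tree $T$ whose nodes at level $n$ correspond to pairs $(U, y)$ with $U$ a member of a star-finite open refinement of $X$ and $y \in Y$ an $\varepsilon_n$-approximation to $F$ on $U$, where $\varepsilon_n = 2^{-n}$. The construction is inductive: at the root take $(X, y_0)$ for any $y_0$; having fixed a node $(U,y)$ at level $n$, use lower semicontinuity of $F$ and strong paracompactness of $X$ to refine $U$ into a star-finite open family in $X$, and for each member $U'$ of that family choose a point $y' \in F(U') \cap D(y,\varepsilon_n)$ (after a small preliminary shrinking of $U'$ to make the $\varepsilon_{n+1}$-approximation valid), declaring $(U',y')$ to be a successor of $(U,y)$. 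Let $Z$ be the set of infinite branches $\xi = ((U^\xi_n, y^\xi_n))_n$ equipped with the ultrametric $d(\xi,\eta) = 2^{-\min\{n\,:\,\xi_n\neq\eta_n\}}$, and define $g(\xi) = \lim_n y^\xi_n$, which exists and lies in $Y$ by completeness of $Y$ and the geometric decay of $\varepsilon_n$; uniform continuity of $g$ is immediate from this decay. Finally set $H(x) = \{\xi \in Z : x \in U^\xi_n \text{ for all } n\}$. Then $g(H(x)) \subset F(x)$ because $F(x)$ is closed and each $y^\xi_n$ has distance at most $\varepsilon_n$ from $F(x)$; USC and compactness of $H(x)$ follow from star-finiteness at every level (finitely many children to follow at each node above $x$); and the clause ``$H(H^{-1}(S))$ is totally $\varepsilon$-bounded whenever $S$ is'' is precisely the translation of star-finiteness into the tree metric.

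\textbf{(2)$\Rightarrow$(3).} Give the discrete space $Y$ the $0/1$ metric so that it is complete, apply (2), and obtain $(Z,d)$, $g$, and $H$. Uniform continuity of $g$ into the $0/1$-metric space yields $\delta > 0$ with $d(\xi,\eta) < \delta \Rightarrow g(\xi) = g(\eta)$, so $g$ is constant on every open $d$-ball of radius $\delta$. Because $d$ is an ultrametric, these balls form a clopen partition of $Z$, and collapsing this partition gives a discrete quotient $Z'$ together with $\tilde g : Z' \to Y$ and the image $\tilde H : X \to Z'$ of $H$. Continuity properties transfer, and $\tilde H$ is finitevalued since it is compactvalued into a discrete space. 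Choosing $\varepsilon < \delta$, the totally-$\varepsilon$-bounded clause becomes the finiteness clause for $\tilde H(\tilde H^{-1}(S))$ with $S \subset Z'$ finite.

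\textbf{(3)$\Rightarrow$(1).} Given an open cover $\mathcal{U} = \{U_\alpha\}_{\alpha \in A}$ of $X$, equip $Y = A$ with the discrete topology and define $F(x) = \{\alpha : x \in U_\alpha\}$; this is LSC closedvalued. Apply (3) to get discrete $Z$, singlevalued $g$, and a USC finitevalued $H$ with $g(H(x)) \subset F(x)$. For each $z \in Z$ let $V_z = \{x \in X : z \in H(x)\}$; the collection $\{W_z\}$ obtained from $V_z$ by intersecting with appropriate open neighborhoods supplied by upper semicontinuity (small preimages) yields an open refinement of $\mathcal{U}$ since $W_z \subset U_{g(z)}$. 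If $W_z \cap W_{z'} \neq \emptyset$, then $\{z,z'\} \subset H(x)$ for some $x$, hence $z' \in H(H^{-1}(\{z\}))$, which is finite by (3). Thus the refinement is star-finite and $X$ is strongly paracompact.

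The principal obstacle is implication (1)$\Rightarrow$(2): one must align the inductively chosen star-finite refinements with compatible $\varepsilon_n$-approximations in $Y$ so that the tree metric on $Z$ yields \emph{uniform} continuity of $g$, so that $H$ is simultaneously compactvalued and USC, and so that the combinatorial star-finiteness at level $n$ translates exactly into the analytic statement that $H(H^{-1}(S))$ is totally $\varepsilon$-bounded when $S$ is. The remaining implications are essentially bookkeeping once the tree framework is in place.
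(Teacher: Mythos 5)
The survey itself offers no proof of this theorem: it is quoted from Gutev and Yamauchi \cite{GuY} with only the remark that it is obtained by the ``trees--sieves'' technique as a complete-metric-range generalization of the results of \cite{CMN}. Your outline follows exactly that indicated route (a tree of star-finite refinements paired with $\varepsilon_n$-approximating points, the branch space as the complete ultrametric $Z$, and star-finiteness translated into the total-boundedness clause), and the implications (1)$\Rightarrow$(2) and (2)$\Rightarrow$(3) are essentially sound. Two small adjustments there: in (2)$\Rightarrow$(3) you must apply the total-boundedness clause with $\varepsilon=\delta$ rather than $\varepsilon<\delta$, since a finite union of $\delta$-balls is totally $\delta$-bounded but need not be totally $\varepsilon$-bounded for any $\varepsilon<\delta$; and in (1)$\Rightarrow$(2) the upper semicontinuity and nonemptiness of $H$ require the usual shrinking of each star-finite level so that closures of children lie inside their parents --- this is standard but does not follow from star-finiteness alone, as you suggest.

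The step that does not work as written is (3)$\Rightarrow$(1). For a USC mapping $H$ the set $V_z=\{x:\, z\in H(x)\}$ is the \emph{large} preimage of $\{z\}$, hence closed rather than open, and no intersection of $V_z$ with open neighborhoods produces an open cover indexed by $z\in Z$: if $H(x)=\{z_1,z_2\}$ while every neighborhood of $x$ contains points with $H=\{z_1\}$ and points with $H=\{z_2\}$, then $x$ lies in the interior of no $V_z$, so your family $\{W_z\}$ can fail to cover $X$. The refinement must instead be indexed by the finite sets $T=H(x)$: put $W_T=\{x':\, H(x')\subset T\}\cap\bigcap_{z\in T}U_{g(z)}$, which is open (a small preimage of the open set $T$ in the discrete space $Z$, intersected with finitely many members of $\mathcal U$), contains $x$, and is contained in $U_{g(z)}$ for any fixed $z\in T$. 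If $W_T\cap W_{T'}\neq\emptyset$, witnessed by $x''$, then $\emptyset\neq H(x'')\subset T\cap T'$, hence $T'=H(x')\subset H(H^{-1}(T))$, a finite set by (3), so only finitely many distinct $W_{T'}$ meet $W_T$ and the cover is star-finite. Your finiteness argument is the right one --- it just has to be run on the values $H(x)$ rather than on single points of $Z$.
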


Similarly, \cite[Corollaries 6.2 and 6.3]{GuY} a space $X$ is  strongly paracompact and $c\omega(X)$ is Lindel\"{o}f (resp.,  compact) if and only if for every LSC closedvalued mapping $F:X \to Y$ into a completely metrizable space $Y$ there exists a USC compactvalued selection $H$ of $F$ such that set $H(H^{-1}(S))$ is separable (resp., compact), whenever $S \subset Z$ is separable (resp., compact).

Yamauchi \cite{YH} gave a selection characterization of the class which unifies compact spaces and finite-dimensional paracompact spaces. A topological space is said to be 
{\it finitistic}\index{finitistic space}
(another term is {\it boundedly metacompact})\, if  any of its 
open covers admits an open refinement of finite order, or equivalently for paracompact spaces, if and only if it contains a compact subset $K$ such that each closed subset of the complement of $K$ is finite-dimensional. Below is a typical statement.

\begin{theorem} [\cite{YH}] \label{Yam}
For a $T_1$-space $X$ the following statements  are equivalent:
\begin{description}
\item{(1)} $X$ is paracompact and finitistic;
\item{(2)} Each LSC closedvalued mapping $F:X \to Y$ into a completely metrizable space $Y$ admits a USC compactvalued selection $H:X \to Y$ of $F$ with the property that for every open cover $\nu$ of $Y$ the exists a natural number $N$ such that every value $H(x), \,x \in X$, can be covered by some $\nu_0 \subset \nu$ with $Card(\nu_0)\leq N$.
\end{description}
\end{theorem}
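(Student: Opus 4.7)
The plan is to prove both implications separately, following the spirit of Michael's reverse implication in Theorem 1 for the harder direction, and adapting the compact-valued selection Theorem 3 (via the sieve/tree method discussed after Theorem 49) for the forward direction.

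For $(1) \Rightarrow (2)$, I would mimic the classical construction of a compact-valued USC selection but insert finite-order refinements at every stage. Fix a complete metric $\rho$ on $Y$ and a sequence $\{\nu_n\}$ of locally finite open covers of $Y$ with $\mathrm{mesh}(\nu_n) < 2^{-n}$ that forms a development. Since $F$ is LSC, $\omega_n^\ast = \{F^{-1}(V) : V \in \nu_n\}$ is an open cover of $X$. Using paracompactness and the finitistic property, refine $\omega_n^\ast$ to a locally finite open cover $\omega_n$ of $X$ of finite order $\leq N_n$ for some $N_n \in \N$; arrange the $\omega_n$ so that $\omega_{n+1}$ refines $\omega_n$ (a sieve in the sense of Gutev). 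For each $W \in \omega_n$ select $V_W \in \nu_n$ with $W \subset F^{-1}(V_W)$, and define the layered compacta $S_n(x) = \mathrm{Clos}\!\bigcup\{V_W : x \in W,\ W \in \omega_n\}$. The standard Michael--Choban argument gives that $H(x) := \bigcap_n S_n(x)$ is a nonempty compact subset of $F(x)$ and that $H : X \to Y$ is USC. To get the uniform bound for a given open cover $\nu$ of $Y$: since $\{\nu_n\}$ is a development, there is $n$ for which every member of $\nu_n$ meeting $H(X)$ lies in some element of $\nu$; because $x$ belongs to at most $N_n$ members of $\omega_n$, $H(x)$ is covered by at most $N_n$ members of $\nu_n$, hence by at most $N = N_n$ members of $\nu$.

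For $(2) \Rightarrow (1)$, paracompactness is the standard reverse Michael argument, applied to LSC mappings into $\R$ with closed convex values. To derive finitistic, let $\omega = \{U_\alpha\}_{\alpha \in A}$ be an open cover of $X$, which by paracompactness may be assumed locally finite. Endow $A$ with the discrete topology (a completely metrizable space) and define $F : X \to A$ by $F(x) = \{\alpha : x \in U_\alpha\}$. Then $F$ is LSC with closed (and, by local finiteness of $\omega$, finite) values. Apply (2) to obtain a USC compact-valued selection $H \subset F$, and test with the singleton cover $\nu = \{\{\alpha\}\}_{\alpha \in A}$ to extract $N \in \N$ with $|H(x)| \leq N$ for all $x \in X$. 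The closed sets $B_\alpha = \{x : \alpha \in H(x)\}$ then form a locally finite closed refinement of $\omega$ of order $\leq N$. Finally one converts this closed refinement to an open one of the same bounded order, either by using normality to shrink-and-expand a locally finite closed cover, or — more cleanly — by also invoking a LSC compact-valued selection $G \subset H$ (constructed from $H$ as in Theorem 3), so that $\{G^{-1}(\{\alpha\}) : \alpha \in A\}$ is immediately an open refinement of $\omega$ of order $\leq |G(x)| \leq N$.

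The main obstacle I anticipate is in the forward direction: coupling the order-bound $N_n$ arising from a finite-order refinement of a cover \emph{of $X$} with a uniform bound on the number of members of an arbitrary cover \emph{of $Y$} needed to cover each $H(x)$. Controlling this requires that the covers $\nu_n$ actually form a development whose meshes shrink uniformly along the image $H(X)$, and that the sieve be built so the finite-order refinements $\omega_n$ of $X$ are aligned with this development. In the converse direction, the analogous delicate point is the passage from the closed finite-order cover $\{B_\alpha\}$ to an open one without blowing up the order, which is why introducing the LSC inner selection $G$ is the cleanest route.
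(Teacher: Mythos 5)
The survey only quotes this theorem from Yamauchi's paper and contains no proof, so your argument has to stand on its own. Your $(2)\Rightarrow(1)$ direction is essentially sound: the discrete-range mapping $F(x)=\{\alpha : x\in U_\alpha\}$, the test cover by singletons giving $|H(x)|\le N$, and the resulting locally finite closed refinement $\{B_\alpha\}$ of order $\le N$ are the right moves, and the passage to an open refinement of the same order is handled by the standard swelling lemma for locally finite closed families in normal spaces. Two small repairs there: paracompactness does not follow from ``the reverse Michael argument applied to LSC maps into $\R$'' (that argument needs single-valued continuous selections into $c_0(A)$); use instead the locally finite closed refinements you have already produced together with Michael's criterion (this is Nedev's characterization of paracompactness via compact-valued USC selections). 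Also, Theorem 3 cannot manufacture an LSC selection $G$ of your USC map $H$, since its hypothesis is lower semicontinuity of the input; stick with the swelling route.

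The genuine gap is in $(1)\Rightarrow(2)$, exactly at the point you flagged. The step ``since $\{\nu_n\}$ is a development, there is $n$ for which every member of $\nu_n$ meeting $H(X)$ lies in some element of $\nu$'' asserts a uniform Lebesgue number for an arbitrary open cover $\nu$ of $Y$ along the generally non-compact, non-totally-bounded set $H(X)$; this fails already for $Y=\R$, $H(X)=\mathbb{Z}$ and a cover of $\R$ by intervals whose lengths shrink to $0$ at infinity. More structurally, finite-order refinements whose orders $N_n$ grow with $n$ only yield that each $H(x)$ is covered by at most $M_n$ sets of diameter $<2^{-n}$; for an arbitrary $\nu$ the level $n$ at which such small sets fit inside members of $\nu$ depends on $x$, so no uniform $N$ emerges. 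What is actually needed is the structure of paracompact finitistic spaces recalled in Section 6: $X$ contains a compact set $K$ such that every closed subset of $X\setminus K$ is finite-dimensional. Over $K$ one uses that $H(K)$ is compact, so a single finite subfamily of $\nu$ covers every $H(x)$ with $x\in K$; over a closed finite-dimensional piece $C$ with $\dim C\le n$ one keeps the order of \emph{all} refinements $\omega_m$ bounded by the \emph{same} number $n+1$, which forces $H(x)$ to be covered by $n+1$ sets of arbitrarily small diameter, hence to consist of at most $n+1$ points --- and a set of at most $n+1$ points is covered by at most $n+1$ members of any cover whatsoever (compare the remark after Theorem 49 on selections with $|H(x)|\le n+1$ over finite-dimensional subspaces). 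Your scheme, which lets $N_n$ vary with $n$ and tries to compensate with a development of $Y$, cannot produce the required uniform bound.
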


Finally, let us mentioned the survey paper  by Choban \cite{Chob}
on reduction principles in selection theory. Briefly, he discussed  questions concerning 
extensions of LSC mappings $F$ with nonparacompact domains onto paracompact ones and a notion of (complete) metrizability of family $\{F(x)\}$
of values rather than (complete) metrizability of a range space.

\section{Miscellaneous results}
\label{sec:7}

{\bf 7.1.} Tymchatyn and Zarichnyi \cite{TZ} applied the selection theorem of Fryszkow\-ski \cite{Fr} to
decomposable-valued mappings $F: X \to L_{1}([0;1], B)$ in order to construct a continuous linear regular operator which extends partially defined pseudometrics to pseudometrics defined on the whole domain. Denote by $\cal{P}\cal{M}$$(X)$ the set of all continuous pseudometrics over metrizable compact $X$ and $\cal{P}\cal{M}$ the subset of $\cal{P}\cal{M}$$(X)$  of all continuous pseudometrics $\rho$ with compact domains $dom(\rho) \subset X$. Identifying a pseudometric with its graph we can consider both of these sets endowed with the topology induced from the compact exponent $exp(X \times X \times [0;\infty) )$.

\begin{theorem} [\cite{TZ}] \label{TymZar_1}
There exists a continuous linear regular extension operator $u: \cal{P}\cal{M} \to \cal{P}\cal{M}$ $(X)$,\quad $u(\rho)|_{dom(\rho) \times dom(\rho)} \equiv \rho$.
\end{theorem}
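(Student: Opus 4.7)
The plan is to construct the extension operator by a barycenter-type integral formula, where the "barycenter" is produced from a continuous selection of a decomposable-valued multivalued mapping into an $L_1$-space. First I would embed the compactum $X$ isometrically into a suitable Banach space $B$ (for example, via the Kuratowski--Wojdy{\l}owski embedding into $C(X,\R)$), so that every $dom(\rho)$ becomes a compact subset of $B$. Writing $K(\rho)=dom(\rho)$, the goal is to assign to every pair $(\rho,x)\in \mathcal{PM}\times X$ a measurable map $s_{\rho,x}:[0;1]\to K(\rho)$, depending continuously on $(\rho,x)$, with the crucial normalization that $s_{\rho,x}\equiv x$ a.e.\ whenever $x\in K(\rho)$. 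Once this is done, I would define
\begin{equation*}
u(\rho)(x,y)=\int_{0}^{1}\rho\bigl(s_{\rho,x}(t),s_{\rho,y}(t)\bigr)\,dt,\qquad x,y\in X.
\end{equation*}
Symmetry, nonnegativity, vanishing on the diagonal, and the triangle inequality for $u(\rho)$ then follow pointwise in $t\in[0;1]$ from the corresponding properties of $\rho$, and the extension property $u(\rho)|_{K(\rho)\times K(\rho)}\equiv\rho$ is an immediate consequence of the normalization.

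Second, I would realize $s_{\rho,x}$ as a Fryszkowski selection. Consider the multimap $\Phi:\mathcal{PM}\times X\rightrightarrows L_{1}([0;1],B)$ defined by
\begin{equation*}
\Phi(\rho,x)=\bigl\{f\in L_{1}([0;1],B):\ f(t)\in K(\rho)\text{ a.e.},\ \text{and }f\equiv x\text{ a.e.\ if }x\in K(\rho)\bigr\},
\end{equation*}
possibly enlarged by suitable proximity constraints (such as $\|x-f(t)\|\le 2\,\mathrm{dist}(x,K(\rho))$ a.e.) that vanish when $x\in K(\rho)$ and guarantee nonemptiness otherwise. Each value $\Phi(\rho,x)$ is a closed decomposable subset of $L_{1}([0;1],B)$, because if $f,g\in\Phi(\rho,x)$ and $A\subset[0;1]$ is measurable, the spliced map $\chi_A f+\chi_{[0;1]\setminus A}g$ still takes values in $K(\rho)$ a.e.\ and still satisfies the normalization. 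Lower semicontinuity of $\Phi$ on $\mathcal{PM}\times X$ reduces to continuity of $\rho\mapsto K(\rho)$ on the hyperspace $\exp(X)$, which holds because $K(\rho)$ is the projection to $X$ of the Hausdorff-continuous graph $\rho$. The Fryszkowski selection theorem \cite{Fr} then yields a continuous $s:\mathcal{PM}\times X\to L_{1}([0;1],B)$ with $s(\rho,x)\in\Phi(\rho,x)$.

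Third, continuity of the operator $u$ follows from continuity of the selection together with the fact that the integrand in the defining formula is jointly continuous when $\rho$ varies in the hyperspace topology on graphs and $s_{\rho,x}(t)$ varies in $K(\rho)$; a standard $L_{1}$-approximation argument then shows that $u(\rho)(x,y)$ depends continuously on $(\rho,x,y)$, so that the graph of $u(\rho)$ depends continuously on $\rho$ in $\exp(X\times X\times[0;\infty))$, which is exactly what is required for continuity of $u:\mathcal{PM}\to\mathcal{PM}(X)$.

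The main obstacle is securing \emph{linearity and regularity} of $u$. Linearity in $\rho$ forces the selection $s_{\rho,x}$ to depend only on the compact set $K(\rho)$ and not on the actual values of $\rho$ on $K(\rho)\times K(\rho)$, since otherwise the integrand would mix $\rho_1$ and $\rho_2$ in nonlinear ways when one tries to verify $u(\lambda_1\rho_1+\lambda_2\rho_2)=\lambda_1 u(\rho_1)+\lambda_2 u(\rho_2)$. To resolve this I would first push the construction down to a multimap $\Psi:\exp(X)\times X\rightrightarrows L_{1}([0;1],B)$ with $\Psi(K,x)$ depending only on the compact set $K$, verify decomposability and lower semicontinuity of $\Psi$ directly on $\exp(X)\times X$, apply Fryszkowski's theorem there to obtain a continuous $\sigma:\exp(X)\times X\to L_{1}([0;1],B)$, and finally set $s_{\rho,x}=\sigma(K(\rho),x)$. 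With this choice, linearity in $\rho$ becomes automatic from linearity of the integral, and regularity (meaning that the extension of a genuine metric is again a metric, and more generally that properties of $\rho$ are inherited by $u(\rho)$) follows because the selectors $s_{\rho,x}$ and $s_{\rho,y}$ attached to distinct points $x\ne y$ can be arranged to avoid being a.e.\ equal, so that $u(\rho)(x,y)>0$ whenever $\rho$ is a metric separating the relevant points.
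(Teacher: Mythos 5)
Your construction is essentially the paper's own proof: the authors define the decomposable-valued LSC mapping directly on $\exp(X)\times X$ by $F(A,x)=\{x\}$ (the constant function) for $x\in A$ and $F(A,x)=L_{1}([0;1],A)$ for $x\notin A$, apply Fryszkowski's selection theorem, and use exactly your integral formula, so that linearity is automatic for precisely the reason you identify (the selection depends only on $dom(\rho)$, not on $\rho$). The one caveat is that ``regular'' here means norm-preserving, i.e. $\max u(\rho)=\max\rho$, which is immediate from the formula since $u(\rho)\le\max\rho$ pointwise and equality is attained on $dom(\rho)\times dom(\rho)$ --- not the separation property discussed in your last paragraph, which in fact need not hold (and need not, since the target is pseudometrics).
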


Here, regularity of an operator means that it preserves the {\it norm} of pseudometrics, i.e. their maximal values. As it typically arises for extensions, the answer is given by some {\it formula}. Namely, under some isometric embedding $X$ into a separable Banach $B$, the desired operator $u$ can be defined as
$$
u(\rho)(x,y) = \int_{0}^{1}\,\,\rho(f(dom(\rho),x)(t),f(dom(\rho),y)(t))\,\,dt,
$$
where $f: expX \times X \to L_1([0;1],B)$ is a continuous singlevalued selection of the decomposable-valued LSC mapping $F: expX \times X \to L_1([0;1],B)$ defined by
$$
F(A,x) = L_1([0;1],\{x\})=\{x\}, x \in A;\qquad F(A,x) = L_1([0;1],A),x \notin A.
$$

Metrizability of a compact space $X$ is a
strongly essential assumption \cite[Theorem 6.1]{TZ}.

\begin{theorem} [\cite{TZ}] \label{TymZar_2}
For a compact Hausdorff space $X$ the following statements  are equivalent:
\begin{description}
\item{(1)} $X$ is metrizable;
\item{(2)} There exists a continuous extension operator $u: \cal{P}\cal{M} \to \cal{P}\cal{M}$$(X)$.
\end{description}
\end{theorem}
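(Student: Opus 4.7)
The direction $(1)\Rightarrow (2)$ is precisely Theorem~56. For $(2)\Rightarrow (1)$, assume that $u:\mathcal{PM}\to \mathcal{PM}(X)$ is a continuous extension operator. The plan is to produce from $u$ a continuous metric on $X$. This suffices, because on a compact Hausdorff space any continuous metric $d$ induces a Hausdorff topology no finer than the original, and the identity map between two compact Hausdorff topologies is then a continuous bijection, hence a homeomorphism; so the original topology on $X$ must coincide with the $d$-topology and $X$ is metrizable.

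First I would set up continuous families of genuine pseudometrics on $X$ coming from $u$. For each $x\in X$ let $\rho_x\in\mathcal{PM}$ denote the zero pseudometric on the singleton $\{x\}$, whose graph in $X\times X\times[0,\infty)$ is the single point $\{(x,x,0)\}$. The assignment $x\mapsto \rho_x$ is continuous into $\mathcal{PM}$, so $\sigma_x:=u(\rho_x)$ is a continuous map from $X$ into $\mathcal{PM}(X)$, and consequently $(z,x,y)\mapsto \sigma_z(x,y)$ is jointly continuous on $X^3$. Picking any regular Borel probability measure $\mu$ on $X$ (which exists by compactness and the Hausdorff property), define
$$
d(x,y):=\int_X \sigma_z(x,y)\,d\mu(z).
$$
Then $d$ is symmetric, jointly continuous, vanishes on the diagonal, and inherits the triangle inequality from each $\sigma_z$, hence is a continuous pseudometric on $X$.

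The main obstacle is the separation property $d(x,y)>0$ for $x\neq y$; equivalently, that for every distinct $x,y$ some $\sigma_z$ separates them. A priori this may fail, so one is forced to enlarge the family of test pseudometrics. For $(x',y',t)\in X\times X\times(0,1]$ with $x'\neq y'$, the two-point pseudometric $\rho_{x',y',t}\in\mathcal{PM}$ on $\{x',y'\}$ of value $t$ satisfies $u(\rho_{x',y',t})(x',y')=t$ by the extension property. The assignment $(x',y',t)\mapsto \rho_{x',y',t}$ is continuous off the diagonal $\Delta\subset X\times X$ but does not extend continuously across $\Delta$ when $t>0$. This is where the principal technical difficulty lies: one needs to replace the constant $t$ by a continuous function $t(x',y')$ that vanishes on $\Delta$ and is strictly positive off it, so that the reparametrized family becomes continuous on all of $X\times X$, after which averaging with $u$ yields a genuine continuous pseudometric strictly positive off $\Delta$. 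A natural candidate for such $t(x',y')$ is an already-constructed nonzero component of $d$ above, fed back through $u$ and combined with a new averaging; iterating this bootstrapping should promote the nontrivial values of the two-point inputs into a continuous metric on $X$.

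Once a continuous metric $d$ on $X$ has been obtained, the identity map from $X$ (original topology) to $(X,d)$ is a continuous bijection between compact Hausdorff topologies, and is therefore a homeomorphism. Hence the original topology on $X$ is the metric topology induced by $d$, and $X$ is metrizable, completing the implication.
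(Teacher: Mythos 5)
Your direction $(1)\Rightarrow(2)$ is fine: it is an immediate consequence of the preceding theorem of Tymchatyn and Zarichnyi, and the reduction of $(2)\Rightarrow(1)$ to producing a single continuous separating pseudometric on $X$ is also sound (a continuous metric on a compact Hausdorff space yields a continuous bijection onto a compact metric space, hence a homeomorphism). The paper itself only cites $(2)\Rightarrow(1)$ from \cite{TZ} without proof, so the comparison must be with the internal coherence of your construction --- and there the argument has a genuine gap, which you partly acknowledge but do not close.

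First, the family $\sigma_z=u(\rho_z)$ built from one-point domains carries no separation information at all: extending the zero pseudometric on a singleton is a vacuous constraint, so nothing prevents $u(\rho_z)\equiv 0$ for every $z$ (and if $u$ is regular in the sense of the paper, i.e.\ norm-preserving, then $u(\rho_z)$ \emph{must} be identically zero). Consequently the pseudometric $d$ obtained by averaging may be identically zero, and the ``already-constructed nonzero component of $d$'' that you propose to feed back into the bootstrap need not exist. Second, and more seriously, the reparametrization you call the principal technical difficulty is not merely technical: a continuous function $t:X\times X\to[0,\infty)$ that vanishes exactly on the diagonal exhibits the diagonal as a zero-set, hence as a $G_\delta$-set, and by \v{S}neider's theorem a compact Hausdorff space with a $G_\delta$ diagonal is already metrizable. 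So the object you need in order to make the two-point family $\rho_{x',y',t(x',y')}$ continuous across the diagonal is equivalent to the conclusion you are trying to prove; the argument is circular unless $t$ is extracted from $u$ itself, which is exactly the step that is missing. Third, even granting such a $t$, the averaging step needs a regular Borel measure $\mu$ on $X$ whose support meets every neighborhood of every point (otherwise $D(x,y)>0$ does not follow); a strictly positive measure exists only on ccc compacta, and non-ccc compact Hausdorff spaces are precisely among the candidates one must rule out. In short, the overall target (a continuous metric) is the right one, but the construction as written does not get off the ground, and the honest admission that the key step ``should'' work by iteration is not a substitute for carrying it out.
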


\medskip
{\bf 7.2.} Gutev and Valov\cite{GVPr} applied selection theory to obtain a new proof of Prokhorov's theorem and its generalization outside the class of Polish spaces. Recall that a probability measure $\mu$ on a $T_{3,5}$-space $X$ is a countably additive mapping $\mu : \cal{B}$ $(X) \to [0,1]$ with $\mu(X)=1$ and with regularity (or, the Radon) property that
$$
\mu(B)=\sup \{\mu(K):\,\,K \subset B,\,\, K\, {\rm is\,\,compact}\}
$$
for every Borel set $B \in \cal{B}$ $(X)$. Roughly speaking, values of measure are
realized over subcompacta with any precision.

The set $P(X)$ of all probability measures can naturally  be considered as the subset of the conjugate space $C^{*}(X)$ of the Banach space $C(X)$ and is endowed with the induced topology. Thus
Prokhorov's theorem states that for a Polish space $X,$ the Radon property holds not for a unique measure but for an arbitrary compact set of measures. Namely, for  a compact $M \subset P(X)$ and for any $\varepsilon > 0,$ there exists a compact $K \subset X$ such that $\mu(X \setminus K) < \varepsilon,$ for all $\mu \in M$. For more general domains  the following result holds \cite{GVPr}:

\begin{theorem} [\cite{GVPr}] \label{Prokh}
For a sieve-complete space $X$, a paracompact space $S \subset P(X)$ and any $\varepsilon > 0,$ there exists a USC compactvalued mapping $H: S \to P(X)$ such that $\mu(X \setminus H(\mu)) < \varepsilon$, for every $\mu \in S$.
\end{theorem}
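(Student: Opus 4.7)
The plan is to derive the statement from the compactvalued selection theorem for sieve-complete range spaces (Theorem 49). Since $S$ is paracompact and $X$ is sieve-complete, the outer hypotheses of that theorem match our setup; what remains is to produce an LSC closedvalued mapping $\Phi : S \to X$ whose values are each contained in a compactum of $\mu$-mass exceeding $1-\varepsilon$, so that any compactvalued USC selection $H \subset \Phi$ furnished by Theorem 49 automatically satisfies $\mu(X \setminus H(\mu)) < \varepsilon$.

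First I would fix a complete sieve $\{\mathcal{U}_n\}_{n\in\N}$ on $X$, and pick $\varepsilon_n>0$ with $\sum_n \varepsilon_n < \varepsilon$. For each individual $\mu \in S$, the Radon property of $\mu$ yields finite subfamilies $\mathcal{V}^\mu_n \subset \mathcal{U}_n$ with $\mu\bigl(\bigcup \mathcal{V}^\mu_n\bigr)>1-\varepsilon_n$. The intersection $K_\mu = \bigcap_n \overline{\bigcup \mathcal{V}^\mu_n}$ is then compact, by completeness of the sieve, and $\mu(X \setminus K_\mu) < \varepsilon$ by countable subadditivity. This is the pointwise Prokhorov step, with the sieve playing the role that countable density plays in the Polish case.

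Second, to globalize this into an LSC closedvalued mapping, I would introduce for each $n$ the auxiliary multifunction
\[
\Phi_n(\mu) \;=\; \bigcup\Bigl\{\,\textstyle\bigcup \mathcal{V}\,:\,\mathcal{V} \subset \mathcal{U}_n\ \text{finite},\ \mu\bigl(\bigcup \mathcal{V}\bigr)>1-\varepsilon_n\,\Bigr\},
\]
which is open-valued and LSC, because $\nu \mapsto \nu(U)$ is LSC on $P(X)$ in the weak-$^{*}$ topology for every open $U \subset X$. Setting $\Phi(\mu) = \bigcap_n \overline{\Phi_n(\mu)}$, the values of $\Phi$ are closed and each sits inside a compactum of $\mu$-mass exceeding $1-\varepsilon$. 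Theorem 49 then produces a compactvalued USC selection $H: S \to X$ of $\Phi$, and the desired inequality follows immediately from $H(\mu) \subset \Phi(\mu) \subset K_\mu$.

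The principal obstacle is the LSC property of $\Phi$: pointwise intersections of LSC mappings need not be LSC, so this step must be executed with care. Overcoming it requires a uniform-tightness argument in which the sieve supplies, in some neighborhood of each $\mu_0 \in S$, a single finite subfamily $\mathcal{V} \subset \mathcal{U}_n$ with $\nu(\bigcup \mathcal{V}) > 1-\varepsilon_n$ simultaneously for all nearby $\nu$; paracompactness of $S$ is then used to paste these local witnesses into a globally LSC-in-$\mu$ mapping. In effect, one propagates the classical Prokhorov argument through the sieve, using paracompactness of $S$ on the domain side and sieve-completeness of $X$ on the range side in a combined way, which is precisely the setting where Theorem 49 applies.
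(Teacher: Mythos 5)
There is a fatal flaw in the core of your plan, and it occurs before any of the technical issues you flag. You propose to build a closedvalued LSC mapping $\Phi: S \to X$ whose value $\Phi(\mu)$ is contained in a compactum $K_\mu$ with $\mu(K_\mu) > 1-\varepsilon$, and then to conclude that a USC compactvalued \emph{selection} $H$ of $\Phi$ satisfies $\mu(X \setminus H(\mu)) < \varepsilon$ ``immediately from $H(\mu) \subset \Phi(\mu) \subset K_\mu$.'' But the inclusion goes the wrong way for this conclusion: $H(\mu) \subset K_\mu$ gives $X \setminus H(\mu) \supset X \setminus K_\mu$, hence only the useless lower bound $\mu(X\setminus H(\mu)) \geq \mu(X\setminus K_\mu)$. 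A selection shrinks the value, and nothing in Theorem 49 (or in any selection theorem) prevents $H(\mu)$ from being a single point of $\mu$-measure zero, in which case $\mu(X\setminus H(\mu))=1$. To get the stated inequality you need $H(\mu)$ to \emph{contain} a set of mass $>1-\varepsilon$, and no construction that selects \emph{inside} a prescribed set can deliver that.

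The paper's proof is built precisely to sidestep this. One works in the hyperspace $\exp(X)$ of subcompacta of $X$ rather than in $X$ itself: set $G_{\varepsilon}(\mu)=\{K \in \exp(X): \mu(X\setminus K)< \varepsilon/2\}$, which is nonempty by the Radon property and LSC as a mapping $P(X)\to \exp(X)$; pass to its pointwise closure $F_\varepsilon$, all of whose points $K$ still satisfy $\mu(X\setminus K)\leq \varepsilon/2$; apply the compactvalued selection theorem to get a USC compactvalued $H:S\to \exp(X)$ with $H(\mu)\subset F_\varepsilon(\mu)$; and finally take $\bigcup\{K': K'\in H(\mu)\}$, which is compact (union of a compact family of compacta) and has complement of measure at most $\varepsilon/2<\varepsilon$ because it contains at least one $K'\in F_\varepsilon(\mu)$. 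The point of lifting to $\exp(X)$ is exactly that there each \emph{point} of the value is already a large-mass compactum, so shrinking the collection by selection costs nothing. Your pointwise Prokhorov step via the sieve (producing $K_\mu$ with $\mu(X\setminus K_\mu)<\varepsilon$) is fine and is indeed the analogue of the Polish-case tightness argument, but it must feed into a hyperspace-valued mapping, not an $X$-valued one; as written, the final step of your argument cannot be repaired without changing the approach in this way.
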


Note that for a paracompact space $X,$ its sieve-completeness coincides with its Czech-completeness. Returning to the original case of Polish space $X,$ the outline of the proof looks as follows. First, for each $\mu \in P(X)$ the set $G_{\varepsilon}(\mu) = \{K - {\rm compact}: \mu(X \setminus K) < 0,5\varepsilon\} \subset P(X)$ is nonempty simply due to the Radon property. Denote by $exp(X)$ the completely metrizable space of all subcompacta of $X$ endowed with the Vietoris topology, or with Hausdorff distance metric. It turns out that the multivalued mapping $G_{\varepsilon}: P(X) \to exp(X)$ is a LSC mapping. Hence its pointwise closure $F_{\varepsilon}$ is also LSC and moreover,
$\mu(X \setminus K) \leq 0,5\varepsilon,$ for every $\mu \in P(X)$ and every $K \in F_{\varepsilon}(\mu)$. By the compactvalued selection Theorem 3,
the mapping $F_{\varepsilon}$ admits an USC compactvalued selection, say $H: P(X) \to exp(X)$. Finally, the union $\cup \{K' : K' \in H(\mu)\}$ yields
the desired compact subset $K \subset X$.

\medskip
{\bf 7.3.} Zippin \cite{Z} considered the convexvalued selection Theorem 1 as the base for resolving the extension problem for operators from a linear subspaces $E$ of $c_0$ into the spaces $C(K)$, where $K$ is a compact Hausdorff space. For any $\varepsilon > 0,$ he considered the multivalued mapping $F: Ball(E^{*}) \to (1+\varepsilon)Ball(c_{0}^{*})$ by setting $F(e^{*})$ equal to $\{0\}$ if $e^{*}=0$ and
$$
F(e^{*}) = \{  x^{*} \in (1+\varepsilon)Ball(c_{0}^{*}):\,\,x^{*} \,\,{\rm extends}\,\, e^{*}\,\, {\rm and}\,\, \| x^{*} \| < (1+\varepsilon)\| e^{*} \| \}
$$
otherwise.
Under the weak-star topology, all values of $F$ are convex metrizable compacta. After (a nontrivial)
verification
of lower semicontinuity of $F$ and applying Theorem 1, one finds a singlevalued weak-star continuous
mapping $f: Ball(E*) \to (1+\varepsilon)Ball(c_{0}^{*})$ such that $f(e^{*})$ extends $e^{*}$ and
$\| f(e^{*}) \| \leq (1+\varepsilon)\| e^{*} \|$.

Hence for an operator $T: E \to C(K)$ with the norm $\| T \|=1,$ let $f_T: K \to Ball(E^{*})$ be defined by $f_{T}(k)(e)=T(e)(k),$ for $k \in K$. Then $f_T$ is weak-star continuous and hence the composition $f \circ f_T: K \to (1+\varepsilon)Ball(c_{0}^{*})$ with the above selection $f$ is also continuous. Defining $\widehat{T}: c_0 \to C(K)$ by $\widehat{T}(x)(k) = (f \circ f_T)(k)(x),\,\, x\in c_0,$ we see that $\widehat{T}$ extends $T$ because $f(e^{*})$ extends $e^{*}$, $\widehat{T}$ is linear and well-defined, since $(f \circ f_T)(k)$ is a linear functional and
$
\| \widehat{T} \| = \sup\{\widehat{T}(x)(k): \|x\| \leq 1, k \in K\} \leq
\sup\{\|(f \circ f_T)(k)\|\,\,\|x\|: \|x\| \leq 1, k \in K\} \leq 1+\varepsilon.
$

\medskip
{\bf 7.4.} For two multivalued mappings $F_{1}:X \to Y_{1}$,
$F_{2}:X \to Y_{2}$ and a singlevalued mapping $L:Y_{1} \times
Y_{2} \to Y$ denote by $L(F_{1};F_{2})$ the composite mapping,
which associates to each $x \in X$ the set
$$
\{y \in Y:y=L(y_{1};y_{2}),\,\,y_{1} \in F_{1}(x),\,\,y_{2} \in
F_{2}(x)\}.
$$

\begin{definition} Let $f$ be a selection of the composite
mapping $L(F_{1};F_{2})$. Then pair $(f_{1},f_{2})$ is said to be a
{\it splitting}\index{splitting} 
of $f$ if $f_{1}$ is a selection of $F_{1}$,
$f_{2}$ is a selection of $F_{2}$ and $f=L(f_{1};f_{2})$.
\end{definition}

Therefore  the {\it splitting problem} \cite{RSSplit} for the
triple $(F_{1}, F_{2}, L)$ is the problem of finding continuous
selections $f_{1}$ and $f_{2}$ which split a continuous selection
$f$ of the composite mapping $L(F_{1};F_{2})$.
As a special case of a constant mappings $F_{1} \equiv A \subset Y, F_{2} \equiv B \subset Y$ and $L(x_1,x_2)=x_1 + x_2$ we have the following:

\begin{problem} [\cite{RSSplit}]
Let $A$ and $B$ be closed convex subsets of a Banach space $Y$ and $C=A+B$ their Minkowski sum. Is it possible to find continuous singlevalued mappings $a: C \to A$ and $b:C \to B$ such that $c=a(c)+b(c)$ for all $c \in C$?
\end{problem}

The answer is positive for strictly convex and finite-dimensional $A$ and $B$ (cf.  \cite[Corollary 3.6]{RSOpen}), and for finite-dimensional $A$ and $B$ with $C=A+B$ being of a special kind, the so-called $P$-set (cf.  \cite[Theorem 2.6]{BalR}). A collection of various examples and affirmative results on approximative splittings, uniformly continuous (or Lipschitz) splittings can be found in \cite{Bal,BalR,BalRU}. Note that under the replacement of the sum-operator $L(x_1,x_2)=x_1 + x_2$
by an arbitrary linear operator $L,$ the problem has a negative solution even in a rather low-dimensional situation \cite[Example 3.2]{RSOpen}.

\begin{theorem} [\cite{RSOpen}]
For any 2-dimensional cell $D$ there exist:
\begin{description}
\item{(a)} Constant multivalued mappings $F_{1}:D \to R^{3}$
and  $F_{2}:D \to R$ with convex compact values;
\item{(b)} A linear surjection $L: R^{3} \oplus R \to R^{2}$; and
\item{(c)} A continuous selection $f$ of the composite mapping
$F=L(F_{1},F_{2})$, such that $f\neq L(f_{1},f_{2})$ for any
continuous selections $f_{i}$ of $F_{i}$, i=1,2.
\end{description}
\end{theorem}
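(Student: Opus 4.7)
The plan is to exhibit an explicit counterexample. Take $D$ to be a two-dimensional cell, $B = [0,1] \subset R$, $A$ a carefully chosen convex compactum in $R^{3}$, $L : R^{3} \oplus R \to R^{2}$ a linear surjection, and $f : D \to L(A \times B)$ a continuous selection of the constant mapping $F(x) \equiv L(A \times B)$. The goal is to engineer the geometry of $A$ and $L$ together with the selection $f$ so that the convex-closed-compactvalued mapping $G : x \mapsto L^{-1}(f(x)) \cap (A \times B)$ admits no continuous singlevalued selection; any such selection would provide, upon projection to $R^{3}$ and $R$, precisely a splitting $(f_{1}, f_{2})$.

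If $G$ were lower semicontinuous, then Michael's Theorem~\ref{ConvSel} applied to $G$ as an LSC mapping into the Banach space $R^{3} \oplus R$ would immediately produce a continuous selection, hence a splitting of $f$. The whole task is therefore to arrange for the LSC property to fail in a topologically forcing way. For this I would take $A$ to be a smoothly bounded (non-polyhedral) convex body in $R^{3}$, with $L$ chosen so that $\ker L$ lies tangent to $\partial(A \times B)$ along a two-dimensional face whose image under $L$ is a boundary arc of $L(A \times B)$; this tangency produces a distinguished point $c^{*} \in \partial L(A \times B)$ and a point $p^{*} \in L^{-1}(c^{*}) \cap (A \times B)$ which cannot be approximated by points of $L^{-1}(c) \cap (A \times B)$ for $c \in \operatorname{int} L(A \times B)$ near $c^{*}$.

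Next I would define $f$ so that $f(x^{*}) = c^{*}$ at a unique interior point $x^{*} \in D$, $f$ avoids $c^{*}$ on $D \setminus \{x^{*}\}$, and on every small circle $S \subset D$ around $x^{*}$ the loop $f|_{S}$ winds once around $c^{*}$ in $L(A \times B)$. Suppose $(f_{1}, f_{2}) : D \to A \times B$ is a continuous splitting. Then $(f_{1}, f_{2})(S)$ lies in $\bigcup_{c \in f(S)} L^{-1}(c) \cap (A \times B)$, which by the tangency construction is confined to an annular region around $p^{*}$ staying at distance at least $\varepsilon_{0} > 0$ from $p^{*}$. Continuity of $(f_{1}, f_{2})$ at $x^{*}$ forces $(f_{1}, f_{2})(S)$ to shrink to the value $(f_{1}, f_{2})(x^{*}) \in L^{-1}(c^{*}) \cap (A \times B)$, and the winding requirement together with the tangency geometry forces this limit to be precisely $p^{*}$, contradicting the positive distance lower bound.

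The main obstacle is the explicit geometric design: non-polyhedrality of $A$ is needed to open the possibility of LSC failure (polyhedral $A \times B$ would give Lipschitz-continuous $G$ via Hoffman's lemma, after which splittings always exist), while the tangency of $\ker L$ and the winding of $f$ must cooperate to pin down the value $(f_{1}, f_{2})(x^{*})$ to the specific inaccessible point $p^{*}$ rather than to some other, accessible, point of the fiber over $c^{*}$. Once the geometry is made explicit -- a concrete model might use a spherical cap for $A$ and a slanted projection for $L$ tuned to the cap's tangent plane -- the non-splittability follows from the continuity-plus-disjoint-neighborhoods contradiction sketched above.
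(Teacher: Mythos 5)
Your reduction is the right one: a splitting of $f$ is exactly a continuous selection of $G: x \mapsto L^{-1}(f(x)) \cap (A \times B)$, so the task is to defeat all such selections, and you are also right that this forces $G$ to fail lower semicontinuity and forces the bodies involved to be non-polyhedral. But the mechanism you propose for defeating the selections does not work. Your point $c^{*}$ lies on $\partial L(A\times B)$, and $L(A\times B)$ is convex, so $L(A\times B)\setminus\{c^{*}\}$ is star-shaped with respect to any interior point and hence simply connected; consequently every loop $f|_{S}$ contained in it has winding number $0$ about $c^{*}$, and the condition that ``$f|_{S}$ winds once around $c^{*}$'' can never be arranged. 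Without the winding there is nothing to pin $(f_{1},f_{2})(x^{*})$ to the inaccessible point $p^{*}$: since $A\times B$ is compact and convex, the fiber map $c \mapsto L^{-1}(c)\cap(A\times B)$ has closed graph and compact values, hence is upper semicontinuous, so the fiber over $c^{*}$ always contains the limits of points chosen in nearby fibers. A splitting is therefore free to take an accessible value of that fiber at $x^{*}$, and a single point of LSC failure yields no contradiction. What is needed is a global monodromy obstruction: a region of the shadow over which the fibers are singletons, so that the selection is forced there, with the forced values failing to extend continuously.

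This is exactly what the paper's construction provides. Take $C = conv(K)$ for one full turn of the spiral $K=\{(\cos t,\sin t,t): 0\le t\le 2\pi\}$ and let $L$ be (essentially) the projection onto the $xy$-plane, so that $L(C)$ is the closed unit disk $D^{2}$. Over every boundary point of $D^{2}$ other than $(1,0)$ the fiber is the single spiral point above it, so any selection restricted to $\partial D^{2}\setminus\{(1,0)\}$ must coincide with the spiral parametrization; its one-sided limits at $(1,0)$ are $(1,0,0)$ and $(1,0,2\pi)$, so no continuous selection of the fiber map exists over $\partial D^{2}$, hence none over $D^{2}$. Taking $f: D \to D^{2}$ to be a homeomorphism (a continuous selection of the constant mapping $F\equiv L(F_{1},F_{2})$) then rules out every splitting $(f_{1},f_{2})$. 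If you want to keep your own outline, you must replace the smooth cap and the tangency-plus-winding argument by a body of this kind, in which the shadow boundary itself carries the obstruction.
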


The construction uses the convex hull $C$ of one full rotation
of the spiral
$
K=\{(\cos t, \sin t,t):0\leq t \leq 2\pi\}
$
and the fact that its projection onto the $xy$-plane admits no continuous singlevalued selections.

\medskip
{\bf 7.5.}  As for the finite-dimensional selection Theorem 4,  during the discussed period three
volumnuous papers were devoted to its versions, generalizations or applications. In \cite{GVDen}
Gutev and Valov proved the following result on the density of selections.

\begin{theorem} [\cite{GVDen}]
Let for a mapping $F:X \to Y$ all assumptions of Theorem 4 be true. Let in addition $\Psi:X \to Y$ be a mapping with an $F_\sigma$-graph such that for each $x \in X$ the intersection
$F(x) \cap \Psi(x)$ is a $\sigma Z_{n+1}$-subset of the value $F(x)$. Then in the set $Sel(F)$ of all continuous singlevalued selections of $F$ endowed with the fine topology the subset of those
selections of $F$ which pointwisely avoid values $\Psi(x)$ constitutes a dense $G_\delta$-subspace.
\end{theorem}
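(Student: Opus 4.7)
We wish to show that
\[
U \;=\; \{ f \in \mathrm{Sel}(F) : f(x) \notin \Psi(x) \ \text{for every}\ x \in X \}
\]
is a dense $G_\delta$ subset of $\mathrm{Sel}(F)$ in the fine topology. First I would handle the $G_\delta$ part. Writing the $F_\sigma$-graph of $\Psi$ as $\mathrm{Graph}(\Psi) = \bigcup_{m \in \mathbb{N}} A_m$ with each $A_m$ closed in $X \times Y$, put
\[
U_m \;=\; \{ f \in \mathrm{Sel}(F) : (x, f(x)) \notin A_m \ \text{for every}\ x \in X \},
\]
so $U = \bigcap_m U_m$. For $f \in U_m$, the graph of $f$ lies in the open set $(X\times Y)\setminus A_m$, so each $(x,f(x))$ has a rectangular neighborhood $V_x\times B(f(x),\eta_x)$ disjoint from $A_m$. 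A paracompact partition-of-unity argument on the cover $\{V_x\}$ produces a continuous $\varepsilon_m : X \to (0,\infty)$ such that every $g$ with $d(g(x), f(x)) < \varepsilon_m(x)$ for all $x$ satisfies $(x,g(x)) \notin A_m$; hence $U_m$ is open in the fine topology.

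For density it suffices, by Baire category (the fine topology makes $\mathrm{Sel}(F)$ a Baire space since $Y$ is completely metrizable and the values of $F$ are closed), to show that each $U_m$ is dense. So fix $f_0 \in \mathrm{Sel}(F)$ and a continuous $\varepsilon_0 : X \to (0,\infty)$; we must find $g \in U_m$ with $d(f_0,g) < \varepsilon_0$. A key preliminary observation is that a closed subset of a $Z_{n+1}$-set is itself a $Z_{n+1}$-set, hence the closed set $A_m(x) \cap F(x)$ is a $\sigma Z_{n+1}$-subset of $F(x)$; write it as $\bigcup_k Z_k(x)$, with each $Z_k(x)$ being a $Z_{n+1}$-set in $F(x)$.

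The core step is a single-layer perturbation lemma: given any $f \in \mathrm{Sel}(F)$, any continuous $\delta : X \to (0,\infty)$, and a family $\{Z(x)\}_{x \in X}$ of $Z_{n+1}$-sets in $F(x)$ behaving well with respect to the structure of $F$, there exists $g \in \mathrm{Sel}(F)$ with $d(f,g) < \delta$ pointwise and $g(x) \notin Z(x)$ for all $x$. The proof of this lemma retraces the proof of Theorem 4: using $\dim X \le n+1$ and the equi-local $n$-connectedness, one works over the nerve of a sufficiently fine open cover, builds local selections (which exist by LSC of $F$), and pushes them off $Z$ on the $(n+1)$-skeleton by invoking the defining $Z_{n+1}$-property (every map of an at most $(n+1)$-dimensional compactum into $F(x)$ can be perturbed to avoid $Z(x)$); the inductive extension across skeleta then uses that $F(x) \setminus Z(x)$ remains $n$-connected (since $Z(x)$ is $Z_{n+1}$ in the $n$-connected $F(x)$) and the equi-$\mathrm{LC}^n$ condition, inherited up to arbitrarily small loss.

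Granted this perturbation lemma, I would iterate it with rapidly shrinking controls $\delta_k$ chosen so that $\sum_k \delta_k < \varepsilon_0$, producing a Cauchy sequence $f_k \in \mathrm{Sel}(F)$ with $f_{k+1}(x) \notin Z_k(x)$ and, crucially, with $f_{k+1}$ avoiding each of $Z_1(x), \dots, Z_k(x)$ by a positive margin that majorizes $\sum_{j>k}\delta_j$; this bookkeeping guarantees that the uniform limit $g := \lim_k f_k$ still avoids every $Z_k(x)$, so $(x,g(x))\notin A_m$ for all $x$, and closedness of the values gives $g \in \mathrm{Sel}(F)$. The main obstacle is precisely the perturbation lemma together with the inductive margin-preservation: tuning the $\delta_k$ against the $Z_{n+1}$-set approximation moduli so that avoidance persists in the limit is the delicate bookkeeping step, after which the theorem follows from the Baire category argument.
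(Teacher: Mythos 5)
Your outline matches the route the survey indicates: the paper records (as its Theorem 62) precisely your single-step perturbation result --- a selection $g$ with $g(x)\notin\Psi(x)$ and $\varrho(g(x),f(x))<\varepsilon(x)$ for a \emph{closed-graph} $\Psi$ whose intersections with the values are $\sigma Z_{n+1}$-sets --- and states that Theorem 61 follows from it by a sequential process, which is exactly your decomposition $U=\bigcap_m U_m$ into fine-open sets, density of each $U_m$ via the perturbation step, and Baireness of $Sel(F)$ in the fine topology. The genuinely hard content is the perturbation lemma itself (pushing selections off $Z_{n+1}$-sets by reworking the proof of the finite-dimensional selection theorem), which you, like the survey, only sketch; the surrounding category argument is correct.
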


Recall that for a metric range space $(Y;d)$ the 
{\it fine} topology\index{fine topology}
in $C(X,Y)$ is defined by its local base
$$
V(f, \varepsilon(\cdot)) = \{g \in C(X,Y):\,\,d(g(x),f(x)) < \varepsilon(x),\,\,x \in X\},
$$
when $\varepsilon(\cdot)$ runs over the set of all positive continuous functions on $X$. Also, for metric space $(B;\varrho),$ a subset $A \subset B$ is said to be a  $\sigma Z_{n+1}$-subset if
it is the union of countably many
sets $A_i \subset B$ such that each continuous mapping from the $(n+1)$-cell to $B$ can be approximated
(with respect to the uniform topology)
by a sequence of continuous mappings to $B \setminus A_i$.  The sequential process of proving
this theorem is based on the following result.

\begin{theorem} [\cite{GVDen}]
Let all assumptions of Theorem 61 be true with exception that $\Psi$ is a closed-graph
mapping. Let $f$ be a continuous singlevalued selection of $F$, $\varepsilon(\cdot)$ a positive continuous function on $X$ and $\varrho$ a compatible metric on $Y$. Then
$F$ admits a continuous singlevalued selection $g$ such that $g(x) \notin \Psi(x)$ and
$\varrho(g(x),f(x)) < \varepsilon(x),$ for every $x \in X$.
\end{theorem}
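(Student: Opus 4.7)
Plan. My strategy is to construct $g$ as the uniform limit of a sequence of continuous singlevalued selections $f=f_0,f_1,f_2,\ldots$ of $F$, engineered so that the limit remains $\varepsilon$-close to $f$ and its graph lies in the open set $W:=(X\times Y)\setminus\Gamma_{\Psi}$. The set $W$ is open precisely because $\Psi$ has closed graph, and then $g(x)\notin\Psi(x)$ for every $x$ is equivalent to $\mathrm{graph}(g)\subset W$. A preliminary one-step push, carried out using the $\sigma Z_{n+1}$-decomposition $F(x)\cap\Psi(x)=\bigcup_{i} A_i(x)$ to move $f_0=f$ into $W$ at error below $\varepsilon/2$, is done first, so that henceforth every $f_k$ already satisfies $\mathrm{graph}(f_k)\subset W$.

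At stage $k\ge 0$, I would choose an open neighborhood $U_k$ of $\mathrm{graph}(f_k)$ with $\overline{U_k}\subset W\cap U_{k-1}$ and vertical $\varrho$-width at most $2^{-k-3}\varepsilon(x)$ at each $x$; this is possible since $W$ is open and $f_k$ continuous. Consider the restricted LSC submapping $F_k(x):=\{y\in F(x):(x,y)\in U_k\}$. Applying Theorem~4 to $F_k$ yields a continuous selection $f_{k+1}$ of $F_k$, and hence of $F$, provided $F_k$ satisfies the $n$-connectedness and equi-$LC^n$ hypotheses --- this is precisely where the $\sigma Z_{n+1}$-decomposition enters: for a $j$-sphere $(j\le n)$ in $F_k(x)$, its filling $(j+1)$-cell in $F(x)$, obtained from $n$-connectedness of $F(x)$, can be pushed off each $A_i(x)$ by the $(n+1)$-cell approximation property, and a diagonal argument across $i$ places the filling inside $F_k(x)$. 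The sequence $(f_k)$ is then uniformly Cauchy in $\varrho$ with total displacement below $\varepsilon(x)$, so $g:=\lim_k f_k$ satisfies $\varrho(g(x),f(x))<\varepsilon(x)$, $g(x)\in F(x)$ (via the standard Michael completeness reduction), and $(x,g(x))\in\overline{U_k}\subset W$ for each $k$ by the nested inclusions $\mathrm{graph}(f_m)\subset U_m\subset U_k$ valid for every $m\ge k$. Hence $g$ avoids $\Psi$ pointwise.

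The main obstacle is the uniformity required to apply Theorem~4 to $F_k$: while pointwise $n$-connectedness of each $F_k(x)$ follows from the diagonal approximation argument above, the equi-$LC^n$ property of the family $\{F_k(x)\}_{x\in X}$ requires a \emph{uniform} modulus for the $(n+1)$-cell approximations of the $A_i(x)$ across $x\in X$, which is not directly supplied by the pointwise $\sigma Z_{n+1}$-hypothesis. Overcoming this, presumably via controlled refinements of a locally finite open cover of $X$ at each inductive stage and combining the already assumed equi-$LC^n$-structure of $\{F(x)\}_{x\in X}$ with the closed-graph control of $\Gamma_{\Psi}$, is the delicate technical point of the argument.
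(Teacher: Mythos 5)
The survey itself contains no proof of this statement --- it is quoted from Gutev and Valov \cite{GVDen}, with only the remark that Theorem 61 is deduced from it by a sequential process --- so your attempt can only be judged on its own terms, and it has two genuine problems. The first is architectural: your ``preliminary one-step push,'' which is supposed to move $f_{0}=f$ into $W=(X\times Y)\setminus\Gamma_{\Psi}$ with error below $\varepsilon/2$, \emph{is} the theorem being proved (with $\varepsilon/2$ in place of $\varepsilon$), and no argument is offered for it beyond invoking the decomposition $F(x)\cap\Psi(x)=\bigcup_{i}A_{i}(x)$. Conversely, if that step could be carried out, you would already be done, since a selection whose graph lies in $W$ avoids $\Psi$ pointwise; the subsequent iteration with the nested tubes $U_{k}\subset W$ then produces nothing that $f_{0}$ does not already give. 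The nested-closure device $\overline{U_{k+1}}\subset U_{k}\subset W$ is indeed the right tool for making avoidance of the closed set $\Gamma_{\Psi}$ survive passage to the limit, but the induction must be organized so that the difficulty is actually spread over the stages --- for instance over the indices $i$, with $f_{k}$ stably avoiding $A_{1}(x),\dots,A_{k}(x)$ --- rather than discharged entirely at stage $0$. As written, either stage $0$ is unjustified or all later stages are redundant. (A smaller point: $X\times Y$ need not be normal for $X$ paracompact and $Y$ completely metrizable, so even producing $U_{k}$ with $\overline{U_{k}}\subset W$ requires a fiberwise argument using a partition of unity on $X$, not just ``$W$ is open and the graph of $f_{k}$ is closed.'')

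The second problem is the one you flag yourself, and it is not a technicality: to apply Theorem 4 to $F_{k}(x)=\{y\in F(x):(x,y)\in U_{k}\}$ you must show each $F_{k}(x)$ is nonempty and $n$-connected and that the family $\{F_{k}(x)\}_{x\in X}$ is equi-$LC^{n}$; moreover these values are relatively open in $F(x)$, hence not complete, so Theorem 4 does not apply verbatim (completeness of the values is what forces the limit point into $F(x)$). The pointwise $\sigma Z_{n+1}$-hypothesis gives, for each fixed $x$ and each fixed $i$, approximation of maps of the $(n+1)$-cell into $F(x)$ by maps missing $A_{i}(x)$, with no uniformity in $x$, no control on the size of the correction (whereas your filling must stay inside the prescribed tube $U_{k}$), and no mechanism for missing all $A_{i}(x)$ simultaneously: a push off $A_{2}(x)$ can re-enter $A_{1}(x)$ unless the avoidance already achieved is stable, which would require the $A_{i}(x)$ to be closed in $F(x)$ or their complements to be open --- neither is assumed. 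The ``diagonal argument across $i$'' is therefore precisely the missing lemma, and it is where essentially all the content of the theorem lives. Until it is supplied, what you have is a plausible plan rather than a proof.
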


Next, recall that Shchepin and Brodsky \cite{BrSc} proved that for any paracompact space $X$ with
$dimX \leq n+1$, a completely metrizable space $Y$, and for any $L$-filtration $\{F_i\}_{i=0}^{n+1}$  of maps $F_i:X \to Y,$
the ending mapping $F_{n+1}$ admits a continuous singlevalued selection.
One of the points in the definition of $L$-filtration (cf.  \cite{RS-2}), is the property
that graph-values $\{x\} \times F_{i}(x)$ are closed in some prescribed $G_\delta$-subset
of the product $X \times Y$. In \cite[Corollary 7.10]{GuFin} Gutev proved a generalization
of this result to the case when the graph of mapping $F_{n+1}$ is
a $G_\delta$-subset of
$X \times Y$. 
Roughly speaking, Gutev proposed his own version of the Shchepin-Brodsky 
$L$-filtrations technique. In particular, he generalized  the previous theorem
\cite[Corollary 7.12]{GuFin}.

\begin{theorem} [\cite{GuFin}]
Theorem 61 is true under the change of the assumption
$(\{F(x)\}_{x \in X}) 
\,\,
{\rm is}
\,\,
ELC^{n})$
with the restriction that $F$ be an 
$ELC^{n}$-mapping,
i.e.
${\{x\} \times \{F(x)\}}_{x \in X}) \,\,{\rm is}\,\,ELC^{n}$ in the product $X \times Y$.
\end{theorem}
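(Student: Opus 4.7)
The natural plan is to lift $F$ to the product space. Define $\widetilde{F}:X\to X\times Y$ by $\widetilde{F}(x)=\{x\}\times F(x)$. The hypothesis that $F$ is an $ELC^{n}$-mapping is, by the very definition given in the statement, precisely the assertion that the family $\{\widetilde{F}(x)\}_{x\in X}$ is $ELC^{n}$ in $X\times Y$ in the classical Michael sense. Moreover $\widetilde{F}$ is LSC iff $F$ is, its values are closed in $X\times Y$ (since $\{x\}$ is closed in the Hausdorff space $X$ and $F(x)$ is closed in $Y$) and $n$-connected (being homeomorphic to $F(x)$), and any selection of $\widetilde{F}$ projects under $X\times Y\to Y$ to a selection of $F$. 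The auxiliary map $\widetilde{\Psi}(x)=\{x\}\times\Psi(x)$ inherits the $F_{\sigma}$-graph property as well as the $\sigma Z_{n+1}$-in-$\widetilde{F}(x)$ structure. One would therefore like to apply Theorem 61 directly to the pair $(\widetilde{F},\widetilde{\Psi})$ and read off the conclusion by projecting.

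The obstacle is that Theorem 61 requires a completely metrizable range, whereas $X\times Y$ is generally not completely metrizable when $X$ is only a paracompact space with $\dim X\le n+1$. Overcoming this is the core of the argument, and the plan is to follow Gutev's sieve approach of \cite{GuFin}: replace the complete metric on the range by a sieve on $X\times Y$, built from a decreasing sequence of normal open covers of $X$ (provided by paracompactness) combined with the complete metric on $Y$. A basic sieve neighborhood of $(x_{0},y_{0})\in\Gamma_{F}$ at stage $k$ should have the shape $V_{x_{0}}\times B(y_{0},2^{-k})$, with $V_{x_{0}}$ chosen small enough that the graph-$ELC^{n}$ condition at $(x_{0},y_{0})$ delivers uniform extension estimates for every $x\in V_{x_{0}}$.

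Given such a sieve, the sequential construction parallels the proof of Theorem 61, but performed on each sufficiently small member of a locally finite open refinement of $X$. Using $\dim X\le n+1$ one passes to a refinement whose nerve has dimension $\le n+1$ and extends an assignment of vertices into values of $F$ inductively over the $0$-, $1$-, \ldots, $(n+1)$-skeleta, invoking graph-$ELC^{n}$ at each extension step. For density in $Sel(F)$, enumerate a countable family $\{A_{k}\}$ of $Z_{n+1}$-sets whose union contains $F(x)\cap \Psi(x)$ for every $x$, and at stage $k$ perturb the $(n+1)$-skeletal extension off $A_{k}$ within an error of $2^{-k}$, using the very definition of a $Z_{n+1}$-set. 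A Cauchy-type argument in the product sieve, together with completeness of $Y$, then yields uniform convergence to a continuous selection of $F$ avoiding $\Psi$, and running the construction from an arbitrary initial control gives the dense $G_{\delta}$ statement.

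The hardest point is the coordination of two scales. Graph-$ELC^{n}$ provides only local-in-$X$ uniformity, so the open cover of $X$ used at stage $k$ must be shrunk not merely so that $Y$-images have diameter $<2^{-k}$, but also so that the $X$-fiber of every simplex lies in a neighborhood on which the extension radii prescribed by graph-$ELC^{n}$ are $<2^{-k}$. Ensuring that both scales can always be shrunk simultaneously, so that the inductive simplicial extension and each $Z_{n+1}$-perturbation live inside the prescribed sieve neighborhoods, is exactly what the product-sieve machinery of \cite{GuFin} is designed to handle.
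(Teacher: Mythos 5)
The survey offers no proof of this statement beyond citing \cite[Corollary 7.12]{GuFin} and describing it as an instance of Gutev's sieve-theoretic version of the Shchepin--Brodsky $L$-filtration technique; your outline --- reading the graph-$ELC^{n}$ hypothesis as the classical $ELC^{n}$ condition for the lifted mapping $x\mapsto\{x\}\times F(x)$, identifying the failure of complete metrizability of $X\times Y$ as the real obstacle, and replacing the metric by a product sieve that coordinates the cover scale on $X$ with the metric scale on $Y$ --- is essentially that same approach. Your sketch is correct in outline, with the caveat that the hardest steps (the construction of the sieve and the verification that the two scales can always be shrunk simultaneously) are delegated to the machinery of \cite{GuFin} rather than carried out.
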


As an example of a result on improvement of near-selections in \cite[Corollary 7.15]{GuFin} we
quote the following:
\begin{theorem} [\cite{GuFin}]
Let $X$ be a countably paracompact and normal space, $(Y;d)$ a complete metric space
and $F:X \to Y$ a Hausdorff continuous closedvalued mapping all of whose values $F(x)$ are uniformly
$LC^n$-subsets of $Y$. Then for every positive lower semicontinuous numerical function $\varepsilon(\cdot)$ on $X$ there exists a positive lower semicontinuous numerical function $\delta(\cdot)$ on $X$ with the following property:

If $g:X \to Y$ is a continuous singlevalued  $\delta$-selection of $F$ then there exists a continuous singlevalued
selection $f$ of $F$ such that
$d(g(x),f(x)) < \epsilon(x),\,\,x \in X$.

Moreover, if all values $F(x)$ are $n$-connected then $F$ admits a continuous singlevalued selection.
\end{theorem}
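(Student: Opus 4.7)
The plan is to exploit the uniform $LC^n$ property to iteratively improve approximate selections, with Hausdorff continuity of $F$ compensating for the absence of any dimensional restriction on $X$. First I would record the uniform modulus provided by the hypothesis: for every $\eta > 0$ there exists $\xi(\eta) > 0$ such that for every $x \in X$ and every $k \in \{0, 1, \dots, n\}$, any continuous map from $S^k$ to $Y$ whose image has diameter less than $\xi(\eta)$ and lies in the $\xi(\eta)$-neighborhood of $F(x)$ extends to a continuous map from $D^{k+1}$ whose image has diameter less than $\eta$ and lies in the $\eta$-neighborhood of $F(x)$. The required function $\delta(\cdot)$ will be chosen so that at each $x$ its value is dominated by the composition $\xi^{n+2}(\varepsilon(x)/2)$, and taking suitable LSC envelopes keeps $\delta(\cdot)$ lower semicontinuous.

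Next, given a continuous $\delta$-selection $g$ of $F$, I would produce a uniformly convergent sequence $g_0 := g, g_1, g_2, \dots$ of continuous approximate selections with $g_k$ an $\varepsilon_k$-selection of $F$, $\varepsilon_k \searrow 0$, and $d(g_{k+1}(x), g_k(x))$ bounded by a summable sequence of total mass below $\varepsilon(x)$. At stage $k$, countable paracompactness and normality of $X$ yield a locally finite open cover $\{U_\alpha\}$ with subordinated partition of unity $\{\phi_\alpha\}$ so fine that, on each $U_\alpha$, both the Hausdorff oscillation of $F$ and the oscillation of $g_k$ are below $\xi(\varepsilon_{k+1})$. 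On each $U_\alpha$ pick $y_\alpha \in F(x_\alpha)$ with $d(y_\alpha, g_k(x_\alpha))$ small, and define $g_{k+1}$ by patching the $y_\alpha$ through $\{\phi_\alpha\}$ using the uniform $LC^n$ filling recipe to supply the interpolating extensions.

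Because $\varepsilon_k \to 0$ and $F$ is closed-valued, the uniform limit $f = \lim_k g_k$ is a continuous selection of $F$ with $d(f, g) < \sum_k \varepsilon_k < \varepsilon$. For the \emph{moreover} clause, it suffices to combine this improvement scheme with the existence of an initial continuous $\delta$-selection for every prescribed LSC $\delta(\cdot) > 0$. When the values $F(x)$ are $n$-connected, such an initial $\delta$-selection is obtained by starting from a local constant near-selection on a neighborhood of some base point $x_0$ (Hausdorff continuity makes any $y_0 \in F(x_0)$ work locally) and extending inductively, the vanishing of $\pi_k(F(x))$ for $k \leq n$ together with uniform $LC^n$ killing the homotopical obstructions on successive members of a fine cover.

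The principal obstacle is this inductive patching step. In the classical finite-dimensional selection Theorem 4, a paracompact domain of dimension at most $n+1$ lets one induct on the skeleta of the nerve of the cover, stopping at the $(n+1)$-skeleton where $\pi_n(F(x))=0$ closes the argument. Without any dimensional assumption on $X$ the nerve can be arbitrarily complicated, so the proof must lean on Hausdorff continuity: it guarantees that on each member of a sufficiently fine cover, $F$ is essentially constant in the Hausdorff sense, so the obstructions one encounters at each stage involve only simplices whose vertices $x_\alpha$ have all the $F(x_\alpha)$ within a single $\xi$-neighborhood of one another. Making this quantitative control uniform across $X$, and verifying lower semicontinuity of the resulting $\delta(\cdot)$, is the main technical burden.
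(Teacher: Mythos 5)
The survey quotes this statement from Gutev's paper \cite{GuFin} (Corollary 7.15 there) without reproducing a proof, so your proposal can only be judged on its own terms; on those terms it has two genuine gaps. The first is the cover construction. You write that ``countable paracompactness and normality of $X$ yield a locally finite open cover $\{U_\alpha\}$ with subordinated partition of unity'' refining an arbitrarily fine cover. That is exactly what these hypotheses do \emph{not} give: only \emph{countable} open covers of a countably paracompact normal space are guaranteed to admit locally finite open refinements and subordinated partitions of unity; for arbitrary open covers one needs full paracompactness, which the theorem deliberately avoids assuming. Any correct proof must manufacture countable covers out of the data at hand --- this is where the Hausdorff continuity of $F$, the continuity of the given $g$, and the Dowker insertion theorem (on a countably paracompact normal space every positive lower semicontinuous function admits a positive continuous minorant; this is how $\varepsilon(\cdot)$ and $\delta(\cdot)$ are actually exploited) must enter. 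Your sketch never confronts this and silently upgrades the hypotheses to paracompactness.

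The second gap is the one you yourself flag as ``the main technical burden'' but then do not discharge. Uniform $LC^n$ (together with $C^n$, for the \emph{moreover} clause) controls the filling of spheres only up to dimension $n$, while the nerve of a locally finite cover of $X$ carries no dimension bound; extending a partial map over an $(n+2)$-simplex meets an obstruction in $\pi_{n+1}$ of the values, which need not vanish. Your proposed remedy --- that on a sufficiently fine cover all the $F(x_\alpha)$ lie in a single $\xi$-neighborhood of one another --- does not touch this: for a constant mapping $F\equiv P$ the values coincide exactly, the nerve is still arbitrarily high-dimensional, and a skeleton-by-skeleton induction over the nerve breaks down at dimension $n+2$ unless $\pi_{n+1}(P)=0$. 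So the method, as described, fails already in the simplest case, even though the conclusion there is trivially true. Whatever mechanism makes the result dimension-free (in \cite{GuFin} it comes out of Gutev's version of the Shchepin--Brodsky filtration machinery mentioned before Theorem 63, i.e., a controlled perturbation of the globally given continuous $g$ rather than a reconstruction of a selection over a nerve), your outline does not contain it; both the improvement step and the construction of the initial $\delta$-selection in the \emph{moreover} clause depend on it.
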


During preparation of the previous survey \cite{RS-2}, Brodsky, Shchepin and Chigogidze announced
results on problem of local triviality of Serre fibrations with two-dimensional fibers. Their paper \cite{BrScCh} appeared in 2008.

\begin{theorem}
Let $p:E \to B$ be a Serre fibration of a locally connected compact space $E$ onto a compact
$ANR$-space $B$. Let all fibers $p^{-1}(x)$ be homeomorphic to a fixed two-dimensional manifold
$M$ which differs from the sphere and the projective plane. Then $p$ admits a continuous section (i.e. $p^{-1}:B \to E$ admits a continuous singlevalued selection) provided that one of the following restrictions holds:
\begin{description}
\item{(1)}   $\pi_1(M)$ is abelian and $H^2(B;\pi_1(M))=0$;
\item{(2)}    $\pi_1(M)$ is nonabelian,  $M$ is not
the Klein bottle and $\pi_1(B)=0$;
\item{(3)}    $M$ is the Klein bottle and $\pi_1(B)= \pi_2(B)=0$.
\end{description}
\end{theorem}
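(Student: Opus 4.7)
The plan is to convert the section problem into a continuous selection problem and then resolve it by a filtration-based obstruction argument in the spirit of the $L$-filtrations of Shchepin--Brodsky. Set $F(b) = p^{-1}(b)$; a continuous section of $p$ is exactly a continuous singlevalued selection of the multivalued map $F: B \to E$. The first preparatory step is to promote $p$ to a locally trivial bundle, after which $F$ becomes automatically lower semicontinuous since locally trivial bundles are open. Because $E$ is a locally connected compactum, $B$ a compact $ANR$, and the fixed fiber $M$ is a $2$-manifold distinct from $S^2$ and $\mathbb{R}P^2$, this upgrade is supplied by the surface-fiber local triviality theorems internal to the Shchepin program; the exclusion of $S^2$ and $\mathbb{R}P^2$ is exactly what makes the upgrade possible.

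Next I would exploit the fact that $M \ne S^2, \mathbb{R}P^2$ forces the universal cover $\widetilde{M}$ to be contractible, so $M$ is a $K(\pi_1(M), 1)$ and $\pi_k(M) = 0$ for all $k \ge 2$. Sections therefore always exist over $0$- and $1$-dimensional stages of a suitable filtration of $B$, and the only obstruction to a global section is a $2$-dimensional cohomology class with coefficients in $\pi_1(M)$, possibly twisted by the $\pi_1(B)$-action and, when $\pi_1(M)$ is nonabelian, interpreted in the nonabelian sense. I would perform the extension step by step along an $L$-filtration of $B$, applying Theorem 4 locally when the family of values in the fiber is verified to be equi-locally $n$-connected --- which it is, since polytopes inside $M$ lift to convex subsets of $\widetilde{M} \cong \mathbb{R}^2$.

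Case $(1)$ is then immediate: with $\pi_1(M)$ abelian, the primary obstruction lives in the ordinary group $H^2(B; \pi_1(M))$, which vanishes by hypothesis. For Case $(2)$, the assumption $\pi_1(B) = 0$ trivializes the coefficient action, and for $M$ aspherical nonabelian non-Klein-bottle the center $Z(\pi_1(M))$ is trivial (these are hyperbolic surface groups or free groups); the nonabelian primary obstruction, parametrized via homomorphisms from $\pi_2(B)$ modulo $\pi_1(M)$-conjugation, is then forced to be trivial purely by the centerlessness of $\pi_1(M)$. For Case $(3)$, the Klein bottle group $\pi_1(K)$ has infinite cyclic center generated by the square of the orientation-reversing generator, so a secondary obstruction survives and must be killed by the additional assumption $\pi_2(B) = 0$.

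The hardest step will be Case $(2)$. Giving rigorous meaning to the nonabelian $H^2$-style obstruction for surface-group fibers --- identifying it with an appropriate quotient of $\mathrm{Hom}(\pi_2(B), \pi_1(M))$ and extracting its vanishing from $\pi_1(B) = 0$ together with centerlessness of $\pi_1(M)$ --- requires delicate algebraic bookkeeping with no direct analogue in the abelian case. I expect most of the technical effort to be spent on this analysis, with the selection machinery of Theorems 3 and 4 and the $L$-filtration technique of Brodsky--Shchepin providing the topological scaffolding that converts pointwise obstruction vanishing into a genuine continuous section of $p$.
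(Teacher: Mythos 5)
There is a genuine gap, and it sits at the very first step. You propose to ``promote $p$ to a locally trivial bundle'' and then let classical bundle obstruction theory do the work. But the local triviality of Serre fibrations of compacta with $2$-manifold fibers is not an available input here: it was precisely the open problem that motivated Brodsky, Shchepin and Chigogidze, and their route runs in the opposite direction --- the section theorem is proved \emph{directly} for the Serre fibration, by the selection machinery for $L$-filtrations (the interplay of $L$-theory and $U$-theory of multivalued mappings from \cite{BrSc}, \cite{RS-2}), and local triviality is a consequence, not a hypothesis. Your claim that excluding $S^2$ and $\mathbb{R}P^2$ ``is exactly what makes the upgrade possible'' is also misplaced; that exclusion is what makes the fiber aspherical, which is what the obstruction analysis needs, not what would make the fibration a bundle. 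Without the upgrade, your reduction to ordinary obstruction theory over a CW model of $B$ has no foundation: $E$ is only a locally connected compactum, $p^{-1}$ need not even be lower semicontinuous (that would require $p$ to be open), and this is exactly the regime the filtration technique was built for.

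A second, independent failure is your appeal to Theorem 4. Its hypothesis $(4)$ requires every value $F(x)$ to be $n$-connected, but the values here are the fibers $p^{-1}(b)\cong M$ with $\pi_1(M)\neq 0$ in every nontrivial case, so they are not even $1$-connected; equi-local connectedness of the family does not rescue this, and Theorem 4 also imposes $\dim X\le n+1$ while $B$ is an arbitrary compact $ANR$. The whole point of the $L$-filtration formalism is to replace the unavailable $C^n$ condition on single values by connectivity conditions on pairs in a filtration $F_0\subset\dots\subset F_{n+1}$ of submappings of $p^{-1}$, and it is from that construction that the obstruction classes you describe actually emerge. Your case analysis is morally aligned with the hypotheses (asphericity of $M$; the obstruction for simply connected $B$ living in $H^2(B;Z(\pi_1(M)))$, so that centerlessness of hyperbolic surface groups and nonabelian free groups handles $(2)$, while the infinite cyclic center of the Klein bottle group forces the extra assumption $\pi_2(B)=0$ in $(3)$), but as written the parametrization of the case $(2)$ obstruction ``via homomorphisms from $\pi_2(B)$'' is not correct, and in any event the algebraic bookkeeping is not the hard part: the hard part is converting vanishing of algebraic obstructions into an actual continuous section when $p$ is only a Serre fibration of compacta, and for that your proposal offers no substitute for the Shchepin--Brodsky selection results.
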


The proof follows the strategy of \cite{BrSc} and is based on selection results for $L$-filtrations which, roughly speaking, are
realized by the interplay between $L$-theory and $U$-theory
of multivalued mappings (cf.  \cite{RS-2}).

\medskip
{\bf 7.6.} Various results exist in continuous selection theory with some additional restriction of algebraic nature. For example, a multivalued mapping $F:X \to Y$ between (as a rule, locally convex topological) vector spaces is called {\it additive} if $F(x_1 + x_2) = F(x_1) + F(x_2)$,
$x_1,x_2\in X$, i.e., the image of the sum of two points coincides with the Minkowski sum of the
images of these points. Next,
$F:X \to Y$ is called {\it subadditive} ( resp., {\it superadditive}, resp., {\it convex}) if $F(x_1 + x_2) \subset F(x_1) + F(x_2)$, ( resp., if $F(x_1 + x_2) \supset F(x_1) + F(x_2)$, resp., if $F(tx_1 + (1-t)x_2) \subset
tF(x_1)+ (1-t)F(x_2),\,\,t\in [0,1]$).
The existence of linear selections was
proved for several such types of mappings with compact convex values in locally convex spaces. In particular, additive mappings
always have linear selections \cite{Gor}, and
every superadditive mapping possesses a linear selection\,\cite{Sm2}. Recently, Protasov \cite{Pr} obtained criteria on $X$ and $Y$ for the affirmative answer on existence of linear selections for arbitrary subadditive mappings.

\begin{theorem}
\begin{description}
\item{(1)} Any subadditive mapping $F:X \to Y$ with compact convex values has a continuous linear selection if and only if $\dim X = 1$ or $\dim Y = 1$.
\item{(2)} Any convex mapping $F:X \to Y$ with compact convex values has a continuous affine selection if and only if $\dim Y = 1$.
\end{description}
\end{theorem}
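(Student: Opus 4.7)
The plan is to treat both parts as equivalences: the $(\Leftarrow)$ direction constructs the linear (resp.\ affine) selection under the stated dimensional restriction, while the $(\Rightarrow)$ direction exhibits a counterexample when the bound fails. In the $(\Leftarrow)$ direction of part $(1)$, the case $\dim Y = 1$ reduces to a sandwich argument of Hahn--Banach type: identifying $Y = \mathbb{R}$ and writing $F(x) = [a(x), b(x)]$, the subadditivity $F(x_1 + x_2) \subset F(x_1) + F(x_2)$ unpacks to $a$ being superadditive, $b$ being subadditive, and $a \leq b$ pointwise. A classical sandwich theorem then supplies a linear functional $\ell \colon X \to \mathbb{R}$ with $a \leq \ell \leq b$, and continuity of $\ell$ follows from compactness of the values combined with the appropriate semicontinuity of $F$.

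The case $\dim X = 1$ of part $(1)$ is more delicate: identifying $X = \mathbb{R}$, a continuous linear selection amounts to a single vector $v \in Y$ with $tv \in F(t)$ for all $t \in \mathbb{R}$. Iterating subadditivity and using convexity of the values yields $F(nt) \subset n F(t)$, so the sets $A_q := q^{-1} F(q)$ for $q \in \mathbb{Q}^{+}$ form a decreasing filtered family of compact convex subsets of $F(1)$; the finite intersection property produces $v \in \bigcap_{q} A_q$, which gives $qv \in F(q)$ for every positive rational $q$, and continuity of $F$ then extends this relation to every real $t > 0$. The case $t < 0$ is symmetric, relying on $0 \in F(0)$, which is the standard normalization in this circle of problems. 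The $(\Leftarrow)$ direction of part $(2)$ with $\dim Y = 1$ runs in parallel: writing $F(x) = [a(x), b(x)]$, the convex-mapping condition unpacks to $a$ concave, $b$ convex, and $a \leq b$, and the affine version of the sandwich theorem gives an affine $\ell$ with $a \leq \ell \leq b$.

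The $(\Rightarrow)$ directions require constructing subadditive (resp.\ convex) compact-convex-valued mappings in dimensions $\geq 2$ that admit no continuous linear (resp.\ affine) selection. A natural approach is a geometric ``twisting'': take $X = Y = \mathbb{R}^2$ and let $F(x)$ be a rotated copy of a fixed compact convex set $K$, with rotation angle depending on $x$ in a subadditive (resp.\ convex) manner, arranged so that any would-be linear (resp.\ affine) selection would be forced to commute with the rotation on all of $X$, contradicting compactness of $K$. The hardest step will be precisely this construction: producing a concrete low-dimensional example in which subadditivity or convexity, compactness of values, and total absence of linear or affine selections simultaneously hold. By contrast, the $(\Leftarrow)$ sandwich arguments are largely routine once the inclusion condition on $F$ has been rewritten as scalar sub/superadditivity or convexity/concavity of the bracketing functions.
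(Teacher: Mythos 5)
This statement (Protasov's criterion) is quoted in the survey from \cite{Pr} without any proof, so there is no in-paper argument to compare against; your proposal has to stand on its own. As it stands, it does not: the substantive content of an ``if and only if'' criterion of this kind lies in the \emph{only if} halves, i.e.\ in exhibiting a subadditive (resp.\ convex) compact-convex-valued map with no continuous linear (resp.\ affine) selection whenever $\dim X\geq 2$ and $\dim Y\geq 2$ (resp.\ $\dim Y\geq 2$), and for these you offer only an intention (``a rotated copy of $K$ with angle depending subadditively on $x$'') while conceding it is the hardest step. No candidate map is written down, and it is not clear that a rotation-angle function can be made subadditive in the required set-valued sense; note also that the two parts demand genuinely different examples, since by part $(1)$ a subadditive map on a one-dimensional $X$ \emph{always} has a linear selection, whereas by part $(2)$ a convex map on a one-dimensional $X$ need not have an affine one. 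Until such examples are produced, the theorem is not proved.

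Two further points on the \emph{if} directions. First, in the $\dim X=1$ case your family $A_q=q^{-1}F(q)$, $q\in\mathbb{Q}^{+}$, does handle the positive ray (it is downward directed via $A_{mq}\subset A_q$ and common integer multiples), but the negative ray is \emph{not} simply ``symmetric'': applying the same argument to $t\mapsto F(-t)$ produces a possibly different direction vector, and to get a single $v$ with $tv\in F(t)$ for all $t$ you must intersect the constraints from both rays simultaneously, e.g.\ work with $C_q=q^{-1}F(q)\cap\bigl(-q^{-1}F(-q)\bigr)$, whose nonemptiness follows from $0\in F(0)\subset F(q)+F(-q)$. Second, both your continuity claims (``continuity of $\ell$ follows from compactness of the values combined with the appropriate semicontinuity of $F$'') and the passage from rational to real $t$ silently require a continuity hypothesis on $F$ that the survey's loose statement omits but that must be present: a singleton-valued $F(x)=\{f(x)\}$ with $f$ a discontinuous additive function is subadditive with compact convex values and has no continuous selection even for $\dim X=\dim Y=1$. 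The scalar sandwich step itself also needs the Kaufman/Mazur--Orlicz-type theorem for subadditive over superadditive functions (not the sublinear Hahn--Banach version), plus local boundedness to upgrade the resulting additive functional to a continuous linear one; these should be cited or proved rather than called ``classical'' in passing.
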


Moreover, in "only if" \, parts of $(1)$ and $(2)$ one can omit the continuity restriction: if $\dim Y \geq 2$ then there exists a convex mapping $F:X \to Y$ without affine selections. Applications in Lipschitz stability problem for linear operators in Banach spaces are presented in \cite{Pr} as well.

\medskip
{\bf 7.7.} We end our survey by some selected results from metric projection theory.
For more detailed information cf.  \cite{BDIK, Esp}.

Let us recall that the {\it operator of almost best approximation}, or 
{\it $\varepsilon$-projection,}\index{$\varepsilon$-projection}
of a real Banach space $(X,\|\cdot\|_X)$ to
a set $M\subset X$ is defined as the multivalued map
$$
x\mapsto P_{M,\varepsilon}(x) = \{z\in M: \|z-x\|_X\le \rho(x, M) +\varepsilon\},
$$
where $\rho(x,M)=\inf_{y\in M}\|x-y\|_X$ is the distance from $x$ to $M$. If $\varepsilon=0$, then $P_{M}=P_{M, 0}$ is the metric projection operator. Clearly, all sets $P_{M,\varepsilon}(x)$
are nonempty, whereas the equality $P_M (x) = \emptyset$  is in general,  possible.

If  $\|z-x\|_X\le \rho(x, M) +\varepsilon$ is replaced by $\|z-x\|_X\le (1+\epsilon) \rho(x, M)$ then
such multivalued mapping is called a
{\it multiplicative $\varepsilon$-projection onto $M$}\index{multiplicative $\varepsilon$-projection}.

Recall the Konyagin theorem \cite{Ko} which states that the metric $\varepsilon$-projection operator admits a continuous singlevalued selection in the case $X=C[0;1]$ with the standard norm
and where $M = R =\{\frac{f}{g}:\,\,  f\in U,  g \in W\}$ is the set of all generalized fractions with
$U$ and $W$ linear closed subspaces of $X$. Note that this result is not true for $X=L_p[0;1]$.

Ryutin \cite{Ryu-1, Ryu-2} considered the set
$$
R_{U,W} = Clos\left\{\frac{f}{g}:\,\,  f\in U,  g \in W\ \,\,{\rm and}\,\, g > 0 \right\}
$$
of generalized fractions in the space $X=C(K),$  where $K$ is connected metric compact, or in the
the space $X=L_1[0;1]$, and with finite-dimensional $U$ and $W$.

\begin{theorem} [\cite{Ryu-2}]
Let the intersection of the set $R_{U,W}$ with the closed unit ball $D \subset C(K)$ be  compact.
Then for every
$\varepsilon >0$ the multiplicative $\varepsilon$-projection of $D$ onto $R_{U,W}$ admits a uniformly continuous singlevalued selection.
\end{theorem}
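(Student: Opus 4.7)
The plan is to apply a selection theorem to the multivalued mapping $\Pi_\varepsilon : D \to R_{U,W}$ with $\Pi_\varepsilon(x) = \{z \in R_{U,W} : \|z-x\| \leq (1+\varepsilon)\rho(x,R_{U,W})\}$, and then extract a uniform modulus of continuity from the compactness hypothesis. First I would verify the basic properties of $\Pi_\varepsilon$: the values are nonempty (by the definition of $\rho(\cdot,R_{U,W})$), contained in a fixed bounded subset of $R_{U,W}$ (namely inside the ball of radius $1+(1+\varepsilon)$, whose intersection with $R_{U,W}$ is compact by hypothesis after rescaling), and in fact contained in a compact set. I would also check lower semicontinuity of $\Pi_\varepsilon$ via Lemma \ref{LSC}: given $(x_0,z_0) \in \Gamma_{\Pi_\varepsilon}$ with $\|z_0-x_0\| < (1+\varepsilon)\rho(x_0,R_{U,W})$ strictly, the 1-Lipschitz property of $x\mapsto\rho(x,R_{U,W})$ and the continuity of $x\mapsto\|z_0-x\|$ produce a neighborhood of $x_0$ on which the constant selection $x\mapsto z_0$ lies in $\Pi_\varepsilon(x)$; the boundary case $\|z_0-x_0\| = (1+\varepsilon)\rho(x_0,R_{U,W})$ is handled by density, using a slightly smaller multiplicative slack.

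The second and principal step, which I also view as the main obstacle, is to produce a continuous singlevalued selection of $\Pi_\varepsilon$ in spite of the fact that the values are in general \emph{not convex} (since $R_{U,W}$ is a closure of generalized fractions, not a linear subspace). The multiplicative slack $(1+\varepsilon)$ is essential here because it ``fattens'' the values: this is precisely the kind of situation where paraconvexity of subsets of $R_{U,W}\cap(\text{bounded set})$ should hold, with the function of nonconvexity tending to $0$ as $\varepsilon\to 0^+$. I would therefore attempt one of two routes: (a) bound the function of nonconvexity $\alpha_{\Pi_\varepsilon(x)}(\cdot)$ uniformly below $1$ using $\dim U,\dim W < \infty$ together with compactness of $R_{U,W}\cap D$, and then invoke the paraconvex selection machinery of Subsection 5.4; or (b) verify the equi-$LC^n$ condition on the family $\{\Pi_\varepsilon(x)\}$ (plausible because $R_{U,W}$ looks locally like the image of a finite-dimensional manifold of pairs $(f,g)$) and apply the finite-dimensional selection Theorem \ref{FinDim}, using that $D$ is metric hence paracompact.

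The third step is the passage from a mere continuous selection $s:D\to R_{U,W}$ to a uniformly continuous one. Because $D$ itself is noncompact, uniform continuity is not automatic; however, the image $s(D)\subset R_{U,W}\cap(1+(1+\varepsilon))D$ lies in a compact set, which is the main leverage. I would produce $s$ by a convergent sequence of approximate selections $s_n$ defined on a nested sequence of finite $\delta_n$-nets inside a suitable compact enlargement, and control the oscillation of $s_n$ on $\delta_n$-balls by the quantitative LSC estimate obtained in Step 1 (which is itself Lipschitz in $x$ with constant essentially $1+\varepsilon$). Passing to the limit, the modulus of $s$ will be dominated by a modulus depending only on $\varepsilon$ and on the geometry of the compact $R_{U,W}\cap D$, yielding uniform continuity. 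The genuinely delicate point here is ensuring that the ``paraconvex correction'' performed at each stage of the approximation does not destroy the quantitative modulus estimate; this is where one must use, in full strength, the compactness of $R_{U,W}\cap D$ together with the finite-dimensionality of $U$ and $W$.
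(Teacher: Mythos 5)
A preliminary remark: the survey states this theorem of Ryutin with a citation only and contains no proof of it, so there is nothing in the paper to match your argument against; I can only assess the proposal on its own terms, and it has real gaps at both of its load-bearing steps. Your Step 1 is essentially sound (scale-invariance $\lambda R_{U,W}=R_{U,W}$ does give compactness of $R_{U,W}\cap rD$ from the hypothesis, and passing to the strict-inequality submap and its pointwise closure handles lower semicontinuity, including at points of $R_{U,W}$). But in Step 2, route (b) cannot work as written: Theorem~\ref{FinDim} requires $\dim X\le n+1$ for the \emph{domain}, and your domain is the unit ball $D$ of $C(K)$, which is infinite-dimensional whenever $K$ is infinite; paracompactness of $D$ is not a substitute for that dimension hypothesis. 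Route (a) is only a hope: you give no argument that the values of the multiplicative $\varepsilon$-projection onto a closure of generalized fractions are uniformly $\alpha$-paraconvex with $\alpha<1$, and your heuristic that the function of nonconvexity tends to $0$ as $\varepsilon\to 0^{+}$ points in the wrong direction --- as $\varepsilon\to 0^{+}$ the values shrink toward the metric projection $P_{R_{U,W}}(x)$, which is precisely the pathological case (compare the negative results quoted in the same subsection: for splines with $d>1$ and for $R_{U,W}$ in $L_1[0;1]$, continuous selections exist only for $\varepsilon$ bounded away from $0$). Those counterexamples also show that any correct proof must use the specific structure of the situation (fractions $f/g$ with $g>0$, finite-dimensionality of $U$ and $W$, connectedness of the compactum $K$, and the compactness of $R_{U,W}\cap D$ for more than mere boundedness of the image); your outline never invokes any of this beyond boundedness.

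Step 3 has an independent problem: compactness of the image does not imply uniform continuity of a map on a noncompact domain (consider $t\mapsto\sin(1/t)$ on $(0;1]$), so it cannot be ``the main leverage.'' The quantitative estimate you extract in Step 1 is a Hausdorff-type continuity modulus for the \emph{multivalued} map, and converting such a modulus into a modulus for a \emph{singlevalued} selection is exactly the hard quantitative selection statement you have not established; for nonconvex values there is no general principle of this kind (the paper's Theorem 39 achieves it only for externally hyperconvex values in hyperconvex spaces). Until Step 2 is replaced by a concrete construction whose oscillation at each stage is explicitly controlled, Step 3 remains a statement of intent rather than a proof.
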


In the space $X=L_1[0;1]$ the situation is more complicated. Namely, in \cite{Ryu-1}
a wide class of pairs $(U,W)$ of finite-dimensional subspaces in $L_1[0;1]$ were found with
the property that there exists $\varepsilon_0=\varepsilon_{0}(U,W) > 0$ such that
the multiplicative $\varepsilon$-projection onto $R_{U,W}$ admits
a singlevalued continuous selection only if $\varepsilon \geq \varepsilon_0$.

Livshits \cite{Liv-1,Liv-2} considered $X=C[0;1]$ with the standard norm and
continuous selections of the metric $\varepsilon$-projection operator onto
the set of all {\it splines} (i.e. piecewise polynomials) with nonfixed nodes.
Namely, for a fixed $n,d\in \N,$ denote by $S_n^d[0,1]$
the set of all functions $f \in C[0;1]$ such that for some (depending on $f$) nodes
$0 = x_0<x_1 <\cdots<x_{n-1}<x_n=1,$ each restriction $f|_{[x_{k-1},x_k]}$  is a
polynomial of degree $\le d$.

\begin{theorem} [\cite{Liv-1,Liv-2}]
\begin{description}
 \item{(1)} A continuous singlevalued selection of the metric projection onto the set $S_n^1[0,1]$ exists if and only if $n \leq 2$;
\item{(2)} For any $\varepsilon > 0$ and any $n \in \N,$ there exists a continuous
singlevalued selection of the metric $\varepsilon$-projection onto the set $S_n^1[0,1]$;
\item{(3)}  For any  $ n >1$ and any $d>1$ there exists $\varepsilon = \varepsilon(n,d)$
such that there is no continuous singlevalued selection of
the metric $\varepsilon$-projection onto the set $S_n^d[0,1]$.
\end{description}
\end{theorem}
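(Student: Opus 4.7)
The plan is to treat the three parts separately, since the techniques differ substantially.

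For part (1) with $n=1$, the set $S_1^1[0,1]$ is the two-dimensional subspace of affine functions, which is a Haar subspace of $C[0,1]$; by the classical Chebyshev alternation theorem the best uniform approximant from a Haar subspace is unique and depends continuously on the target function, giving the desired selection directly. For $n=2$ there is a single interior node $x_1\in(0,1)$ to choose, plus linear pieces on either side. I would characterize the optimal configuration by an alternation-style criterion on each sub-interval, verify that in a generic open set of $C[0,1]$ the optimal $x_1$ is unique (with the degenerate cases $x_1\in\{0,1\}$ reducing to the $n=1$ Haar situation), and deduce continuity of $f\mapsto x_1(f)$ by a standard implicit-function argument on the first-order optimality conditions; the corresponding linear pieces are then continuous functions of both $f$ and $x_1(f)$.

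For $n\ge 3$ (the negative direction), the plan is to construct an explicit $f_0\in C[0,1]$ whose set of best approximants from $S_n^1[0,1]$ consists of splines with topologically distinct interior-node configurations that cannot be joined by a continuous path of best approximants. Perturbing $f_0$ along a suitable loop in $C[0,1]$ forces any putative continuous selection to switch between two such node assignments, producing a contradiction through a disconnection (or nontrivial homotopy class) of the best-approximation multifunction. This is the main technical obstacle of the theorem and requires a delicate construction of $f_0$ together with the obstructing loop.

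For part (2), I would first verify that $P_{S_n^1,\varepsilon}$ is LSC: if $p$ satisfies the strict inequality $\|p-x\|<\rho(x,S_n^1)+\varepsilon$, then any sufficiently small perturbation of $x$ preserves this inequality, yielding a local singlevalued selection and hence lower semicontinuity via Lemma~\ref{LSC}. Next I would show each value $P_{S_n^1,\varepsilon}(x)$ is contractible inside $S_n^1[0,1]$: given two $\varepsilon$-best approximants, pass to a common refined partition on which both are piecewise linear, and use the straight-line homotopy in the coefficient space; by the triangle inequality and convexity of $\|\cdot\|$ in the coefficients, the homotopy stays $\varepsilon$-near-best. A compactvalued LSC refinement (Theorem~\ref{CompSel}) followed by Michael's convexvalued theorem applied to the closed convex hulls, or directly the van de Vel--Horvath selection theorem (Theorem~\ref{Horth_2}) for a suitable convexity on $S_n^1[0,1]$, then produces the continuous singlevalued $\varepsilon$-selection.

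For part (3), the strategy is to show that for $d\ge 2$ the nonlinearity of the polynomial pieces in the node position is strong enough that an $\varepsilon$-buffer cannot repair the obstruction from part (1). Concretely, for some $f_0$ depending on $n$ and $d$, I would identify two $\varepsilon(n,d)$-best approximants whose node configurations lie in different components of the set of near-minimizers, and exhibit a loop through $f_0$ that forces any selection to cross between these components; the critical value $\varepsilon(n,d)$ is chosen small enough that the two branches remain topologically separated while both continue to contain $\varepsilon$-best approximants. As with part (1) for $n\ge 3$, producing such an explicit configuration and verifying the separation estimate is the main obstacle.
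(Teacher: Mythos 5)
This theorem is stated in the survey as a cited result of Livshits \cite{Liv-1,Liv-2}; the paper contains no proof of it, so there is nothing internal to compare your argument against. Judged on its own terms, your proposal has a concrete gap in part (2) and leaves the negative parts unproved. For part (2), the contractibility argument fails: if $p_0,p_1\in S_n^1[0,1]$ have different interior nodes, their common refinement has up to $2n-1$ knots, so the straight-line homotopy $(1-t)p_0+tp_1$ is piecewise linear with up to $2n-1$ pieces and in general leaves $S_n^1[0,1]$; the set $S_n^1[0,1]$ is not closed under convex combinations of its elements. The fallback of applying Michael's convexvalued theorem to closed convex hulls fails for the same reason --- a selection of $x\mapsto Clos(conv(P_{S_n^1,\varepsilon}(x)))$ need not take values in $S_n^1[0,1]$ and hence need not be an $\varepsilon$-selection at all. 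Any selection-theoretic route here must either exhibit an actual convexity or $c$-structure on $S_n^1[0,1]$ under which the values of the $\varepsilon$-projection are convex (and the polytopes homotopically trivial), or verify equi-local $n$-connectedness of the family of values; you do neither, and it is precisely this step that carries the content of part (2). Note also that part (3) shows the statement is delicate: for $d>1$ the analogous $\varepsilon$-selection fails, so whatever structure you invoke for $d=1$ must genuinely use piecewise linearity rather than generic properties of near-best approximants.

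The remaining parts are plans rather than proofs. In part (1) for $n=2$, the metric projection onto $S_2^1[0,1]$ is genuinely multivalued (the set is not convex and not Chebyshev), so a generic-uniqueness plus implicit-function argument does not produce a globally continuous selection; the difficulty is exactly at the non-generic data where several node positions are optimal, and you must show a coherent choice can be made across them. In part (1) for $n\geq 3$ and in part (3) you correctly identify that a connectivity or monodromy obstruction is needed, but you do not construct the function $f_0$, the obstructing loop, or the separation estimate for $\varepsilon(n,d)$, and you acknowledge these as the main obstacles. As it stands the proposal establishes only the $n=1$ case of part (1).
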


\section*{Acknowledgements}
The authors were supported by the SRA grants P1-0292-0101, J1-2057-0101 and J1-4144-0101, and the RFBR grant 11-01-00822.

\newcommand{\noopsort}[1]{} \newcommand{\singleletter}[1]{#1}

\printindex
\end{document}